\newcommand{\opn}[1]{\operatorname{#1}}
 \let\glb@currsize\relax
\NewDocumentCommand{\definealphabet}{mmmm}
 {%
  \int_step_inline:nnn { `#3 } { `#4 }
   {
    \cs_new_protected:cpx { #1 \char_generate:nn { ##1 }{ 11 } }
     {
      \exp_not:N #2 { \char_generate:nn { ##1 } { 11 } }
     }
   }
 }
\providecommand*\xLongrightleftharpoons[2][]{\mathrel{%
  \raise.22ex\hbox{%
    $\ext@arrow 9999\MT_rightharpoonup_fill:{\phantom{\Longleftrightarrow}}{#2}$}%
  \setbox0=\hbox{%
    $\ext@arrow 9999\MT_leftharpoondown_fill:{#1}{\phantom{\Longleftrightarrow}}$}%
  \kern-\wd0 \lower.22ex\box0}}
\DeclareMathOperator{\Gal}{Gal}
\DeclareMathOperator{\FT}{\frakF}
\DeclareMathOperator{\sep}{sep}
\DeclareMathOperator{\Rep}{\mathbf{Rep}}
\newcommand{\lto}{\longrightarrow}
\newcommand{\wtilde}[1]{\widetilde{#1}}
\newcommand{\repcat}{\Rep_{\bfZ_p}(\scrG_K)}
\newcommand{\Qp}{{\bfQ_p}}
\newcommand{\Zp}{\bfZ_p}
\newcommand{\Cpflat}{\bfC_p^\flat}
\newcommand{\hyphen}{\!\opn{-}\!}
\newcommand*{\textvcenter}{\@ifstar{\@tempswatrue\text@vcenter}{\@tempswafalse\text@vcenter}}
\newcommand*{\text@vcenter}[1]{\mbox{$\m@th\vcenter{\setbox\z@=\hbox{#1}\if@tempswa\dp\z@\z@\fi\box\z@}$}}
\newlength{\owidth}
\newcommand{\pparen}[1]{
\mathopen{\text{\textvcenter{\includegraphics[width=\owidth,height=\totalheightof{$\left(#1\right)$}]{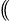}}}}#1\mathclose{\text{\textvcenter{\includegraphics[width=\owidth,height=\totalheightof{$\left(#1\right)$}]{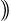}}}}}
\newcommand{\bbrac}[1]{\left\llbracket #1\right\rrbracket}
\newcommand{\np}{\opn{np}}
\newcommand{\EtmKnpc}{\bfE_{\FT,K}^{\np,\circ}}
\newcommand{\EtmKnp}{\bfE_{\FT,K}^{\np}}
\newcommand{\EtmKcnp}{\widehat{\bfE}_{\FT,K}^{\np,\circ}}
\newcommand{\Esep}{\bfE_{\FT}^{\np}}
\newcommand{\Ecsep}{\widehat{\bfE}_{\FT}^{\np}}
\newlength{\ltolen}
\DeclareSymbolFont{yhlargesymbols}{OMX}{yhex}{m}{n}
\DeclareMathAccent{\yhwidehat}{\mathord}{yhlargesymbols}{"62}
\newcommand{\AKq}{\bfA_{\frakF,K}^{\np,?}}
\Crefname{theorem}{Theorem}{Theorem}
\Crefname{conjecture}{Conjecture}{Conjectures}
\Crefname{lemma}{Lemma}{Lemmas}
\Crefname{definition}{Definition}{Definitions}
\Crefname{remark}{Remark}{Remarks}
\Crefname{proposition}{Proposition}{Propositions}
\Crefname{corollary}{Corollary}{Corollaries}
\Crefname{equation}{}{}
\Crefname{item}{}{}
\Crefname{example}{Example}{Examples}
\Crefname{proof}{Proof}{Proofs}
\Crefname{condition}{Condition}{Conditions}
\Crefname{question}{Question}{Questions}
\renewcommand*\subjclass[2][2020]{%
  \def\@subjclass{#2}%
  \@ifundefined{subjclassname@#1}{%
    \ClassWarning{\@classname}{Unknown edition (#1) of Mathematics
      Subject Classification; using '2020'.}%
  }{%
    \@xp\let\@xp\subjclassname\csname subjclassname@#1\endcsname
  }%
}
\let\@xp\subjclassname\csname subjclassname@2020\endcsname
\newtheorem{theorem}{Theorem}[section]
\newtheorem{example}[theorem]{Example}
\newtheorem{lemma}[theorem]{Lemma}
\newtheorem{remark}[theorem]{Remark}
\newtheorem{proposition}[theorem]{Proposition}
\newtheorem{definition}[theorem]{Definition}
\newtheorem{corollary}[theorem]{Corollary}
\newtheorem{question}[theorem]{Question}
\newtheorem{assumption}[theorem]{Assumption}
\newtheorem{introthm}{Theorem}
\numberwithin{equation}{section}
\numberwithin{figure}{section}
\newlist{propenum}{enumerate}{1}
\setlist[propenum]{label=(\arabic*), ref=\theproposition~(\arabic*)}
\newlist{lemenum}{enumerate}{1}
\setlist[lemenum]{label=(\arabic*), ref=\thelemma~(\arabic*)}
\newlist{thmenum}{enumerate}{1}
\setlist[thmenum]{label=(\arabic*), ref=\thetheorem~(\arabic*)}
\title{Multivariable period rings of $p$-adic false Tate curve extension}
\author{Yijun Yuan\orcidlink{0000-0001-6571-6980}}
\address{Institute for Theoretical Sciences, Westlake University, No. 600 Dunyu Road, Sandun town, Xihu district, Hangzhou, Zhejiang Province, 310030, China}
\email{941201yuan@gmail.com}
\urladdr{https://yijunyuan.github.io/}
\begin{document}
\frontmatter
\begin{abstract}
	Let $p\geq 3$ be a prime number and $K$ be a finite extension of $\bfQ_p$ with uniformizer $\pi_K$. In this article, we introduce two multivariable period rings $\bfA_{\frakF,K}^{\np}$ and $\bfA_{\frakF,K}^{\np,\opn{c}}$ for the \'etale $(\varphi,\Gamma_{\frakF,K})$-modules of $p$-adic false Tate curve extension $K\left(\pi_K^{1/p^\infty},\zeta_{p^\infty}\right)$. Various properties of these rings are studied and as applications, we show that $(\varphi,\Gamma_{\frakF,K})$-modules over these rings bridge $(\varphi,\Gamma)$-modules and $(\varphi,\tau)$-modules over imperfect period rings in both classical and cohomological sense, which answers a question of Caruso. Finally, we construct the $\psi$ operator for false Tate curve extension and discuss the possibility to calculate Iwasawa cohomology for this extension via $(\varphi,\Gamma_{\frakF,K})$-modules over these rings.
\end{abstract}
\subjclass{14G45, 11F80, 11E95, 11S25, 11S15}
\keywords{$(\varphi,\Gamma)$-modules, false Tate curve extension, multivariable period rings, $\psi$ operator}
\maketitle
\tableofcontents
\mainmatter
\section*{Notations and conventions}
For any field $F$, denote by $\scrG_F$ the absolute Galois group of $F$.

Let $p\geq 3$ be a prime number. We normalize the $p$-adic valuation $v_p$ on the field $\bfC_p$ of $p$-adic complex numbers to be $v_p(p)=1$. Denote by $v^\flat$ the valuation on $\Cpflat$ and let $\bfA_{\opn{inf}}\coloneqq W\left(\scrO_{\bfC_p}^\flat\right)$ be Fontaine's infinitesimal period ring.

Let $K$ be a finite extension of $\Qp$, with uniformizer $\pi_K$ and residue field $k_K$. Let $\repcat$ be the category of $\bfZ_p$-representations of $\scrG_K$.

\section{Introduction}
In 1990, Fontaine attached to every $\Zp$-representation $V$ of $\scrG_K$ an \'etale $(\varphi,\Gamma)$-module $\mathbf{D}(V)$, which is a module over the $p$-Cohen ring (cf. \Cref{def:34646}) $\bfA_K$ of the field of norms (cf. \Cref{def:69346}) $X_K(K_{p^\infty})$ of the cyclotomic extension $K_{p^\infty}\coloneqq K(\zeta_{p^\infty})$ over $K$, equipped with a semi-linear action of the Frobenius map $\varphi$ and $\Gamma_K\coloneqq \Gal(K_{p^\infty}/K)$ that commutes with each other.
The functor $V\longmapsto \bfD(V)$ induces the equivalence of categories between $\repcat$ and \'etale $(\varphi,\Gamma)$-modules over $\mathbf{A}_K$ (cf. \Cref{thm:36449}). This turns out to be a huge success in $p$-adic Hodge theory and has numerous important applications. Among them, we can use \'etale $(\varphi,\Gamma)$-module $\bfD(V)$ to calculate the Iwasawa cohomology of $V$:
\begin{theorem}[{cf. \cite[Théorème II.1.3]{cherbonnierTheorieIwasawaRepresentations1999}}]\label{thm:49557}
	For $V\in\repcat$, define the Iwasawa cohomology of $V$ to be $$\rmH_{\opn{Iw}}^i\left(\scrG_K,V\right)\coloneqq \rmH^i\left(\scrG_K,\bfZ\bbrac{\Gamma_K}\otimes_{\Zp}V\right).$$
	Then one has $\rmH_{\opn{Iw}}^1\left(\scrG_K,V\right)\cong \bfD(V)^{\psi=1}$, where $\psi\colon \bfD(V)\lto\bfD(V)$ is the unique operator that commutes with the action of $\Gamma_K$ and satisfies $\psi(\varphi(a)\cdot x)=a\cdot \psi(x)$ and $\psi(a\cdot\varphi(x))=\psi(a)\cdot x$ for any $a\in\bfA_K$ and $x\in \bfD(V)$.
\end{theorem}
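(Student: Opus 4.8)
The plan is to realise the Iwasawa cohomology as an inverse limit of Galois cohomologies over the finite layers $K_n\coloneqq K(\zeta_{p^n})$ of the cyclotomic tower, to compute each of those by the Fontaine--Herr complex of the corresponding \'etale $(\varphi,\Gamma)$-module (available thanks to \Cref{thm:36449}), and to pass to the limit, where $\psi$ is forced to appear out of the corestriction maps; this is the strategy of Cherbonnier--Colmez \cite{cherbonnierTheorieIwasawaRepresentations1999}, and I would follow it. Concretely, set $\Lambda\coloneqq\bfZ_p\bbrac{\Gamma_K}$. Shapiro's lemma identifies $\rmH_{\opn{Iw}}^i(\scrG_K,V)=\rmH^i(\scrG_K,\Lambda\otimes_{\Zp}V)$ with $\varprojlim_n\rmH^i(\scrG_{K_n},V)$, the transition maps being corestriction; since $p\geq 3$, $\Gamma_K\cong\Delta\times\Zp$ with $\#\Delta\mid p-1$, each $\Gamma_{K_n}\coloneqq\Gal(K_{p^\infty}/K_n)$ is eventually procyclic, and one may fix compatible topological generators $\gamma_n$ of the pro-$p$ parts, with $\gamma_{n+1}$ a $p$-th power of $\gamma_n$ for $n$ large. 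Because $K_n(\zeta_{p^\infty})=K_{p^\infty}$, the \'etale $(\varphi,\Gamma_{K_n})$-module attached to $V|_{\scrG_{K_n}}$ is simply $\bfD(V)$ with its Frobenius, now regarded as an $\bfA_K$-module with the restricted action of $\Gamma_{K_n}\subset\Gamma_K$; hence, by Fontaine--Herr, $\RGcont(\scrG_{K_n},V)$ is computed by the Herr complex, namely the total complex of $\bfD(V)\xrightarrow{\varphi-1}\bfD(V)$ and $\bfD(V)\xrightarrow{\gamma_n-1}\bfD(V)$.

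The heart of the argument is to describe the corestriction $\RGcont(\scrG_{K_{n+1}},V)\to\RGcont(\scrG_{K_n},V)$ on these Herr complexes and to form $\mathrm{R}\varprojlim_n$. In the $\Gamma$-direction corestriction is given by the norm $\nu_n\coloneqq\sum_{i=0}^{p-1}\gamma_n^i$ together with the tautological maps coming from $\gamma_{n+1}-1=\nu_n(\gamma_n-1)$; iterating, $\nu_0\cdots\nu_{n-1}=(\gamma_n-1)/(\gamma_0-1)$, so that the $\Gamma$-direction ``telescopes away'' as $n\to\infty$. On the Frobenius side the matching transition map is the one induced by the normalized trace of $\bfA_K$ over $\varphi(\bfA_K)$ --- equivalently by $\psi$ --- and here one exploits that $\bfA_K$, hence $\bfD(V)$ over it, is free of rank $p$ over $\varphi(\bfA_K)$, which yields $\psi$ with $\psi\circ\varphi=\id$ and the decomposition $\bfD(V)=\varphi(\bfD(V))\oplus\bfD(V)^{\psi=0}$; it is also here that the relations satisfied by $\psi$ in the statement are both used and seen to characterise it uniquely. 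After $\mathrm{R}\varprojlim_n$ --- which requires checking Mittag--Leffler for these transition systems, vanishing of the relevant $\varprojlim^1$, and the interplay of the weak and $p$-adic topologies on $\bfD(V)$ --- one is left with the two-term complex $\bfD(V)\xrightarrow{\psi-1}\bfD(V)$ placed in degrees $1$ and $2$. Reading off cohomology then gives $\rmH_{\opn{Iw}}^0(\scrG_K,V)=0$, $\rmH_{\opn{Iw}}^1(\scrG_K,V)\cong\Ker(\psi-1)=\bfD(V)^{\psi=1}$, and $\rmH_{\opn{Iw}}^2(\scrG_K,V)\cong\opn{coker}(\psi-1)$, the $\Lambda$-module structure on $\bfD(V)^{\psi=1}$ being the one induced by the $\Gamma_K$-action (which commutes with $\psi$), as claimed.

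I expect the main obstacle to be this middle step: pinning down the corestriction maps at finite level and proving that, modulo the correction imposed by the $\gamma_n-1$ direction, they reorganize the Frobenius direction into $\psi-1$, together with the functional-analytic bookkeeping in the inverse limit. Minor technical points are the harmless prime-to-$p$ subgroup $\Delta\subset\Gamma_K$ and, when $V$ has $p$-torsion so that $\bfD(V)$ need not be free over $\bfA_K$, a standard d\'evissage to the $\Zp$-free case.
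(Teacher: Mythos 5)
The theorem you are asked to prove is not proved in this paper; it is quoted (with attribution to Cherbonnier--Colmez) purely as motivation for what follows, so there is no proof in the paper to compare against. Your sketch follows the broad contours of the cited reference: Shapiro's lemma identifies $\rmH^i_{\opn{Iw}}(\scrG_K,V)$ with $\varprojlim_n\rmH^i(\scrG_{K_n},V)$, each finite layer is computed by a Herr complex over the same $\bfA_K$-module $\bfD(V)$ (since $K_n\subset K_{p^\infty}$, the $(\varphi,\Gamma)$-module does not change from layer to layer), and the answer is read off from the two-term complex $\bfD(V)\xrightarrow{\psi-1}\bfD(V)$ placed in degrees $1$ and $2$.

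What is actually wrong, and not merely deferred, is your account of where $\psi$ comes from. Corestriction $\rmH^*(\scrG_{K_{n+1}},V)\to\rmH^*(\scrG_{K_n},V)$, written out on Herr complexes, acts by the norm $\nu_n=1+\gamma_n+\cdots+\gamma_n^{p-1}$ in the $\gamma$-direction and essentially trivially in the Frobenius direction; $\psi$ is a feature of the $\varphi$-module structure of $\bfD(V)$, which does not depend on $n$, and it is \emph{not} ``the matching transition map'' on the Frobenius side. In Cherbonnier--Colmez, $\psi$ enters through two lemmas your sketch never states: (i) $\gamma-1$ is bijective on $\bfD(V)^{\psi=0}$ (their Proposition I.5.1 --- the very result the present paper invokes to prove \Cref{prop:16574}), which is what allows one to build, for each $\alpha\in\bfD(V)^{\psi=1}$, the compatible system of $1$-cocycles of shape $\bigl(-\alpha,\,(\gamma_n-1)^{-1}(1-\varphi)\alpha\bigr)$; and (ii) the quasi-isomorphism of complexes $C_{\varphi,\gamma}\to C_{\psi,\gamma}$ (their Lemme I.5.2), whose false Tate curve analogue is precisely \Cref{ques:56379}, which this paper leaves open. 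Without (i), both the telescoping of the $\gamma$-direction and the Coleman-type cocycle construction collapse, so this is the genuinely missing idea rather than the ``functional-analytic bookkeeping'' you flag at the end.
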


This is a crucial ingredient in Colmez's proof of the $p$-adic local Langlands correspondence for $\opn{GL}_2(\Qp)$ and has other applications in Iwasawa theory.

In 2004, J. Coates, T. Fukaya, K. Kato, R. Sujatha, and O. Venjakob proposed a program of non-commutative Iwasawa theory (cf. \cite{coatesGL2MainConjecture2005}). In view of the influential role played by the theory of $(\varphi, \Gamma)$-modules in commutative Iwasawa theory, it is natural to ask if there is an analogy of the \Cref{thm:49557} in the non-commutative situation. The first interesting case can be the false Tate curve extension $K_{\FT}\coloneqq K\left(\pi_K^{1/p^\infty},\zeta_{p^\infty}\right)$, which is the normal closure of the Kummer extension $K\left(\pi_K^{1/p^\infty}\right)$ over $K$. To imitate what has been done in the cyclotomic extension, the first step is to lift the action of the Frobenius map $\varphi$ and the Galois group $\Gamma_{\FT,K}\coloneqq \Gal(K_{\FT}/K)$, which is a $p$-adic Lie group of dimension $2$, on the field of norms $X_K\left(K_{\FT}\right)$ to its $p$-Cohen ring. This turns out to be an extremely difficult problem, for it is open to find a norm-compatible system of uniformizers for the tower $\left\{K\left(\pi_K^{1/p^n},\zeta_{p^n}\right)\right\}_{n\geq 0}$ even when $K=\Qp$. Although there are some partial results in this direction (cf. \cite{vivianiRamificationGroupsArtin2004,bellemareExplicitUniformizersCertain2020a,WangYuan2021,wangUniformizerFalseTate2024}), the general case is still open. On the other hand, requiring the $\psi$ operator to be non-trivial forces us to find an imperfect period ring, which rules out the perfectoid field $\wtilde{\bfE}_{\frakF,K}\coloneqq \widehat{K}_{\frakF}^\flat$. %

When constructing the $(\varphi,\tau)$-modules over ``partially imperfect'' (partiellement déperfectisées) period rings, Caruso considered the field $\bfE_{\FT,K}^{\mathtt{Car},\np ,\circ}$, which is the fraction field of the image of the injective morphism (cf. \cite[Proposition 1.7]{Caruso2013})
$$\iota_{\mathtt{Car}}\colon k_K\bbrac{X,Y}\lto \scrO_{\bfC_p}^\flat,\ X\longmapsto u_K,\ Y\longmapsto \eta_{\Qp}\coloneqq\varepsilon-1,$$
where $u_K\coloneqq \left(\pi_K,\pi_K^{1/p},\cdots,\pi_K^{1/p^n},\cdots\right)\in\Cpflat$  and $\varepsilon\coloneqq\left(1,\zeta_p,\cdots,\zeta_{p^n},\cdots\right)\in\Cpflat$.

The action of $\scrG_K$ on $\bfE_{\FT,K}^{\mathtt{Car},\np ,\circ}$ factors through $\Gamma_{\frakF,K}\coloneqq \Gal\left(K_{\frakF}/K\right)$, which allows Caruso to define the following variants of imperfect period rings:
$$\bfE_{\FT,K}^{\mathtt{Car},\np }\coloneqq \left(\left(\bfE_{\FT,K}^{\mathtt{Car},\np ,\circ}\right)^{\sep}\right)^{\scrG_{K_{\frakF}}},\ \bfE_{\FT,K}^{\mathtt{Car},u\hyphen\np }\coloneqq\left(\left(\bigcup_n\bfE_{\FT,K}^{\mathtt{Car},\np ,\circ}\left(\eta_{\Qp}^{1/p^n}\right)\right)^{\sep}\right)^{\scrG_{K_{\frakF}}}$$
and
$$\bfE_{\FT,K}^{\mathtt{Car},\eta\hyphen\np }\coloneqq\left(\left(\bigcup_n\bfE_{\FT,K}^{\mathtt{Car},\np ,\circ}\left[u_K^{1/p^n}\right]\right)^{\sep}\right)^{\scrG_{K_{\frakF}}}.$$
It is shown by Caruso that the absolute Galois groups of these fields are all isomorphic to $H_{\FT,K}$, and the action of $\Gamma_{\FT,K}$ can be lifted to their $p$-Cohen rings respectively.

This inspires us to study the multivariable period rings for false Tate curve extension. For simplicity, we make the following mild assumption:
\begin{assumption}
	The false Tate curve extension $K_{\frakF}$ is totally ramified over $K$. In particular, the residue field of $X_K(K_{\frakF})$ and $X_K(K_{p^\infty})$ are both $k_K$.
\end{assumption}
For example, this assumption is satisfied when $K=\Qp$ (cf. \cite[Theorem 5.5]{vivianiRamificationGroupsArtin2004}).

\begin{definition}[cf. \Cref{def:9888}, \Cref{def:8032}]
	Let $\bfE_{\FT,K}^{\np ,\circ,+}$ be the image of the injective (cf. \Cref{lem:1122}) morphism
	$$\iota_{\np}\colon k_K\bbrac{X,Y}\lto \scrO_{\bfC_p}^\flat, f\longmapsto f(u_K,\eta_K),$$
	where $\eta_K$ is a uniformizer of $\bfE_K\coloneqq X_K\left(K_{p^\infty}\right)$. Let $\bfE_{\FT,K}^{\np ,\circ}$ be the fraction field of $\bfE_{\FT,K}^{\np ,\circ,+}$, $\bfE_{\FT,K}^{\np ,\circ,sep}$ be the separable closure of $\bfE_{\FT,K}^{\np ,\circ}$ in $\bfC_p^\flat$ and $\bfE_{\FT,K}^{\np }\coloneqq \left(\bfE_{\FT,K}^{\np ,\circ,sep}\right)^{\scrG_{K_{\frakF}}}$. Besides that, let $\widehat{\bfE}_{\FT,K}^{\np ,\circ}$ be the completion of $\bfE_{\FT,K}^{\np ,\circ}$ with respect to the valuation $v^\flat$.
\end{definition}
\begin{remark}
	When $K=\Qp$, one can take $\eta_{\Qp}=\varepsilon-1$, and consequently $\bfE_{\FT,\Qp}^{\np ,\circ}$ (resp. $\bfE_{\FT,\Qp}^{\np }$) coincides with the field $\bfE_{\FT,\Qp}^{\mathtt{Car},\np ,\circ}$ (resp. $\bfE_{\FT,\Qp}^{\mathtt{Car},\np }$) defined by Caruso.
\end{remark}
Our first main result is the clarification of properties of the above fields (and ring $\bfE_{\FT,K}^{\np,\circ,+}$):
\begin{introthm}\label{thm:42581}
	One has strict containment $\bfE_{\FT,K}^{\np ,\circ}\subsetneq \bfE_{\FT,K}^{\np }\subsetneq \widehat{\bfE}_{\FT,K}^{\np ,\circ}\subsetneq \wtilde{\bfE}_{\FT,K}$. Moreover, we summarize the properties of these fields (and the ring $\bfE_{\FT,K}^{\np,\circ,+}$) in the following table:
	\begin{center}
		\begin{tabular}{ccccccc}
			\toprule
			$E$                                            & $\scrG_{E}\cong \scrG_{K_{\frakF}}$?       & $\substack{(E,v^\flat)                                                                                                                                                                                          \\\text{is complete?}}$                 & $\substack{v^\flat\vert_E\\\text{is discrete? }}$             & $\substack{\Gamma_{\FT,K}\text{-action}                                                                               \\\text{is liftable?}}$ & $\substack{E^{\scrG_{K_{p^\infty}}}=\bfE_K,                                \\E^{\scrG_{K_\infty}}=\bfE_{\tau,K}}$ & $[E\colon \varphi(E)]$ \\
			\midrule
			\rowcolor{Turquoise!30}$\bfE_{\FT,K}$          & {\normalfont\faCheck}                      & {\normalfont\faCheck}                      & {\normalfont\faCheck}                      & {\normalfont\faQuestion}                   & {\normalfont\faTimes}                     & $p$                          \\
			$\bfE_{\FT,K}^{\np ,\circ,+}$                  & {\normalfont\faMinus}                      & \hyperref[lem:2016]{\normalfont\faCheck}   & \hyperref[prop:35357]{\normalfont\faTimes} & {\normalfont\faMinus}                      & {\normalfont\faMinus}                     & {\normalfont\faMinus}        \\
			$\EtmKnpc$                                     & \hyperref[prop:56478]{\normalfont\faTimes} & \hyperref[prop:51851]{\normalfont\faTimes} & \hyperref[prop:35357]{\normalfont\faTimes} & \hyperref[prop:42040]{\normalfont\faCheck} & \hyperref[lem:56945]{\normalfont\faCheck} & \hyperref[prop:16686]{$p^2$} \\
			$\EtmKnp$                                      & \hyperref[lem:5466]{\normalfont\faCheck}   & \hyperref[prop:29251]{\normalfont\faTimes} & \hyperref[prop:35357]{\normalfont\faTimes} & \hyperref[prop:42040]{\normalfont\faCheck} & \hyperref[lem:56945]{\normalfont\faCheck} & \hyperref[prop:16686]{$p^2$} \\
			$\EtmKcnp$                                     & \hyperref[lem:47550]{\normalfont\faCheck}  & {\normalfont\faCheck}                      & \hyperref[prop:35357]{\normalfont\faTimes} & \hyperref[prop:42040]{\normalfont\faCheck} & \hyperref[lem:56945]{\normalfont\faCheck} & \hyperref[prop:16686]{$p^2$} \\
			\rowcolor{Turquoise!30}$\wtilde{\bfE}_{\FT,K}$ & {\normalfont\faCheck}                      & \normalfont\faCheck                        & \normalfont\faTimes                        & {\normalfont\faCheck}                      & \normalfont\faTimes                       & $1$                          \\
			\bottomrule
		\end{tabular}
	\end{center}
	where \normalfont\faMinus\space means ``not applicable'', \normalfont\faCheck\space means ``true'', \normalfont\faTimes\space means ``false'' and \normalfont\faQuestion\space means ``unknown''.

\end{introthm}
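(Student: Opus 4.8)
\textit{Overview and inclusions.} This theorem synthesises results proved throughout the paper, so my plan is to show how the table entries combine to give the chain of strict inclusions, leaving the substance to the cited statements. For the inclusions I would argue as follows. First, $\EtmKnpc\subseteq\EtmKnp$ because the $\scrG_K$-action on the generators $u_K,\eta_K$ factors through $\Gamma_{\frakF,K}$, so $\scrG_{K_{\frakF}}$ fixes $\bfE_{\FT,K}^{\np,\circ,+}$, hence its fraction field, pointwise; thus $\EtmKnpc\subseteq(\bfE_{\FT,K}^{\np,\circ,sep})^{\scrG_{K_{\frakF}}}=\EtmKnp$. As both fields have separable closure $\bfE_{\FT,K}^{\np,\circ,sep}$ in $\Cpflat$, this gives an inclusion $\scrG_{\EtmKnp}\hookrightarrow\scrG_{\EtmKnpc}$; the source is $\scrG_{K_{\frakF}}$ by \Cref{lem:5466}, while \Cref{prop:56478} says the target is not isomorphic to $\scrG_{K_{\frakF}}$, so the inclusion of groups, hence of fields, is proper. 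Second, $\EtmKnp\subseteq\EtmKcnp$: monotonicity of separable closure in $\Cpflat$ gives $\bfE_{\FT,K}^{\np,\circ,sep}\subseteq(\EtmKcnp)^{sep}$, and \Cref{lem:47550} identifies $\scrG_{\EtmKcnp}$ with $\scrG_{K_{\frakF}}$, so $\bigl((\EtmKcnp)^{sep}\bigr)^{\scrG_{K_{\frakF}}}=\EtmKcnp$; applying $(-)^{\scrG_{K_{\frakF}}}$ to the inclusion of separable closures yields $\EtmKnp\subseteq\EtmKcnp$, and this is proper since $\EtmKcnp$ is $v^\flat$-complete but $\EtmKnp$ is not (\Cref{prop:29251}). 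Third, $\EtmKcnp\subseteq\wtilde{\bfE}_{\FT,K}$: by construction $u_K,\eta_K$ lie in the $v^\flat$-complete field $\wtilde{\bfE}_{\FT,K}$, so $\EtmKnpc$ and its completion $\EtmKcnp$ do too, and the inclusion is proper because $[\EtmKcnp:\varphi(\EtmKcnp)]=p^2$ by \Cref{prop:16686} whereas $\wtilde{\bfE}_{\FT,K}$ is perfect.

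\textit{The table.} I would then dispatch the columns. The boundary rows are classical: for the field of norms $\bfE_{\FT,K}=X_K(K_{\frakF})$, Fontaine--Wintenberger theory gives a complete discretely valued field with $\scrG_{\bfE_{\FT,K}}\cong\scrG_{K_{\frakF}}$ and $[\bfE_{\FT,K}:\varphi(\bfE_{\FT,K})]=p$ (using the Assumption that the residue field is $k_K$), liftability of the $\Gamma_{\frakF,K}$-action is the open problem recalled in the introduction, and one checks directly that the $\scrG_{K_{p^\infty}}$- and $\scrG_{K_\infty}$-invariants of its completed perfection are not $\bfE_K,\bfE_{\tau,K}$; for $\wtilde{\bfE}_{\FT,K}=\widehat{K}_{\frakF}^\flat$ the entries are standard perfectoid/tilting facts (absolute Galois group $\scrG_{K_{\frakF}}$, complete, value group dense so $v^\flat$ not discrete, $\Gamma_{\frakF,K}$-action liftable via Witt vectors, perfect, and the relevant invariants are completed perfected subfields, not $\bfE_K,\bfE_{\tau,K}$). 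For the three middle fields, completeness is \Cref{prop:51851,prop:29251} in the negative cases and holds by construction for $\EtmKcnp$; non-discreteness of $v^\flat$ is \Cref{prop:35357}, which reduces to the $\bbQ$-linear independence of $v^\flat(u_K)$ and $v^\flat(\eta_K)$; liftability of the $\Gamma_{\frakF,K}$-action is \Cref{prop:42040}; the identification of the $\scrG_{K_{p^\infty}}$- and $\scrG_{K_\infty}$-invariants with $\bfE_K,\bfE_{\tau,K}$ is \Cref{lem:56945}; the $\varphi$-index is \Cref{prop:16686}; and the absolute Galois groups are \Cref{prop:56478}, \Cref{lem:5466} and \Cref{lem:47550}. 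For $\bfE_{\FT,K}^{\np,\circ,+}$, the Galois-group, invariant-field and $\varphi$-index columns are vacuous since it is a ring, not a field, and completeness and non-discreteness are \Cref{lem:2016} and \Cref{prop:35357}.

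\textit{Main obstacle.} Once the cited statements are in place the assembly above is routine; the real content lies in those statements, and I expect \Cref{prop:56478} and \Cref{prop:16686} to be the crux. The first is the counterintuitive point --- the ``na\"ive'' multivariable field $\EtmKnpc$ has a \emph{strictly larger} absolute Galois group than $\scrG_{K_{\frakF}}$, which is exactly what forces the passage to the fixed-point ring $\EtmKnp$ --- and its proof should come down to producing concretely a nontrivial separable extension of $\EtmKnpc$ inside $\Cpflat$ on which $\scrG_{K_{\frakF}}$ acts trivially, i.e.\ an element of $\EtmKnp\setminus\EtmKnpc$. The $\varphi$-index computation is delicate precisely because $v^\flat$ is non-discrete, so the one-variable ``uniformizer to the $p$-th power'' argument is unavailable; one must instead work with a two-element $p$-basis built from $u_K$ and $\eta_K$ and control how it transforms under the fixed-point and completion operations that define $\EtmKnp$ and $\EtmKcnp$.
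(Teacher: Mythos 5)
Your assembly matches the paper's own: Theorem~A has no standalone proof but is a digest of \Cref{lem:2016}, \Cref{prop:51851}, \Cref{prop:56478}, \Cref{prop:29251}, \Cref{lem:47550}, \Cref{lem:5466}, \Cref{prop:35357}, \Cref{prop:42040}, \Cref{lem:56945} and \Cref{prop:16686}, and your derivation of the chain of strict inclusions from those inputs is the intended one (properness of the first inclusion from the mismatch of absolute Galois groups, of the second from (in)completeness, of the third from (im)perfectness). One remark on your closing speculation: you predict that \Cref{prop:56478} ``should come down to producing concretely'' an element of $\EtmKnp\setminus\EtmKnpc$, but the paper's argument is non-constructive and purely group-theoretic --- it identifies $\EtmKnpc\cong k_K\pparen{X,Y}$, invokes the theorem of Harbater and Pop that the absolute Galois group of such a field has rank equal to the (uncountable) cardinality of the field, and contrasts this with the countable rank of $H_{\frakF,K}\cong\scrG_{k_K\pparen{T}}$; no explicit witness in $\EtmKnp\setminus\EtmKnpc$ is exhibited (indeed the paper separately remarks that such explicit descriptions are likely hard). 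Also a minor wording slip: for the $\bfE_{\FT,K}$ row, the invariants column concerns $\bfE_{\FT,K}^{\scrG_{K_{p^\infty}}}$ and $\bfE_{\FT,K}^{\scrG_{K_\infty}}$ themselves, not those of its completed perfection.
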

\begin{remark}
	The results in the colored rows are not original of this paper and can be found in the literature (for example, \cite{Wintenberger1983,scholzePerfectoidSpaces2012}). We put them here to make a comparison with our results. The results in the white rows can be found in \Cref{lem:5466}, \Cref{prop:51851}, \Cref{lem:2016}, \Cref{prop:56478}, \Cref{lem:47550}, \Cref{prop:29251}, \Cref{prop:35357}, \Cref{prop:42040}, \Cref{lem:56945} and \Cref{prop:16686}.
\end{remark}
\begin{definition}
	Let $\bfA_{\FT,\Qp}^{\np}$ (resp. $\bfA_{\FT,\Qp}^{\np,\opn{c}}$) be the $p$-Cohen ring of $\bfE_{\FT,\Qp}^{\np}$ (resp. $\widehat{\bfE}_{\FT,\Qp}^{\np}$) in $W\left(\wtilde{\bfE}_{\frakF,K}\right)$ that equipped with the lifted action of Frobenius and $\Gamma_{\frakF,K}$ from the residue field.
\end{definition}
\begin{remark}
	The rings $\bfA_{\FT,K}^{\np}$ and $\bfA_{\FT,K}^{\np,\rmc}$ in this article are very artificially constructed. It is valuable to seek a natural interpretation within the framework of (mixed-characteristic) locally analytic vectors (cf. \cite{porat2024locallyanalyticvectorsdecompletion}), as Gal Porat suggested to the author. 
\end{remark}
We give several applications of this fundamental theorem:

\subsection*{Bridging $(\varphi,\Gamma)$-modules and $(\varphi,\tau)$-modules}
The theory of $(\varphi,\tau)$-modules is the analogue of $(\varphi,\Gamma)$-modules for the Kummer extension constructed by Caruso in 2013. This variant is naturally connected to the Breuil-Kisin modules, which plays an important role in integral $p$-adic Hodge theory. In \cite[Section 4]{Caruso2013}, Caruso asked how to connect $(\varphi,\Gamma)$-modules and $(\varphi,\tau)$-modules (cf. \Cref{eg:5636}) without the bridging of the category of Galois representations. In his doctoral thesis, Poyeton partially answered this question by using the theory of locally (resp. pro-) analytic vectors (cf. \cite[Théorème I]{poyetonExtensionsLiePadiques2019}). The following theorem provides a fresh solution to this question in terms of \'etale $(\varphi,\Gamma_{\frakF,K})$-modules (cf. \Cref{eg:5636}) over multivariable period rings $\bfA_{\frakF,K}^{\np}$ and $\bfA_{\frakF,K}^{\np,\rmc}$:
\begin{introthm}[{cf. \Cref{prop:48076}, \Cref{coro:3935}}]\label{thm:25467}
	Let $\bfA_{\FT,K}^{\np,?}\in\left\{\bfA_{\frakF,K}^{\np},\bfA_{\frakF,K}^{\np,\rmc}\right\}$. The functor
	$$\left\{\text{\'etale }(\varphi,\Gamma)\hyphen\text{modules over }\bfA_K\right\}\lto\left\{\text{\'etale }(\varphi,\tau)\hyphen\text{modules over }\left(\bfA_{\tau,K},\bfA_{\FT,K}^{\np,?}\right)\right\},$$
	$$D\longmapsto \left(\bfA_{\FT,K}^{\np,?}\otimes_{\bfA_K}D\right)^{\scrG_{K_\infty}}$$
	induces the equivalence of categories, with the quasi-inverse given by
	$$D^\prime\longmapsto \left(\bfA_{\FT,K}^{\np,?}\otimes_{\bfA_{\tau,K}}D^\prime\right)^{\scrG_{K_{p^\infty}}}.$$
\end{introthm}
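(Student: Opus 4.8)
The plan is to deduce the statement from the fact that \emph{all three} categories in play are equivalent to $\repcat$: the source by Fontaine's theorem (\Cref{thm:36449}), the target by the $(\varphi,\tau)$-module dictionary in its imperfect multivariable form (cf. \Cref{eg:5636}, \cite{Caruso2013}, \Cref{prop:48076}), and --- the technical core --- the category of \'etale $(\varphi,\Gamma_{\frakF,K})$-modules over $\AKq$ itself. Concretely I would work inside the ``large'' period ring $\bfA_{\FT,K}^{\np,\circ,sep}$, the $p$-Cohen ring of the separable closure $\bfE_{\FT,K}^{\np,\circ,sep}$ realised inside $W\!\left(\Cpflat\right)$ with its compatible Frobenius and $\scrG_K$-actions; since $\bfE_K^{sep}$ and $\bfE_{\tau,K}^{sep}$ embed into $\bfE_{\FT,K}^{\np,\circ,sep}$, it contains the usual cyclotomic period ring (which computes $\bfD(V)$) and Caruso's Kummer period ring (which computes the underlying $\varphi$-module of the $(\varphi,\tau)$-module of $V$), so that Fontaine's and Caruso's comparison isomorphisms base-change to it. The variant $\AKq=\bfA_{\frakF,K}^{\np,\rmc}$ is treated identically, working over the $v^\flat$-completion.

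First I would record the ring-theoretic dictionary furnished by \Cref{thm:42581}: applying the $p$-Cohen ring functor to the identities $E^{\scrG_{K_{p^\infty}}}=\bfE_K$ and $E^{\scrG_{K_\infty}}=\bfE_{\tau,K}$ of \Cref{lem:56945} for $E=\bfE_{\FT,K}^{\np}$ --- using also $\scrG_{\bfE_{\FT,K}^{\np}}\cong\scrG_{K_{\frakF}}$ (\Cref{lem:5466}, \Cref{lem:47550}), so that the $\scrG_K$-action on $\bfA_{\FT,K}^{\np,\circ,sep}$ is well-defined and these fixed subrings are again $p$-Cohen rings --- yields
\[
\bigl(\bfA_{\FT,K}^{\np,\circ,sep}\bigr)^{\scrG_{K_{p^\infty}}}=\bfA_K,\qquad \bigl(\bfA_{\FT,K}^{\np,\circ,sep}\bigr)^{\scrG_{K_\infty}}=\bfA_{\tau,K},\qquad \bigl(\bfA_{\FT,K}^{\np,\circ,sep}\bigr)^{\scrG_{K_{\frakF}}}=\AKq .
\]
For $V\in\repcat$ put $\bfD_{\FT}^{\np,?}(V)\deftobe\bigl(\bfA_{\FT,K}^{\np,\circ,sep}\otimes_{\Zp}V\bigr)^{\scrG_{K_{\frakF}}}$. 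Base-changing Fontaine's comparison isomorphism to $\bfA_{\FT,K}^{\np,\circ,sep}$ gives a $\scrG_{K_{p^\infty}}$-equivariant isomorphism $\bfA_{\FT,K}^{\np,\circ,sep}\otimes_{\bfA_K}\bfD(V)\cong\bfA_{\FT,K}^{\np,\circ,sep}\otimes_{\Zp}V$, and taking $\scrG_{K_{\frakF}}$-invariants --- which, as $\scrG_{K_{\frakF}}$ acts trivially on $\bfD(V)$ and $\bfD(V)$ is finite free over $\bfA_K$, commute with $-\otimes_{\bfA_K}\bfD(V)$ --- produces a $\varphi$- and $\Gamma_{\frakF,K}$-equivariant isomorphism $\AKq\otimes_{\bfA_K}\bfD(V)\xrightarrow{\sim}\bfD_{\FT}^{\np,?}(V)$; the same manipulation with Caruso's comparison isomorphism for $\bfD_\tau(V)$ gives $\AKq\otimes_{\bfA_{\tau,K}}\bfD_\tau(V)\xrightarrow{\sim}\bfD_{\FT}^{\np,?}(V)$, where $\bfD_\tau(V)$ denotes the underlying \'etale $\varphi$-module over $\bfA_{\tau,K}$ of the $(\varphi,\tau)$-module of $V$.

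Now I would combine these. Unwinding $\bfD_{\FT}^{\np,?}(V)=\bigl(\bfA_{\FT,K}^{\np,\circ,sep}\otimes_{\Zp}V\bigr)^{\scrG_{K_{\frakF}}}$ and applying the dictionary again, taking further $\scrG_{K_{p^\infty}}$- and $\scrG_{K_\infty}$-invariants identifies $\bfD_{\FT}^{\np,?}(V)^{\scrG_{K_{p^\infty}}}=\bfD(V)$ and $\bfD_{\FT}^{\np,?}(V)^{\scrG_{K_\infty}}=\bfD_\tau(V)$. Hence, for an \'etale $(\varphi,\Gamma)$-module $D$ over $\bfA_K$, writing $D=\bfD(V)$,
\[
\bigl(\AKq\otimes_{\bfA_K}D\bigr)^{\scrG_{K_\infty}}=\bfD_{\FT}^{\np,?}(V)^{\scrG_{K_\infty}}=\bfD_\tau(V),
\]
carrying the $\Gamma_{\frakF,K}$-action on $\AKq\otimes_{\bfA_{\tau,K}}\bfD_\tau(V)=\bfD_{\FT}^{\np,?}(V)$ --- i.e. exactly the \'etale $(\varphi,\tau)$-module attached to $V$; symmetrically, the candidate quasi-inverse sends this $(\varphi,\tau)$-module to $\bfD_{\FT}^{\np,?}(V)^{\scrG_{K_{p^\infty}}}=\bfD(V)=D$. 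Thus the first functor is the composite of a quasi-inverse of Fontaine's equivalence with the $(\varphi,\tau)$-module equivalence, and the second is the composite the other way round; in particular the two are mutually quasi-inverse equivalences of categories, which --- together with functoriality, exactness, and compatibility with tensor products, all read off from $\repcat$ on both sides --- proves the theorem (see also \Cref{coro:3935}).

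I expect the genuine difficulty to lie not in this formal assembly but in the ring-theoretic inputs it rests on --- the equality of Galois-fixed subrings displayed above, equivalently \Cref{lem:56945} together with $\scrG_{\bfE_{\FT,K}^{\np}}\cong\scrG_{K_{\frakF}}$. Proving these is delicate precisely because $\AKq$ and its residue field $\bfE_{\FT,K}^{\np}$ are imperfect and \emph{not} discretely valued (and $\bfA_{\frakF,K}^{\np}$ is not even $v^\flat$-complete), cf. \Cref{thm:42581}, so one cannot run the classical unramified-descent argument over a complete discrete valuation ring; instead one would establish the descent first at the perfectoid level $\wtilde{\bfE}_{\FT,K}$, where it is classical, and then ``decomplete'' it down to $\AKq$, using the finiteness $\bigl[\bfE_{\FT,K}^{\np}:\varphi\bigl(\bfE_{\FT,K}^{\np}\bigr)\bigr]=p^2$ (\Cref{prop:16686}) and the identification of absolute Galois groups --- which is carried out in \Cref{prop:48076}.
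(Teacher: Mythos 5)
Your proposal is correct and follows essentially the same path as the paper's proof (\Cref{prop:48076} followed by \Cref{coro:3935}): base-change Fontaine's isomorphism $\bfA\otimes_{\bfA_K}\bfD(V)\cong\bfA\otimes_{\Zp}V$ to the large Cohen ring $\bfA_{\FT}^{\np,?}$, take $H_{\frakF,K}$-invariants (using that $H_{\frakF,K}$ acts trivially on $\bfD(V)$ and $\bfA_K$) to get $\bfD_{\FT}^{\np,?}(V)\cong\bfA_{\FT,K}^{\np,?}\otimes_{\bfA_K}\bfD(V)$, and then descend further by the fixed-ring identities of \Cref{coro:5386}. One small misattribution: the ring-theoretic core you single out is really \Cref{lem:56945} together with \Cref{coro:5386} rather than \Cref{prop:48076}, and the paper proves \Cref{lem:56945} not by decompleting from the perfectoid level as you speculate, but directly, by pulling the invariance equations back to $k_K\bbrac{X,Y}$ via the generalized Caruso embedding lemma (\Cref{prop:60072}) and comparing expansions coefficient by coefficient.
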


\subsection*{Herr-Ribeiro complex and Galois cohomology}
By Fontaine's equivalence of categories, the Galois cohomology of $p$-adic representation of $\scrG_K$ can be computed via $(\varphi,\Gamma)$-modules and its variants:%
\begin{enumerate}
	\item In \cite[Théorème 2.1]{herrCohomologieGaloisienneCorps1998}, Herr proved that the Galois cohomology $\rmH^i(\scrG_K,V)$ of $V\in\repcat$ can be computed by the complex $\opn{Kos}\left(\varphi,\Gamma_K,\bfD(V)\right)\coloneqq \opn{Tot}\left(\scrC_\gamma(\bfD(V))\right)$ of \'etale $(\varphi,\Gamma)$-modules $\bfD(V)$ over $\bfA_K$, where
	      $$\scrC_\gamma(\bfD(V))\coloneqq\begin{tikzcd}
			      \bfD(V)\ar[r,"1-\varphi"]\ar[d,"\gamma-1"]&\bfD(V)\ar[d,"\gamma-1"]\\\bfD(V)\ar[r,"1-\varphi"]&\bfD(V)
		      \end{tikzcd}$$
	      and $\gamma$ is a topological generator of $\Gamma_K$.
	\item In \cite[Theorem 1.1.13]{Zhao2022}, Zhao proved that $\rmH^i(\scrG_K,V)$ can be computed by the complex
	      $\opn{Kos}\left(\varphi,\wtilde{\Gamma}_{\tau,K},\wtilde{\bfD}_\tau(V)\right)\coloneqq \opn{Tot}\left(\scrC_{\tau}(\wtilde{\bfD}_\tau(V))\right)$
	      of \'etale $(\varphi,\tau)$-modules $\wtilde{\bfD}_\tau(V)$ over $\left(\bfA_{\tau,K},W\left(\wtilde{\bfE}_{\frakF,K}\right)\right)$, where
	      $$\scrC_{\tau}\left(\wtilde{\bfD}_\tau(V)\right)\coloneqq \begin{tikzcd}
			      \wtilde{\bfD}_\tau(V)\ar[r,"\tau-1"]&\left(W\left(\wtilde{\bfE}_{\frakF,K}\right)\otimes_{\bfA_{\tau,K}}\wtilde{\bfD}_\tau(V)\right)^{\gamma-\delta=0}\\
			      \wtilde{\bfD}_\tau(V)\ar[r,"\tau-1"]\ar[u,"1-\varphi"]&\left(W\left(\wtilde{\bfE}_{\frakF,K}\right)\otimes_{\bfA_{\tau,K}}\wtilde{\bfD}_\tau(V)\right)^{\gamma-\delta=0}\ar[u,"1-\varphi"]
		      \end{tikzcd}$$
	      and $\tau$ is a topological generator of $\wtilde{\Gamma}_{\tau,K}\coloneqq \Gal(K_{\frakF}/K_{p^\infty})$.
	\item In \cite[Theorem 1.5]{ribeiroExplicitFormulaHilbert2011}, Ribeiro proved that $\rmH^i(\scrG_K,V)$ can also be computed by the complex $\opn{Kos}\left(\varphi,\Gamma_{\frakF,K},\bfD_{\frakF}^{\ttR}(V)\right)\coloneqq \opn{Tot}\left(\scrC_{\frakF}\left(\bfD_{\frakF}^{\ttR}(V)\right)\right)$
	      of \'etale $(\varphi,\Gamma_{\frakF,K})$-modules over $\bfA_{\frakF,K}^{\ttR}$, where
	      $$\scrC_{\frakF}\left(\bfD_{\frakF}^{\ttR}(V)\right)\coloneqq\begin{tikzcd}[column sep=small]%
			      & \bfD_{\frakF}^{\ttR}(V)\ar[rr,"\tau-1"{xshift=0pt}]\ar[dd,"\gamma-1"{yshift=17pt}] && \bfD_{\frakF}^{\ttR}(V)\ar[dd,"\gamma-\delta"{yshift=0pt}] \\
			      \bfD_{\frakF}^{\ttR}(V)\ar[ur,"1-\varphi"]\ar[dd,"\gamma-1"{yshift=0pt}]  && \bfD_{\frakF}^{\ttR}(V) \ar[ur,"1-\varphi"]\\
			      & \bfD_{\frakF}^{\ttR}(V)\ar[rr,"\tau^{\chi(\gamma)}-1"{xshift=-17pt}] && \bfD_{\frakF}^{\ttR}(V) \\
			      \ar[rr,"\tau^{\chi(\gamma)}-1"{xshift=0pt}]\bfD_{\frakF}^{\ttR}(V)\ar[ur,"1-\varphi"]&& \bfD_{\frakF}^{\ttR}(V)\ar[ur,"1-\varphi"]
			      \arrow[from=2-3,to=4-3,"\gamma-\delta"{yshift=17pt},crossing over]
			      \arrow[from=2-1, to=2-3,"\tau-1"{xshift=-17pt},crossing over]
		      \end{tikzcd}$$
	      and $\delta=\frac{\tau^{\chi(\gamma)}-1}{\tau-1}$.
\end{enumerate}
In the spirit of \Cref{thm:25467}, we show that after replacing $W\left(\wtilde{\bfE}_{\frakF,K}\right)$ and $\bfA_{\frakF,K}^{\ttR}$ with our multivariable period rings $\bfA_{\frakF,K}^{\np}$ or $\bfA_{\frakF,K}^{\np,\rmc}$ in the aforementioned complexes, these complexes can be naturally connected without the bridging of $\rmR\Gamma_{\opn{cont}}(\scrG_K,V)$:
\begin{introthm}[{cf. \Cref{{thm:26607}}}]\label{thm:59311}
	Let $\bfD_{\frakF}^{\np,?}(V)$ be the \'etale $(\varphi,\Gamma_{\frakF,K})$-module associated to $V$ over $\bfA_{\frakF,K}^{\np,?}\in\left\{\bfA_{\frakF,K}^{\np},\bfA_{\frakF,K}^{\np,\rmc}\right\}$.
	\begin{enumerate}
		\item $\scrC_\gamma(\bfD(V))$ is the lateral kernel complex of $\scrC_{\frakF}\left(\bfD_{\FT}^{\np,?}(V)\right)$, which induces injection of complexes
		      $$\iota_{p^\infty}\colon\opn{Kos}\left(\varphi,\Gamma_K,\bfD(V)\right)\longhookrightarrow \opn{Kos}\left(\varphi,\Gamma_{\FT,K},\bfD_{\FT}^{\np,?}(V)\right)$$
		      that induces quasi-isomorphism.
		\item $\scrC_\tau\left(\bfD_\tau^{\np,?}(V)\right)$ is the vertical kernel complex of $\scrC_{\frakF}\left(\bfD_{\FT}^{\np,?}(V)\right)$, which induces injection of complexes
		      $$\iota_{\infty}\colon\opn{Kos}\left(\varphi,\wtilde{\Gamma}_{\tau,K},\bfD_{\tau}^{\np,?}(V)\right)\longhookrightarrow \opn{Kos}\left(\varphi,\Gamma_{\FT,K},\bfD_{\FT}^{\np,?}(V)\right)$$
		      that induces quasi-isomorphism.
		\item The following diagram commutes:
		      $$\begin{tikzcd}[row sep=huge]
				      \opn{Kos}\left(\varphi,\Gamma_K,\bfD(V)\right)\ar[r,"\cong"', "\iota_{p^\infty}^*"]\ar[rd,"\cong","\text{Herr}"']&\opn{Kos}\left(\varphi,\Gamma_{\FT,K},\bfD_{\FT}^{\np,?}(V)\right)\ar[d,"\cong","\text{Ribeiro}"']&\opn{Kos}\left(\varphi,\wtilde{\Gamma}_{\tau,K},\bfD_{\tau}^{\np,?}(V)\right)\ar[l,"\iota_{\infty}^*"',"\cong"]\ar[ld,"\cong","\text{Zhao}"']\\
				      &\rmR\Gamma_{\opn{cont}}(\scrG_K,V)&
			      \end{tikzcd}$$
	\end{enumerate}
\end{introthm}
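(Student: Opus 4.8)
\emph{Overview and plan.} The three assertions are facets of one structural fact: $\scrC_{\frakF}(\bfD_{\FT}^{\np,?}(V))$ is the Koszul-type complex attached to the triple of operators $\varphi$, $\gamma$, $\tau$ on $\bfD_{\FT}^{\np,?}(V)=\AKq\otimes_{\bfA_K}\bfD(V)$, encoding the $\varphi$-complex for $\scrG_{K_{\frakF}}$ together with the presentation of $\Gamma_{\frakF,K}$ (with $\wtilde{\Gamma}_{\tau,K}=\overline{\langle\tau\rangle}\cong\bfZ_p$ normal, quotient $\Gamma_K$, a section identifying $\Gamma_K$ with $\overline{\langle\gamma\rangle}=\Gal(K_{\frakF}/K_\infty)$, and $\gamma\tau\gamma^{-1}=\tau^{\chi(\gamma)}$). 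My plan is: (a) show $\opn{Kos}(\varphi,\Gamma_{\frakF,K},\bfD_{\FT}^{\np,?}(V))$ computes $\rmR\Gamma_{\opn{cont}}(\scrG_K,V)$; (b) identify the lateral and vertical kernel subcomplexes with $\scrC_\gamma(\bfD(V))$ and $\scrC_\tau(\bfD_{\tau}^{\np,?}(V))$; (c) prove the diagram commutes, which will also deliver the quasi-isomorphism assertions in (1) and (2).

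For (a), the key input is that by \Cref{thm:42581} the residue field $\EKq$ of $\AKq$ has absolute Galois group $\scrG_{K_{\frakF}}$. Hence, although $\AKq$ is neither complete nor discretely valued and has $[\EKq:\varphi(\EKq)]=p^2$, the bare $\varphi$-module formalism (surjectivity of $\varphi-1$ on the Witt vectors of a separably closed field, plus \'etale descent) still applies: it gives an equivalence between \'etale $\varphi$-modules over $\AKq$ and $\bfZ_p$-representations of $\scrG_{K_{\frakF}}$, functorial in the tower $K_{p^\infty},K_\infty\subseteq K_{\frakF}$, under which $\bigl[\bfD_{\FT}^{\np,?}(V)\xrightarrow{1-\varphi}\bfD_{\FT}^{\np,?}(V)\bigr]$ computes $\rmR\Gamma_{\opn{cont}}(\scrG_{K_{\frakF}},V)$ (concentrated in degrees $\le1$, since $K_{\frakF}\supseteq K_{p^\infty}$). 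Taking continuous $\Gamma_{\frakF,K}$-cohomology and iterating Hochschild--Serre along the normal subgroup $\wtilde{\Gamma}_{\tau,K}$ with quotient $\Gamma_K$ gives
$$\rmR\Gamma_{\opn{cont}}(\scrG_K,V)\simeq\rmR\Gamma_{\opn{cont}}\Bigl(\Gamma_K,\rmR\Gamma_{\opn{cont}}\bigl(\wtilde{\Gamma}_{\tau,K},[\bfD_{\FT}^{\np,?}(V)\xrightarrow{1-\varphi}\bfD_{\FT}^{\np,?}(V)]\bigr)\Bigr);$$
forming the two (two-term) cyclic-group complexes and totalising reproduces $\scrC_{\frakF}(\bfD_{\FT}^{\np,?}(V))$ on the nose — the inner $\wtilde{\Gamma}_{\tau,K}$-complex supplying the $\tau-1$ edges, the outer $\Gamma_K$-complex the $\gamma-1$ edges on $\rmH^0$ and the twisted $\gamma-\delta$ edges on $\rmH^1$, with $\delta=\frac{\tau^{\chi(\gamma)}-1}{\tau-1}$ appearing because $\Gamma_K$ acts on $\wtilde{\Gamma}_{\tau,K}$-cohomology through the conjugation $\tau\mapsto\tau^{\chi(\gamma)}$. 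This is the computation of \cite{ribeiroExplicitFormulaHilbert2011} for $\bfD_{\frakF}^{\ttR}(V)$; its combinatorics are unaffected by replacing $\bfA_{\frakF,K}^{\ttR}$ with $\AKq$, so this yields \Cref{thm:26607} and, in particular, the ``Ribeiro'' edge of the diagram.

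For (b), part (1): \Cref{thm:42581} in its $p$-Cohen-ring form gives $\AKq^{\scrG_{K_{p^\infty}}}=\bfA_K$; since $\scrG_{K_{p^\infty}}$ acts on $\AKq$ through the pro-cyclic group $\wtilde{\Gamma}_{\tau,K}$ and trivially on $\bfD(V)$, and $\AKq$ is flat over $\bfA_K$, one gets $\Ker\bigl(\tau-1\mid\bfD_{\FT}^{\np,?}(V)\bigr)=\AKq^{\tau=1}\otimes_{\bfA_K}\bfD(V)=\bfD(V)$. Moreover $\delta$ is a unit in $\bfZ_p\bbrac{\wtilde{\Gamma}_{\tau,K}}$ (its constant term is $\chi(\gamma)\in\bfZ_p^\times$), so $\Ker(\tau^{\chi(\gamma)}-1)=\Ker(\tau-1)$ as well, and $\varphi,\gamma$ stabilise $\Ker(\tau-1)$ (for $\gamma$ because $\tau x=x$ forces $\tau(\gamma x)=\gamma\tau^{\chi(\gamma)^{-1}}x=\gamma x$). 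Hence $\Ker(\tau-1)=\bfD(V)$ placed at the four vertices of the cube from which the $\tau$-edges emanate, and $0$ at the other four, is a subcomplex — a chain map precisely because those $\tau$-edges kill $\Ker(\tau-1)$ while the $1-\varphi$ and $\gamma-1$ edges preserve it — and by inspection it equals $\scrC_\gamma(\bfD(V))$ (no $\gamma-\delta$ edge ever occurs, those sitting only on the complementary vertices). This is the lateral kernel complex, with inclusion $\iota_{p^\infty}$. Part (2) is the mirror image: $\AKq^{\scrG_{K_\infty}}=\bfA_{\tau,K}$, so $\Ker\bigl(\gamma-1\mid\bfD_{\FT}^{\np,?}(V)\bigr)=(\AKq\otimes_{\bfA_K}\bfD(V))^{\scrG_{K_\infty}}=\bfD_{\tau}^{\np,?}(V)$ — precisely the functor of \Cref{thm:25467} applied to $\bfD(V)$ — while $\Ker\bigl(\gamma-\delta\mid\bfD_{\FT}^{\np,?}(V)\bigr)$ is the ``$\gamma-\delta=0$'' locus, into which $\tau-1$ maps $\Ker(\gamma-1)$ by the commuting relation $(\tau^{\chi(\gamma)}-1)(\gamma-1)=(\gamma-\delta)(\tau-1)$; these assemble into the vertical kernel complex, which is by construction $\scrC_\tau(\bfD_{\tau}^{\np,?}(V))$ (the analogue over $\AKq$ of Zhao's complex), with inclusion $\iota_\infty$.

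For (c), all four maps in the diagram arise from one construction, which makes both its commutativity and the remaining quasi-isomorphism assertions formal. Herr's map is the composite
$$\opn{Kos}(\varphi,\Gamma_K,\bfD(V))=\rmR\Gamma_{\opn{cont}}\bigl(\Gamma_K,[\bfD(V)\xrightarrow{1-\varphi}\bfD(V)]\bigr)\xrightarrow{\sim}\rmR\Gamma_{\opn{cont}}\bigl(\Gamma_K,\rmR\Gamma_{\opn{cont}}(\scrG_{K_{p^\infty}},V)\bigr)\xrightarrow{\sim}\rmR\Gamma_{\opn{cont}}(\scrG_K,V);$$
unwinding (a) through $\overline{\langle\gamma\rangle}\xrightarrow{\sim}\Gamma_K$, the ``Ribeiro'' identification is $\rmR\Gamma_{\opn{cont}}(\Gamma_K,-)$ applied to the comparison quasi-isomorphism $[\bfD(V)\xrightarrow{1-\varphi}\bfD(V)]\hookrightarrow\rmR\Gamma_{\opn{cont}}(\wtilde{\Gamma}_{\tau,K},[\bfD_{\FT}^{\np,?}(V)\xrightarrow{1-\varphi}\bfD_{\FT}^{\np,?}(V)])$ of the inner Hochschild--Serre step, and this is exactly the map realised on complexes by the inclusion $\Ker(\tau-1)=\bfD(V)\hookrightarrow\bfD_{\FT}^{\np,?}(V)$ of (b); its totalisation over $\Gamma_K$ is $\iota_{p^\infty}^*$, so $\text{Ribeiro}\circ\iota_{p^\infty}^*=\text{Herr}$ by naturality of Hochschild--Serre — this is the left triangle. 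The right triangle is the mirror statement, with $K_\infty$, $\bfD_{\tau}^{\np,?}(V)$, $\iota_\infty^*$ and Zhao's map \cite{Zhao2022} in the roles of $K_{p^\infty}$, $\bfD(V)$, $\iota_{p^\infty}^*$ and Herr's — the extra ``$\gamma-\delta=0$'' in Zhao's complex being precisely the vertical-kernel condition found in (b). Since Herr and Zhao are quasi-isomorphisms and ``Ribeiro'' is one by (a), the triangles force $\iota_{p^\infty}^*$ and $\iota_\infty^*$ to be quasi-isomorphisms as well, which is (1) and (2) (equivalently, the quotient complexes $\opn{Cone}(\iota_{p^\infty})$ and $\opn{Cone}(\iota_\infty)$ — certain Koszul complexes on the $\rmH^1$ of $\bfD_{\FT}^{\np,?}(V)$ along $\wtilde{\Gamma}_{\tau,K}$, resp.\ $\overline{\langle\gamma\rangle}$ — are acyclic). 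The main obstacle is step (a): one must verify that the non-classical ring $\AKq$ — imperfect with $[\EKq:\varphi(\EKq)]=p^2$, and not complete, resp.\ not discretely valued — genuinely supports the $\varphi$-module dictionary, its functoriality in the tower $K_{p^\infty},K_\infty\subseteq K_{\frakF}$, and hence the iterated Hochschild--Serre computation for $\scrG_{K_{\frakF}}$. Granting that — which rests entirely on \Cref{thm:42581} — together with the $\delta$-twist bookkeeping of \cite{ribeiroExplicitFormulaHilbert2011,Zhao2022} and the functoriality of \Cref{thm:25467}, everything else is formal.
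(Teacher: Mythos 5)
Your proof takes a genuinely different route from the paper's. You deduce the commutativity of the triangle in (3) \emph{abstractly}: you claim that the ``Ribeiro'' edge is the iterated Hochschild--Serre comparison $\rmR\Gamma_{\opn{cont}}(\Gamma_K, \rmR\Gamma_{\opn{cont}}(\wtilde{\Gamma}_{\tau,K}, [1-\varphi]))\simeq\rmR\Gamma_{\opn{cont}}(\scrG_K,V)$, and then invoke naturality of Hochschild--Serre to get $\text{Ribeiro}\circ\iota_{p^\infty}^*=\text{Herr}$ (and its mirror), from which (1) and (2) follow by the two-out-of-three property. The paper instead proves commutativity \emph{explicitly at the level of cocycles}: it pins down $\kappa_{p^\infty}^{*,1}$ by Cherbonnier--Colmez's formula $(x,y)\mapsto c_{x,y}$, pins down $\kappa_{\frakF}^{*,1}$ by Ribeiro's formula $(x,y,z)\mapsto c_{x,y,z}$, observes that $\iota_{p^\infty}^{*,1}(x,y)=(x,y,0)$, and checks that $c_{x,y,0}=c_{x,y}$ directly; then repeats for $\rmH^2$ via Ribeiro's Proposition~1.10. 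Your route is slicker and more conceptual but has a genuine gap where it matters most: it treats Ribeiro's comparison as if it were \emph{defined} as the iterated Hochschild--Serre map, whereas Ribeiro builds it via concrete cocycle formulas. Reconciling the two descriptions is exactly the content that the paper's cocycle comparison handles (and it is also, as the paper remarks, the step where the closely related Hochschild--Serre argument of Gao--Zhao needs additional work, including an intermediate complex). Your identification of the lateral and vertical kernel subcomplexes in part (b) is correct and agrees with the paper's use of \Cref{prop:48076} and \Cref{coro:5386}; the observation that $\delta$ is a unit so that $\Ker(\tau^{\chi(\gamma)}-1)=\Ker(\tau-1)$ is likewise the same. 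What the paper's approach buys over yours is avoiding having to verify that the Ribeiro/Zhao/Herr comparisons really are Hochschild--Serre maps, and (as the paper notes) portability to $(\varphi,\Gamma_{\frakF,K})$-modules over other period rings; what yours buys is a cleaner conceptual picture and no dependence on the precise shape of Ribeiro's formulas. To make your proof airtight you would need to add a lemma showing that the explicit formulas \eqref{eq:54978} and \eqref{eq:1990} do realise the respective Hochschild--Serre edge maps --- at which point you have essentially reproduced the paper's cocycle comparison.
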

\begin{remark}
	When finishing this paper independently, we realize that our approach to this theorem is partially covered by the papers \cite{ZHAO202566} and \cite{gao2024cohomologyvarphitaumodules}, which are extensions of the preprint \cite{Zhao2022}, in the very recent literature.
	\begin{enumerate}
		\item In \cite[Corollary 4.14]{ZHAO202566}, Zhao describes the $1$-cocycle of $\rmH^1(\scrG_K,V)$ corresponding to $\rmH^1\left(\opn{Kos}\left(\varphi,\wtilde{\Gamma}_{\tau,K},\wtilde{\bfD}_{\tau}(V)\right)\right)$ by showing that the morphism
		      $$\iota_{\infty}^{\opn{Zhao}}\colon\opn{Kos}\left(\varphi,\wtilde{\Gamma}_{\tau,K},\wtilde{\bfD}_{\tau}(V)\right)\lto \opn{Kos}\left(\varphi,\Gamma_{\FT,K},W\left(\wtilde{\bfE}_{\frakF,K}\right)\otimes_{\bfA_{\tau,K}}\wtilde{\bfD}_{\tau}(V)\right)$$
		      induces quasi-isomorphism. While in this article, we go in the opposite direction: showing that $\iota_\infty$ induces quasi-isomorphism by comparing the cocycles\footnote{The $1$-cocycle given by \cite[Corollary 4.14]{ZHAO202566} is an analogous result to \cite[Proposition I.4.1]{cherbonnierTheorieIwasawaRepresentations1999} and can be proved similarly.}.
		\item When proving \cite[Theorem 1.9]{gao2024cohomologyvarphitaumodules}, Gao and Zhao also make a similar observation as we make in diagram \eqref{eq:46495}: Zhao's complex can be realized by taking (the total complex of) the kernel complex of a triple complex, whose totalization is Ribeiro's complex.
	\end{enumerate}
	Despite the above overlap, we still find our result valuable in the following sense:
	\begin{enumerate}
		\item When Zhao proves that $\iota_{\infty}^{\mathtt{Zhao}}$ induces a quasi-isomorphism (cf. \cite[Theorem 4.11]{ZHAO202566}), he needs to use an intermediate complex, with our terminology, $\opn{Kos}\left(\varphi,\wtilde{\Gamma}_{\tau,K},W\left(\wtilde{\bfE}_{\tau,K}\right)\otimes_{\bfA_{\tau,K}} \wtilde{\bfD}_{\tau}(V)\right)$. This is caused by the fact that $$\left(W\left(\wtilde{\bfE}_{\frakF,K}\right)\otimes\wtilde{\bfD}_{\tau}(V)\right)^{\scrG_{K_\infty}}\cong W\left(\wtilde{\bfE}_{\tau,K}\right)\otimes_{\bfA_{\tau,K}} \wtilde{\bfD}_{\tau}(V),$$
		      and consequently an additional ``descent'' from $W\left(\wtilde{\bfE}_{\tau,K}\right)\otimes \wtilde{\bfD}_{\tau}(V)$ to $\wtilde{\bfD}_{\tau}(V)$ is needed. However, since \Cref{thm:25467} ensures that $\bfA_{\frakF,K}^{\np,?}\otimes_{\bfA_{\tau,K}}\bfD_{\tau}^{\np,?}(V)\cong \bfD_{\frakF}^{\np,?}(V)$, we do not need any intermediate complex for proving \Cref{thm:59311}.
		\item Our cocycle-comparing approach to \Cref{thm:59311} can also be adapted to \'etale $(\varphi,\Gamma_{\frakF,K})$-modules over period rings other than $\bfA_{\frakF,K}^{\np}$ and $\bfA_{\frakF,K}^{\np,\rmc}$.%

	\end{enumerate}
\end{remark}

\subsection*{$\psi$-operator and discussion on Iwasawa cohomology}
The imperfectness of multivariable period rings $\bfA_{\frakF,K}^{\np}$ and $\bfA_{\frakF,K}^{\np,\rmc}$ allows us to define the $\psi$-operator on the \'etale $(\varphi,\Gamma_{\frakF,K})$-modules:
\begin{definition}[{cf. \Cref{prop:2140}}]
	For every \'etale $(\varphi,\Gamma_{\FT,\Qp})$-module $D$ over $\bfA_{\FT,\Qp}^{\np,?}$, there exists a unique operator $\psi^{\np}\colon D\lto D$ satisfying
	$$\psi^{\np}(a\cdot\varphi(x))=a\cdot\psi^{\np}(x),\ \psi^{\np}(a\cdot\varphi(x))=\psi^{\np}(a)\cdot x$$
	for all $a\in \bfA_{\FT,\Qp}^{\np,?}$ and $x\in D$.
\end{definition}
If one wants to imitate the methodology in \cite{cherbonnierTheorieIwasawaRepresentations1999} to compute the Iwasawa cohomology of $p$-adic Galois representation $V$ via \'etale $(\varphi,\Gamma)$-modules for false Tate curve extension, one needs to prove that the complex $\opn{Kos}\left(\psi^{\np},\Gamma_{\FT,K},\bfD_{\FT}^{\np,?}(V)\right)$, which is obtained by replacing $\varphi$ by $\psi^{\np}$ in the complex $\opn{Kos}\left(\varphi,\Gamma_{\FT,K},\bfD_{\FT}^{\np,?}(V)\right)$, is quasi-isomorphic to $\opn{Kos}\left(\varphi,\Gamma_{\FT,K},\bfD_{\FT}^{\np,?}(V)\right)$. Here $\bfD_{\FT}^{\np,?}(V)$ is the $(\varphi,\Gamma_{\FT,K})$-module associated to $V$ over $\bfA_{\FT,K}^{\np,?}\in\left\{\bfA_{\frakF,K}^{\np},\bfA_{\frakF,K}^{\np,\rmc}\right\}$. We prove that:
\begin{proposition}[{\Cref{prop:16574}}]
	The natural morphism of complexes $\Phi$ in \Cref{ques:56379} induces isomorphisms of cohomology groups
	$$\rmH^0\left(\opn{Kos}\left(\varphi,\Gamma_{\FT,K},\bfD_{\FT}^{\np,?}(V)\right)\right)\cong\rmH^0\left(\opn{Kos}\left(\psi^{\np},\Gamma_{\FT,K},\bfD_{\FT}^{\np,?}(V)\right)\right).$$
\end{proposition}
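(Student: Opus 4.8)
The plan is to compute both $\rmH^0$'s by hand, note that $\Phi$ is the identity in degree $0$, and reduce the assertion to a vanishing statement for the kernel of $\psi^{\np}$, which I would then prove by adapting the cyclotomic argument of Cherbonnier--Colmez. Write $D\deftobe\bfD_{\FT}^{\np,?}(V)$ and $\Gamma\deftobe\Gamma_{\FT,K}$. The degree-$0$ corner of the Ribeiro-type cube $\scrC_{\frakF}$ is $D$ with outgoing edges $1-\varphi$, $\gamma-1$, $\tau-1$, and replacing $\varphi$ by $\psi^{\np}$ affects only the first; since $\gamma,\tau$ topologically generate $\Gamma$ this gives
$$\rmH^0\bigl(\opn{Kos}(\varphi,\Gamma,D)\bigr)=D^{\varphi=1}\cap D^{\Gamma=1},\qquad \rmH^0\bigl(\opn{Kos}(\psi^{\np},\Gamma,D)\bigr)=D^{\psi^{\np}=1}\cap D^{\Gamma=1}.$$
Because $\psi^{\np}$ splits $\varphi$, i.e.\ $\psi^{\np}\circ\varphi=\id$ (this is how $\psi^{\np}$ is normalized in \Cref{prop:2140}, using $[\bfE_{\FT,K}^{\np,?}\colon\varphi(\bfE_{\FT,K}^{\np,?})]=p^2$ with $1$ in a $\varphi$-basis, cf.\ \Cref{prop:16686}), one has $D^{\varphi=1}\subseteq D^{\psi^{\np}=1}$; and as $\Phi$ is the identity on degree-$0$ terms (\Cref{ques:56379}), $\rmH^0(\Phi)$ is exactly the inclusion $D^{\varphi=1}\cap D^{\Gamma=1}\hookrightarrow D^{\psi^{\np}=1}\cap D^{\Gamma=1}$. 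This is automatically injective, so the whole point is its surjectivity.

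For surjectivity, observe that $\psi^{\np}\bigl((\varphi-1)x\bigr)=\psi^{\np}(\varphi(x))-\psi^{\np}(x)=0$ for $x\in D^{\psi^{\np}=1}$, while $\varphi-1$ restricted to $D^{\psi^{\np}=1}$ has kernel $D^{\varphi=1}$; so there is an exact sequence $0\to D^{\varphi=1}\to D^{\psi^{\np}=1}\xrightarrow{\varphi-1}D^{\psi^{\np}=0}$. Since $\varphi$ commutes with $\Gamma$, applying the left-exact functor $(-)^{\Gamma=1}$ embeds $\bigl(D^{\psi^{\np}=1}\bigr)^{\Gamma=1}\big/\bigl(D^{\varphi=1}\bigr)^{\Gamma=1}$ into $\bigl(D^{\psi^{\np}=0}\bigr)^{\Gamma=1}$; hence it suffices to prove the vanishing
$$\bigl(D^{\psi^{\np}=0}\bigr)^{\Gamma_{\FT,K}=1}=0.$$
I would reduce this modulo $p$: rescaling a hypothetical nonzero invariant so it does not lie in $pD$ and using that reduction mod $p$ commutes with $\psi^{\np}$ and with the $\Gamma$-action, one may assume $D$ is a finite-dimensional \'etale $(\varphi,\Gamma)$-module over the field $\bfE_{\FT,K}^{\np,?}$. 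With the $\varphi$-basis $\{X^iY^j\}_{0\le i,j\le p-1}$ of $\bfA_{\FT,K}^{\np,?}$ coming from $\iota_{\np}$, the construction of $\psi^{\np}$ identifies $D^{\psi^{\np}=0}$ with $\bigoplus_{(i,j)\ne(0,0)}X^iY^j\varphi(D)$.

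The remaining step --- that this space has no nonzero $\Gamma$-invariant --- is the main obstacle, and I would follow the one-variable cyclotomic template: there $\bfA_K^{\psi=0}$ is free of rank one over the Iwasawa algebra $\bfZ_p\bbrac{\Gamma_K}$, and $\bigl(D^{\psi=0}\bigr)^{\Gamma_K=1}=0$ because $\gamma-1$ is a non-zero-divisor of that algebra. The analogue to establish is that $\bigl(\bfA_{\FT,K}^{\np,?}\bigr)^{\psi^{\np}=0}$, and then $D^{\psi^{\np}=0}$ for arbitrary \'etale $D$, is torsion-free over $\bfZ_p\bbrac{\Gamma_{\FT,K}^{(1)}}$ (and its $\bfF_p$-reduction) for an open uniform pro-$p$ subgroup $\Gamma_{\FT,K}^{(1)}\le\Gamma_{\FT,K}$; one has to extract this from the explicit false Tate formulas for the actions of $\gamma\in\Gamma_K$ and $\tau\in\wtilde{\Gamma}_{\tau,K}$ on $X=u_K$ and $Y=\eta_K$, together with $\varphi(X)\equiv X^p$ and $\varphi(Y)\equiv Y^p$ modulo higher-order terms. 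Granting it, the completed group algebra of a torsion-free compact $p$-adic analytic group is a Noetherian domain, so $\bigl(D^{\psi^{\np}=0}\bigr)^{\Gamma=1}\subseteq\ker\bigl(\gamma-1\bigm|D^{\psi^{\np}=0}\bigr)=0$ for any $1\ne\gamma\in\Gamma_{\FT,K}^{(1)}$, as wanted. The difficulty, relative to the cyclotomic case, is that $\Gamma_{\FT,K}$ is a non-commutative two-dimensional $p$-adic Lie group and the action on the variable $X$ genuinely involves $Y$, so both the module-theoretic input and its propagation to a general $D$ by d\'evissage demand real work; importing the cyclotomic freeness through the bridge of \Cref{thm:25467} is tempting but does not settle the question outright, since $\bfA_{\FT,K}^{\np,?}$ is of infinite degree over $\bfA_K$.
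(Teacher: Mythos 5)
Your degree-zero computation and the reduction to the vanishing statement $\left(D^{\psi^{\np}=0}\right)^{\Gamma_{\FT,K}=1}=0$ are correct and match the paper's target: this is precisely condition \labelcref{it:2549} of \Cref{lem:57841}. The problem is that you never prove the vanishing; you outline a program (torsion-freeness of $D^{\psi^{\np}=0}$ over a two-dimensional Iwasawa algebra $\bfZ_p\bbrac{\Gamma_{\FT,K}^{(1)}}$) and acknowledge it ``demands real work,'' so as written the proof has a genuine gap at the crucial step.

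The irony is that you consider and then reject the correct route. You dismiss ``importing the cyclotomic freeness through the bridge'' on the grounds that $\bfA_{\FT,K}^{\np,?}$ is of infinite degree over $\bfA_K$, but no module-theoretic import is needed. The key observation is to compute the $\Gamma_{\FT,K}$-invariants in stages: taking $\tau$-invariants first, \Cref{prop:48076} gives $\bfD_{\FT}^{\np,?}(V)^{\tau=1}=\bfD(V)$, and \Cref{rmk:20098} (that $\psi^{\np}$ restricts to the classical $\psi$ on $\bfD(V)\subseteq\bfD_{\FT}^{\np,?}(V)$, because $\{1,[\varepsilon],\ldots,[\varepsilon]^{p-1}\}$ is already a $\varphi$-basis of $\bfA$) identifies
$$\bfD_{\FT}^{\np,?}(V)^{\psi^{\np}=0,\ \tau=1}=\bfD(V)^{\psi=0}.$$
Taking $\gamma$-invariants, the vanishing now follows from the classical fact that $\gamma-1$ is invertible on $\bfD(V)^{\psi=0}$ (Cherbonnier--Colmez, Proposition I.5.1). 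So the new multivariable structure plays no role once you notice the invariants collapse to the familiar one-variable object; the whole two-dimensional Iwasawa analysis you propose, with its genuine non-commutativity difficulties, is unnecessary for this degree.
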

On the other hand, proving $\Phi$ induces an isomorphism of $\rmH^1$ and $\rmH^2$ seems to be difficult. We discuss the obstacles and possibility at the end of \Cref{sec:6180}. In particular, we estimates the over-complicated perfectoid valuation $v^\flat$ on $\bfE_{\frakF,\Qp}^{\np,\circ,+}$ in terms of the monomial valuation
$$v_{\opn{mono}}\left(\sum_{i,j\geq 0}f_{ij}u_{\Qp}^i\eta_{\Qp}^j\right)\coloneqq \min_{i,j\geq 0,f_{ij}\neq 0}i\cdot v^\flat(u_K)+j\cdot v^\flat(\eta_K)$$
in the appendix:
\begin{introthm}[{\Cref{thm:mono}}]
	Let
	\begin{align*}
		\frakM(t)\coloneqq & \frac{p^{2\lceil t\rceil+3}}{p-1}\prod_{k=0}^{\lceil t\rceil}\left(k+\frac{p+1}{p}\right)^2-(p-1)\sum_{k=0}^{\lceil t\rceil}\left(k+1\right)p^k\prod_{l=0}^{k-1}\left(l+\frac{p+1}{p}\right) \\
		\colon             & \bfR_{\geq 0}\to\bfQ_{>0}.
	\end{align*}
	For any $f\in \bfE_{\frakF,\Qp}^{\np,\circ,+}$ with $v^\flat(f)\geq \frakM(0)$, one has
	$$v_{\opn{mono}}(f)\leq v^\flat(f)<\frakM\left(v_{\opn{mono}}(f)\right)=O\left(p^{2\cdot v_{\opn{mono}}(f)}\cdot \Gamma\left(v_{\opn{mono}}(f)+C\right)^2\right),$$
	where $C\coloneqq 3+\frac{1}{p}$ and $\Gamma(\cdot)$ is the usual Gamma function.
\end{introthm}
This might be helpful to do the explicit calculation when considering the Iwasawa cohomology.

\subsection*{Acknowledgement}
This paper is partially derived from my doctoral thesis. I would like to express my gratitude to my supervisor, Lei Fu, for his patience and support throughout this research. I deeply thank Shanwen Wang for his introducing me to this topic and for his valuable comments on the earlier version of this paper. Thank Boris Adamczewski for telling me about the existence of \cite[Example 7.2]{arocaSupportLaurentSeries2019}, which is crucial to proof \Cref{prop:29251}. Finally, thank Laurent Berger, Heng Du, Gal Porat, Léo Poyeton, Liang Xiao, Luming Zhao for the enlightening discussion and comments.%

\section{Preliminaries}
For the convenience of readers, we collect basic facts and notations on $p$-adic false Tate curve extension, $(\varphi,\Gamma)$-modules and other elements that will be used in the rest of this paper.
\subsection{$p$-adic extensions and their Galois groups}
\begin{definition}\leavevmode
	\begin{enumerate}
		\item Call $K_{p^\infty}\coloneqq \bigcup_{n=1}^\infty K\left(\zeta_{p^n}\right)$ the cyclotomic extension of $K$. Let $H_K\coloneqq \scrG_{K_{p^\infty}}$ and  $\Gamma_K\coloneqq \scrG_K/H_K$.
		\item Call $K_{\infty}\coloneqq \bigcup_{n=1}^\infty K\left(\pi_K^{1/p^n}\right)$ the Kummer extension of $K$. Let $H_{\tau,K}\coloneqq \scrG_{K_\infty}$.
		\item Call $K_{\frakF}\coloneqq \bigcup_{n=1}^\infty K\left(\pi_K^{1/p^n},\zeta_{p^n}\right)$ the false Tate curve extension of $K$. Let $H_{\frakF,K}\coloneqq \scrG_{K_\frakF}$, $\Gamma_{\frakF,K}\coloneqq \scrG_K/H_{\frakF,K}$, $\wtilde{\Gamma}_{K}\coloneqq \Gal(K_{\frakF}/K_{\infty})$ and $\wtilde{\Gamma}_{\tau,K}\coloneqq \Gal(K_{\frakF}/K_{p^\infty})$.
	\end{enumerate}
\end{definition}
We summarize the above definition in the following diagram:
\[\begin{tikzcd}
		& \overline{\bfQ}_p \\
		& K_{\frakF} \\
		K_{\infty} && K_{p^\infty} \\
		& K
		\arrow[no head, from=1-2, to=2-2,"H_{\frakF,K}"]
		\arrow[curve={height=18pt}, no head, from=1-2, to=3-1, "H_{\tau,K}"']
		\arrow[curve={height=-18pt}, no head, from=1-2, to=3-3,"H_K"]
		\arrow[no head, from=2-2, to=3-1,"\wtilde{\Gamma}_K"']
		\arrow[no head, from=2-2, to=3-3,"\wtilde{\Gamma}_{\tau,K}"]
		\arrow[no head, from=2-2, to=4-2,"\Gamma_{\frakF,K}"]
		\arrow[dotted, no head, from=3-1, to=4-2]
		\arrow[no head, from=3-3, to=4-2,"\Gamma_K"]
	\end{tikzcd}\]
\begin{remark}
	The extension $K_\infty/K$ is not Galois, so there is no ``$\Gal(K_\infty/K)$''.
\end{remark}
\begin{lemma}[{cf. \cite[Lemma 5.12]{liuLatticesSemistableRepresentations2008}}]\label{lem:22831}
	One has
	\begin{enumerate}
		\item $K_\infty\cap K_{p^\infty}=K$;
		\item The Galois group $\wtilde{\Gamma}_K\cong \Gamma_K$ is identified as a subgroup of $\bfZ_p^\times$ via the cyclotomic character $\chi$, and $\wtilde{\Gamma}_{\tau,K}\cong\bfZ_p$;
		\item $\Gamma_{\frakF,K}\cong\wtilde{\Gamma}_{\tau,K}\rtimes\wtilde{\Gamma}_K$, and $\Gamma_K$ acts on $\bfZ_p$ by the cyclotomic character.
	\end{enumerate}
\end{lemma}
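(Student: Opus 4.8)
The plan is to prove part (1) directly — it is the only point of substance (and reproves \cite[Lemma 5.12]{liuLatticesSemistableRepresentations2008}) — and then to deduce (2) and (3) by formal Galois theory of composita together with Kummer theory. For (1): since $K_{p^\infty}/K$ is Galois, the composita isomorphism gives $[K_{\frakF}:K_{p^\infty}]=[K_\infty:K_\infty\cap K_{p^\infty}]$, and as $K_{\frakF}=K_\infty K_{p^\infty}$ is obtained from $K_{p^\infty}$ by extracting $p$-power roots of $\pi_K$, the equality $K_\infty\cap K_{p^\infty}=K$ is equivalent — by Kummer theory over $K_{p^\infty}$, which contains $\mu_{p^\infty}$ — to the statement that $\pi_K$ is not a $p$-th power in $K_{p^\infty}^\times$. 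So I would argue by contradiction: if $\pi_K=\beta^p$ with $\beta\in K_{p^\infty}$, then $\beta\in K(\zeta_{p^N})$ for some $N$, so $K(\pi_K^{1/p})$ is a subextension of the abelian extension $K(\zeta_{p^N})/K$, hence abelian over $K$, hence normal; normality forces $\zeta_p\in K(\pi_K^{1/p})$, and since $[K(\pi_K^{1/p}):K]=p$ (the polynomial $X^p-\pi_K$ is Eisenstein) while $[K(\zeta_p):K]\mid p-1$, the degree count forces $\zeta_p\in K$.

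It then remains to treat the case $\zeta_p\in K$. Let $p^a$ (with $1\le a<\infty$, since $K/\Qp$ is finite) be the largest power of $p$ with $\zeta_{p^a}\in K$. Since $\Gal(K(\zeta_{p^N})/K)$ is a subgroup of $(\bfZ/p^N)^\times$ landing in $1+p^a\bfZ/p^N\bfZ$ (it fixes $\zeta_{p^a}$), it is cyclic of $p$-power order, so $K(\zeta_{p^N})/K$ has a \emph{unique} subextension of degree $p$; this must be $K(\zeta_{p^{a+1}})$, which indeed has degree $p$ over $K$. Hence $K(\pi_K^{1/p})=K(\zeta_{p^{a+1}})=K\bigl(\zeta_{p^a}^{1/p}\bigr)$ (using $\zeta_{p^{a+1}}^{p}=\zeta_{p^a}$ and $\mu_p\subseteq K$), and by Kummer theory over $K$ this forces $\pi_K\equiv\zeta_{p^a}^{\,j}\pmod{(K^\times)^p}$ for some $j$ prime to $p$; writing $\pi_K\zeta_{p^a}^{-j}=\gamma^p$ with $\gamma\in K^\times$ and applying $v_K$ gives $1=p\,v_K(\gamma)$, which is absurd. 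This proves (1); note also that each $K(\pi_K^{1/p^m})$ lies in $K_\infty$, so (1) gives $\pi_K^{1/p^m}\notin K_{p^\infty}$ for every $m\ge1$, a fact used below.

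For (2): since $K_{p^\infty}/K$ is Galois and $K_{\frakF}=K_\infty K_{p^\infty}$, restriction to $K_{p^\infty}$ yields an isomorphism $\wtilde{\Gamma}_K=\Gal(K_{\frakF}/K_\infty)\xrightarrow{\ \sim\ }\Gal\bigl(K_{p^\infty}/K_\infty\cap K_{p^\infty}\bigr)=\Gal(K_{p^\infty}/K)=\Gamma_K$, the middle equality being (1), and $\Gamma_K\hookrightarrow\bfZ_p^\times$ via the cyclotomic character $\chi$. For $\wtilde{\Gamma}_{\tau,K}=\Gal(K_{\frakF}/K_{p^\infty})$: over $K_{p^\infty}$ all $p$-power roots of unity are available, so $K_{\frakF}=K_{p^\infty}K_\infty$ is a Kummer extension of $K_{p^\infty}$, and since $\pi_K^{1/p^m}\notin K_{p^\infty}$ for $m\ge1$ the Kummer maps $\Gal\bigl(K_{p^\infty}(\pi_K^{1/p^n})/K_{p^\infty}\bigr)\xrightarrow{\sim}\mu_{p^n}$, $\tau\mapsto\tau(\pi_K^{1/p^n})/\pi_K^{1/p^n}$, are compatible isomorphisms; passing to the limit gives $\wtilde{\Gamma}_{\tau,K}\xrightarrow{\sim}\varprojlim_n\mu_{p^n}\cong\bfZ_p$.

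For (3): the tower $K\subseteq K_{p^\infty}\subseteq K_{\frakF}$ (with $K_{p^\infty}/K$ Galois) gives the exact sequence $1\to\wtilde{\Gamma}_{\tau,K}\to\Gamma_{\frakF,K}\to\Gamma_K\to1$ via restriction to $K_{p^\infty}$; by (2) the subgroup $\wtilde{\Gamma}_K=\Gal(K_{\frakF}/K_\infty)$ maps isomorphically onto $\Gamma_K$ and meets $\wtilde{\Gamma}_{\tau,K}=\Gal(K_{\frakF}/K_{p^\infty})$ trivially (their intersection is $\Gal(K_{\frakF}/K_\infty K_{p^\infty})=1$), hence is a complement, so $\Gamma_{\frakF,K}\cong\wtilde{\Gamma}_{\tau,K}\rtimes\wtilde{\Gamma}_K$. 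For the action, set $c_n(\sigma)\coloneqq\sigma(\pi_K^{1/p^n})/\pi_K^{1/p^n}\in\mu_{p^n}$; using that $\Gamma_{\frakF,K}$ acts on $\mu_{p^n}$ by $\sigma(\zeta)=\zeta^{\chi(\sigma)}$ one checks the twisted cocycle identity $c_n(\sigma\sigma')=c_n(\sigma)c_n(\sigma')^{\chi(\sigma)}$, and then for $g\in\Gamma_{\frakF,K}$, $\tau\in\wtilde{\Gamma}_{\tau,K}$ (so $\chi(\tau)=1$) a two-line computation gives $c_n(g\tau g^{-1})=c_n(\tau)^{\chi(g)}$; under the identifications of (2) this says exactly that $\wtilde{\Gamma}_K\cong\Gamma_K$ acts on $\wtilde{\Gamma}_{\tau,K}\cong\bfZ_p$ through the cyclotomic character. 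The only step that requires genuine thought is (1) — concretely the degenerate case $\zeta_p\in K$ — but even there the argument above is short; parts (2) and (3) are then pure bookkeeping, so I do not anticipate a real obstacle.
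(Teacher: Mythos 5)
The paper gives no proof of this lemma --- it is stated as a citation of \cite[Lemma 5.12]{liuLatticesSemistableRepresentations2008}, with $p=2$ excluded via \cite[Remark 5.1.3]{liuLatticesSemistableRepresentations2008} --- so there is no internal argument to compare against, and your self-contained proof is worth having. It is correct in substance; the one step that needs tightening is the reduction of (1) to $\pi_K\notin(K_{p^\infty}^\times)^p$. You appeal to $[K_{\frakF}:K_{p^\infty}]=[K_\infty:K_\infty\cap K_{p^\infty}]$ ``since $K_{p^\infty}/K$ is Galois'', but the identity $[LM:M]=[L:L\cap M]$ requires $L/L\cap M$ to be normal, and here $L=K_\infty$ plays that role, not $K_{p^\infty}$; a priori one only has $[LM:M]\le[L:L\cap M]$, and asserting equality amounts to assuming a form of what you want to prove. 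The repair is to work at finite levels and use only the inequality: if $\pi_K\notin(K_{p^\infty}^\times)^p$, then (since $\mu_{p^\infty}\subseteq K_{p^\infty}$ so no root of unity can absorb a defect) $\pi_K$ has order exactly $p^n$ in $K_{p^\infty}^\times/(K_{p^\infty}^\times)^{p^n}$, hence Kummer theory gives $[K_nK_{p^\infty}:K_{p^\infty}]=p^n$ for $K_n\coloneqq K(\pi_K^{1/p^n})$; combining with the generic bound $[K_nK_{p^\infty}:K_{p^\infty}]\le[K_n:K_n\cap K_{p^\infty}]\le[K_n:K]=p^n$ pinches $K_n\cap K_{p^\infty}=K$ for every $n$, and (1) follows by taking unions.

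With that repair in place, the rest of your argument for (1) --- reduce to $\zeta_p\in K$ via normality of $K(\pi_K^{1/p})/K$ and the divisibility $[K(\zeta_p):K]\mid\gcd(p-1,p)$; then use that $\Gal(K(\zeta_{p^N})/K)$ is a cyclic $p$-group to force $K(\pi_K^{1/p})=K(\zeta_{p^{a+1}})=K(\zeta_{p^a}^{1/p})$, whence Kummer theory over $K$ gives $\pi_K\equiv\zeta_{p^a}^{\,j}\pmod{(K^\times)^p}$, contradicting $v_K(\pi_K)=1$ --- is correct. Parts (2) and (3), including the twisted-cocycle identity $c_n(\sigma\sigma')=c_n(\sigma)\,c_n(\sigma')^{\chi(\sigma)}$ and the resulting $c_n(g\tau g^{-1})=c_n(\tau)^{\chi(g)}$, are routine bookkeeping that checks out.
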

\begin{remark}
	According to \cite[Remark 5.1.3]{liuLatticesSemistableRepresentations2008}, \Cref{lem:22831} fails for $p=2$, and this is the reason we assume $p\geq 3$ in this paper.
\end{remark}
\subsection{Field of norms and action of Galois groups}
\begin{definition}[{cf. \cite[Section 2.1.2]{Wintenberger1983}}]\label{def:69346}
	For any sAPF extension (cf. \cite[Définitions 1.2.1]{Wintenberger1983}) $L/K$, define the field of norms
	$$X_K(L)\coloneqq \{0\}\cup\varprojlim_{M\in\scrE(L/K)}M^\times,$$
	where $\scrE(L/K)$ is the set of finite extensions of $K$ contained in $L$ and the transition map is the norm map.
\end{definition}
\begin{remark}
	It can be proved that all the extensions $K_\infty$, $K_{p^\infty}$ and $K_{\frakF}$ are sAPF extensions of $K$. See \cite[Proposition 1.2.3]{Wintenberger1983} and \cite[Main Theorem]{senRamificationPadicLie1972} for more details.
\end{remark}
\begin{theorem}[{cf. \cite[Théorème 2.1.3, Proposition 4.2.1]{Wintenberger1983}}]\label{thm:55903}
	Let $L/K$ be a sAPF extension.
	\begin{enumerate}
		\item With addition defined by
		      \begin{equation*}\left(\alpha_M\right)_{M\in \scrE(L/K)}+\left(\beta_M\right)_{M\in\scrE(L/K)}\coloneqq \left(\lim_{N\in\scrE(L/M)}\calN_{N/M}(\alpha_N+\beta_N)\right)_{M\in\scrE(L/K)},\end{equation*}
		      and component-wise multiplication, $X_K(L)$ is a complete discrete valued field of characteristic $p$ with residue field $k_L$ and valuation defined by $$v\left(\left(\alpha_M\right)_{M\in \scrE(L/K)}\right)\coloneqq v_p(\alpha_{K_0}),$$
		      where $K_0\in\scrE(L/K)$ is the maximal unramified extension of $K$ contained in $L$. Moreover, the absolute Galois group is naturally isomorphic to $\scrG_L$.
		\item The field of norms $X_K(L)$ embeds continuously into the perfectoid field $\widehat{L}^\flat\coloneqq \varprojlim_{x\mapsto x^p}L/p$. More specifically, Let $K_1$ be the maximally tamely ramified extension of $K$ in $L$. For any positive integer $n$, let
		      $$\scrE_n=\{M\in\scrE(L/K_1)\colon v_p([M\colon K_1])\geq n\}.$$
		      For any element $(\alpha_E)_{E\in\scrE(L/K)}$ in $X_K(L)$,
		      $$\left\{\alpha_E^{[E\colon K_1]\cdot p^{-n}}\right\}_{E\in\scrE_n}$$
		      converges to an element $x_n$ in $\widehat{L}$ with respect to the filter $(\scrE_\bullet,\subseteq)$, and the map
		      $$\Lambda_{L/K}\colon X_K(L)\lto \widehat{L}^\flat,\ (\alpha_E)_{E\in\scrE(L/K)}\longmapsto (x_n)_{n\geq 1}$$ is continuous and injective.
	\end{enumerate}
\end{theorem}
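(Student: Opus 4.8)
The plan is to follow the field of norms construction of Fontaine and Wintenberger, in which everything is driven by the sAPF hypothesis on $L/K$: it forces the higher ramification breaks of the tower to tend to infinity, and this in turn makes the norm maps $\calN_{N/M}$ agree with the power map $x\mapsto x^{[N\colon M]}$ up to an error of large valuation. So the first step is to isolate the quantitative form of this statement: there is a function $c(N/M)\to\infty$ as $N$ grows in $\scrE(L/M)$ (with $N/M$ totally wildly ramified) such that $\calN_{N/M}(x)\equiv x^{[N\colon M]}$ modulo elements of $M$-valuation $\geq c(N/M)$, for every unit $x$ of $\scrO_N$. This estimate, extracted from Wintenberger's analysis of the ramification filtration of an sAPF extension, is the technical heart of the statement, and both parts below are essentially formal consequences of it.

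Given the estimate, part (1) runs as follows. Multiplicativity of $X_K(L)^\times$ is immediate, being an inverse limit of the groups $M^\times$. For addition one checks that, for fixed $M$ and compatible systems with components $\alpha_N,\beta_N$ at level $N$, the net $\bigl(\calN_{N/M}(\alpha_N+\beta_N)\bigr)_{N\in\scrE(L/M)}$ is Cauchy in the complete field $M$: using $\alpha_N=\calN_{N'/N}(\alpha_{N'})$ (and likewise for $\beta$) together with the fact that $\calN_{N'/N}$ is the $[N'\colon N]$-power map up to valuation $\geq c(N'/N)$, consecutive terms differ by elements whose valuation tends to infinity; the limit then exists and the resulting family is again norm-compatible. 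The remaining ring and field axioms are verified by passing to the limit, and the characteristic is $p$ because the $M$-component of the $p$-fold sum $1+\cdots+1$ is $\lim_N\calN_{N/M}(p\cdot 1_N)$, whose $M$-valuation equals $f(N/M)\,e(N/\Qp)\to\infty$. The valuation $v\bigl((\alpha_M)_M\bigr)\deftobe v_p(\alpha_{K_0})$ is well defined since the maximal unramified subextension $K_0$ of $L/K$ is finite over $K$; one checks multiplicativity and the ultrametric inequality, so $v$ is a valuation, discrete (a uniformizer of any large enough $M$ assembles into an element of valuation $1$), with residue field $\bigcup_M k_M=k_L$ via reduction, and $X_K(L)$ is complete because a $v$-Cauchy sequence is componentwise Cauchy in the complete fields $M$.

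For part (2), fix the maximal tamely ramified subextension $K_1\subseteq L$; for $E\supseteq K_1$ in the tower $E/K_1$ is totally wildly ramified, so $[E\colon K_1]=p^{m_E}$ with $m_E=v_p([E\colon K_1])$. Given $(\alpha_E)_E\in X_K(L)$, a positive integer $n$, and $E\in\scrE_n$, consider $\alpha_E^{p^{m_E-n}}\in E\subseteq\widehat{L}$. Writing $\alpha_E=\calN_{E'/E}(\alpha_{E'})$ and feeding this into the ramification estimate, one gets $\alpha_E^{p^{m_E-n}}\equiv\alpha_{E'}^{p^{m_{E'}-n}}$ modulo a power of $p$ that grows with $E$ (each application of the $p$-power map raises the precision), so the net $\{\alpha_E^{p^{m_E-n}}\}_{E\in\scrE_n}$ is Cauchy and converges to some $x_n\in\widehat{L}$; since $\scrE_{n+1}$ is cofinal in $\scrE_n$ one gets $x_{n+1}^p=x_n$ exactly, so $(x_n)_{n\geq 1}$ defines an element of $\widehat{L}^\flat$, and $\Lambda_{L/K}$ sends $(\alpha_E)_E$ to it. Multiplicativity of $\Lambda_{L/K}$ is clear, and additivity follows by matching the norm-limit defining $+$ on $X_K(L)$ with the usual formula $(a+b)_n=\lim_m(a_{n+m}+b_{n+m})^{p^m}$ for addition in the tilt, again via the same estimate. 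Continuity and injectivity both come from the fact that $\Lambda_{L/K}$ multiplies the valuation by a fixed positive constant: norm-compatibility distributes the valuation, $v_p(\alpha_E)\asymp v_p(\alpha_{K_0})/[E\colon K_0]$, so $v_p\bigl(\alpha_E^{p^{m_E-n}}\bigr)$ stabilizes to a positive multiple of $v_p(\alpha_{K_0})$, which at once bounds $v^\flat\bigl(\Lambda_{L/K}((\alpha_E)_E)\bigr)$ below by a multiple of $v_p(\alpha_{K_0})$ and forces $\ker\Lambda_{L/K}=0$.

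Finally, for the identification of absolute Galois groups I would either run Wintenberger's original argument — functoriality of $X_K(-)$ along finite separable subextensions $L\subseteq L'\subseteq L^{\mathrm{sep}}$, showing $X_K(L')/X_K(L)$ is separable of degree $[L'\colon L]$ and that every finite separable extension of $X_K(L)$ arises this way, so that $\scrG_{X_K(L)}\cong\varprojlim_{L'}\Gal(X_K(L')/X_K(L))\cong\scrG_L$ — or, more economically, deduce it from perfectoid theory: part (2) exhibits $X_K(L)$ inside $\widehat{L}^\flat$ with dense perfection, and since the absolute Galois group is unchanged under completion and under purely inseparable extensions, $\scrG_{X_K(L)}\cong\scrG_{\widehat{L}^\flat}\cong\scrG_{\widehat{L}}\cong\scrG_L$ by the tilting equivalence. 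I expect the ramification estimate of the first step — together with, in the hands-on route, the bijection between finite separable extensions of $X_K(L)$ and of $L$ — to be the genuinely hard ingredient; for the present paper both are simply quoted from \cite{Wintenberger1983} (and \cite{scholzePerfectoidSpaces2012} for the perfectoid reformulation).
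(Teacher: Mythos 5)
The paper does not prove this theorem: it is stated in the Preliminaries as a direct citation to Wintenberger (Th\'eor\`eme 2.1.3, Proposition 4.2.1) and used as a black box, so there is no internal argument to compare against. Your sketch is a correct outline of that cited proof, correctly isolating the ramification estimate $\calN_{N/M}\equiv(\cdot)^{[N\colon M]}$ modulo high valuation as the technical engine, and both routes you propose for the Galois-group identification (Wintenberger's functoriality of $X_K(-)$ along finite separable subextensions, or the perfectoid tilting equivalence via dense perfection) are standard and valid.

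One small imprecision worth flagging: when you assert discreteness of $v$ by saying a uniformizer of a large enough $M$ ``assembles into an element of valuation $1$'', that is not literally what happens with the normalization $v\bigl((\alpha_M)_M\bigr)=v_p(\alpha_{K_0})$; a norm-compatible system of uniformizers has $v$-value $1/e(K_0/\Qp)$, not $1$. The substantive claim — that the value group is a discrete subgroup of $\bfQ$ (namely $\frac{1}{e(K_0/\Qp)}\bfZ$ once $M/K_0$ is totally ramified) — is correct, and that is all the theorem asserts. Likewise, the formula $f(N/M)\,e(N/\Qp)$ for $v_M\bigl(\calN_{N/M}(p)\bigr)$ is correct but disguised: it equals $[N\colon M]\,e(M/\Qp)$, which is the more transparent expression.
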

\begin{definition}\leavevmode
	\begin{enumerate}
		\item Let $\bfE_K=\opn{Im}\Lambda_{K_{p^\infty}/K}$, $\bfE_{\tau,K}\coloneqq \opn{Im}\Lambda_{K_\infty/K}$ and $\bfE_{\frakF,K}\coloneqq \opn{Im}\Lambda_{K_\frakF,K}$.%
		      Let $\wtilde{\bfE}_K\coloneqq \widehat{K}_{p^\infty}^\flat$, $\wtilde{\bfE}_{\tau,K}\coloneqq \widehat{K}_{\infty}^\flat$ and $\wtilde{\bfE}_{\frakF,K}\coloneqq \widehat{K}_{\frakF}^\flat$.
		\item Fix the uniformizer of $\bfE_{\tau,K}$ to be $u_K\coloneqq \left(u_K,u_K^{1/p},\cdots,u_K^{1/p^n},\cdots\right)\in\Cpflat$ and denote by $\eta_K$ the uniformizer of $\bfE_K$. When $K=\Qp$, we take $\eta_{\Qp}=\varepsilon-1$, where $\varepsilon=\left(1,\zeta_p,\cdots,\zeta_{p^n},\cdots\right)\in\Cpflat$. Since $\bfE_{\tau,K}$ is a subfield of $\wtilde{\bfE}_{\frakF,K}$, elements of $\wtilde{\Gamma}_{\tau,K}$ acts on $\bfE_{\tau,K}$ by sending $u_K$ to $u_K\cdot \varepsilon^a$ for some $a\in\bfZ_p^\times$. We fix a topological generator $\tau_K$ of $\wtilde{\Gamma}_{\tau,K}$ such that $\tau_K\cdot u_K=u_K\cdot \varepsilon$.
		\item Let $\bfA_K$ be the $p$-Cohen ring of $\bfE_K$ in $\bfA_{\opn{inf}}$ that is equipped with a continuous action of $\Gamma_K$ that lifts the action of it on $\bfE_K$ (see for instance \cite[Theorem 1.3]{bergerLiftingFieldNorms2014}). Let $\bfA$ be the completion of the maximal unramified ring extension\footnote{We say $R_1$ is the maximal unramified ring extension of $R_2$ if $R_1$ is the ring of integers of the maximal unramified extension of $\opn{Fr}R_1$ in $\opn{Fr}\bfA_{\opn{inf}}$.} of $\bfA_K$. Let $\bfA_{\tau,K}$ be the $p$-adic completion of the ring $W(k_K)\bbrac{[u_K]}[1/[u_K]]$ and let $\bfA_{\frakF,K}^{\ttR}\coloneqq \bfA^{H_{\frakF,K}}$.
	\end{enumerate}
\end{definition}
\begin{lemma}\label{lem:13864}\leavevmode
	\begin{enumerate}
		\item There exists a topological generator $\gamma_K$ of $\Gamma_K$ and a formal power series $\frakG_K(T)\in k_K\bbrac{T}^\times$ such that the action of $\Gamma_K$ on $\bfE_K$ is given by the formula
		      $$\gamma_K\colon \eta_K\longmapsto \eta_K\cdot \frakG_K(\eta_K).$$
		      When $K=\bfQ_p$, we can take $\frakG_K(T)=\frac{1}{T}\left((1+T)^{\chi\left(\gamma_{\bfQ_p}\right)}-1\right)$.
		\item There exists formal power series $\frakT_K(T)\in k_K\bbrac{T}$ such that
		      $$\tau_K\cdot u_K=u_K\cdot \left(1+\eta_K\cdot \frakT_K(\eta_K)\right).$$
		      When $K=\Qp$, then $\frakT_K(T)=1$.
	\end{enumerate}
\end{lemma}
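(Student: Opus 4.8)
The plan is to reduce both parts to one observation about how $\Gamma_K$ and $\tau_K$ move a uniformizer. By \Cref{thm:55903} and the standing assumption (so that $\bfE_K$ has residue field $k_K$), $\bfE_K$ is a complete discretely valued field of characteristic $p$ with uniformizer $\eta_K$, hence $\scrO_{\bfE_K}=k_K\bbrac{\eta_K}$. The $\Gamma_K$-action on $\bfE_K$ is the restriction of the $\scrG_K$-action on $\Cpflat$: it is continuous, preserves $v^\flat$, and fixes the copy of $k_K$ in $\scrO_{\bfE_K}$ (acting on it through its trivial action on the residue field). So each $\sigma\in\Gamma_K$ is a continuous $k_K$-algebra automorphism of $\bfE_K$ with $v^\flat(\sigma(\eta_K))=v^\flat(\eta_K)$, whence $\sigma(\eta_K)/\eta_K\in\scrO_{\bfE_K}^\times$, i.e.\ $\sigma(\eta_K)=\eta_K\cdot\frakG_\sigma(\eta_K)$ for a unique $\frakG_\sigma\in k_K\bbrac{T}^\times$; being continuous and $k_K$-linear, $\sigma$ is determined by $\frakG_\sigma$.

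For part (1): for $p\geq3$ the group $\bfZ_p^\times\cong\mu_{p-1}\times\bfZ_p$ is procyclic, a topological generator being $(\zeta,1)$ with $\zeta$ a generator of $\mu_{p-1}$ — given $(\zeta^m,a)$, the Chinese Remainder Theorem with the coprime moduli $p-1,\,p^N$ produces integers $n\equiv m\pmod{p-1}$ with $n\to a$ $p$-adically, so $(\zeta,1)^n\to(\zeta^m,a)$ — hence its closed subgroup $\chi(\Gamma_K)\cong\Gamma_K$ is procyclic too. Fix a topological generator $\gamma_K$ of $\Gamma_K$; by the observation above $\gamma_K(\eta_K)=\eta_K\frakG_K(\eta_K)$ for a unique $\frakG_K\in k_K\bbrac{T}^\times$, and this formula determines the action of $\Gamma_K=\overline{\langle\gamma_K\rangle}$. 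When $K=\Qp$ we have $\eta_{\Qp}=\varepsilon-1$ and $\gamma_{\Qp}(\varepsilon)=\varepsilon^{\chi(\gamma_{\Qp})}$ by definition of $\chi$, so $\gamma_{\Qp}(\eta_{\Qp})=(1+\eta_{\Qp})^{\chi(\gamma_{\Qp})}-1$; since $\frac{(1+T)^{\chi(\gamma_{\Qp})}-1}{T}\in\Zp\bbrac{T}$ has constant term $\chi(\gamma_{\Qp})\in\bfZ_p^\times$, its reduction lies in $\Fp\bbrac{T}^\times$.

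For part (2): by the choice of $\tau_K$ one has $\tau_K\cdot u_K=u_K\cdot\varepsilon$ in $\wtilde{\bfE}_{\frakF,K}$, so (cancelling $u_K\neq0$) it is enough to find $\frakT_K\in k_K\bbrac{T}$ with $\varepsilon=1+\eta_K\frakT_K(\eta_K)$, i.e.\ to show $\varepsilon-1\in\eta_K\scrO_{\bfE_K}$. First, $\varepsilon\in\bfE_K$: since $\Qp(\zeta_{p^\infty})\subseteq K_{p^\infty}$ (equivalently $H_K\subseteq H_{\Qp}$), the field-of-norms formalism gives $\bfE_{\Qp}\subseteq\bfE_K$ compatibly with the embeddings into $\Cpflat$ — e.g.\ realize both as the $H_{\Qp}$- resp.\ $H_K$-invariants of a common separable closure of $\bfE_{\Qp}$ inside $\Cpflat$ — and $\varepsilon=1+\eta_{\Qp}\in\bfE_{\Qp}$. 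Second, $v^\flat(\varepsilon-1)>0$, because $\varepsilon=(1,\zeta_p,\zeta_{p^2},\dots)$ reduces to $1$ modulo the maximal ideal of $\scrO_{\Cpflat}$. As $v^\flat|_{\bfE_K}$ is discrete with $\eta_K$ generating its value group, positivity forces $v^\flat(\varepsilon-1)\geq v^\flat(\eta_K)$, so $(\varepsilon-1)/\eta_K\in\scrO_{\bfE_K}=k_K\bbrac{\eta_K}$ and $\frakT_K$ is the corresponding power series. When $K=\Qp$, $\varepsilon-1=\eta_{\Qp}$ outright, so $\frakT_{\Qp}=1$.

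The only non-formal ingredient is the inclusion $\bfE_{\Qp}\subseteq\bfE_K$ of part (2) — equivalently, knowing that the copy of $\bfE_K$ inside $\Cpflat$ furnished by \Cref{thm:55903} contains $\varepsilon$. I expect that (the field-of-norms functoriality with respect to the base field) to be the one place one must cite Wintenberger rather than bare field theory; the remainder is the structure of complete discretely valued fields of equal characteristic with perfect residue field, together with the procyclicity of $\bfZ_p^\times$ for $p\geq3$.
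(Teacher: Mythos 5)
Your proof is correct and takes essentially the same route as the paper: part (1) via the isometric $k_K$-linear action of $\Gamma_K$ on the equal-characteristic local field $\bfE_K$, and part (2) via the functorial inclusion $\bfE_{\Qp}\subseteq\bfE_K$ combined with the fact that $v^\flat(\varepsilon-1)>0$ in a discretely valued ring forces $\varepsilon-1\in\eta_K\scrO_{\bfE_K}$. You fill in a few details the paper leaves implicit (procyclicity of $\Gamma_K$ for $p\ge 3$, why $\varepsilon\in\bfE_K$, and why $\frakG_K$ lands in $k_K\bbrac{T}^\times$), which are all legitimate and in the spirit of the paper.
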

\begin{proof}
	\begin{enumerate}
		\item The Galois group $\Gamma_K$ acts isometrically on $\bfE_K$, thus $v^\flat\left(\frac{\gamma_K\cdot\eta_K}{\eta_K}\right)=1$. The existence of $\frakG_K(T)$ follows from the fact that $\bfE_K$ is a local field. When $K=\Qp$, the formula is well-known.
		\item By the inclusion $\bfE_{\Qp}\subseteq \opn{Im}\Lambda_{K_{p^\infty}/K}\subseteq \bfE_K$, $\eta_{\Qp}$ is an element of $\bfE_K$ with positive valuation. Thus, there exists a formal power series $\frakT_K(T)\in k_K\bbrac{T}$ such that $\varepsilon-1=\eta_K\cdot \frakT_K(\eta_K)$. The result follows from our choice of $\tau_K$.
	\end{enumerate}
\end{proof}
\begin{remark}
	We abbreviate the element $\gamma_K$ (resp. $\tau_K$) to $\gamma$ (resp. $\tau$) if the choice of $K$ is self-explanatory in the context.
\end{remark}
\subsection{Variants of \'etale $(\varphi,\Gamma)$-modules}
\begin{definition}\label{def:51763}
	Let $$(\bfA_1,\bfA_2)\in\left\{\left(\bfA_K,\bfA_K\right),\left(\bfA_{\tau,K},W\left(\wtilde{\bfE}_{\frakF,K}\right)\right),\left(\bfA_{\frakF,K}^{\ttR},\bfA_{\frakF,K}^{\ttR}\right)\right\}.$$
	\begin{enumerate}
		\item A $\varphi$-module over $\bfA_1$ is a finitely generated $\bfA_1$-module endowed with a semilinear action of the Frobenius map $\varphi$. A $\varphi$-module $D$ is called \'etale if $\varphi^*(D)\cong D$.
		\item Let $\boldsymbol{\Phi\Gamma}^{\text{\'et}}(\bfA_1,\bfA_2)$ be the category with the objects given by \'etale $\varphi$-modules $D$ over $\bfA_1$ that satisfy:
		      \begin{enumerate}
			      \item $\scrG_{\opn{Fr \bfA_2}}$ acts continuously and semilinearly on $\bfA_2\otimes_{\bfA_1}D$.
			      \item Regarding $D$ as an $\bfA_1$-submodule of $\bfA_2\otimes_{\bfA_1}D$, then $D\subseteq \left(\bfA_2\otimes_{\bfA_1}D\right)^{\scrG_{\opn{Fr}\bfA_1}}$.
		      \end{enumerate}
	\end{enumerate}
\end{definition}
\begin{example}\label{eg:5636}\leavevmode
	\begin{enumerate}
		\item Call objects in $\boldsymbol{\Phi\Gamma}^{\text{\'et}}\left(\bfA_K,\bfA_K\right)$ the (\'etale) $(\varphi,\Gamma_K)$-modules over $\bfA_K$. Here $\scrG_{\opn{Fr}\bfA_1}\cong H_K$.
		\item Call objectives in $\boldsymbol{\Phi\Gamma}^{\text{\'et}}\left(\bfA_{\tau,K},W\left(\wtilde{\bfE}_{\frakF,K}\right)\right)$ the (\'etale) $(\varphi,\tau)$-modules over $\left(\bfA_{\tau,K},W\left(\wtilde{\bfE}_{\frakF,K}\right)\right)$. Here $\scrG_{\opn{Fr}\bfA_2}\cong H_{\frakF,K}$ and $\scrG_{\opn{Fr}\bfA_1}\cong H_{\tau,K}$.
		\item Call objects in $\boldsymbol{\Phi\Gamma}^{\text{\'et}}\left(\bfA_{\frakF,K}^{\ttR},\bfA_{\frakF,K}^{\ttR}\right)$ the \'etale $(\varphi,\Gamma_{\frakF,K})$-modules over $\bfA_{\frakF,K}^{\ttR}$. Here $\scrG_{\opn{Fr}\bfA_{\frakF,K}^{\ttR}}\cong H_{\frakF,K}$.
	\end{enumerate}
\end{example}
\begin{remark}
	As we will see later, the terminologies we introduced in \Cref{def:51763} and \Cref{eg:5636} extend to period rings $(\bfA_1,\bfA_2)$ that are not in the list of \Cref{def:51763}. To keep the notations simple, these terminologies will still be used in the new context.
\end{remark}
\begin{theorem}[{\cite[Théorème A.3.4.3]{Fontaine1990}, \cite[Section 1.3]{Caruso2013}, \cite[Theorem 1.3]{ribeiroExplicitFormulaHilbert2011}}]\label{thm:36449}
	For any $(\bfA_1,\bfA_2)$ in \Cref{def:51763}, let $\bfA^{\opn{ur}}$ be the completion of maximal unramified ring extension of $\bfA_2$ in $\bfA_{\opn{inf}}$. The functor
	$$\bfD_{\bfA_1,\bfA_2}\colon \repcat\lto \boldsymbol{\Phi\Gamma}^{\text{\'et}}(\bfA_1,\bfA_2),\ V\longmapsto \left(\bfA\otimes_{\bfZ_p}V\right)^{\scrG_{\opn{Fr}\bfA_1}}$$
	induces an equivalence of categories,
	with a quasi-inverse given by $D\longmapsto \bfV(D)\coloneqq \left(\bfA^{\opn{ur}}\otimes_{\bfA_2}D\right)^{\varphi=1}$.
\end{theorem}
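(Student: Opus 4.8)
The plan is to treat the three cases at once, isolating the two ingredients on which Fontaine's descent method rests and observing that each case supplies both. Write $\bar\bfA_1\deftobe\bfA_1/p$, $\bar\bfA_2\deftobe\bfA_2/p$ and $\bar\bfA\deftobe\bfA/p$. The ingredients are: \textbf{(I)} the field-of-norms data, namely that $\bar\bfA_1$ is a complete discretely valued field of characteristic $p$ and that the absolute Galois groups of $\opn{Fr}\bfA_1$ and $\opn{Fr}\bfA_2$ are canonically $\scrG_{\opn{Fr}\bfA_1}$ and $\scrG_{\opn{Fr}\bfA_2}$, as recorded in \Cref{eg:5636} and resting on \Cref{thm:55903} and \Cref{lem:22831}; and \textbf{(II)} the two ``motors'' of the theory: $(\bfA^{\opn{ur}})^{\varphi=1}=\Zp$, and the fact that $\bar\bfA$ is separably closed, so that every étale $\varphi$-module over it is trivial (the equation $\varphi(v)=Av$ carving out a $\varphi$-fixed basis is finite étale, hence has a full set of solutions), together with its $p$-adic refinement that every étale $\varphi$-module over $\bfA^{\opn{ur}}$ admits a $\varphi$-fixed basis.

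First I would reduce to torsion coefficients: $\bfD$ is exact (using the additive Hilbert 90 vanishing $\rmH^1(\scrG_{\opn{Fr}\bfA_1},\bfA)=0$), so by the five lemma it suffices to prove the statement mod $p$, and the case of a general $V\in\repcat$ follows by induction on $n$ for $\Zp/p^n$ and passage to the limit $V=\varprojlim_n V/p^nV$, since $\bfA_1,\bfA_2,\bfA,\bfA^{\opn{ur}}$ are $p$-torsion-free and $p$-adically complete. Working over $\bar\bfA_1\subseteq\bar\bfA_2\subseteq\bar\bfA$, for $V\in\repcat$ one must show that $\bfD(V)=(\bfA\otimes_{\Zp}V)^{\scrG_{\opn{Fr}\bfA_1}}$ is finite projective and étale over $\bfA_1$ and that the natural map $\bfA\otimes_{\bfA_1}\bfD(V)\to\bfA\otimes_{\Zp}V$ is an isomorphism. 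This \emph{decompletion} --- that a module finitely generated over the enormous ring $\bfA$ descends to one finitely generated over the comparatively small imperfect ring $\bfA_1$ --- is the one genuinely delicate point; it is carried out by an Ax--Sen--Tate / Tate--Sen descent argument relying on $\bar\bfA^{\,\scrG_{\opn{Fr}\bfA_1}}=\bar\bfA_1$ together with a completeness estimate, and in the $(\varphi,\tau)$ case it reflects the fact that $\bfA_{\tau,K}$ is not stable under the $\tau$-action, which is precisely why that case is packaged with the auxiliary ring $W(\wtilde\bfE_{\frakF,K})$. Granting the decompletion, $\bfV(\bfD(V))=(\bfA^{\opn{ur}}\otimes_{\bfA_2}\bfD(V))^{\varphi=1}=(\bfA^{\opn{ur}}\otimes_{\Zp}V)^{\varphi=1}=(\bfA^{\opn{ur}})^{\varphi=1}\otimes_{\Zp}V=V$ by ingredient (II), which already makes $\bfD$ faithful; applying the same computation to internal $\opn{Hom}$'s of $(\varphi,\Gamma)$-modules promotes this to full faithfulness.

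For essential surjectivity I would start from an étale $\varphi$-module $D$ over $\bfA_1$ carrying the descent data of \Cref{def:51763}. By ingredient (II) the base change $\bfA^{\opn{ur}}\otimes_{\bfA_1}D$ admits a $\varphi$-fixed basis, so $\bfV(D)=(\bfA^{\opn{ur}}\otimes_{\bfA_2}D)^{\varphi=1}$ is free over $\Zp$ of rank $\opn{rk} D$ and $\bfA^{\opn{ur}}\otimes_{\Zp}\bfV(D)\to\bfA^{\opn{ur}}\otimes_{\bfA_1}D$ is an isomorphism. The continuous semilinear action of $\scrG_{\opn{Fr}\bfA_2}$ on $\bfA_2\otimes_{\bfA_1}D$ furnished by the descent data then equips $\bfV(D)$ with a continuous $\Zp$-linear $\GK$-action --- continuity being verified on a $\GK$-stable lattice against the weak and $p$-adic topologies --- so $\bfV(D)\in\repcat$, and running the identities of the previous paragraph backwards gives $\bfD(\bfV(D))\cong D$ compatibly with $\varphi$ and all the Galois data. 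Hence $\bfD$ and $\bfV$ are mutually quasi-inverse.

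Finally, the three cases differ only through the identification of $\scrG_{\opn{Fr}\bfA_1}$ (resp. $\scrG_{\opn{Fr}\bfA_2}$) with $H_K$, with $(H_{\tau,K},H_{\frakF,K})$, and with $H_{\frakF,K}$, supplied in each case by \Cref{thm:55903} and \Cref{lem:22831}; so rather than rewriting the three sources I would record this parallelism and then cite \cite[Théorème A.3.4.3]{Fontaine1990}, \cite[Section 1.3]{Caruso2013} and \cite[Theorem 1.3]{ribeiroExplicitFormulaHilbert2011} for the three cases respectively. The main obstacle, as flagged above, is the decompletion step; everything else is formal once ingredients (I) and (II) are in hand.
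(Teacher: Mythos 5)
The paper supplies no proof of \Cref{thm:36449}; it simply records the statement and points to the three references that carry it out in each of the three cases of \Cref{def:51763}. Your sketch of the Fontaine-style argument — dévissage to mod-$p$ coefficients, Galois descent from the big ring to the imperfect base, and the Artin–Schreier/Hilbert~90 ingredients $(\bfA^{\opn{ur}})^{\varphi=1}=\Zp$ and triviality of étale $\varphi$-modules over a separably closed residue field — is essentially correct and, since you ultimately defer to the same citations, matches the paper's treatment.

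One small terminological point: the decompletion you flag is, in the mod-$p$ reduction, plain Galois descent for the extension $\bar\bfA/\bar\bfA_1$ (i.e.\ Speiser's theorem, $\rmH^1\bigl(\scrG_{\opn{Fr}\bfA_1},\opn{GL}_n(\bar\bfA)\bigr)=1$, obtained by inflation–restriction from finite subextensions); the Ax–Sen–Tate / Tate–Sen machinery with completeness estimates is what one needs for the \emph{overconvergent} refinement, not for the basic equivalence stated here. Relatedly, in the $(\varphi,\tau)$ case the functor involves invariants under $\scrG_{\opn{Fr}\bfA_1}=H_{\tau,K}$, so the relevant big ring has residue field the separable closure of $\bfE_{\tau,K}$ rather than of $\bfE_K$; the theorem's uniform notation $\bfA$ glosses over this, and it is one of the details that the cited sources handle case by case. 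Neither point affects the correctness of your outline as a guide to the literature.
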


\begin{remark}
	We abbreviate the functor $\bfD_{\bfA_1,\bfA_2}(V)$ in \Cref{thm:36449} as:
	\begin{enumerate}
		\item $\bfD(V)$ for \'etale $(\varphi,\Gamma)$-modules;
		\item $\wtilde{\bfD}_{\tau}(V)$ for \'etale $(\varphi,\tau)$-modules over $\left(\bfA_{\tau,K},W\left(\wtilde{\bfE}_{\frakF,K}\right)\right)$;
		\item $\bfD_{\frakF}^{\ttR}(V)$ for \'etale $(\varphi,\Gamma_{\frakF,K})$-modules over $\bfA_{\frakF,K}^{\ttR}$.
	\end{enumerate}
\end{remark}

\section{Generalized Caruso's embedding lemma}
For any elements $\alpha,\beta\in\scrO_{\bfC_p}^\flat$ and any $f(X,Y)=\sum_{i,j\geq 0}f_{ij}X^iY^j\in k_K\bbrac{X,Y}$, let
$$f(\alpha,\beta)\coloneqq \lim_{n\to\infty} \sum_{i=0}^n\sum_{j=0}^n f_{ij}\alpha^i\beta^j\in\scrO_{\bfC_p}^\flat.$$
Recall the following result of Caruso:
\begin{lemma}[Caruso's embedding lemma, {\cite[Proposition 1.7]{Caruso2013}}]
	The map
	$$\iota_{\mathtt{Car}}\colon k_K\bbrac{X,Y}\lto \scrO_{\bfC_p}^\flat,\ f\longmapsto f\left(u_K,\eta_\Qp\right)$$
	is injective.
\end{lemma}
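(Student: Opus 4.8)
The plan is to show injectivity by a valuation argument: if $f = \sum_{i,j\geq 0} f_{ij}X^iY^j \in k_K\bbrac{X,Y}$ is nonzero, I want to show $f(u_K,\eta_{\Qp}) \neq 0$ in $\scrO_{\bfC_p}^\flat$. The key invariants are the $v^\flat$-valuations of the two substituted elements. Since $u_K = (\pi_K, \pi_K^{1/p}, \dots)$ is a uniformizer of $\bfE_{\tau,K} \cong k_K\pparen{u_K}$, it has valuation $v^\flat(u_K) = v_p(\pi_K) = \tfrac{1}{e_K}$ for the appropriate normalization (equivalently, normalize so $v^\flat(u_K)$ is some fixed positive rational $a$); likewise $\eta_{\Qp} = \varepsilon - 1$ satisfies $v^\flat(\eta_{\Qp}) = \tfrac{p}{p-1} \cdot \tfrac{1}{?}$, in any case some fixed positive rational $b$. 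The crucial arithmetic input is that $a$ and $b$ are $\mathbb{Q}$-linearly independent, or more precisely that the monomial valuations $ia + jb$ for $(i,j) \in \mathbb{Z}_{\geq 0}^2$ are \emph{all distinct}: this is where the false Tate curve geometry enters, because $u_K$ lives in the Kummer (tamely-indexed) direction while $\eta_{\Qp}$ lives in the cyclotomic direction, and under the totally ramified assumption these ramification indices are coprime in the relevant sense.

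First I would pin down the exact values $v^\flat(u_K)$ and $v^\flat(\eta_{\Qp})$ and prove the linear-independence statement: that $ia + jb = i'a + j'b$ with $(i,j),(i',j')\in\mathbb{Z}_{\geq 0}^2$ forces $(i,j)=(i',j')$. Given this, for nonzero $f$ let $(i_0,j_0)$ be the \emph{unique} pair among $\{(i,j) : f_{ij}\neq 0\}$ minimizing $ia+jb$ (uniqueness of the minimizer follows from distinctness of the values, once one checks the min is attained — which holds because $ia+jb\to\infty$ as $i+j\to\infty$, so only finitely many monomials have valuation below any bound). Then in the partial sums $\sum_{i\leq n}\sum_{j\leq n} f_{ij} u_K^i \eta_{\Qp}^j$, for $n \geq \max(i_0,j_0)$ the term $f_{i_0 j_0} u_K^{i_0}\eta_{\Qp}^{j_0}$ strictly dominates in $v^\flat$: every other present term has strictly larger valuation, and $\scrO_{\bfC_p}^\flat$ is a valuation ring so the ultrametric inequality is an equality when the minimum is attained uniquely. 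Hence each such partial sum has valuation exactly $i_0 a + j_0 b$, and since these partial sums converge to $f(u_K,\eta_{\Qp})$ and valuation is continuous, $v^\flat(f(u_K,\eta_{\Qp})) = i_0 a + j_0 b < \infty$, so $f(u_K,\eta_{\Qp})\neq 0$.

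The main obstacle is establishing the distinctness of the monomial valuations $ia+jb$, i.e.\ that $b/a \notin \mathbb{Q}$ or at least that $b/a$ is not a ratio of small integers in a way that would collide for lattice points in the first quadrant. This should follow from a careful computation of ramification: $u_K$ generates the Kummer tower whose degree-$p^n$ layers are totally (and for the first step tamely, after the assumption) ramified, while $\varepsilon-1$ governs the cyclotomic tower with its wild ramification of a different shape, and the \Cref{assumption} that $K_{\frakF}/K$ is totally ramified is precisely what decouples the two valuations. I would expect to extract the needed value of $v^\flat(\eta_{\Qp})$ from standard cyclotomic ramification ($v^\flat(\varepsilon - 1) = \tfrac{p}{p-1}$ in the normalization $v^\flat(p^\flat) = 1$, coming from $v_p(\zeta_p - 1) = \tfrac{1}{p-1}$) and $v^\flat(u_K)$ from $v_p(\pi_K) = 1/e(K/\Qp)$; the linear independence then reduces to checking that $\tfrac{p}{p-1}$ and $\tfrac{1}{e(K/\Qp)}$ generate a subgroup of $\mathbb{Q}$ in which no two distinct first-quadrant integer combinations coincide — a finite check once the denominators are understood, but one that genuinely uses the number-theoretic structure rather than being purely formal. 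One subtlety to handle carefully: convergence of $f(\alpha,\beta)$ in $\scrO_{\bfC_p}^\flat$ requires that $v^\flat(\alpha), v^\flat(\beta) > 0$, which is why the substitution is only defined for topologically nilpotent $\alpha,\beta$ — this is already implicit in the displayed definition preceding the lemma and should be remarked on.
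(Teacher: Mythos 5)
Your plan hinges on the claim that the monomial valuations $ia+jb$ with $a=v^\flat(u_K)$ and $b=v^\flat(\eta_{\Qp})$ are pairwise distinct as $(i,j)$ ranges over $\bfZ_{\geq 0}^2$, so that the lowest-valuation monomial of $f$ survives in $f(u_K,\eta_{\Qp})$ without cancellation. This claim is false, and it fails already in the base case $K=\Qp$. There one has $a=v^\flat(u_{\Qp})=v_p(p)=1$ and $b=v^\flat(\varepsilon-1)=\tfrac{p}{p-1}$, so for instance $(i,j)=(p,0)$ and $(i',j')=(0,p-1)$ both give monomial valuation $p$. More fundamentally, $a$ and $b$ are both rational, so they span $\bfQ$ rather than being $\bfQ$-linearly independent, and the set of collisions among nonnegative-integer combinations is infinite: every relation $(i-i')(p-1)=(j'-j)\,p\,e_K$ with both sides nonzero produces one. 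Once the minimizer is not unique, the ultrametric inequality gives only $v^\flat(f(u_K,\eta_{\Qp}))\geq v_{\opn{mono}}(f)$ and cancellation among tied monomials cannot be ruled out by valuation bookkeeping alone; your argument that $v^\flat(f(u_K,\eta_{\Qp}))=v_{\opn{mono}}(f)$ then breaks precisely at the point you flagged as the ``main obstacle.'' The totally-ramified assumption does not rescue this; it concerns residue fields, not a rational-independence of $a$ and $b$ (which cannot hold).

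The paper does not prove the lemma this way at all: it cites Caruso for the statement and then proves the strictly stronger \Cref{prop:60072} (which gives injectivity of $\iota_{\theta,F}$, specializing to $\iota_{\np}$ and hence $\iota_{\mathtt{Car}}$ when $K=\Qp$) by an entirely different mechanism. Rather than reading off a dominant monomial from valuations, the proof of \Cref{prop:61330} uses elements $\gamma_c\in\Gamma_K$ and $\tau_c\in\Gal(K_{\frakF}/K_{p^\infty})$ close to the identity, Taylor-expands $\iota_{\theta,F}(f)$ under these Galois actions, and bootstraps valuation lower bounds on the Hasse derivatives $\partial_1^{[s]}\partial_2^{[t]}f$; since $\partial_1^{[i]}\partial_2^{[j]}f=f_{ij}+(\text{higher valuation})$, each coefficient is forced to vanish. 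The monomial valuation $v_{\opn{mono}}$ appears in the paper only as a one-sided comparison ($v_{\opn{mono}}\leq v^\flat\leq\theta\circ v_{\opn{mono}}$, mentioned at the end of \Cref{sec:6180}), not as the equality your argument requires. So there is a genuine gap: the approach, not just a step, would need to be replaced, essentially by the Galois-derivative extraction the paper carries out.
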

The main goal of this section is the following (variant of) generalization of Caruso's embedding lemma. This new version will specialize to the original one when $K=\Qp$, and will play an important role in studying the topological properties of the multivariable period rings in the next section.
\begin{proposition}\label{prop:60072}\label{lem:62550}
	For any $\theta\in\bfZ_{\geq 1}$ and $F(X)\in k_K\bbrac{X}$ satisfying $v^\flat\left(\frac{\eta_K^\theta}{F(u_K)}\right)>0$, define the map
	$$\iota_{\theta,F}\colon k_K\bbrac{X,Y}\lto \scrO_{\bfC_p}^\flat,\ f\longmapsto f\left(u_K,\frac{\eta_K^\theta}{F(u_K)}\right).$$
	For any $n\in\bfN$, there exists $M(n)\in\bfQ_{>0}$ such that for every formal power series $f(X,Y)=\sum_{i,j\geq 0}f_{ij}X^i Y^j\in k_K\bbrac{X,Y}$ satisfying $v^\flat\left(\iota_{\theta,F}(f)\right)>M(n)$, one has $f_{ij}=0$ for every $(i,j)\in \bfN_{\leq n}^2$. In particular, the map $\iota_{\theta,F}$ is injective.
\end{proposition}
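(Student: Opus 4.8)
The plan is to isolate the content of the statement in plain injectivity of $\iota_{\theta,F}$ and to recover the quantitative refinement by a compactness argument. Since $k_K$ is finite, $k_K\bbrac{X,Y}=\varprojlim_N k_K[X,Y]/(X,Y)^N$ is profinite, hence compact, and $\iota_{\theta,F}$ is continuous for the $(X,Y)$-adic topology because $v^\flat\bigl(\iota_{\theta,F}(X^iY^j)\bigr)=i\,v^\flat(u_K)+j\,v^\flat\bigl(\eta_K^\theta/F(u_K)\bigr)$ is a sum of two positive multiples (the second positive by hypothesis), so it tends to $\infty$ with $i+j$. Granting injectivity: for each $n$ the complement $C_n$ of the open ideal $(X,Y)^{2n+1}k_K\bbrac{X,Y}$ is closed, hence compact, its image $\iota_{\theta,F}(C_n)$ is compact and (by injectivity) avoids $0$, so $v^\flat$ attains a finite rational maximum on it; taking $M(n)$ to be the larger of that maximum and $1$, the inequality $v^\flat(\iota_{\theta,F}(f))>M(n)$ forces $f\notin C_n$, i.e.\ $f_{ij}=0$ whenever $i+j\le 2n$, in particular for every $(i,j)\in\bfN_{\le n}^2$.

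For injectivity, assume $\iota_{\theta,F}(f)=0$; I would show $f=0$ by a Galois deformation. Fix a lift $\tilde\tau\in H_K$ of the distinguished generator $\tau_K$ of $\wtilde{\Gamma}_{\tau,K}$, so that $\tilde\tau(u_K)=u_K\varepsilon$. As $\tilde\tau$ fixes $K_{p^\infty}$ pointwise it fixes $\eta_K$ and $\varepsilon$, and as $\tilde\tau\in\scrG_K$ it fixes $k_K$ pointwise; hence $\tilde\tau^k(u_K)=u_K\varepsilon^k$, and applying $\tilde\tau^k$ to $f(u_K,\eta_K^\theta/F(u_K))=0$ gives $f\bigl(u_K\varepsilon^k,\eta_K^\theta/F(u_K\varepsilon^k)\bigr)=0$ for every $k\in\bfZ$. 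I then assemble these into one function of a single variable: writing $d=\opn{ord}_XF$ and $F=X^d\tilde U$ with $\tilde U\in k_K\bbrac{X}^\times$, put
\[
\Phi(Z)\deftobe f\bigl(u_KZ,\ \eta_K^\theta/F(u_KZ)\bigr)=f\bigl(u_KZ,\ \eta_K^\theta u_K^{-d}Z^{-d}\tilde U(u_KZ)^{-1}\bigr)
\]
and collect powers of $Z$. The $Z$-coefficients produced by the monomial $f_{ij}X^iY^j$ all have $v^\flat\ge i\,v^\flat(u_K)+j\,v^\flat(\eta_K^\theta/F(u_K))\to\infty$, so the rearrangement is legitimate, $\Phi(Z)=\sum_{m\in\bfZ}e_mZ^m$ with $e_m\in\Cpflat$, and $\Phi(\varepsilon^k)=0$ for all $k$. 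A direct bookkeeping computation identifies $e_m=u_K^m\,\Psi_m(\eta_K)$ for a power series $\Psi_m\in k_K\bbrac{Y}$ whose coefficients are $k_K$-linear combinations of the $f_{ij}$ weighted by those of $\tilde U^{-j}$, and gives $v^\flat(e_m)\ge|m|\cdot c$ for a constant $c>0$; the bound for $m<0$ (which arise only when $d>0$, from the pole $Z^{-d}$) is precisely where the hypothesis $v^\flat(\eta_K^\theta/F(u_K))>0$ enters.

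It remains to exploit that $\Phi$ has too many zeros. By $v^\flat(e_m)\ge|m|c$, the series $\Phi$ converges on the annulus $-c<v^\flat(Z)<c$, in particular on the locus $v^\flat(Z)=0$, which contains every $\varepsilon^k$; moreover the $\varepsilon^k$ ($k\in\bfZ$) are pairwise distinct since $\varepsilon$ has infinite multiplicative order in $(\Cpflat)^\times$. Thus $\Phi$ has infinitely many zeros of valuation $0$, whereas a nonzero Laurent series convergent on an annulus has only finitely many zeros of any prescribed valuation (read off from its Newton polygon); hence $\Phi=0$, so $e_m=0$ and therefore $\Psi_m(\eta_K)=0$ for every $m$. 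Since $\eta_K$ is a uniformizer of the characteristic-$p$ local field $\bfE_K$, the evaluation $k_K\bbrac{Y}\to\Cpflat$, $Y\mapsto\eta_K$, is injective, so $\Psi_m=0$ for all $m\in\bfZ$; unwinding these identities, using that each $\tilde U(Z)^{-j}$ is a unit of $k_K\bbrac{Z}$ (so the attendant convolution operators are invertible), forces $f_{ij}=0$ for all $i,j$, i.e.\ $f=0$. The argument is self-contained — it does not invoke Caruso's embedding lemma — and the step I expect to require real care rather than mere bookkeeping is the combination ``convergence estimate $v^\flat(e_m)\ge|m|c$ plus the non-archimedean finiteness of zeros on a single circle'': this is what converts the soft observation that $\Phi$ vanishes at all $\varepsilon^k$ into the rigid conclusion $\Phi=0$, and it is where the positivity hypothesis and the dichotomy $\opn{ord}_XF=0$ versus $\opn{ord}_XF>0$ genuinely matter.
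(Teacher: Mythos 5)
Your proof is correct but takes a genuinely different route from the paper's. The paper proves the quantitative claim head-on by an inductive Taylor-expansion argument (\Cref{prop:61330}): Galois elements $\gamma_c\in\Gamma_K$ and $\tau_c\in\Gal(K_{\FT}/K_{p^\infty})$ arbitrarily close to the identity are used to perturb $u_K$ and $\eta_K^\theta/F(u_K)$ by small amounts, the Hasse derivatives $\partial_1^{[i]}\partial_2^{[j]}f$ are extracted one degree at a time, and explicit constants $M(n)=M_1^{(n)}\bigl(M_2^{(n)}(1)\bigr)$ come out; injectivity is then a corollary. You invert the logic: the quantitative bound falls out of bare injectivity by compactness of the profinite ring $k_K\bbrac{X,Y}$ and continuity of $\iota_{\theta,F}$, and injectivity is proved by rigidity --- a lift $\tilde\tau\in H_K$ of $\tau_K$ manufactures infinitely many zeros $\varepsilon^k$, all of valuation zero and pairwise distinct, of the one-variable Laurent series $\Phi(Z)=f\bigl(u_KZ,\eta_K^\theta/F(u_KZ)\bigr)$, which converges on an annulus around $v^\flat(Z)=0$ precisely because of the positivity hypothesis on $v^\flat(\eta_K^\theta/F(u_K))$; the Newton-polygon finiteness of zeros on a circle forces $\Phi=0$, and unwinding via the injectivity of $k_K\bbrac{Y}\lto\bfE_K$, $Y\mapsto\eta_K$, and the invertibility of $\tilde U$ gives $f=0$. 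Both arguments are sound. The paper's method buys explicitness --- the remark closing \Cref{sec:6180} notes that its constants yield a usable comparison between $v^\flat$ and the monomial valuation $v_{\opn{mono}}$ on $\bfE_{\FT,K}^{\np,\circ,+}$, which may be relevant to the open estimates toward Iwasawa cohomology --- while yours buys conceptual economy and avoids the nested inductions. Two points worth pinning down in a write-up: (i) $\tilde\tau\in H_K$ really does fix $\eta_K$, $\varepsilon$ and $k_K$ pointwise because $H_K=\scrG_{K_{p^\infty}}$ fixes $\bfE_K\subset\widehat{K}_{p^\infty}^\flat$ pointwise and $k_{K_{p^\infty}}=k_K$ by the paper's standing totally-ramified assumption; (ii) the finiteness-of-zeros step should cite Weierstrass preparation for Laurent series over the algebraically closed complete field $\Cpflat$, valid in characteristic $p$ but perhaps less familiar than its $\bfC_p$-counterpart.
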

The main ingredient of the proof of \Cref{prop:60072} is the following lemma:
\begin{lemma}\label{prop:61330}
	Using the notations in \Cref{prop:60072}. For any $s\in \bfN$, denote by $\partial_1^{[s]}\coloneqq\frac{1}{s!}\frac{\partial^s}{\partial X^s}$ (resp. $\partial_2^{[s]}\coloneqq\frac{1}{s!}\frac{\partial^s}{\partial Y^s}$) the Hasse derivative on $k_K\bbrac{X,Y}$ with respect to $X$ (resp. $Y$).
	Then
	\begin{enumerate}
		\item For any $i\in\bfN$ and $c\in\bfQ_{>0}$, there exists $M_1^{[i]}(c)\in\bfQ_{>0}$ such that for any formal power series $f\in k_K\bbrac{X,Y}$ with $v^\flat\left(\iota_{\theta,F}(f)\right)\geq M_1^{[i]}(c)$, one has $v^\flat\left(\iota_{\theta,F}\left(\partial_1^{[s]}f\right)\right)\geq c$ for $s=0,\cdots,i$.
		\item For any $j\in\bfN$ and $c\in\bfQ_{>0}$, there exists $M_2^{[j]}(c)\in\bfQ_{>0}$ such that for any formal power series $f\in k_K\bbrac{X,Y}$ with $v^\flat\left(\iota_{\theta,F}(f)\right)\geq M_2^{[j]}(c)$, one has $v^\flat\left(\iota_{\theta,F}\left(\partial_2^{[s]}f\right)\right)\geq c$ for $s=0,\cdots,j$.
	\end{enumerate}
\end{lemma}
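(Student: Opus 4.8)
The plan is to make everything flow from the divided-power Taylor formula in $\scrO_{\bfC_p}^\flat$: writing $Z\coloneqq\eta_K^\theta/F(u_K)$, for any $h\in\scrO_{\bfC_p}^\flat$ with $v^\flat(h)>0$ one has
$$f(u_K+h,\,Z)=\sum_{s\geq 0}h^s\,\iota_{\theta,F}\!\bigl(\partial_1^{[s]}f\bigr),\qquad f(u_K,\,Z+h)=\sum_{t\geq 0}h^t\,\iota_{\theta,F}\!\bigl(\partial_2^{[t]}f\bigr),$$
together with the two-variable version when both slots are perturbed. Thus $\iota_{\theta,F}$ of the divided-power derivatives is read off from values of $f$ at perturbed arguments, and the point is to realize those perturbations by \emph{isometric} Galois actions so that the perturbed values keep the valuation $v^\flat(\iota_{\theta,F}(f))$ under control. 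Since $\scrO_{\bfC_p}^\flat$ is perfect, a Frobenius manipulation lets me assume at the outset that $p\nmid\theta$.

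I would establish (2) first, using $\wtilde{\Gamma}_K$. The generator $\gamma_K^a$ (an isometry) fixes $u_K$ and sends $\eta_K$ to $\lambda_a\eta_K$ with $\lambda_a\coloneqq\gamma_K^a(\eta_K)/\eta_K\in k_K\bbrac{\eta_K}^\times$, so
$$\gamma_K^a\bigl(\iota_{\theta,F}(f)\bigr)=f\bigl(u_K,\lambda_a^\theta Z\bigr)=\sum_{t\geq 0}\bigl(\lambda_a^\theta-1\bigr)^tZ^t\,\iota_{\theta,F}\!\bigl(\partial_2^{[t]}f\bigr),$$
the left side having valuation $v^\flat(\iota_{\theta,F}(f))$. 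Taking $a=p^m$ with $m\to\infty$, the multiplier $\mu_m\coloneqq\lambda_{p^m}^\theta-1$ satisfies $v^\flat(\mu_m)\to\infty$; and since $\iota_{\theta,F}(f)$ lies in the $v^\flat$-completion of $\Frac\,k_K\bbrac{u_K,\eta_K}$, on which the $\wtilde{\Gamma}_K$-action has bounded ramification, one gets $v^\flat\!\bigl((\gamma_K^{p^m}-1)y\bigr)\geq v^\flat(y)+c_m$ with $c_m\coloneqq v^\flat(\lambda_{p^m}-1)\to\infty$ \emph{independently of $y$}, while (using $p\nmid\theta$) $v^\flat(\mu_m)-c_m\to 0$. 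Comparing, for $m$ large, the leading term $\mu_mZ\,\iota_{\theta,F}(\partial_2^{[1]}f)$ of $(\gamma_K^{p^m}-1)\iota_{\theta,F}(f)$ with this estimate gives $v^\flat\bigl(\iota_{\theta,F}(\partial_2^{[1]}f)\bigr)\geq v^\flat(\iota_{\theta,F}(f))-v^\flat(Z)-1$, i.e.\ the case $j=1$. As $p\geq 3$, for $1\leq t<p$ one has $\bigl(\partial_2^{[1]}\bigr)^t=t!\,\partial_2^{[t]}$ with $t!$ invertible, so iterating settles $j<p$; for $t$ divisible by $p$ I would use the base-$p$ decomposition $f=\sum_{0\leq r<p}Y^r g_r(X,Y^p)$, under which $\partial_2^{[p]}$ becomes the ordinary derivative in the $Y^p$-slot of the $g_r$ and the $Z^r$-components of $\iota_{\theta,F}(f)$ are isolated by a fixed size-$p$ Vandermonde in the $\lambda_{a_k}$, so that the same peeling applies to the variable $Y^p$; recursing on the base-$p$ digits of $t$ reaches all $j$ with a compounding but bounded loss, which I record as $M_2^{(j)}(c)$.

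For (1) I would reduce to (2). Perturbing $u_K$ through $\tau_K^a\in\wtilde{\Gamma}_{\tau,K}$ fixes $\eta_K$ but moves $F(u_K)$, hence moves both slots: with $\delta_a\coloneqq u_K\bigl(\tau_K^a(u_K)/u_K-1\bigr)$,
$$\tau_K^a\bigl(\iota_{\theta,F}(f)\bigr)=f\bigl(u_K+\delta_a,\ Z\,F(u_K)/F(u_K+\delta_a)\bigr)=\sum_{s,t\geq 0}\delta_a^s(Z\rho_a)^t\,\iota_{\theta,F}\!\bigl(\partial_1^{[s]}\partial_2^{[t]}f\bigr),$$
where $\rho_a$ is a fixed power series in $\delta_a$ of positive order. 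Running the same deep-iterate argument in the variable $\delta_{p^m}$, collecting by powers of $\delta_{p^m}$, and using (2) — applied to the power series $\partial_1^{[s]}f$ along a secondary induction on $s$, with $s=0$ vacuous — to absorb the $t\geq 1$ terms, I would extract $\iota_{\theta,F}(\partial_1^{[s]}f)$ for $s\leq i$ with bounded loss (first base-$p$-decomposing in $X$ when $p\mid s$). When $F$ is constant this collapses to the situation in which $\tau_K^a$ already fixes $Z$, recovering Caruso's embedding lemma as the case $\theta=1$, $F=1$, $K=\bfQ_p$.

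The step I expect to be the main obstacle is the quantitative control of the extraction. Because the residue field of $\scrO_{\bfC_p}^\flat$ only sees the finitely many reductions of the multipliers $\lambda_a$ (they lie in $k_K^\times$), Galois translates of $Z$ — and likewise of $u_K$ — separate merely ``to bounded order'', so a single large Vandermonde in these translates would lose an unbounded amount of valuation; this is precisely why one is forced into the one-derivative-at-a-time scheme above. Making that scheme close rests on the near-cancellation between the ramification gain $c_m$ and the multiplier valuation $v^\flat(\mu_m)$, on the correct handling of the $p$-divisible divided powers via the base-$p$ decomposition, and on tracking how the losses at each step accumulate into $M_1^{(i)}(c)$ and $M_2^{(j)}(c)$. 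The remaining ingredients — the divided-power Leibniz and chain identities, the ultrametric estimates, and the bounded-ramification input on $\Frac\,k_K\bbrac{u_K,\eta_K}$ — are routine.
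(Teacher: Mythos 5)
Your basic framework matches the paper's: perturb the arguments of $\iota_{\theta,F}(f)$ by an isometric Galois element, use the divided-power Taylor expansion $f(X,Y+h)=\sum_{l\geq 0}h^l\,\partial_2^{[l]}f(X,Y)$, and bootstrap. The paper does exactly this for part (2): pick $\gamma_c\in\Gamma_K\subset\Gamma_{\frakF,K}$ close to $1$ so that $\delta_c\coloneqq\gamma_c(\eta_K^\theta)/F(u_K)-\eta_K^\theta/F(u_K)$ has $v^\flat(\delta_c)>c$, subtract off the terms $l\leq j$ (controlled by $M_2^{(j)}$), divide by $\delta_c^{j+1}$, and read off $\iota_{\theta,F}\bigl(\partial_2^{[j+1]}f\bigr)$ up to a tail of valuation $\geq v^\flat(\delta_c)>c$; the recursion $M_2^{(j+1)}(c)=\max\{M_2^{(j)}(c+(j+1)v^\flat(\delta_c)),\,c+(j+1)v^\flat(\delta_c),\,M_2^{(j)}(c)\}$ closes the induction. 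Part (1) is the two-variable version using $\tau_c\in\Gal(K_\frakF/K_{p^\infty})$, with the $l\geq 1$ cross-terms absorbed via part (2). No assumption on $\theta$ is made, and there is no base-$p$ manipulation.

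The genuine gap in your proposal is the detour you take to handle $\partial_2^{[t]}$ when $p\mid t$. You first iterate $\partial_2^{[1]}$ to reach $t<p$, then propose a base-$p$ decomposition of $f$ and a ``fixed size-$p$ Vandermonde in the $\lambda_{a_k}$'' to separate the $Z^r$-components when $p\mid t$. This is solving a problem that does not exist: the divided-power Taylor identity $f(X,Y+h)=\sum_l h^l\partial_2^{[l]}f$ is just the binomial theorem applied coefficientwise and is valid for \emph{every} $l$ in characteristic $p$, so the paper's induction passes from $j$ to $j+1$ uniformly without ever composing first-order derivatives. Moreover, your Vandermonde step is unlikely to close as stated: the multipliers $\lambda_a=\gamma_K^a(\eta_K)/\eta_K$ have residues in $k_K^\times$ given (for $K=\Qp$) by powers of $\chi(\gamma_{\Qp})\bmod p$, of which there are at most $p-1$ distinct values, so a $p\times p$ Vandermonde in the $\lambda_{a_k}$ cannot have distinct rows modulo $\eta_K$ and its determinant has unbounded loss of valuation --- precisely the failure mode you flag as the main obstacle. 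The same misconception drives your $p\nmid\theta$ reduction: you need $v^\flat(\lambda_{p^m}^\theta-1)-v^\flat(\lambda_{p^m}-1)\to 0$, which fails when $p\mid\theta$, but the paper has no such constraint because it never compares these two quantities --- it simply picks one $\gamma_c$ per target $c$, with no limit over $m$. Drop the base-$p$/Vandermonde machinery and the $p\nmid\theta$ reduction, replace the limiting argument by a fixed $\gamma_c$ depending on $c$ with the explicit recursive $M$'s, and your outline becomes the paper's proof.
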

\begin{proof}
	We first prove the second statement. When $j=0$ we can just take $M_2^{[0]}(c)=c$. Suppose we have constructed such $M_2^{[j]}(c)$ for some $j\geq 0$.

	Since $\Gamma_{\FT,K}$ acts continuously and isometrically on $\EtmKnpc\subset\wtilde{\bfE}_{\FT,K}$ (cf. \cite[48]{schneiderGaloisRepresentationsPhi2017}), there exists $\gamma_c\in\Gamma_K\subsetneq \Gamma_{\FT,K}$ that is close enough but not equal to $1$ such that the element
	$$\delta_c\coloneqq \frac{\gamma_c\cdot \eta_K^\theta-\eta_K^\theta}{F(u_K)}$$
	has valuation not less than $c$. Define
	\begin{equation}\label{eq:27570}
		M_2^{[j+1]}(c)\coloneqq \max\left\{M_2^{[j]}\left(c+(j+1)v^\flat\left(\delta_c\right)\right),c+(j+1)v^\flat\left(\delta_c\right),M_2^{[j]}(c)\right\}
	\end{equation}
	and take $f\in k_K\bbrac{X,Y}$ satisfying $v^\flat\left(\iota_{\theta,F}(f)\right)\geq M_2^{[j+1]}(c)$.

	By Taylor expansion, we have
	\begin{equation}\label{eq:13475}
		\begin{aligned}
			\delta_c^{-(j+1)}\left(\gamma_c\cdot\iota_{\theta,F}(f)-\sum_{l=0}^j\iota_{\theta,F}\left(\partial_2^{[l]}f\right)\delta_c^l\right)= & \delta_c^{-(j+1)}\left(f\left(u,\frac{\eta_K^\theta}{F(u_K)}+\delta_c\right)-\sum_{l=0}^j\iota_{\theta,F}\left(\partial_2^{[l]}f\right)\delta_c^l\right) \\
			=                                                                                                                                    & \iota_{\theta,F}\left(\partial_2^{[j+1]}f\right)+\delta_c\sum_{l=j+2}^\infty\iota_{\theta,F}\left(\partial_2^{[l]}f\right)\delta_c^{l-(j+2)}.
		\end{aligned}
	\end{equation}
	On one hand, we have
	$$v^\flat\left(\gamma_c\cdot\iota_{\theta,F}(f)\right)=v^\flat\left(\iota_{\theta,F}(f)\right)\geq M_2^{[j+1]}(c)\geq c+(j+1)v^\flat(\delta_c).$$
	On the other hand, the condition
	$$v^\flat(\iota_{\theta,F}(f))\geq M_2^{[j]}\left(c+(j+1)v^\flat(\delta_c)\right)$$
	implies that $v^\flat\left(\iota_{\theta,F}\left(\partial_2^{[l]}f\right)\right)\geq c+(j+1)v^\flat(\delta_c)$ for $l=0,\cdots,j$. Thus, the valuation of the left-hand side of \Cref{eq:13475} is not less than $c$. By the ultra-metric inequality, we have
	\begin{align*}
		     & v^\flat\left(\iota_{\theta,F}\left(\partial_2^{[j+1]}f\right)\right)                                                                                                                                                                                                                     \\
		\geq & \min\left(v^\flat\left(\delta_c^{-(j+1)}\left(\gamma_c\cdot\iota_{\theta,F}(f)-\sum_{l=0}^j\iota_{\theta,F}\left(\partial_2^{[l]}f\right)\delta_c^l\right)\right),v^\flat\left(\delta_c\sum_{l=j+2}^\infty\iota_{\theta,F}\left(\partial_2^{[l]}f\right)\delta_c^{l-(j+2)}\right)\right) \\
		\geq & \min\left(c,v^\flat(\delta_c)\right)                                                                                                                                                                                                                                                     \\
		=    & c.
	\end{align*}
	Finally, the condition $M_2^{[j+1]}(c)\geq M_2^{[j]}(c)$ implies that $v^\flat\left(\iota_{\theta,F}\left(\partial_2^{[l]}f\right)\right)\geq c$ for $l=0,\cdots,j$. This completes the proof of the second statement.

	For the first statement, when $i=0$ we can just take $M_1^{[0]}(c)=c$. Suppose we have constructed such $M_1^{[i]}(c)$ for some $i\geq 0$. Similar to what we have done, for any $c>0$, we can take $\tau_c\in\Gal(K_{\FT}/K_{p^\infty})$ being close enough but not equal to $1$ that $v^\flat(A_c),v^\flat(B_c)\geq c$, where $A_c\coloneqq \tau_c\cdot u_K-u_K$ and
	$$B_c\coloneqq \tau_c\cdot \frac{\eta_K^\theta}{F(u_K)}-\frac{\eta_K^\theta}{F(u_K)}=-\frac{\eta_K^\theta}{F(u_K)}\cdot \frac{\tau_c\cdot F(u_K)-F(u_K)}{\tau_c\cdot F(u_K)}.$$
	Define
	\begin{equation}\label{eq:27571}
		M_1^{[i+1]}(c)\coloneqq\max\left\{c+(i+1)v^\flat(A_c),M_1^{[i]}\left(M_2^{[\lambda(c)]}\left(c+(i+1)v^\flat(A_c)\right)\right),M_1^{[i]}(c)\right\},
	\end{equation}
	where $\lambda(c)\coloneqq \left\lceil\frac{c+(i+1)v^\flat(A_c)}{v^\flat(B_c)}\right\rceil$, and take $f\in k_K\bbrac{X,Y}$ satisfying $v^\flat\left(\iota_{\theta,F}(f)\right)\geq M_1^{[i+1]}(c)$.

	One can rewrite the Taylor expansion
	$$\tau_c\cdot \iota_{\theta,F}(f)=f\left(u_K+A_c,\frac{\eta_K^\theta}{F(u)}+B_c\right)=\sum_{m=0}^\infty\sum_{l=0}^\infty \iota_{\theta,F}\left(\partial_2^{[l]}\partial_1^{[m]}f\right)B_c^lA_c^m$$
	as
	\begin{equation}\label{eq:11148}
		\iota_{\theta,F}\left(\partial_1^{[i+1]}f\right)=A_c^{-(i+1)}\left(\tau_c\cdot \iota_{\theta,F}(f)-S_1-S_2\right)-B_cS_3-A_cS_4,
	\end{equation}
	where
	$$S_1\coloneqq \sum_{m=0}^i\sum_{l=0}^{\lambda(c)}\iota_{\theta,F}\left(\partial_2^{[l]}\partial_1^{[m]}f\right)B_c^lA_c^m,\ S_2\coloneqq \sum_{m=0}^i\sum_{l=\lambda(c)+1}^\infty\iota_{\theta,F}\left(\partial_2^{[l]}\partial_1^{[m]}f\right)B_c^lA_c^m,$$
	$$S_3\coloneqq\sum_{l=1}^\infty \iota_{\theta,F}\left(\partial_2^{[l]}\partial_1^{[i+1]}f\right)B_c^{l-1}\text{ and }S_4\coloneqq \sum_{l=0}^{\infty}\sum_{m=i+2}^\infty\iota_{\theta,F}\left(\partial_2^{[l]}\partial_1^{[m]}f\right)B_c^lA_c^{m-(i+2)}.$$

	Since $$v^\flat(\iota_{\theta,F}(f))\geq M_1^{[i+1]}(c)\geq M_1^{[i]}\left(M_2^{[\lambda(c)]}\left(c+(i+1)v^\flat(A_c)\right)\right),$$
	we know that $v^\flat\left(\iota_{\theta,F}\left(\partial_1^{[m]}f\right)\right)\geq M_2^{[\lambda(c)]}\left(c+(i+1)v^\flat(A_c)\right)$ for all $m=0,1,\cdots,i$. This implies that $v^\flat\left(\iota_{\theta,F}\left(\partial_2^{[l]}\partial_1^{[m]}f\right)\right)\geq c+(i+1)v^\flat(A_c)$ for all $m=0,1,\cdots,i$ and $l=0,1,\cdots,\lambda(c)$. As a result, we obtain
	\begin{equation*}
		v^\flat\left(S_1\right)\geq\min_{\substack{0\leq m\leq i\\0\leq l\leq \lambda(c)}}v^\flat\left(\iota_{\theta,F}\left(\partial_2^{[l]}\partial_1^{[m]}f\right)\right)\geq c+(i+1)v^\flat(A_c).
	\end{equation*}
	On the other hand, the definition of $\lambda(c)$ implies that
	\begin{equation*}
		v^\flat(S_2)\geq (\lambda(c)+1)v^\flat(B_c)\geq c+(i+1)v^\flat(A_c).
	\end{equation*}
	Since
	\begin{equation*}
		v^\flat\left(\tau_c\cdot\iota_{\theta,F}(f)\right)=v^\flat\left(\iota_{\theta,F}(f)\right)\geq M_1^{[i+1]}(c)\geq c+(i+1)v^\flat(A_c)
	\end{equation*}
	and $S_3,S_4$ have non-negative valuation, we can apply the ultra-metric inequality on \Cref{eq:11148}%
	to conclude that $v^\flat\left(\iota_{\theta,F}\left(\partial_1^{[i+1]}f\right)\right)\geq c$. Finally, the condition $M_1^{[i+1]}(c)\geq M_1^{[i]}(c)$ implies that $$v^\flat\left(\iota_{\theta,F}\left(\partial_1^{[m]}f\right)\right)\geq c$$ for all $m=0,1,\cdots,i$. This completes the proof of the first statement.
\end{proof}

\begin{proof}[Proof of \Cref{prop:60072}]
	Let $M(n)\coloneqq M_1^{[n]}\left(M_2^{[n]}(1)\right)$. Then \Cref{prop:61330} implies that $$v^\flat\left(\iota_{\theta,F}\left(\partial_1^{[i]}\partial_2^{[j]}f\right)\right)\geq 1$$ for all $(i,j)\in\bfN_{\leq n}^2$. Since
	$$\partial_1^{[i]}\partial_2^{[j]}f=f_{ij}+\text{ terms with positive valuation},$$
	the ultra-metric inequality implies that $f_{ij}=0$ for all $(i,j)\in\bfN_{\leq n}^2$.

	To see the injectivity of $\iota_{\theta,F}$, we only need to take $f\in k_K\bbrac{X,Y}$ such that $\iota_{\theta,F}(f)=0$ and notice that $v^\flat(f)>M(n)$ for arbitrary $n\in\bfN$.
\end{proof}
\Cref{prop:60072} specializes to the following two important cases:
\begin{corollary}\label{coro:48511}\label{lem:1122}\label{lem:869}\leavevmode%
	\begin{enumerate}
		\item The map
		      $$\iota_{\np}\colon k_K\bbrac{X,Y}\lto \scrO_{\bfC_p}^\flat,\ f\longmapsto f\left(u_K,\eta_K\right)$$
		      is injective.
		\item Let $\theta\in\bfZ_{\geq 1}$ be a positive integer such that $v^\flat\left(\frac{\eta_K^\theta}{u_K}\right)>0$. Then the map
		      $$\iota_{\theta,X}\colon k_K\bbrac{X,Y}\lto \scrO_{\bfC_p}^\flat,\ f\longmapsto f\left(u_K,\frac{\eta_K^\theta}{u_K}\right)$$
		      is injective.
	\end{enumerate}
\end{corollary}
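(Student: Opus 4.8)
The plan is to deduce both statements directly from \Cref{prop:60072}, so that no new argument is required beyond exhibiting, in each case, a pair $(\theta, F)$ with $F(X)\in k_K\bbrac{X}$ for which the positivity hypothesis $v^\flat\left(\eta_K^\theta/F(u_K)\right)>0$ of \Cref{prop:60072} is satisfied and for which $\iota_{\theta,F}$ coincides with the map in question.

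For the first part I would take $\theta = 1$ and $F(X) = 1$, the constant power series. Then $F(u_K) = 1$, so $v^\flat\left(\eta_K^{\theta}/F(u_K)\right) = v^\flat(\eta_K) > 0$, the last inequality holding because $\eta_K$ is a uniformizer of $\bfE_K$ and therefore has strictly positive valuation in $\scrO_{\bfC_p}^\flat$. With these choices $\iota_{\theta,F}$ is exactly the map $f\longmapsto f(u_K,\eta_K) = \iota_{\np}(f)$, so the injectivity of $\iota_{\np}$ follows at once from \Cref{prop:60072}.

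For the second part I would take $F(X) = X$, so that $F(u_K) = u_K$. The hypothesis $v^\flat\left(\eta_K^\theta/u_K\right) > 0$ imposed in the statement is then precisely the condition needed to invoke \Cref{prop:60072}, and $\iota_{\theta,F}$ becomes the map $f\longmapsto f\left(u_K,\eta_K^\theta/u_K\right) = \iota_{\theta,X}(f)$. Hence $\iota_{\theta,X}$ is injective.

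There is no genuine obstacle here, since all the work is contained in \Cref{prop:60072}; the only point to be attentive to is checking the positivity condition for the chosen $F$, which is trivial in part (1) (uniformizers have positive valuation) and is literally the hypothesis in part (2). One might additionally remark, regarding part (2), that a suitable $\theta$ always exists: since $v^\flat(\eta_K)>0$ and $v^\flat(u_K)$ is finite, one has $\theta\, v^\flat(\eta_K) - v^\flat(u_K) > 0$ for all sufficiently large $\theta$.
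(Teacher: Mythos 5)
Your proposal is correct and matches the paper's intent exactly: the paper treats both parts as immediate specializations of \Cref{prop:60072}, and the choices $F(X)=1$ (with $\theta=1$) and $F(X)=X$ are precisely the specializations that recover $\iota_{\np}$ and $\iota_{\theta,X}$ respectively, with the positivity hypothesis trivially satisfied in the first case and literally assumed in the second. The closing remark about the existence of a suitable $\theta$ is a harmless bonus not needed for the statement.
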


\section{Basic properties of $\bfE_{\FT,K}^{\np}$ and $\widehat{\bfE}_{\FT,K}^{\circ}$}
\begin{definition}\label{def:9888}
	Let $\bfE_{\FT,K}^{\np ,\circ,+}$ be the image of the embedding $\iota_{\np}$ in \Cref{lem:1122}. Let $\bfE_{\FT,K}^{\np ,\circ}$ be the fraction field of $\bfE_{\FT,K}^{\np ,\circ,+}$ and let $\widehat{\bfE}_{\FT,K}^{\np ,\circ}$ be completion of $\bfE_{\FT,K}^{\np ,\circ}$ with respect to the valuation $v^\flat$. Let $\Esep$ (resp. $\Ecsep$) be the separable closure of $\bfE_{\FT,K}^{\np ,\circ}$ (resp. $\widehat{\bfE}_{\FT,K}^{\np ,\circ}$) in $\bfC_p^\flat$.
\end{definition}

\begin{lemma}\label{lem:5466}
	The action of $H_{\frakF,K}$ on $\Esep$ (resp. $\Ecsep$) induced by the action of $\scrG_K$ on $\bfC_p^\flat$ is faithful and is trivial on $\bfE_{\FT,K}^{\np ,\circ}$ and $\widehat{\bfE}_{\FT,K}^{\np ,\circ}$, i.e., one has injective morphisms
	$$\rho\colon H_{\frakF,K}\lto \Gal\left(\Esep/\EtmKnpc\right)\text{ and\ } \widehat{\rho}\colon H_{\frakF,K}\lto \Gal\left(\Ecsep/\EtmKcnp\right).$$
\end{lemma}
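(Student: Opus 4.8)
The plan is to realise $\EtmKnpc$ and $\EtmKcnp$ as subfields of the perfectoid tilt $\wtilde{\bfE}_{\frakF,K}=\widehat{K}_{\frakF}^{\flat}$ --- on which $H_{\frakF,K}=\scrG_{K_{\frakF}}$ acts trivially by construction --- and then to extract the faithfulness of $\rho,\widehat\rho$ from the faithfulness of the $\scrG_K$-action on $\Cpflat$. For the first point, observe that $\bfE_{\FT,K}^{\np,\circ,+}=\opn{Im}(\iota_{\np})$ consists of the convergent series $f(u_K,\eta_K)$, and $u_K\in\bfE_{\tau,K}\subseteq\wtilde{\bfE}_{\tau,K}$ and $\eta_K\in\bfE_K\subseteq\wtilde{\bfE}_K$ both lie in $\wtilde{\bfE}_{\frakF,K}$ because $K_{\infty},K_{p^\infty}\subseteq K_{\frakF}$; since $(\wtilde{\bfE}_{\frakF,K},v^{\flat})$ is complete, each such series converges there, so $\bfE_{\FT,K}^{\np,\circ,+}\subseteq\wtilde{\bfE}_{\frakF,K}$, and passing to the fraction field and then to the $v^{\flat}$-completion stays inside the complete field $\wtilde{\bfE}_{\frakF,K}$. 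As $H_{\frakF,K}$ fixes $K_{\frakF}$ and acts continuously on $\Cpflat$, it fixes $\widehat{K}_{\frakF}$, hence its tilt $\wtilde{\bfE}_{\frakF,K}$, pointwise; this is precisely the asserted triviality of the action on $\EtmKnpc$ and on $\EtmKcnp$.

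Granting this, each $g\in H_{\frakF,K}\subseteq\scrG_K$ is a continuous field automorphism of $\Cpflat$ fixing $\EtmKnpc$, hence sends any element of $\Cpflat$ that is separable algebraic over $\EtmKnpc$ to another such element (its minimal polynomial is $g$-fixed), i.e. it stabilises $\Esep$. Because $\Esep$, being the separable closure of a field inside the algebraically closed field $\Cpflat$, is itself separably closed, $\Esep/\EtmKnpc$ is a Galois extension and $\Gal(\Esep/\EtmKnpc)$ is the absolute Galois group of $\EtmKnpc$; restricting $g$ therefore yields a continuous homomorphism $\rho\colon H_{\frakF,K}\to\Gal(\Esep/\EtmKnpc)$, and the same construction with $\EtmKcnp$ in place of $\EtmKnpc$ gives $\widehat\rho$.

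The substantive part is injectivity. Suppose $g\in\ker\rho$, i.e. $g$ fixes $\Esep$ pointwise. Since $\Cpflat$ has characteristic $p$ --- so $p$-power roots are unique --- and $g$ commutes with Frobenius, $g$ fixes the perfection $\bigcup_{n}(\Esep)^{1/p^{n}}$ pointwise, and by continuity it fixes the $v^{\flat}$-closure of this perfection in $\Cpflat$. Now $\bfE_{\Qp}=\bfF_p(\!(\eta_{\Qp})\!)$ with $\eta_{\Qp}=\varepsilon-1\in\bfE_K$ (cf. the proof of \Cref{lem:13864}), and $\bfE_K=k_K(\!(\eta_K)\!)=\Frac\,\iota_{\np}(k_K\bbrac{Y})\subseteq\EtmKnpc$, so $\bfE_{\Qp}\subseteq\EtmKnpc$; consequently the separable closure of $\bfE_{\Qp}$ in $\Cpflat$ is contained in $\Esep$, its perfection --- which is the algebraic closure $\overline{\bfE_{\Qp}}$ of $\bfE_{\Qp}$ in $\Cpflat$ --- is contained in $\bigcup_{n}(\Esep)^{1/p^{n}}$, and the $v^{\flat}$-closure of $\overline{\bfE_{\Qp}}$ is all of $\Cpflat$ by the standard identification of the tilt of $\bfC_p$ as $\Cpflat=\widehat{\overline{\bfE_{\Qp}}}$. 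Hence $g$ fixes $\Cpflat$ pointwise; and $\scrG_K$ acts faithfully on $\Cpflat$ (if $g$ fixes $\Cpflat$ it fixes $\scrO_{\bfC_p}^{\flat}$, hence $\bfA_{\opn{inf}}=W(\scrO_{\bfC_p}^{\flat})$, hence $\scrO_{\bfC_p}$ through Fontaine's $\scrG_K$-equivariant surjection $\theta$, hence $\bfC_p\supseteq\overline{\bfQ}_p$), so $g=1$. Running the identical argument with $\Ecsep$ and $\bfE_{\Qp}\subseteq\EtmKcnp$ gives the injectivity of $\widehat\rho$.

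The parts establishing triviality and the existence of $\rho,\widehat\rho$ are formal; the crux is the faithfulness step, and the single point inside it that genuinely needs care is the claim that $\Esep$ is ``large enough'' that perfecting and then completing returns all of $\Cpflat$. I would secure this cheaply from $\bfE_{\Qp}\subseteq\EtmKnpc$ --- i.e. from the fact that $\EtmKnpc$ already contains a pseudo-uniformizer of $\Cpflat$ whose algebraic closure is $v^{\flat}$-dense --- together with the well-known faithfulness of the $\scrG_K$-action on $\Cpflat$. A route avoiding the perfection step is to invoke the $\scrG_K$-equivariance of the field-of-norms embedding $\bfE_K^{\opn{sep}}\hookrightarrow\Cpflat$ (\Cref{thm:55903}): then $H_K=\scrG_{K_{p^\infty}}$, hence its subgroup $H_{\frakF,K}$, already acts faithfully on $\bfE_K^{\opn{sep}}\subseteq\Esep$; but that forces one to make the equivariance of the field of norms explicit, which the perfection argument sidesteps.
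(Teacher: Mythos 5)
Your proof is correct. For context, the paper proves this lemma by a one-line citation to Zhao's preprint (Proposition 3.1.4 of \cite{Zhao2022}), so there is no substantive argument in the text to compare against; you have in effect supplied the missing proof.

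A few remarks on the substance. The triviality part is exactly as you say: the embedding $\iota_{\np}$ lands in $\wtilde{\bfE}_{\frakF,K}$ because $u_K\in\wtilde{\bfE}_{\tau,K}$ and $\eta_K\in\wtilde{\bfE}_{K}$ both live in $\wtilde{\bfE}_{\frakF,K}$ and the latter is $v^\flat$-complete, and $H_{\frakF,K}$ fixes $\wtilde{\bfE}_{\frakF,K}$ pointwise by functoriality of tilting. The faithfulness argument is where your route is genuinely distinct from the field-of-norms one you mention at the end. You reduce to the pair of facts that (i) $g\in\scrG_K$ commuting with Frobenius and fixing $\Esep$ must fix the $v^\flat$-closure of $\bigcup_n(\Esep)^{1/p^n}$, (ii) $\bfE_{\Qp}\subseteq\EtmKnpc$ implies $\overline{\bfE_{\Qp}}$ sits inside that perfection, and (iii) $\widehat{\overline{\bfE_{\Qp}}}=\Cpflat$; then the $\theta$-map pins down $g=1$. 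This has the virtue of needing only the standard tilting isomorphism $\widehat{\overline{\bfE_{\Qp}}}\cong\Cpflat$ and the Witt-vector functoriality of the $\scrG_K$-action, with no appeal to the Galois-equivariance of the norm-field embedding --- which, as you correctly flag, is what the alternate route (and, I expect, Zhao's cited argument) relies on. Both routes are sound; yours is more self-contained from the ingredients available in this paper (\Cref{thm:55903} gives only the embedding, and the equivariance of that embedding under $\scrG_L$ is not spelled out in the statement quoted), while the field-of-norms route is shorter if one is willing to take the equivariance for granted. One very small point: when you invoke the separable closure of $\bfE_{\Qp}$ lying in $\Esep$, it is worth recording why an element separable over the subfield $\bfE_{\Qp}\subseteq\EtmKnpc$ remains separable over the larger field --- its $\EtmKnpc$-minimal polynomial divides the separable $\bfE_{\Qp}$-minimal polynomial --- since that containment is the lever for the whole reduction.
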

\begin{proof}
	See \cite[Proposition 3.1.4, Injectivity of $\rho$]{Zhao2022}.
\end{proof}

\begin{definition}\label{def:8032}
	Let $\bfE_{\FT,K}^{\np }\coloneqq \left(\Esep\right)^{H_{\frakF,K}}$ and $\widehat{\bfE}_{\FT,K}^{\np }\coloneqq \left(\Ecsep\right)^{H_{\frakF,K}}$, where $H_{\frakF,K}$ acts on $\Esep$ (resp. $\Ecsep$) by $\rho$ (resp. $\widehat{\rho}$).
\end{definition}
\begin{remark}\label{rmk:84309}
	To simplify our notation, we denote by $\bfE_{\frakF,K}^{\np,?}$ any choice between $\EtmKnp,\EtmKnpc$ and set $\bfE_{\frakF}^{\np,?}$ to be the separable closure of $\bfE_{\frakF,K}^{\np,?}$.
\end{remark}

\subsection{Valuation-theoretic characterization of $\bfE_{\FT,K}^{\np,?}$}
In \cite[Lemma 3.1.2, Proposition 3.1.4, Corollary 3.1.5]{Zhao2022}, Zhao shows that if $\bfE_{\FT,\Qp}^{\np ,\circ}$ is complete with respect to $v^\flat$, then $\bfE_{\FT,\Qp}^{\mathtt{Car},u\hyphen\np }$ provides the ``correct'' absolute Galois group, i.e., $\scrG_{\bfE_{\FT,\Qp}^{\mathtt{Car},u\hyphen\np }}\cong H_{\frakF,\Qp}$. The same strategy can be applied to show that if $\bfE_{\FT,K}^{\np ,\circ}$ is complete with respect to $v^\flat$, then $\rho$ is an isomorphism, i.e., $\EtmKnpc=\EtmKnp$. However, the following result shows that this premise does not hold:
\begin{proposition}\label{prop:51851}
	The field $\bfE_{\FT,K}^{\np ,\circ}$ is not complete with respect to $v^\flat$, i.e., it is a proper subfield of $\widehat{\bfE}_{\FT,K}^{\np ,\circ}$.
\end{proposition}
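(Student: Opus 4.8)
The plan is to exhibit a concrete element of $\EtmKcnp$ that does not lie in $\EtmKnpc$. By \Cref{lem:1122} the embedding $\iota_{\np}$ identifies $\bfE_{\FT,K}^{\np,\circ,+}$ with $k_K\bbrac{X,Y}$ via $X\mapsto u_K$, $Y\mapsto\eta_K$, so $\EtmKnpc=\Frac\!\left(k_K\bbrac{u_K,\eta_K}\right)$. Write $\alpha\deftobe v^\flat(u_K)$ and $\beta\deftobe v^\flat(\eta_K)$; both are positive and $\alpha<\beta$, since the cyclotomic uniformizer is more ramified in $\Cpflat$ than the Kummer one (indeed $\beta/\alpha\geq p/(p-1)$, from $e(\bfE_K/\bfE_{\bfQ_p})\leq e(K/\bfQ_p)$). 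Put $w\deftobe\eta_K u_K^{-1}$, so $v^\flat(w)=\beta-\alpha>0$, and set
$$z\deftobe\sum_{n\geq 0}w^{n^2}=\sum_{n\geq 0}u_K^{-n^2}\eta_K^{n^2}.$$
Each partial sum $z_N=u_K^{-N^2}\bigl(u_K^{N^2}+\sum_{n=1}^N u_K^{N^2-n^2}\eta_K^{n^2}\bigr)$ lies in $\EtmKnpc$ (an element of $k_K\bbrac{u_K,\eta_K}$ divided by $u_K^{N^2}$), and $v^\flat(z_{N'}-z_N)\geq (N+1)^2(\beta-\alpha)\to\infty$, so $(z_N)$ is $v^\flat$-Cauchy; its limit $z\in\EtmKcnp$ is nonzero (leading term $1$). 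It remains to show $z\notin\EtmKnpc$.

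Suppose for contradiction $z=f/g$ with $f,g\in k_K\bbrac{u_K,\eta_K}$ and $g\neq 0$. Since $z$ has nonzero constant term, after dividing $f$ and $g$ by $\eta_K^{\opn{ord}_{\eta_K}(g)}$ I may assume $g\notin(\eta_K)$; write $g=\sum_{k\geq 0}\tilde g_k(u_K)\eta_K^k$ with $\tilde g_0\neq 0$ and $f=\sum_{m\geq 0}\tilde f_m(u_K)\eta_K^m$. Using continuity of multiplication on the complete valued field $\EtmKcnp$, together with the observation that the $\eta_K^m$-coefficient of $g z_N$ is a finite sum which is constant in $N$ once $N\geq\sqrt m$, one shows that $gz=f$ has $\eta_K$-adic expansion $\sum_{m\geq 0}\bigl(\sum_{k=0}^m\tilde g_k h_{m-k}\bigr)\eta_K^m$, where $h_m=u_K^{-m}$ if $m$ is a perfect square and $h_m=0$ otherwise. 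The crucial point — and, I expect, the main obstacle — is that this $\eta_K$-adic expansion is unambiguous: any relation $\sum_{m\geq 0}d_m\eta_K^m=0$ with $d_m\in k_K(\!(u_K)\!)$ and $\opn{ord}_{u_K}(d_m)\geq -m$ forces all $d_m=0$, because under $\alpha<\beta$ only finitely many monomials $u_K^a\eta_K^m$ occur at each $v^\flat$-level, and Caruso's embedding lemma (\Cref{lem:1122}, see also \Cref{prop:60072}) forbids cancellation among finitely many distinct monomials $u_K^i\eta_K^j$ with $i,j\geq 0$ at a common level. Comparing coefficients in $gz=f$ therefore yields $\sum_{k=0}^m\tilde g_k h_{m-k}=\tilde f_m$ for all $m$, hence, since $\tilde g_0$ is a unit in $k_K(\!(u_K)\!)$, the recursion $h_m=\tilde g_0^{-1}\bigl(\tilde f_m-\sum_{k=1}^m\tilde g_k h_{m-k}\bigr)$.

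To finish, compare $u_K$-orders in this recursion at $m=n^2$ with $n$ large. The left-hand side has $u_K$-order $-n^2$. On the right, $\opn{ord}_{u_K}(\tilde g_0^{-1})=-v_0$ is a fixed integer, $\tilde f_m$ and $\tilde g_k$ have $u_K$-order $\geq 0$, and among $1\leq k\leq m$ the largest index $m-k$ that is a perfect square is $(n-1)^2$, so $\min_{1\leq k\leq m}\opn{ord}_{u_K}(h_{m-k})=-(n-1)^2$; hence the right-hand side has $u_K$-order $\geq -v_0-(n-1)^2$. Equality would force $-n^2\geq -v_0-(n-1)^2$, i.e. $v_0\geq 2n-1$, which is false once $n>(v_0+1)/2$. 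This contradiction shows $z\notin\EtmKnpc$, so $\EtmKnpc\subsetneq\EtmKcnp$ and $\EtmKnpc$ is not complete for $v^\flat$. The heart of the argument is the unambiguity of the $\eta_K$-adic expansion of the product $gz$ (which is where the embedding lemma and the inequality $v^\flat(\eta_K)>v^\flat(u_K)$ are really used); the rest is convergence bookkeeping and an elementary order estimate.
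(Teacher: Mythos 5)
Your proposal is correct in spirit but follows a genuinely different route from the paper's proof, and is considerably longer. The paper takes $A=\sum_n C_n(\eta_K^\theta/u_K)^n$, the Catalan generating function, observes that Segner's recurrence forces the quadratic relation $A^2\cdot(\eta_K^\theta/u_K)=A-1$, and pushes this through $\iota_{\np}$ to the polynomial identity $X^\theta f^2-Yfg+Yg^2=0$, whence $(2(X^\theta/Y)(f/g)-1)^2=1-4X^\theta/Y$, and compares $Y$-adic valuations (even vs.\ odd) to reach a contradiction. That is a three-line parity argument relying only on $\Cref{lem:1122}$ and needing no estimate at all. You instead take a sparse lacunary series $z=\sum_n(\eta_K/u_K)^{n^2}$ and rule out $z=f/g$ by comparing $\eta_K$-adic coefficients and $u_K$-orders; the key technical input is the unambiguity of the $\eta_K$-adic expansion, which after the substitution $w=\eta_K/u_K$ is exactly the injectivity of $\iota_{1,X}$ in $\Cref{coro:48511}$(2). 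A few spots in your write-up should be tightened: (i) the stated justification of the unambiguity claim via ``no cancellation of finitely many monomials at a common $v^\flat$-level'' is not valid as a level-by-level argument (when $v^\flat(u_K),v^\flat(\eta_K)$ are $\bfQ$-dependent a finite sum of distinct monomials at one level can have strictly larger valuation); the clean argument is simply to set $d_m:=c_m-\tilde f_m$, note $\opn{ord}_{u_K}(d_m)\geq -m$, form $D(X,Y)=\sum_m d_m(X)X^mY^m\in k_K\bbrac{X,Y}$, and apply $\iota_{1,X}(D)=0\Rightarrow D=0$ from $\Cref{coro:48511}$(2); (ii) the reduction to $\tilde g_0\neq 0$ should go through $\gcd(f,g)=1$ in the UFD $k_K\bbrac{X,Y}$ (if $Y\mid g$ and $Y\nmid f$, the constant $\eta_K$-coefficient of $gz$ vanishes while that of $f$ does not), since dividing $f$ by $\eta_K^{\opn{ord}_{\eta_K}(g)}$ may take you out of $k_K\bbrac{X,Y}$; (iii) the inequality $v^\flat(\eta_K)>v^\flat(u_K)$ is correct under the paper's total-ramification assumption as you argue, but the paper sidesteps it entirely by working with $\eta_K^\theta/u_K$ for $\theta$ large, which you could also do. What your approach buys is a transparent, elementary element (no generating-function trick); what the paper's approach buys is brevity and independence from any comparison of $v^\flat(\eta_K)$ and $v^\flat(u_K)$. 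Your series $z$ is also close in spirit to the element $\sum_n(\eta_K^\theta/u_K)^{n!}$ the paper uses later in $\Cref{prop:29251}$ to produce a transcendental element.
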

\begin{proof}
	Let $\theta>0$ be an integer satisfying $v^\flat\left(\frac{\eta_K^\theta}{u_K}\right)>0$. Let
	$$A\coloneqq \lim_{m\to\infty}\sum_{n=0}^m C_n\cdot \left(\frac{\eta_K^\theta}{u_K}\right)^n\in \EtmKcnp,$$
	where $C_n\coloneqq \binom{2n}{n}-\binom{2n}{n+1}$ is the $n$-th Catalan number (cf. \cite[106]{koshyCatalanNumbersApplications2008}).

	One can calculate that
	$$A^2=\lim_{m\to\infty}\left(\sum_{n=0}^m C_n \left(\frac{u_K^\theta}{\eta_K}\right)^n\right)^2=\lim_{m\to\infty}\sum_{n=0}^m\left(\sum_{i=0}^n C_iC_{n-i}\right)\left(\frac{u_K^\theta}{\eta_K}\right)^n.$$
	By Segner's recurrence relation (cf. \cite[(5.6)]{koshyCatalanNumbersApplications2008}) $C_{n+1}=\sum_{i=0}^n C_iC_{n-i}$, we have
	\begin{equation}\label{eq:60689}
		A^2=\lim_{m\to\infty}\sum_{n=0}^m C_{n+1}\left(\frac{u_K^\theta}{\eta_K}\right)^n=\left(\frac{u_K^\theta}{\eta_K}\right)^{-1}\cdot (A-1).
	\end{equation}
	If $A$ lies in $\bfE_{\FT,K}^{\np ,\circ}$, then we may set $A=\iota_{np}(f)/\iota_{np}(g)$, where $f(X,Y), g(X,Y)\in k_K\bbrac{X,Y}$. \Cref{eq:60689} can be rewritten as
	$$\iota_{np}(X^\theta)\cdot \iota_{np}(f)^2-\iota_{np}(Y)\left(\iota_{np}(f)\iota_{np}(g)-\iota_{np}(g)^2\right)=0.$$
	By \Cref{lem:1122}, this implies that
	$$X^\theta\cdot f^2-Y\cdot f\cdot g+Y\cdot g^2=0,$$
	i.e.,
	$$\left(2\cdot\frac{X^\theta}{Y}\cdot\frac{f}{g}-1\right)^2=1-4\cdot\frac{X^\theta}{Y}.$$
	The contradiction follows from comparing the $Y$-adic valuation of the both sides of the above equality: the $Y$-adic valuation of the left-hand side is an even integer while the one of the right-hand side\footnote{Recall that we assume $p>2$.} is $-1$.
\end{proof}
\begin{remark}
	\Cref{prop:51851} will be covered by \Cref{prop:56478} combined with \Cref{lem:47550}, or independently by the proof of \Cref{prop:29251}. We choose to keep this proof, for \Cref{prop:51851} is the starting point of the whole study, and the proof presented here is short and elegant.
\end{remark}
Different from the single-variable case, $\bfE_{\FT,K}^{\np ,\circ,+}$ is not the ring of integers of $\bfE_{\FT,K}^{\np ,\circ}$, for elements like $\eta_K^\theta/u_K$ with $\theta>\!\!\!>0$ have positive valuation, but they do not live in $\bfE_{\FT,K}^{\np ,\circ,+}$. Therefore, \Cref{prop:51851} does not imply that $\bfE_{\FT,K}^{\np ,\circ,+}$ is incomplete with respect to $v^\flat$. On the contrary, we have:
\begin{lemma}\label{lem:2016}
	The ring $\bfE_{\FT,K}^{\np ,\circ,+}$ is complete with respect to $v^\flat$.
\end{lemma}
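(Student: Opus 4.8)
The plan is to identify $\bfE_{\FT,K}^{\np,\circ,+}$ with $k_K\bbrac{X,Y}$ through the injective morphism $\iota_{\np}$ of \Cref{lem:1122}, and then to check that, under this identification, the topology on $\bfE_{\FT,K}^{\np,\circ,+}$ induced by $v^\flat$ coincides with the $(X,Y)$-adic topology; completeness follows at once, since $k_K\bbrac{X,Y}$ is $(X,Y)$-adically complete and separated. Set $c\coloneqq \min\left(v^\flat(u_K),v^\flat(\eta_K)\right)$, which is $>0$ because $u_K$ and $\eta_K$ are uniformizers of $\bfE_{\tau,K}$ and $\bfE_K$.

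The easy half of the comparison is this: if $g=\sum_{i,j}g_{ij}X^iY^j\in (X,Y)^N$, i.e.\ $g_{ij}=0$ whenever $i+j<N$, then $\iota_{\np}(g)=\sum_{i+j\geq N}g_{ij}u_K^i\eta_K^j$, and the ultrametric inequality gives $v^\flat\left(\iota_{\np}(g)\right)\geq Nc$; hence $\iota_{\np}$ is continuous from the $(X,Y)$-adic topology to the one defined by $v^\flat$. The other half is precisely the quantitative statement of \Cref{prop:60072}, applied with $\theta=1$ and $F(X)=1$ (so that $\iota_{\theta,F}=\iota_{\np}$): for every $n\in\bfN$ there is $M(n)\in\bfQ_{>0}$ such that $v^\flat\left(\iota_{\np}(g)\right)>M(n)$ forces $g_{ij}=0$ for all $(i,j)\in\bfN_{\leq n}^2$; any monomial $X^iY^j$ with $g_{ij}\neq 0$ then has $i\geq n+1$ or $j\geq n+1$, hence $i+j\geq n+1$, so $g\in (X,Y)^{n+1}$.

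Granting these two facts the rest is routine. Let $(a_m)_{m\geq0}$ be a sequence in $\bfE_{\FT,K}^{\np,\circ,+}$ that is Cauchy for $v^\flat$, and write $a_m=\iota_{\np}(f_m)$ with $f_m\in k_K\bbrac{X,Y}$, uniquely by the injectivity of $\iota_{\np}$. Fixing $n$ and choosing $m_0$ so that $v^\flat\left(a_m-a_{m'}\right)>M(n)$ for all $m,m'\geq m_0$, the previous paragraph applied to $f_m-f_{m'}$ gives $f_m\equiv f_{m'}\pmod{(X,Y)^{n+1}}$; thus $(f_m)_m$ is Cauchy for the $(X,Y)$-adic topology and converges to some $f_\infty\in k_K\bbrac{X,Y}$. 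For each $n$ one has $f_m-f_\infty\in (X,Y)^{n+1}$ once $m$ is large, so $v^\flat\left(a_m-\iota_{\np}(f_\infty)\right)=v^\flat\left(\iota_{\np}(f_m-f_\infty)\right)\geq (n+1)c$ by the easy estimate, whence $a_m\to\iota_{\np}(f_\infty)\in\bfE_{\FT,K}^{\np,\circ,+}$.

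The only step carrying genuine content is the reverse topological comparison, namely \Cref{prop:60072}: it rules out a power series of small $(X,Y)$-order acquiring large $v^\flat$-valuation after evaluation at $(u_K,\eta_K)$. That this fails once denominators are allowed is exactly the point of \Cref{prop:51851}, where a Catalan-series construction produces an element of $\widehat{\bfE}_{\FT,K}^{\np,\circ}$ that is a $v^\flat$-limit from $\bfE_{\FT,K}^{\np,\circ}$ yet escapes the image of $\iota_{\np}$; so it is comforting, if not logically indispensable, that the delicate estimate is already available from the previous section. Everything else is the standard ``completeness transfers across a homeomorphism'' bookkeeping, and I foresee no further difficulty.
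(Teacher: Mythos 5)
Your proof is correct and rests on the same key ingredient as the paper's, namely the quantitative estimate of \Cref{prop:60072} applied with $\theta=1$, $F=1$; the paper simply phrases the conclusion more tersely as ``the termwise coefficient limits exist and define the $v^\flat$-limit,'' whereas you package the same content as a topological comparison between the $(X,Y)$-adic and $v^\flat$-topologies. Both arguments are the same in substance, and your framing as ``$\iota_{\np}$ is a homeomorphism onto its image, so completeness transfers'' is a clean way to organize the bookkeeping.
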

\begin{proof}
	Let $\left\{f_n(X,Y)=\sum_{i,j\geq 0}a_{ij}^{(n)}X^iY^j\right\}_{n\geq 1}\subset k_K\bbrac{X,Y}$ such that $\left\{\iota_{\np}(f_n)\right\}_{n\geq 1}$ is a Cauchy sequence in $\bfE_{\FT,K}^{\np,\circ,+}$. Let
	$$f(X,Y)\coloneqq \sum_{i,j\geq 0}\left(\lim_{n\to\infty}a_{ij}^{(n)}\right)X^iY^j.$$
	\Cref{lem:62550} ensures this element is well-defined and $\iota_{np}(f)$ is the limit of $\left\{\iota_{\np}(f_n)\right\}_{n\geq 1}$.
\end{proof}

In fact, we can use group-theoretic arguments to show:
\begin{proposition}\label{prop:56478}
	The field $\bfE_{\FT,K}^{\np ,\circ}$ is a proper subfield of $\bfE_{\FT,K}^{\np }$.
\end{proposition}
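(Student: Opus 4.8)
The plan is to produce, explicitly, an element of $\EtmKnp$ that does not lie in $\EtmKnpc$, recycling the Catalan-series element from the proof of \Cref{prop:51851}. Fix $\theta\in\bfZ_{\geq 1}$ with $v^\flat\bigl(\eta_K^\theta/u_K\bigr)>0$, set $Z\coloneqq \eta_K^\theta/u_K$, which lies in $\EtmKnpc=\Frac\bigl(\bfE_{\FT,K}^{\np,\circ,+}\bigr)$, and let $A\coloneqq \lim_{m\to\infty}\sum_{n=0}^m C_n Z^n\in\EtmKcnp$ be the element appearing in that proof. Two facts established there are taken as input: Segner's recurrence gives the identity $ZA^2-A+1=0$, and the valuation comparison shows $A\notin\EtmKnpc$.

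First I would verify that $A$ is separable algebraic over $\EtmKnpc$. The polynomial $ZX^2-X+1\in\EtmKnpc[X]$ has leading coefficient $Z\neq 0$ and discriminant $1-4Z$, and since $v^\flat(4Z)=v^\flat(Z)>0$ this discriminant is nonzero; as $p\geq 3$, a quadratic with nonzero leading coefficient and nonzero discriminant is separable, hence its root $A$ is separable over $\EtmKnpc$, and since $A\in\Cpflat$ it follows that $A\in\Esep$. Secondly, I would check that $A$ is fixed by $H_{\frakF,K}$: by construction everything takes place inside $\Cpflat$ and $u_K,\eta_K$ belong to the $v^\flat$-complete field $\wtilde{\bfE}_{\FT,K}=\widehat{K}_{\frakF}^\flat$, so $\EtmKcnp\subseteq\wtilde{\bfE}_{\FT,K}$ and in particular $A\in\wtilde{\bfE}_{\FT,K}$; by the Ax--Sen--Tate theorem applied to $K_{\frakF}$ together with tilting, $\wtilde{\bfE}_{\FT,K}=\bigl(\Cpflat\bigr)^{\scrG_{K_{\frakF}}}$, so $H_{\frakF,K}=\scrG_{K_{\frakF}}$ acts trivially on $A$. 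Combining the two points, $A\in\bigl(\Esep\bigr)^{H_{\frakF,K}}=\EtmKnp$, the $H_{\frakF,K}$-action being the restriction of the $\scrG_K$-action on $\Cpflat$ as in \Cref{def:8032}; since $A\notin\EtmKnpc$, this gives the strict inclusion $\EtmKnpc\subsetneq\EtmKnp$.

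I do not anticipate a genuine obstacle, since \Cref{prop:51851} already carries the analytic weight of producing and analyzing $A$; the only delicate points are the separability of the auxiliary quadratic, which is precisely where the standing hypothesis $p\neq 2$ enters, and the identification $\wtilde{\bfE}_{\FT,K}=(\Cpflat)^{H_{\frakF,K}}$. A more abstract route is available — the inclusion $\EtmKnpc\subseteq\EtmKcnp$ induces $\Esep\subseteq\Ecsep$, hence $\EtmKnp\subseteq\bigl(\Ecsep\bigr)^{H_{\frakF,K}}$, which combined with \Cref{prop:51851} and the later identification of $\EtmKcnp$ with $\bigl(\Ecsep\bigr)^{H_{\frakF,K}}$ forces a strict jump — but locating that jump already at $\EtmKnp$ still requires a concrete separable element such as $A$, so the direct argument above is the shortest.
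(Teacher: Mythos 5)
Your proof is correct, and it takes a genuinely different route from the paper. The paper's argument is group-theoretic: it identifies $\EtmKnpc$ with $k_K\pparen{X,Y}$ via the embedding lemma, invokes Harbater's theorem to see that $\scrG_{k_K\pparen{X,Y}}$ has rank equal to the (uncountable) cardinality of the field, and contrasts this with the countable rank of $H_{\frakF,K}\cong\scrG_{k_K\pparen{T}}$ to conclude that $\rho\colon H_{\frakF,K}\to\Gal(\Esep/\EtmKnpc)$ cannot be an isomorphism. Your argument instead exhibits a concrete witness: the Catalan series $A$ is shown to be a root of the separable quadratic $ZX^2-X+1$ over $\EtmKnpc$, hence lies in $\Esep$, and it lies in $\wtilde{\bfE}_{\FT,K}=(\Cpflat)^{H_{\frakF,K}}$, hence is $H_{\frakF,K}$-invariant; combined with the conclusion of \Cref{prop:51851} that $A\notin\EtmKnpc$, this gives the strict inclusion. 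What your approach buys is an explicit, elementary, constructive proof with a concrete element in $\EtmKnp\setminus\EtmKnpc$ and no appeal to Galois-group cardinality arithmetic. What it costs is independence: the paper's two propositions are logically parallel, each with a self-contained proof, whereas your argument recycles the analytic heart of \Cref{prop:51851}, so the two statements become serially dependent. (This is harmless here since \Cref{prop:51851} is proved first and independently.) One small imprecision in your commentary: separability of $ZX^2-X+1$ actually holds in every characteristic, since its derivative $2ZX-1$ is a nonzero constant in characteristic $2$ as well; the genuine place $p\neq 2$ is needed is in the parity-of-valuation contradiction inside \Cref{prop:51851}, which you inherit as a black box. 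Your closing remark about a second, more abstract route correctly notes that it would not close without a concrete separable element, and it would also be circular in the paper's ordering since \Cref{lem:47550} comes afterward; the direct argument you give is the right one.
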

\begin{proof}
	We only need to show that $\rho$ is not an isomorphism. This can be done by comparing the rank (cf. \cite[339]{friedFieldArithmetic2008}) of these two groups.

	By \Cref{lem:1122}, $\bfE_{\FT,K}^{\np ,\circ}$ is isomorphic to $k_K\pparen{X,Y}$, which has absolute Galois group with rank equal to the cardinality of $k_K\pparen{X,Y}$ (cf. \cite[Theorem 1.1]{HARBATER2005623}). \cite[Remark 7.1 (iii)]{krapp2023generalised} tells us that this rank is uncountable.

	On the other hand, $H_{\frakF,K}$ is isomorphic to the absolute Galois of the field of norms $\bfE_{\FT,K}\cong k_K\pparen{T}$ by Fontaine-Wintenberger, which has countable rank (cf. \cite[Theorem 7.5.13]{Neukirch2008}).
\end{proof}

Although $\bfE_{\FT,K}^{\np ,\circ}$ is not satisfactory in the sense of \Cref{prop:56478}, we can still apply Zhao's argument on its completion $\widehat{\bfE}_{\FT,K}^{\np ,\circ}$ and obtain
\begin{lemma}\label{lem:47550}
	$\widehat{\rho}$ is an isomorphism, i.e., $\widehat{\bfE}_{\FT,K}^{\np ,\circ}=\widehat{\bfE}_{\FT,K}^{\np }$.
\end{lemma}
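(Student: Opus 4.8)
The plan is to follow Zhao's single-variable argument (\cite[Proposition 3.1.4, Corollary 3.1.5]{Zhao2022}), whose only missing input in our setting is supplied by \Cref{lem:2016}: the ring $\EtmKcnp$ is complete with respect to $v^\flat$ by construction. Since $\widehat{\rho}$ is injective by \Cref{lem:5466} and $H_{\frakF,K}$ is compact, its image is a closed subgroup of $\Gal\left(\Ecsep/\EtmKcnp\right)=\scrG_{\EtmKcnp}$, so by infinite Galois theory $\widehat{\rho}$ is an isomorphism if and only if the fixed field $\widehat{\bfE}_{\FT,K}^{\np}=\left(\Ecsep\right)^{H_{\frakF,K}}$ equals $\EtmKcnp$. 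Now $\left(\Ecsep\right)^{H_{\frakF,K}}\subseteq\left(\bfC_p^\flat\right)^{H_{\frakF,K}}$, and the latter equals $\wtilde{\bfE}_{\frakF,K}$ by the Ax--Sen--Tate theorem ($\bfC_p^{H_{\frakF,K}}=\widehat{K}_{\frakF}$) combined with tilting; hence $\widehat{\bfE}_{\FT,K}^{\np}$ is exactly the separable closure of $\EtmKcnp$ inside $\wtilde{\bfE}_{\frakF,K}$, and the whole lemma reduces to showing that $\EtmKcnp$ is separably closed in $\wtilde{\bfE}_{\frakF,K}$.

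The first ingredient I would establish is that $\wtilde{\bfE}_{\frakF,K}$ is precisely the completed perfect closure of $\EtmKcnp$ inside $\bfC_p^\flat$. Both $u_K$ and $\eta_K$ lie in $\bfE_{\FT,K}^{\np,\circ,+}\subseteq\EtmKcnp$ with positive $v^\flat$-valuation, so adjoining all their $p$-power roots together with the (perfect) residue field $k_K$ and then completing yields a complete subfield of $\wtilde{\bfE}_{\frakF,K}$ containing both $\wtilde{\bfE}_{\tau,K}$ (the completed perfect closure of $\bfE_{\tau,K}=k_K(\!(u_K)\!)$) and $\wtilde{\bfE}_K$ (that of $\bfE_K=k_K(\!(\eta_K)\!)$). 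Conversely, $K_{\frakF}=K_\infty K_{p^\infty}$ gives $\widehat{K}_{\frakF}=\overline{\widehat{K}_\infty\widehat{K}_{p^\infty}}$, and tilting (which commutes with completed compositum of perfectoid subfields) yields $\wtilde{\bfE}_{\frakF,K}=\overline{\wtilde{\bfE}_{\tau,K}\wtilde{\bfE}_K}$ for the $v^\flat$-topology. As the completed perfect closure of $\EtmKcnp$ is a complete subfield of $\wtilde{\bfE}_{\frakF,K}$ containing $\wtilde{\bfE}_{\tau,K}\wtilde{\bfE}_K$, the two coincide.

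Granting this, set $P\coloneqq\left(\EtmKcnp\right)^{\mathrm{perf}}$, so that its completion is $\widehat{P}=\wtilde{\bfE}_{\frakF,K}$. The extension $P/\EtmKcnp$ is purely inseparable, so it suffices to show $P$ is separably closed in $\widehat{P}$; and since $\EtmKcnp$ is complete of rank one, hence Henselian, its algebraic extension $P$ is Henselian of rank one as well, so this follows from the standard Krasner's-lemma approximation argument (the one Zhao uses), which descends finite separable extensions of $\widehat{P}$ to $P$ of the same degree. Tracking roots inside $\bfC_p^\flat$, the identifications $\Gal\left(\Ecsep/\EtmKcnp\right)=\scrG_{\EtmKcnp}\cong\scrG_P\cong\scrG_{\widehat P}=\scrG_{\wtilde{\bfE}_{\frakF,K}}$ are all compatible with the natural action of $\scrG_K$ on $\bfC_p^\flat$, and composing with the Fontaine--Wintenberger isomorphism $\scrG_{\wtilde{\bfE}_{\frakF,K}}\cong\scrG_{\widehat{K}_{\frakF}}=H_{\frakF,K}$ recovers exactly $\widehat{\rho}$.

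I expect the main obstacle to be the separable-closedness statement in the last step --- equivalently, the Krasner-type descent of finite separable extensions along $\widehat{P}/P$ --- since this is precisely the step where completeness of $\EtmKcnp$ is genuinely used, and where the argument breaks for the incomplete field $\bfE_{\FT,K}^{\np,\circ}$ (cf. \Cref{prop:51851}, \Cref{prop:56478}). Identifying $\wtilde{\bfE}_{\frakF,K}$ with the completed perfect closure of $\EtmKcnp$ is the second, milder, point; the remaining bookkeeping --- the Ax--Sen--Tate reduction, the purely inseparable step, and tracking the Galois actions --- is formal.
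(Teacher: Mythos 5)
The paper does not actually write out a proof of this lemma: it defers to ``Zhao's argument'' (the strategy sketched just before \Cref{prop:51851}) applied to the complete field $\EtmKcnp$, and your proposal is a correct and faithful unwinding of precisely that strategy. The overall chain — reduce surjectivity of $\widehat\rho$ via infinite Galois theory and the char-$p$ Ax--Sen--Tate theorem $\left(\bfC_p^\flat\right)^{H_{\frakF,K}}=\wtilde{\bfE}_{\frakF,K}$ to showing $\EtmKcnp$ is separably closed in $\wtilde{\bfE}_{\frakF,K}$; identify $\wtilde{\bfE}_{\frakF,K}$ as the completed perfection of $\EtmKcnp$; split off the purely inseparable step; and use that a complete rank-one valued field is Henselian, so that it remains separably algebraically closed in the completion of its perfect closure — is exactly where the completeness hypothesis enters and where the argument fails for $\bfE_{\FT,K}^{\np,\circ}$.

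The one place I'd push back on your assessment of difficulty: you call the identification $\wtilde{\bfE}_{\frakF,K}=\yhwidehat{\left(\EtmKcnp\right)^{\mathrm{perf}}}$ a ``milder'' point, but it is arguably the step requiring the most care. The inclusion $\yhwidehat{\left(\EtmKcnp\right)^{\mathrm{perf}}}\subseteq\wtilde{\bfE}_{\frakF,K}$ is immediate, and it is also immediate that the left side contains the completed compositum $\yhwidehat{\wtilde{\bfE}_{\tau,K}\,\wtilde{\bfE}_K}$ (since $\bfE_{\tau,K},\bfE_K\subseteq\bfE_{\FT,K}^{\np,\circ}$); but the assertion that this completed compositum is all of $\wtilde{\bfE}_{\frakF,K}$ needs the order-preserving tilt/untilt bijection on perfectoid subfields of $\bfC_p^\flat$ and $\bfC_p$, which sends $\wtilde{\bfE}_{\tau,K}\mapsto\widehat{K}_\infty$, $\wtilde{\bfE}_K\mapsto\widehat{K}_{p^\infty}$, $\wtilde{\bfE}_{\frakF,K}\mapsto\widehat{K}_\frakF$, and hence sends the smallest perfectoid subfield containing the first two to the smallest perfectoid subfield of $\bfC_p$ containing $\widehat{K}_\infty$ and $\widehat{K}_{p^\infty}$, namely $\widehat{K}_\frakF$. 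Your parenthetical ``tilting commutes with completed compositum of perfectoid subfields'' is true but should be stated via this bijection rather than taken for granted. The Krasner/Henselian descent step, by contrast, is entirely standard (Engler--Prestel, Thm.~5.3.5, or Ax's lemma). None of this affects the correctness of your argument.
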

\begin{remark}
	When constructing $(\varphi,\tau)$-modules, one may pursue the minimal possible substitution for the perfectoid field $\wtilde{\bfE}_{\frakF,K}$. Such substitutions $F\subseteq \wtilde{\bfE}_{\frakF,K}$ must satisfy the following conditions:
	\begin{enumerate}[label=(T\arabic*)]
		\item $F$ contains $\bfE_{\tau,K}$ and $\bfE_K$. Here the inclusion $\bfE_K\subset F$ ensures that the $\tau$-action on $\bfE_{\tau,K}\subset F$ is well-defined.
		\item $\scrG_F$ is isomorphic to $H_{\frakF,K}$.
	\end{enumerate}
	It is easy to show that the field $\EtmKcnp$ is the minimal \underline{complete} subfield satisfying both conditions. On the other hand, $\EtmKnp$ is unlikely to be minimal if one drops the constraint of completeness. This is hinted by the fact that $\EtmKnpc$ is NOT the minimal subfield of $\Cpflat$ satisfying (T1) (cf. \cite[Proposition 1.8.]{Abhyankar1991}). It is still possible to find a smaller field than $\EtmKnp$ satisfying (T1) and (T2) simultaneously, but it may not have an effortless and explicit description.
\end{remark}
\begin{proposition}\label{prop:29251}
	The field $\bfE_{\FT,K}^{\np }$ is a proper dense subfield of $\EtmKcnp$.
\end{proposition}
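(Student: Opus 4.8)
The plan is to prove the two assertions separately: density is essentially formal, while properness is the substantive point and rests on a transcendence result.

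\emph{Density.} First I would record the chain of inclusions $\EtmKnpc\subseteq\EtmKnp\subseteq\EtmKcnp$. The first is immediate from \Cref{lem:5466}: the field $\EtmKnpc$ lies in its separable closure $\Esep$ and is fixed pointwise by $H_{\frakF,K}$ acting through $\rho$, hence $\EtmKnpc\subseteq(\Esep)^{H_{\frakF,K}}=\EtmKnp$. For the second, $\EtmKnpc\subseteq\EtmKcnp$ forces $\Esep\subseteq\Ecsep$ (separable closures being taken inside $\Cpflat$), and since $\rho$ and $\widehat\rho$ are both induced by the $\scrG_K$-action on $\Cpflat$ they are compatible, so $\EtmKnp=(\Esep)^{H_{\frakF,K}}\subseteq(\Ecsep)^{H_{\frakF,K}}=\widehat{\bfE}_{\FT,K}^{\np}=\EtmKcnp$, the last equality being \Cref{lem:47550}. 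Since $\EtmKnpc$ is dense in its completion $\EtmKcnp$ by construction, the intermediate field $\EtmKnp$ is dense as well.

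\emph{Properness.} The key remark is that every element of $\EtmKnp\subseteq\Esep$ is separably algebraic over $\EtmKnpc$, so it is enough to exhibit a single element of $\EtmKcnp$ transcendental over $\EtmKnpc$. Using $\iota_{\np}$ (\Cref{lem:1122}) I identify $\EtmKnpc$ with $k_K\pparen{X,Y}\coloneqq\Frac k_K\bbrac{X,Y}$, with $X,Y$ corresponding to $u_K,\eta_K$. Since $\Cpflat$ is a field and $v^\flat$ is additive on products, each Laurent monomial $u_K^i\eta_K^j$ ($i,j\in\bfZ$) lies in $\EtmKnpc$ with $v^\flat(u_K^i\eta_K^j)=i\,v^\flat(u_K)+j\,v^\flat(\eta_K)$; thus for any $S\subseteq\bfZ^2$ along which $i\,v^\flat(u_K)+j\,v^\flat(\eta_K)\to+\infty$ and any coefficients $(a_{ij})$ in $k_K$, the partial sums of $\sum_{(i,j)\in S}a_{ij}u_K^i\eta_K^j$ form a $v^\flat$-Cauchy sequence of elements of $\EtmKnpc$, whose limit defines an element of $\EtmKcnp$. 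To conclude I would invoke \cite[Example 7.2]{arocaSupportLaurentSeries2019}, which furnishes such a support $S$ and coefficients for which the resulting Laurent series is transcendental over the formal power series field $k_K\pparen{X,Y}$ — the mechanism being that the support of an algebraic such series must be contained, up to translation, in a finitely generated sub-semigroup, whereas the support of the example is not. Writing $A$ for the resulting element of $\EtmKcnp$, transcendence over $\EtmKnpc$ gives $A\notin\Esep$, a fortiori $A\notin\EtmKnp$ (in particular $A\notin\EtmKnpc$, which reproves \Cref{prop:51851}); combined with density, $\EtmKnp$ is a proper dense subfield of $\EtmKcnp$.

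\emph{Main obstacle.} Everything delicate sits in the final step. First, one must ensure the transcendence in \cite[Example 7.2]{arocaSupportLaurentSeries2019} is over the \emph{full} formal power series field $\Frac k_K\bbrac{X,Y}$, not merely over the rational function field $k_K(X,Y)$: this is essential, for an element of $\EtmKcnp$ that were only separably algebraic over $\EtmKnpc$ would already lie in $\Esep$, and then — being fixed by $H_{\frakF,K}$ via \Cref{lem:5466} and \Cref{lem:47550} — in $(\Esep)^{H_{\frakF,K}}=\EtmKnp$ (so in fact $\EtmKnp=\EtmKcnp\cap\Esep$). Second, one must match the weight used in Aroca's example to the present one: the support $S$ must genuinely leave $\bfN^2$ (else the series already belongs to $\bfE_{\FT,K}^{\np,\circ,+}\subseteq\EtmKnpc$), so one wants $S$ with, say, its $u_K$-exponents tending to $-\infty$ and its $\eta_K$-exponents to $+\infty$ fast enough that $i\,v^\flat(u_K)+j\,v^\flat(\eta_K)\to+\infty$; such $S$ exist for any positive $v^\flat(u_K),v^\flat(\eta_K)$, with no commensurability hypothesis on $v^\flat(u_K)/v^\flat(\eta_K)$, and it remains to check that Aroca's example can be realized (or harmlessly adapted) to this shape. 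Once these two points are secured, the proof is complete.
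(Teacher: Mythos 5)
Your density argument and logical reduction are both correct and in fact match the paper's: the chain $\EtmKnpc\subseteq\EtmKnp\subseteq\EtmKcnp$ gives density since $\EtmKnpc$ is already dense in its completion, and the identity $\EtmKnp=\EtmKcnp\cap\Esep$ (a nice observation worth recording) reduces properness to exhibiting an element of $\EtmKcnp$ that is not separably algebraic over $\EtmKnpc$. You also correctly identify Aroca's Example~7.2 as the engine, and you are right to insist the transcendence statement must be over the full Laurent series field $k_K\pparen{X,Y}$, not merely over $k_K(X,Y)$.

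The gap sits precisely where you flag a ``remaining check,'' and it is not a matter of routine bookkeeping. Your plan is to take a $v^\flat$-convergent series $A=\sum_{(i,j)\in S}a_{ij}u_K^i\eta_K^j$ whose associated \emph{formal} Laurent series $\sum a_{ij}X^iY^j$ is transcendental over $k_K\pparen{X,Y}$, and deduce that $A$ is transcendental over $\EtmKnpc$. But there is no transfer principle that makes this deduction automatic: $\iota_{\np}$ is defined on $k_K\bbrac{X,Y}$ only, it does not extend to an injective map on convergent two-variable Laurent series (the paper's \Cref{rmk:304} is explicit that such an extension is problematic, since the support of a $v^\flat$-convergent series need not be ``power-series shaped'' in any single ordering of the variables), and your $A$ necessarily has support escaping $\bfN^2$. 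So a hypothetical polynomial relation $\sum_i a_i A^i=0$ with $a_i\in\EtmKnpc$ does \emph{not} visibly descend to a relation in $k_K\pparen{X,Y}$ that you could feed into Aroca.

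The paper circumvents this with three interlocking moves that you would need to reproduce. It chooses $A=\iota_{\theta,X}(\sum_n Y^{n!})$ with $\theta=p^r$ large enough that $v^\flat(\eta_K^\theta/u_K)>0$: crucially, this writes $A$ as the image under a \emph{different} embedding $\iota_{\theta,X}\colon k_K\bbrac{X,Y}\to\scrO_{\bfC_p}^\flat$, $f\mapsto f(u_K,\eta_K^\theta/u_K)$, of a genuine \emph{power} series (support in $\bfN^2$), so the generalized embedding lemma (\Cref{lem:869}, i.e.\ injectivity of $\iota_{\theta,X}$) is available. Next, given a relation $\sum_s a_sA^s=0$ with $a_s\in\bfE_{\FT,K}^{\np,\circ,+}=\iota_{\np}(k_K\bbrac{X,Y})$, it raises to the $\theta$-th power; because $\theta$ is a power of $p$, Frobenius acts coefficientwise and the rewriting $u_K^{\theta i}\eta_K^{\theta j}=u_K^{\theta i+j}(\eta_K^\theta/u_K)^j$ shows $a_s^\theta\in\iota_{\theta,X}(k_K\bbrac{X,Y})$ as well, so everything lives in one image and $\iota_{\theta,X}$-injectivity yields a relation $\sum_s h_s(X,Y)\,f(X,Y)^{\theta s}=0$ in $k_K\pparen{X,Y}$. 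Finally the substitution $Z=XY$ turns this into an algebraicity statement for $\sum_n(Z/X)^{n!}$ over $k_K\pparen{X,Z}$, which is exactly what Aroca's Example~7.2 forbids. Without the representation of $A$ via $\iota_{\theta,X}$ and the $p$-power trick to align the two embeddings, your argument cannot close, so as written the proof has a genuine hole rather than a minor checklist item.
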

\begin{proof}
	Since $\bfE_{\FT,K}^{\np }\subseteq \widehat{\bfE}_{\FT,K}^{\np }=\widehat{\bfE}_{\FT,K}^{\np ,\circ}$ by \Cref{def:8032} and \Cref{lem:47550}, we know that $\bfE_{\FT,K}^{\np }$ is dense in $\widehat{\bfE}_{\FT,K}^{\np }$. It remains to show that there exists a Cauchy sequence in $\bfE_{\FT,K}^{\np ,\circ}$ that converges to a transcendental element over $\bfE_{\FT,K}^{\np ,\circ}$.

	Take $\theta=p^r>\!\!\!> 0, r\in\bfZ_{\geq 1}$ such that $v^\flat\left(\frac{\eta_K^\theta}{u_K}\right)>0$. Let $f(X,Y)\coloneqq \sum_{n=0}^\infty Y^{n!}$ and
	$$A\coloneqq \iota_{\theta,X}(f)=\lim_{m\to\infty}\sum_{n=0}^m\left(\frac{\eta_K^\theta}{u_K}\right)^{n!}\in \EtmKcnp.$$
	If $A$ is separable (and consequentially algebraic) over $\EtmKnpc$, then there exist elements $a_0,\cdots,a_m$ in $\bfE_{\FT,K}^{\np,\circ,+}$ that %
	$a_0+a_1 A+\cdots+a_m A^m=0$. Since $\theta$ is a power of $p$, we can write
	\begin{equation}\label{eq:12584}
		a_0^{\theta}+a_1^{\theta} A^{\theta}+\cdots+a_m^{\theta}\left(A^{\theta}\right)^m=\left(a_0+a_1 A+\cdots+a_m A^m\right)^\theta=0.
	\end{equation}
	For every $s=0,\cdots,m$, write $a_s=\lim_{n\to\infty}\sum_{i=0}^n\sum_{j=0}^n a_{ij}^{(s)}u_K^i\eta_K^j$. Then
	\begin{align*}
		a_s^\theta= & \lim_{n\to \infty}\sum_{i=0}^n\sum_{j=0}^n \left(a_{ij}^{(s)}\right)^{\theta}u_K^{\theta i+j}\left(\frac{\eta_K^\theta}{u_K}\right)^j \\
		=           & \lim_{n\to \infty}\sum_{i=0}^{(\theta+1)n}\sum_{j=0}^{(\theta+1)n}c_{ij}u_K^i \left(\frac{\eta_K^\theta}{u_K}\right)^j                \\
		=           & \lim_{n\to \infty}\sum_{i=0}^n\sum_{j=0}^n c_{ij}u_K^i \left(\frac{\eta_K^\theta}{u_K}\right)^j,
	\end{align*}
	where
	$c_{ij}\coloneqq \begin{cases}
			\left(a_{\frac{i-j}{\theta},j}^{(s)}\right)^\theta, & \text{ if }i-j\in\theta\bfN; \\
			0,                                                  & \text{ otherwise}
		\end{cases}$.
	Therefore, we can write $a_s^\theta=\iota_{\theta,X}(h_s)$, where
	\begin{equation}\label{eq:20885}
		h_s(X,Y)=\sum_{i=0}^\infty\sum_{j=0}^\infty c_{ij}X^i Y^j=\sum_{i=0}^\infty \sum_{j=0}^\infty \left(a_{ij}^{(s)}\right)^\theta X^i (XY)^j.
	\end{equation}

	Thus, \Cref{eq:12584} can be rewritten as
	$$\sum_{s=0}^m \iota_{\theta,X}(h_s)\iota_{\theta,X}(f)^{\theta s}=0.$$
	By \Cref{lem:869}, this implies that
	$$\sum_{s=0}^m h_s(X,Y)f(X,Y)^{\theta s}=0.$$
	If we set $Z\coloneqq XY$, then by \Cref{eq:20885} the above equality can be rewritten as
	$$\sum_{s=0}^m \left(\sum_{i=0}^\infty \sum_{j=0}^\infty \left(a_{ij}^{(s)}\right)^\theta X^i Z^j\right)\left(\sum_{n=0}^\infty\left(\frac{Z}{X}\right)^{n!}\right)^{\theta s}=0.$$
	This implies the assertion, which is false by \cite[Example 7.2]{arocaSupportLaurentSeries2019}, that $\sum_{n=0}^\infty \left(\frac{Z}{X}\right)^{n!}$ is algebraic over $k_K\pparen{X,Z}$.
\end{proof}

\begin{proposition}\label{prop:35357}
	The valuation $v^\flat$ on $\EtmKnpc$ (resp. $\bfE_{\frakF,K}^{\np,?}$) is non-discrete.
\end{proposition}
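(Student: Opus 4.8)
Write \textbf{(Reduction step).} Since $\bfE_{\FT,K}^{\np,\circ}\subsetneq\EtmKnp\subsetneq\EtmKnpc$ by \Cref{thm:42581} (and $\EtmKnpc=\widehat{\bfE}_{\FT,K}^{\np}$ by \Cref{lem:47550}), and since passing to the completion of a valued field does not enlarge its value group, it suffices to prove that $v^\flat$ is non-discrete on $\bfE_{\FT,K}^{\np,\circ}$; the other three fields contain it, so their value groups contain a dense subgroup of $\bfR$ as soon as that of $\bfE_{\FT,K}^{\np,\circ}$ is dense. By \Cref{lem:1122} the map $\iota_{\np}$ is injective, so it extends to a field embedding $\bfE_{\FT,K}^{\np,\circ}=\Frac\bbrac{\bfE_{\FT,K}^{\np,\circ,+}}\cong\Frac\bbrac{k_K\bbrac{X,Y}}\hookrightarrow\Cpflat$, and $v\deftobe v^\flat\vert_{\bfE_{\FT,K}^{\np,\circ}}$ is a rank-$1$ valuation with value group contained in $v^\flat(\Cpflat{}^\times)=\bfQ$. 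For $f=\sum f_{ij}X^iY^j\in k_K\bbrac{X,Y}$ the ultrametric inequality gives $v(f)=v^\flat\!\big(\sum f_{ij}u_K^i\eta_K^j\big)\ge\min\{\,iv^\flat(u_K)+jv^\flat(\eta_K):f_{ij}\neq 0\,\}\ge 0$; hence $v$ dominates the $2$-dimensional regular local ring $k_K\bbrac{X,Y}$, with $v(X)=v^\flat(u_K)$ and $v(Y)=v^\flat(\eta_K)$.

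\textbf{(Residue field and non-monomiality).} The valuation ring of $v$ is $\bfE_{\FT,K}^{\np,\circ}\cap\scrO_{\Cpflat}$, so its residue field $\kappa_v$ embeds into the residue field $\overline{\bfF}_p$ of $\Cpflat$; in particular $\mathrm{tr.deg}(\kappa_v/k_K)=0$. On the other hand $v$ is \emph{not} the monomial valuation attached to the weights $v^\flat(u_K),v^\flat(\eta_K)$: write $v^\flat(u_K)/v^\flat(\eta_K)=P/Q$ in lowest terms (a rational number, cf. \Cref{lem:13864}), so that $w\deftobe\iota_{\np}(X^QY^{-P})=u_K^Q\eta_K^{-P}\in\bfE_{\FT,K}^{\np,\circ}$ has $v(w)=0$. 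Its residue $\overline w\in\overline{\bfF}_p$ is algebraic over $k_K$ with minimal polynomial $\Pi\in k_K[Z]$, say of degree $m$, and then $\iota_{\np}\!\big(Y^{Pm}\Pi(X^QY^{-P})\big)=\eta_K^{Pm}\Pi(w)$ has valuation $Pm\,v^\flat(\eta_K)+v^\flat(\Pi(w))$ with $v^\flat(\Pi(w))>0$, strictly larger than the common monomial weight $Pm\,v^\flat(\eta_K)$ of its constituent monomials. (When $K=\Qp$ one checks $\overline w=-1$ by a Wilson-type congruence, so $\Pi=Z+1$ and the cancellation is $v^\flat(u_K^p+\eta_K^{p-1})>p$.)

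\textbf{(Conclusion, and the hard point).} Suppose $v$ were discrete. Then its valuation ring is a discrete valuation ring dominating $k_K\bbrac{X,Y}$, and by the classification of valuations centred at a two-dimensional regular local ring such a valuation is either \emph{divisorial} — in which case $\mathrm{tr.deg}(\kappa_v/k_K)=1$, contradicting the previous paragraph — or of \emph{curve type}, which after completing the valuation ring means precisely that $u_K$ and $\eta_K$ lie simultaneously in a formal power series ring $\kappa\bbrac{t}\hookrightarrow\Cpflat$ over some algebraic extension $\kappa/k_K$, i.e.\ that $u_K$ and $\eta_K$ lie on a common formal curve inside $\Cpflat$. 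The plan is to rule out this last possibility, which I expect to be the main obstacle: the natural approach is to make the non-monomiality of the previous paragraph quantitative, iterating the leading-term extraction $w\rightsquigarrow\Pi(w)\rightsquigarrow\cdots$ while using the explicit description of the pair $(u_K,\eta_K)$ — either via \Cref{lem:13864} and the isometric, continuous action of $\Gamma_{\frakF,K}$ on $\bfE_{\FT,K}^{\np,\circ}$, or via a direct $p$-adic computation of the valuations $v^\flat(\Pi(w))$ and their successive refinements (one may also feed in \Cref{coro:48511} to keep the intermediate elements inside $\bfE_{\FT,K}^{\np,\circ}$) — so as to exhibit elements of $\bfE_{\FT,K}^{\np,\circ}$ whose $v^\flat$-values have unbounded denominators; since the value group is a subgroup of $\bfQ$, this forces it to be non-finitely generated, hence dense in $\bfR$. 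Granting non-discreteness of $v$ on $\bfE_{\FT,K}^{\np,\circ}$, the statement for $\EtmKnp$, $\EtmKnpc$ (and $\widehat{\bfE}_{\FT,K}^{\np}$, and hence for $\bfE_{\frakF,K}^{\np,?}$) follows from the reduction step.
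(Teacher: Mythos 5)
Your reduction to $\bfE_{\FT,K}^{\np,\circ}$ and the residue-field observation that $\kappa_v$ is algebraic over $k_K$ (which kills the divisorial case) are both sound, but, as you say yourself, you have not ruled out the curve-type alternative — and that is precisely where the whole content of the proposition lies, so the argument has a genuine gap. (Side note: in the paper's macros $\EtmKnpc$ denotes $\bfE_{\FT,K}^{\np,\circ}$ itself, so the last term of your chain and the identity you invoke from \Cref{lem:47550} should read $\EtmKcnp$, not $\EtmKnpc$.)

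The paper closes exactly this gap, but not by the iterative ``unbounded denominators'' computation you sketch. First, the sandwich $\bfE_{\tau,K}\subsetneq\EtmKcnp\subseteq\wtilde{\bfE}_{\FT,K}$, whose two ends both have residue field $k_K$, forces $\kappa_v=k_K$ exactly — so $\kappa_v$ is finite, not merely algebraic over $k_K$; your embedding $\kappa_v\hookrightarrow\overline{\bfF}_p$ only gives algebraicity, and finiteness is what you need next. If $v^\flat$ were discrete, $\EtmKcnp$ would then be a complete discretely valued field with finite residue field, i.e.\ a local field $\cong k_K\pparen{T}$; this is precisely your ``curve-type'' scenario. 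The paper dispatches it with a theorem of Nazir--Popescu (\cite[Theorem 5]{zbMATH05243931}): a closed complete subfield of a local field is of finite codimension. Since $\bfE_{\tau,K}$ is a closed complete subfield of $\EtmKcnp$, while $\EtmKnpc\cong k_K\pparen{X,Y}$ is already transcendental over $\bfE_{\tau,K}\cong k_K\pparen{X}$, this is impossible. So your classification correctly isolates the enemy, but what kills it is a structural theorem about subfields of local fields, not a $p$-adic estimate; if you insist on a direct valuation computation you would still have to carry out the iterated leading-term extraction to the end and show the resulting $v^\flat$-values have unbounded denominators, which — as you anticipate — looks substantially harder, especially for general $K$ where $\frakG_K$ and $\frakT_K$ are inexplicit.
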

\begin{proof}
	Since the residue field of $\bfE_{\tau,K}$ and $\wtilde{\bfE}_{\FT,K}$ are both $k$, the containing relationship $\bfE_{\tau,K}\subsetneq\EtmKcnp\subseteq \wtilde{\bfE}_{\FT,K}$ implies that the residue field of $\EtmKcnp$ is also $k$. If $v^\flat$ is discrete on $\EtmKnpc$, then $\EtmKcnp$ is a local field, i.e., a complete discrete valuation field with finite residue field. Thus,  $\EtmKcnp$ is isomorphic to $k_K\pparen{T}$ by the classification of local fields (cf. \cite[129]{serreLocalClassField1967}) and the $T$-adic topology on it coincides with the topology induced by $v^\flat$.

	Since $\bfE_{\tau,K}$ is a closed subfield of $\EtmKcnp$ with respect to the $T$-adic topology, a result of Nazir-Popescu (cf. \cite[Theorem 5]{zbMATH05243931}) tells us that $\EtmKcnp$ is a finite extension of $\bfE_{\tau,K}$, which is impossible since $\EtmKnpc\cong k_K\pparen{X,Y}\subsetneq \EtmKcnp$ is already a transcendental extension of $\bfE_{\tau,K}\cong k_K\pparen{X}$.
\end{proof}
\begin{corollary}
	The valuation on $\bfE_{\FT,K}^{\np ,\circ,+}$ is non-discrete.
\end{corollary}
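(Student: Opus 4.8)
The plan is to derive the corollary as a formal consequence of \Cref{prop:35357}, so that no new ideas are needed. By \Cref{def:9888}, the field $\EtmKnpc$ is, by construction, the fraction field of the domain $\bfE_{\FT,K}^{\np ,\circ,+}$, and $v^\flat$ is induced on both of them by restriction from $(\Cpflat,v^\flat)$. For any integral domain $R$ contained in a valued field $(L,v)$ with $L=\Frac(R)$, each element of $L$ can be written as $a/b$ with $a,b\in R\setminus\{0\}$, whence $v(a/b)=v(a)-v(b)$; therefore the value group $v(L^{\times})$ coincides with the subgroup of $\bfR$ generated by the value monoid $v\!\left(R\setminus\{0\}\right)$. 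Specializing to $R=\bfE_{\FT,K}^{\np ,\circ,+}$ and $L=\EtmKnpc$ shows that $v^\flat$ has one and the same value group on the ring $\bfE_{\FT,K}^{\np ,\circ,+}$ and on the field $\EtmKnpc$. Hence $v^\flat$ is discrete on $\bfE_{\FT,K}^{\np ,\circ,+}$ exactly when it is discrete on $\EtmKnpc$, and the latter is excluded by \Cref{prop:35357}.

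I do not anticipate any genuine obstacle here; the only point worth isolating is the (entirely elementary) fact that passing to the fraction field neither enlarges nor shrinks the value group, so that non-discreteness of $v^\flat$ is already visible at the level of the ring. For clarity I would also add a remark spelling out what the statement does \emph{not} assert: the value monoid $v^\flat\!\left(\bfE_{\FT,K}^{\np ,\circ,+}\setminus\{0\}\right)$ is in fact bounded away from $0$, since for nonzero $f\in k_K\bbrac{X,Y}$ the element $\iota_{\np}(f)=f(u_K,\eta_K)$ has $v^\flat$-value $0$ if $f(0,0)\neq0$ and value at least $\min\{v^\flat(u_K),v^\flat(\eta_K)\}>0$ otherwise (cancellations among the monomials of $f$ can only raise the valuation). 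Thus the corollary should be read as a statement about the value \emph{group}, not about the existence of elements of arbitrarily small positive valuation.
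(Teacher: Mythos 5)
Your argument is correct and is essentially the paper's own proof, just with the single line spelled out: the paper writes $v^\flat(\bfE_{\FT,K}^{\np,\circ}) = v^\flat(\bfE_{\FT,K}^{\np,\circ,+}) - v^\flat(\bfE_{\FT,K}^{\np,\circ,+})$ (Minkowski subtraction) and invokes \Cref{prop:35357}, which is precisely your observation that the value group of $\EtmKnpc$ is generated by the value monoid of $\bfE_{\FT,K}^{\np,\circ,+}$. Your closing remark correctly pins down the intended meaning of ``discrete'' here (the generated subgroup, not the topological subset), which the paper leaves implicit.
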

\begin{proof}
	Use
	$$v^\flat\left(\bfE_{\FT,K}^{\np ,\circ}\right)=v^\flat\left(\bfE_{\FT,K}^{\np ,\circ,+}\right)-v^\flat\left(\bfE_{\FT,K}^{\np ,\circ,+}\right),$$
	where minus means Minkowski subtraction.
\end{proof}

\subsection{Imperfectness of $\bfE_{\FT,K}^{\np,?}$}
We finish this section by discussing the imperfectness of the period rings $\bfE_{\frakF,K}^{\np,?}$.
\begin{proposition}\label{prop:16686}
	For $\bfE\in\left\{\bfE_{\FT,K}^{\np ,\circ},\bfE_{\FT,K}^{\np,?},\bfE_{\FT}^{\np,?}\right\}$, we have $[\bfE\colon\varphi(\bfE)]=p^2$ and $\{u_K^i\cdot\eta_K^j\}_{0\leq i,j\leq p-1}$ is a $\varphi(\bfE)$-basis of $\bfE$. In particular, $\{u_K,\eta_K\}$ is a $p$-basis\footnote{A $p$-basis of a field $k$ is a subset $B$ of $k$, such that \begin{enumerate}
			\item $[k^p(b_1,\cdots,b_r)\colon k^p] = p^r$ for any $r$ distinct elements $b_1,\cdots,b_r\in B$;
			\item $k = k^p(B)$.
		\end{enumerate}} of $\bfE$.
\end{proposition}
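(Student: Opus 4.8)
The plan is to reduce everything to the explicit description of $\EtmKnpc$, then deal with its completion, and finally propagate to the separable extensions.

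\textbf{Step 1: the field $\EtmKnpc$.} By \Cref{lem:1122} the map $\iota_{\np}$ identifies $\bfE_{\FT,K}^{\np,\circ,+}$ with $k_K\bbrac{X,Y}$ via $X\mapsto u_K$, $Y\mapsto\eta_K$, so that $\EtmKnpc\cong\Frac\!\left(k_K\bbrac{X,Y}\right)$ and $\varphi$ is the $p$-power map. Since $k_K$ is finite, hence perfect, $\varphi\!\left(k_K\bbrac{X,Y}\right)=k_K\bbrac{X^p,Y^p}$ and $k_K\bbrac{X,Y}$ is free over this subring on the basis $\{X^iY^j\}_{0\le i,j\le p-1}$ (sort monomials by exponents modulo $p$ and extract $p$-th roots of coefficients using perfectness). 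Clearing denominators via $f/h=fh^{p-1}/h^p$, with $h^p\in\varphi(\EtmKnpc)$, carries this to the fraction field: the $u_K^i\eta_K^j$ span $\EtmKnpc$ over $\varphi(\EtmKnpc)$, and any nontrivial $\varphi(\EtmKnpc)$-linear relation among them would, after clearing a common denominator, contradict freeness over $k_K\bbrac{X^p,Y^p}$. Hence $[\EtmKnpc:\varphi(\EtmKnpc)]=p^2$ with $\varphi(\EtmKnpc)$-basis $\{u_K^i\eta_K^j\}_{0\le i,j\le p-1}$. The $p$-basis assertion is then formal from the definition in the footnote: part (2) is the spanning statement, while in (1) the degrees $[\varphi(\EtmKnpc)(u_K):\varphi(\EtmKnpc)]$ and $[\varphi(\EtmKnpc)(\eta_K):\varphi(\EtmKnpc)]$ divide $p$ (since $u_K^p,\eta_K^p\in\varphi(\EtmKnpc)$) and cannot equal $1$, as otherwise the $p^2$ monomials $u_K^i\eta_K^j$ would lie in a field of degree $<p^2$ over $\varphi(\EtmKnpc)$.

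\textbf{Step 2: the completion $\EtmKcnp$ (the main obstacle).} The $p$-power map is continuous with $v^\flat\circ\varphi=p\,v^\flat$, so it extends to $\EtmKcnp$; its image $\varphi(\EtmKcnp)=(\EtmKcnp)^p$ is the closure of $\varphi(\EtmKnpc)$, hence a complete subfield. For $[\EtmKcnp:\varphi(\EtmKcnp)]\le p^2$: the $\varphi(\EtmKcnp)$-span of $\{u_K^i\eta_K^j\}_{0\le i,j\le p-1}$ is a finite-dimensional subspace over a complete valued field, hence closed; by Step 1 it contains the dense subfield $\EtmKnpc=\bigoplus_{0\le i,j\le p-1}\varphi(\EtmKnpc)u_K^i\eta_K^j$, so it equals $\EtmKcnp$. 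The content of the step is the reverse inequality, i.e.\ that $\{u_K^i\eta_K^j\}_{0\le i,j\le p-1}$ stays linearly independent over $\varphi(\EtmKcnp)$. I would reduce this to the statement that the decomposition $\EtmKnpc=\bigoplus_{0\le i,j\le p-1}\varphi(\EtmKnpc)u_K^i\eta_K^j$ is $v^\flat$-strict, i.e.\ $v^\flat\!\left(\sum_{i,j}h_{ij}u_K^i\eta_K^j\right)=\min_{i,j}v^\flat\!\left(h_{ij}u_K^i\eta_K^j\right)$ for all $h_{ij}\in\varphi(\EtmKnpc)$ (equivalently, no cancellation occurs in $\Cpflat$ between the $\iota_{\np}$-images of monomials whose bidegrees lie in distinct classes modulo $p$); such a strict decomposition survives completion as a direct sum of $p^2$ pieces, which gives the independence. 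The strictness I would in turn extract from the quantitative injectivity of \Cref{prop:60072}, whose uniform bounds $M(n)$ are precisely designed to preclude such cancellation, together with the elementary estimate $v^\flat(\iota_{\np}(f))\ge\operatorname{ord}(f)\cdot\min\!\left(v^\flat(u_K),v^\flat(\eta_K)\right)$ for the $(X,Y)$-adic order of $f$. Controlling the interplay of this cancellation bound with arbitrary denominators is the delicate point I expect to require the most care. Granting $[\EtmKcnp:\varphi(\EtmKcnp)]=p^2$, the $p$-basis assertion follows as in Step 1.

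\textbf{Step 3: the separable extensions.} The field $\EtmKnp=(\Esep)^{H_{\frakF,K}}$ is a subfield of the separable closure $\Esep$ of $\EtmKnpc$, hence separable algebraic over $\EtmKnpc$; likewise $\bfE_{\frakF}^{\np,?}$ is separable algebraic over $\bfE_{\frakF,K}^{\np,?}\in\{\EtmKnp,\EtmKcnp\}$, hence over $\EtmKnpc$ or $\EtmKcnp$. For a separable algebraic extension $L/E$ of fields of characteristic $p$ one has $L=E\cdot L^p$ and $[L:\varphi(L)]=[E:\varphi(E)]$ (a standard fact; $E\cap L^p=E^p$ since a $p$-th root in $L$ of an element of $E$ is purely inseparable over $E$, hence already in $E$). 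Taking $E\in\{\EtmKnpc,\EtmKcnp\}$, which carry the $p$-basis $\{u_K,\eta_K\}$ by Steps 1--2, the identity $L=E\cdot L^p=\varphi(L)[u_K,\eta_K]=\sum_{0\le i,j\le p-1}\varphi(L)\,u_K^i\eta_K^j$ together with $[L:\varphi(L)]=p^2$ shows that $\{u_K^i\eta_K^j\}_{0\le i,j\le p-1}$ is a $\varphi(L)$-basis and that $\{u_K,\eta_K\}$ is a $p$-basis of $L$, for every remaining field $L$; this completes the proof.
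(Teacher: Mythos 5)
Your Steps 1 and 3 are sound and run parallel to the paper: the $\EtmKnpc$ case is an explicit computation via $\iota_{\np}$, and the passage to the separable extensions $\EtmKnp$, $\bfE_{\frakF}^{\np,?}$ rests on the linear disjointness of purely inseparable and separable extensions (the paper invokes \cite[Theorem 26.4]{matsumuraCommutativeRingTheory1987} for essentially your ``standard fact'').

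The gap is Step 2, and it is a real one. You reduce the lower bound $[\EtmKcnp:\varphi(\EtmKcnp)]\ge p^2$ to the claim that the decomposition $\EtmKnpc=\bigoplus_{i,j}\varphi(\EtmKnpc)\,u_K^i\eta_K^j$ is $v^\flat$-strict, but you do not prove this, and you flag it yourself as ``the delicate point.'' Strictness is in fact far from clear here: it is equivalent to a uniform lower bound on the $p^2$ coordinate projections, which a priori you only get as a \emph{consequence} of $[\EtmKcnp:\varphi(\EtmKcnp)]=p^2$ (continuity of linear maps between finite-dimensional spaces over a complete field), so deriving the degree from strictness risks circularity unless you produce the bound independently. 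The quantitative statement in \Cref{prop:60072} controls when \emph{all} low-degree coefficients of a single power series vanish; it does not by itself separate the residue classes of $(i,j)$ modulo $p$, and the additional complication of denominators (which you acknowledge) is precisely where the non-discreteness of $v^\flat$ on these rings (\Cref{prop:35357}) makes naive estimates unreliable. The paper avoids this entirely: it proves instead that $u_K\notin\varphi\bigl(\widehat{\bfE}_{\FT}^{\np}\bigr)(\eta_K)$ (and symmetrically for $\eta_K$), using a general lemma (\Cref{lem:16740}) that a degree-$p$ purely inseparable extension of valued fields has $[\widehat{E}:\widehat{F}]=[E:F]$, so the generator survives completion and even passage to the separable closure. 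This replaces the quantitative valuation estimate you would need with a soft structural fact about complete valued fields, and is the ingredient your proposal is missing. If you want to keep your framework, the clean fix is to import exactly this lemma (or \cite[Theorem 1]{Warner_1986}) at the point where you assert strictness.
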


The following lemma is a key input when proving this proposition (and also \Cref{lem:56945}) for the case of $\bfE_{\frakF,K}^{\np,?}=\EtmKcnp$:
\begin{lemma}
	Let $(E,w)/(F,v)$ be a finite extension of valued field of characteristic $p$. Denote by $\widehat{E}$ $\widehat{F}$ the completion of $E$ (resp. $F$) with respect to $w$ (resp. $v$). Then the following assertions hold:
	\begin{lemenum}
		\item\label{lem:39640} Any basis of this extension also generates $\widehat{E}$ as a $\widehat{F}$-vector space. In particular, one has $[\widehat{E}\colon\widehat{F}]\leq [E\colon F]$.
		\item\label{lem:16740} If $E/F$ is non-trivial and a generator $x\in E$ of it satisfies $x^p\in F$, then $E/F$ is a purely inseparable extension of degree $p$. In particular, one has $x\notin \widehat{F}$ under the same conditions.
	\end{lemenum}
\end{lemma}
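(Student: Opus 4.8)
The plan is to prove the two assertions separately; the first is purely topological and the second combines a short field-theory computation with the same topological input.

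For the first assertion, fix a basis $e_1,\dots,e_n$ of $E/F$ (so $n=[E:F]$) and use the isometric embedding $\widehat F\hookrightarrow\widehat E$ coming from $w|_F=v$ to form the $\widehat F$-subspace $W\deftobe\widehat Fe_1+\dots+\widehat Fe_n\subseteq\widehat E$. The point to check is that $W$ is closed in $\widehat E$: it is the image of the continuous $\widehat F$-linear map $\Phi\colon\widehat F^{\,n}\to\widehat E$, $(c_i)\mapsto\sum_ic_ie_i$, and since $\ker\Phi$ is closed, $W$ is a continuous injective image of the complete space $\widehat F^{\,n}/\ker\Phi$; because a finite-dimensional vector space over the complete field $\widehat F$ carries a unique topology, this injection is a homeomorphism onto $W$, so $W$ is complete and hence closed. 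As $e_1,\dots,e_n$ span $E$ over $F\subseteq\widehat F$ we have $E\subseteq W$, and $E$ is dense in $\widehat E$, so $\widehat E=\overline E\subseteq W\subseteq\widehat E$; thus $W=\widehat E$, the $e_i$ generate $\widehat E$ over $\widehat F$, and $[\widehat E:\widehat F]\le n$.

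For the second assertion, set $a\deftobe x^p\in F$; non-triviality of $E/F$ forces $x\notin F$, and over $\overline F$ we have $T^p-a=(T-x)^p$, so the minimal polynomial $m(T)$ of $x$ over $F$ equals $(T-x)^d$ for some $1\le d\le p$. The coefficient of $T^{d-1}$ in $m(T)$ is $-d\,x\in F$; if $p\nmid d$ then $d\in\bfF_p^\times$ is invertible and $x\in F$, a contradiction, so $d=p$, whence $[E:F]=p$ and $m(T)=T^p-a$ has vanishing derivative, i.e.\ $E/F$ is purely inseparable of degree $p$. For the final clause, observe first that $\widehat E=\widehat F(x)$: the latter is a finite extension of the complete field $\widehat F$, hence complete and closed in $\widehat E$, and it contains the dense subfield $E=F(x)$. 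Running the same minimal-polynomial argument over $\widehat F$ then gives $[\widehat E:\widehat F]=[\widehat F(x):\widehat F]\in\{1,p\}$, the value being $p$ exactly when $a\notin(\widehat F)^p$, equivalently exactly when $x\notin\widehat F$. So the final clause reduces to showing $a\notin(\widehat F)^p$, i.e.\ that the purely inseparable degree-$p$ extension $E/F$ is defectless.

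I expect this last reduction to be the main obstacle: defectlessness is not automatic, a finite extension of valued fields can carry nontrivial defect, so one must use something about the particular $(E,F,x)$. In every case where the lemma is applied, $F$ is (a fraction field of) a two-variable power series ring $k_K\bbrac{X,Y}$ realised inside $\Cpflat$ and $x$ is a $p$-th root of $u_K$, of $\eta_K$, or of a monomial $u_K^i\eta_K^j$, and I would settle $a\notin(\widehat F)^p$ in one of two ways: either $\tfrac1p v^\flat(a)\notin v^\flat(F^\times)=v^\flat(\widehat F^{\,\times})$, so that $a$ admits no $p$-th root in $\widehat F$ at all for valuation reasons; or, when the value groups do not separate the extension, by invoking the transcendence statement \cite[Example 7.2]{arocaSupportLaurentSeries2019} exactly as in the proof of \Cref{prop:29251}, which rules out $a^{1/p}$ becoming rational over $F$ after completion. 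Once $a\notin(\widehat F)^p$ is established, the contradiction with $\widehat E=\widehat F$ closes the argument.
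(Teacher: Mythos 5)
Your arguments for \Cref{lem:39640} and for the first clause of \Cref{lem:16740} are correct and amount to the same facts the paper obtains by citing Warner and Isaacs. The substantive part of your proposal is the diagnosis of the final clause: you reduce $x\notin\widehat F$ to $[\widehat E:\widehat F]=p$, equivalently to $a\deftobe x^p\notin(\widehat F)^p$, recognise this as a defectlessness statement, and flag that it is not automatic. That flag is exactly right --- in fact the lemma is false in the generality stated. Take $\alpha\in\mathbf F_p[[t]]$ transcendental over $\mathbf F_p(t)$ and set $F=\mathbf F_p(t,\alpha^p)\subset E=F(\alpha)$ with the $t$-adic valuation; then $E/F$ is purely inseparable of degree $p$ with $\alpha^p\in F$, but the truncations of the power series $\alpha$ lie in $\mathbf F_p[t]\subset F$ and converge to $\alpha$, so $\alpha\in\widehat F$ and $\widehat E=\widehat F$. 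The paper's own proof invokes a criterion of Warner requiring a chain $F(c_0)\subset F(c_0,c_1)\subset\cdots$ of intermediate fields each closed in $E$, and chooses $c_i=x^i$; since $F(c_0)=F$ and $F(c_0,c_1)=E$, the hypothesis that must be verified is precisely that $F$ is closed in $E$, which for a degree-$p$ extension is the statement $x\notin\widehat F$ itself. The verification is therefore circular, and the example above shows the needed closedness can genuinely fail.

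So the ``main obstacle'' you identified is a gap in the lemma rather than in your proposal, and the repair you sketch --- establishing $a\notin(\widehat F)^p$ separately each time the lemma is used, either via a value-group obstruction when $\tfrac1p v^\flat(a)\notin v^\flat(F^\times)$, or via the transcendence result \cite[Example 7.2]{arocaSupportLaurentSeries2019} as in \Cref{prop:29251} --- is the right route. In the two places the lemma is invoked (\Cref{prop:16686} and \Cref{lem:56945}), $F$ is a concrete two-variable Laurent-series subfield of $\Cpflat$, and the required defectlessness is plausible there, but it does need an argument of the kind you outline rather than a black-box appeal to a general lemma.
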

\begin{proof}
	\hyperref[lem:39640]{(1)} is from \cite[64]{Warner_1986}.

	The first assertion of \hyperref[lem:16740]{(2)} can be found, for instance, in \cite[Corollary 19.11, Corollary 19.12]{isaacsAlgebraGraduateCourse1994}. To prove $x\notin \widehat{F}$, we first claim that $[\widehat{E}\colon\widehat{F}]=[E\colon F]$. If this claim holds, then the first assertion implies that the set $\{1,x,\cdots,x^{p-1}\}$, which is a basis of $E/F$, is also a basis of $\widehat{E}/\widehat{F}$, showing that $x\notin \widehat{F}$.

	In fact, the claim can be deduced from the following conditions (cf. \cite[Theorem 1(1)(5)]{Warner_1986}):
	\begin{enumerate}
		\item All valuation on $E$ that extends $v$ are equivalent. This follows from \cite[Corollary 3.2.10]{englerValuedFields2005} and the first assertion.
		\item There exist $c_0,c_1,\cdots,c_n\in E$ such that $F(c_0)$ is the separable closure of $F$ in $E$, $E=F(c_0,\cdots,c_n)$ and the subfields $\{F(c_0,\cdots,c_i)\}_{0\leq i\leq n}$, are closed in $E$. This can be verified by setting $c_i=x^i$, $i=0,1,\cdots,p-1$.
	\end{enumerate}
\end{proof}
\begin{proof}[Proof of \Cref{prop:16686}]
	We reformulate this proposition as follows:
	\begin{enumerate}[label=(A\arabic*),ref=A\arabic*]
		\item\label{it:49184} The element $u_K$ (resp. $\eta_K$) is not contained in $\varphi(\bfE)(\eta_K)$ (resp. $\varphi(\bfE)(u_K)$);
		\item\label{it:41340} $[\bfE\colon\varphi(\bfE)]\leq p^2$.
	\end{enumerate}
	Since $u_K^p\in\varphi(\bfE)(\eta_K)$ (resp. $\eta_K^p\in\varphi(\bfE)(u_K)$), \Cref{it:49184} ensures that $\{u_K^i\}_{0\leq i\leq p-1}$ (resp. $\{\eta_K^i\}_{0\leq i\leq p-1}$) are linearly independent over $\varphi(\bfE)(\eta_K)$ (resp. $\varphi(\bfE)(u_K)$). In particular, the extensions $\bfE/\varphi(\bfE)(\eta_K)$, $\varphi(\bfE)(\eta_K)/\varphi(\bfE)$, $\bfE/\varphi(\bfE)(u_K)$ and $\varphi(\bfE)(u_K)/\varphi(\bfE)$ have degree at least $p$. Together with \Cref{it:41340}, these 4 extensions all have degree $p$, and the result follows.

	For \Cref{it:49184}, we prove $u_K\notin \varphi(\bfE)$, and the remaining part can be proved similarly. The inclusion $\varphi(\bfE)(\eta_K)\subseteq \varphi\left(\widehat{\bfE}_{\FT}^{\np}\right)(\eta_K)$ reduces the problem to the case of $\bfE=\Ecsep$, i.e., $u_K\notin \varphi\left(\widehat{\bfE}_{\FT}^{\np}\right)(\eta_K)$. \Cref{lem:869} implies that $u_K$ is not contained in $\varphi\left(\EtmKnpc\right)(\eta_K)=k_K\pparen{u_K^p,\eta_K}$. By taking $E=k_K\pparen{u_K^p,\eta_K}(u_K)=\EtmKnpc$, $F=k_K\pparen{u_K^p,\eta_K}$ and $x=u_K$ in \Cref{lem:16740} which we will prove later, we can show that $u_K\notin \left(\yhwidehat{\varphi\left(\EtmKnpc\right)(\eta_K)}\right)^{\sep}$. Since $\yhwidehat{\varphi\left(\EtmKnpc\right)(\eta_K)}=\yhwidehat{\varphi\left(\EtmKnpc\right)}(\eta_K)$ by \cite[64]{Warner_1986}, we know that
	$$u_K\notin \left(\yhwidehat{\varphi\left(\EtmKnpc\right)}(\eta_K)\right)^{\sep}=\left(\varphi\left(\EtmKcnp\right)(\eta_K)\right)^{\sep}.$$
	Notice that $\varphi\left(\widehat{\bfE}_{\FT}^{\np}\right)(\eta_K)=\left(\varphi\left(\EtmKcnp\right)\right)^{\sep}(\eta_K)$ is contained in $\left(\varphi\left(\EtmKcnp\right)(\eta_K)\right)^{\sep}$, the result follows.

	Now we prove \Cref{it:41340}. The case of $\bfE=\EtmKnpc$ is immediate: $\{u_K^i\cdot\eta_K^j\}_{0\leq i,j\leq p-1}$ generates $\bfE_{\FT,K}^{\np,\circ,+}$ as a $\varphi\left(\bfE_{\FT,K}^{\np,\circ,+}\right)$-module and consequently generates $\EtmKnpc$ as $\varphi\left(\EtmKnpc\right)$-vector space. Since \Cref{lem:39640} indicates that $\{u_K^i\cdot\eta_K^j\}_{0\leq i,j\leq p-1}$ also generates $\EtmKcnp$ as $\varphi\left(\EtmKcnp\right)$-vector space, we can apply \cite[Lemma 3.3.1]{Zhao2022} to show that \Cref{it:41340} holds for $\bfE\in\left\{\Esep,\Ecsep\right\}$. In particular, the proof of \cite[Lemma 3.3.1]{Zhao2022} indicates that
	\begin{equation}\label{eq:25701}
		\varphi(\Esep)\EtmKnpc=\Esep.
	\end{equation}
	For $\bfE=\EtmKnp$, \cite[Theorem 26.4]{matsumuraCommutativeRingTheory1987} tells us that $\varphi(\Esep)=\varphi(\EtmKnp)^{\sep}$ and $\EtmKnp$ are linearly disjoint over $\varphi(\EtmKnp)$, i.e.,
	\begin{equation}\label{eq:40351}
		\varphi(\Esep)\otimes_{\varphi(\EtmKnp)}\EtmKnp\cong \varphi(\Esep)\EtmKnp.
	\end{equation}
	Since $\varphi(\Esep)\EtmKnpc\subset \varphi(\Esep)\EtmKnp\subset \Esep$, \eqref{eq:25701} implies that \eqref{eq:40351} can be rewritten as
	$$\varphi(\Esep)\otimes_{\varphi(\EtmKnp)}\EtmKnp\cong \Esep.$$
	As a result, we have $[\EtmKnp\colon\varphi(\EtmKnp)]=[\Esep\colon \varphi(\Esep)]\leq p^2$, and the proof is complete.
\end{proof}
\begin{corollary}\label{coro:42040}
	When $K=\bfQ_p$, $B=\{u_{\Qp},\varepsilon\}$ is a $p$-basis of $\bfE_{\FT,\Qp}^{\np,?}$ (resp. $\widehat{\bfE}_{\FT}^{\np,?}$).
\end{corollary}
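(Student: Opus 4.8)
The plan is to bootstrap from \Cref{prop:16686}, exploiting that over $\Qp$ we have fixed $\eta_{\Qp} = \varepsilon - 1$, so that $\varepsilon$ is obtained from the $p$-basis element $\eta_{\Qp}$ by adding $1 = 1^p$. Write $\bfE$ for any one of the fields appearing in the statement; each of them is among those for which \Cref{prop:16686} establishes that $\{u_{\Qp},\eta_{\Qp}\}$ is a $p$-basis. The whole argument rests on one observation: since $1\in\bfE^p$, for any intermediate field $L$ with $\bfE^p\subseteq L\subseteq \bfE$ one has $L(\varepsilon) = L(\varepsilon - 1) = L(\eta_{\Qp})$. Thus replacing the generator $\eta_{\Qp}$ by $\varepsilon$ changes none of the intermediate fields entering the definition of a $p$-basis.

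Concretely, I would first record that $B = \{u_{\Qp},\varepsilon\}$ genuinely has two elements: indeed $v^\flat(\varepsilon) = 0$ because $\varepsilon = 1 + \eta_{\Qp}$ with $v^\flat(\eta_{\Qp}) > 0$, whereas $v^\flat(u_{\Qp}) > 0$, so $\varepsilon \neq u_{\Qp}$ (and obviously $\varepsilon\neq u_{\Qp}$ suffices to make $B$ a candidate $p$-basis). Condition (2) of the definition then follows from the identity above applied to $L = \bfE^p(u_{\Qp})$: one gets $\bfE^p(u_{\Qp},\varepsilon) = \bfE^p(u_{\Qp},\eta_{\Qp}) = \bfE$, the last equality being part of \Cref{prop:16686}. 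For condition (1), the cases with three or more elements are vacuous ($B$ has only two), the case $r=1$ for the element $u_{\Qp}$ is already part of \Cref{prop:16686} (it gives $[\bfE^p(u_{\Qp}):\bfE^p]=p$), and the remaining cases $r=1$ for $\varepsilon$ and $r=2$ follow from the identities $\bfE^p(\varepsilon) = \bfE^p(\eta_{\Qp})$ and $\bfE^p(u_{\Qp},\varepsilon) = \bfE^p(u_{\Qp},\eta_{\Qp})$ above, which yield $[\bfE^p(\varepsilon):\bfE^p] = p$ and $[\bfE^p(u_{\Qp},\varepsilon):\bfE^p] = p^2$ by \Cref{prop:16686}. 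Hence $B$ is a $p$-basis of $\bfE$.

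There is no substantive obstacle here: the corollary is a cosmetic reformulation of \Cref{prop:16686} recording the more familiar generator $\varepsilon$ in place of $\varepsilon - 1$, and the only ingredient beyond \Cref{prop:16686} is that $1$ is a $p$-th power in $\bfE$. The only (entirely routine) point requiring attention is verifying that $B$ is a two-element set, which is disposed of by the valuation computation above.
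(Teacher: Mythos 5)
Your proof is correct, and it is exactly the reasoning the paper leaves implicit: the corollary appears with no proof of its own, being treated as an immediate consequence of \Cref{prop:16686} together with the fact that over $\Qp$ one has $\eta_{\Qp}=\varepsilon-1$, so that $\varepsilon$ and $\eta_{\Qp}$ generate the same extension of any subfield. Your only slight overengineering is invoking $1\in\bfE^p$ — it suffices that $1$ lies in every intermediate field $L$ — and the valuation check that $\varepsilon\neq u_{\Qp}$, while harmless, is not really needed; but the substance is the same as the paper's.
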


It is natural to ask whether there is any containment relationship between $\bfE_{\frakF}$ and $\bfE_{\frakF,K}^?$. The imperfectness of $\bfE_{\frakF,K}^{\np,?}$ allows us to conclude the following result, which partially answers this question:
\begin{lemma}
	There exists an integer $n\geq 1$ such that $\varphi^{-n}\left(\bfE_{\FT,K}\right)$ is not contained in $\EtmKcnp$.
\end{lemma}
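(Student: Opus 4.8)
The plan is to prove the stronger statement that $n=1$ already works, i.e.\ $\varphi^{-1}\left(\bfE_{\FT,K}\right)\not\subseteq\EtmKcnp$. The mechanism is the imperfectness of $\EtmKcnp$ recorded in \Cref{prop:16686}, played off against the fact that one of the two $p$-basis elements, $u_K$, already lies in $\bfE_{\FT,K}$. First I would recall that $u_K\in\bfE_{\FT,K}$: by \Cref{thm:42581} one has $\bfE_{\tau,K}=\bfE_{\FT,K}^{\scrG_{K_\infty}}\subseteq\bfE_{\FT,K}$, and $u_K$ is by construction the uniformizer of $\bfE_{\tau,K}$; in particular $u_K^{1/p}\coloneqq\varphi^{-1}(u_K)\in\wtilde{\bfE}_{\FT,K}$ makes sense and $\varphi\bigl(u_K^{1/p}\bigr)=u_K\in\bfE_{\FT,K}$.

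Now suppose, for contradiction, that $\varphi^{-1}\left(\bfE_{\FT,K}\right)\subseteq\EtmKcnp$. Then $u_K^{1/p}\in\varphi^{-1}\left(\bfE_{\FT,K}\right)\subseteq\EtmKcnp$, so $u_K=\varphi\bigl(u_K^{1/p}\bigr)\in\varphi(\EtmKcnp)=(\EtmKcnp)^p$. But \Cref{prop:16686} (applied to $\bfE=\EtmKcnp$) asserts that $\{u_K,\eta_K\}$ is a $p$-basis of $\EtmKcnp$ and that $[\EtmKcnp\colon\varphi(\EtmKcnp)]=p^2$; in particular $[(\EtmKcnp)^p(u_K,\eta_K)\colon(\EtmKcnp)^p]=p^2$, whereas $u_K\in(\EtmKcnp)^p$ collapses this compositum to $(\EtmKcnp)^p(\eta_K)$, of degree at most $p$ over $(\EtmKcnp)^p$. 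This contradiction shows $\varphi^{-1}\left(\bfE_{\FT,K}\right)\not\subseteq\EtmKcnp$, so $n=1$ suffices.

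I do not expect any genuinely hard step here: the entire substance — the imperfectness $[\EtmKcnp\colon\varphi(\EtmKcnp)]=p^2$ and the explicit $p$-basis $\{u_K,\eta_K\}$ — is already packaged in \Cref{prop:16686}, and the present lemma is a short deduction; the only point needing a word of justification is the inclusion $u_K\in\bfE_{\FT,K}$ in the first step. For a variant that neither pins down a specific $n$ nor invokes the $p$-basis description, one can argue as follows: were $\varphi^{-n}\left(\bfE_{\FT,K}\right)\subseteq\EtmKcnp$ to hold for all $n\geq 1$, then the perfect closure $\bigcup_{n\geq 1}\varphi^{-n}\left(\bfE_{\FT,K}\right)$ of $\bfE_{\FT,K}$ would be contained in $\EtmKcnp$; passing to $v^\flat$-closures inside $\wtilde{\bfE}_{\FT,K}$ and using that the completed perfection of the field of norms $\bfE_{\FT,K}$ is $\wtilde{\bfE}_{\FT,K}$ (Fontaine--Wintenberger, cf.\ \cite{Wintenberger1983}), while $\EtmKcnp$ is $v^\flat$-complete and hence closed in $\wtilde{\bfE}_{\FT,K}$, one would obtain $\EtmKcnp=\wtilde{\bfE}_{\FT,K}$ --- impossible, since the former is imperfect by \Cref{prop:16686} whereas the latter is perfect.
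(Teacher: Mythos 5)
Your ``variant'' at the end is precisely the paper's proof: assuming the containment held for all $n$, the field $\EtmKcnp$ would contain the perfection $\bigcup_{n}\varphi^{-n}\left(\bfE_{\FT,K}\right)$ and hence (being $v^\flat$-complete, so closed in $\wtilde{\bfE}_{\FT,K}$) its completion $\wtilde{\bfE}_{\FT,K}$; the resulting equality $\EtmKcnp=\wtilde{\bfE}_{\FT,K}$ contradicts the imperfectness $[\EtmKcnp\colon\varphi(\EtmKcnp)]=p^2$ from \Cref{prop:16686}, since $\wtilde{\bfE}_{\FT,K}$ is perfect. So that part matches the paper exactly.

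Your main argument, which sharpens the conclusion to $n=1$, is structurally sound but opens with a wrong citation that you should not let stand: you invoke \Cref{thm:42581} for ``$\bfE_{\tau,K}=\bfE_{\FT,K}^{\scrG_{K_\infty}}$'', whereas the table in \Cref{thm:42581} explicitly puts a \faTimes\ in that column for the row $E=\bfE_{\FT,K}$ --- the paper is asserting that this identity \emph{fails} for $\bfE_{\FT,K}$. What you actually need is only the inclusion $u_K\in\bfE_{\FT,K}$ (equivalently $\bfE_{\tau,K}\subseteq\bfE_{\FT,K}$), and that \emph{is} true, but by a different mechanism: it is Wintenberger's compatibility of the field-of-norms construction along the tower $K\subset K_\infty\subset K_{\frakF}$ (the embeddings $\Lambda_{K_\infty/K}$ and $\Lambda_{K_{\frakF}/K}$ into $\bfC_p^\flat$ are compatible, cf.\ \cite{Wintenberger1983}), not a fixed-field computation. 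With that repair, your first argument is a genuine improvement in precision --- it identifies $n=1$ and isolates exactly which element ($u_K$) becomes obstructed --- but at the cost of invoking the functoriality of fields of norms for an infinite subtower, which the paper's route deliberately sidesteps by working only with completions and never tracing individual elements of $\bfE_{\FT,K}$.
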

\begin{proof}
	If the statement is incorrect, then $\EtmKcnp$ contains $\bigcup_n\varphi^{-n}\left(\bfE_{\FT,K}\right)$ and consequentially its completion $\wtilde{\bfE}_{\FT,K}$. This implies that $\EtmKcnp=\wtilde{\bfE}_{\FT,K}$, which is impossible by \Cref{prop:16686}.
\end{proof}

\section{Liftability of the action of $\Gamma_{\FT,K}$}
To build the characteristic $0$ theory, the first step is to lift the action of the Galois group $\Gamma_{\FT,K}$ on the $\opn{mod} p$ period rings to their $p$-Cohen ring. We start by recalling the definition of the $p$-Cohen ring and the concept of liftability.
\begin{definition}\label{def:34646}
	For any field $k$ of characteristic $p$, the $p$-Cohen ring $\calC(k)$ of $k$ is the unique (up to isomorphism) absolutely unramified complete discrete valuation ring with residue field $k$.
\end{definition}
\begin{lemma}[{\cite[Section 3]{BSMF_1972__100__241_0}}]\label{lem:19332}
	For any field $k$ of characteristic $p$, let $B$ be a $p$-basis of $k$. For any $n\in\bfN$, let $\calC_{n+1}(k)$ be the subring of $W_{n+1}(k)$ generated by $W_{n+1}\left(k^{p^n}\right)$ and $\{[b]\colon b\in B\}$. Then the canonical projection $W_{n+1}(k)\lto W_n(k)$ restrict to a surjective morphism $\calC_{n+1}(k)\lto \calC_n(k)$, which induces an isomorphism $\calC(k)\cong \varprojlim_n\calC_n(k)$.
\end{lemma}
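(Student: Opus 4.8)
The plan is to verify that $A\coloneqq\varprojlim_n\calC_n(k)$ is an absolutely unramified complete discrete valuation ring with residue field $k$; by the uniqueness asserted in \Cref{def:34646} this forces $A\cong\calC(k)$, which is the assertion (the surjectivity in the statement is precisely what makes the transition maps, hence $\varprojlim_n\calC_n(k)$, well-defined). Since each $\calC_n(k)$ sits inside $W_n(k)$ and $\varprojlim_n W_n(k)=W(k)$ injects into the domain $W(\bar k)$ ($\bar k$ being perfect), $A$ is a domain, so $p$ is a non-zero-divisor in it. Thus it suffices to prove about the subrings $\calC_{n+1}(k)\subseteq W_{n+1}(k)$ that \textbf{(i)} the truncation $\pi_n\colon W_{n+1}(k)\to W_n(k)$ restricts to a surjection $\calC_{n+1}(k)\twoheadrightarrow\calC_n(k)$, and \textbf{(ii)} the kernel of this restriction equals $p^n\calC_{n+1}(k)$. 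Indeed (i) and (ii) yield $\calC_{n+1}(k)/p^n\calC_{n+1}(k)\cong\calC_n(k)$ for every $n$, whence (a routine Mittag--Leffler argument using the surjectivity of the transition maps) $A/p^nA\cong\calC_n(k)$, so that $\bigcap_n p^nA=0$, $A$ is $p$-adically complete, and $A/pA\cong\calC_1(k)=k$ is a field; a $p$-torsion-free, $p$-adically complete local ring with residue field $k$ and maximal ideal $(p)$ is exactly an absolutely unramified complete DVR with residue field $k$.

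Everything rests on one structural description of $\calC_{n+1}(k)$ that I would establish first. Let $\calM_n$ be the set of monomials $\prod_{b\in B}b^{e_b}$ with $0\leq e_b<p^n$ (almost all $e_b=0$); since $B$ is a $p$-basis, $\calM_n$ is a basis of $k$ as a $k^{p^n}$-vector space and $k=k^{p^n}[B]$. Using $[b_1b_2]=[b_1][b_2]$, the fact that $[y^{p^n}]\in W_{n+1}(k^{p^n})$ for $y\in k$, and the observation that any monomial in $B$ equals a monomial of $\calM_n$ times an element of $k^{p^n}$, one checks that $\sum_{m\in\calM_n}[m]\cdot W_{n+1}(k^{p^n})$ is already a subring of $W_{n+1}(k)$ containing the generators, so that
$$\calC_{n+1}(k)=\sum_{m\in\calM_n}[m]\cdot W_{n+1}(k^{p^n}).$$

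From this I would deduce (i) and (ii) by induction on $n$, the inductive step passing from the field $k$ to $k^p$ (whose $p$-basis is $B^p$). For (ii): the zeroth Witt component of $\calC_{n+1}(k)$ is the subring of $k$ generated by $k^{p^n}$ and $B$, which is all of $k$, so $p^n\calC_{n+1}(k)=V^n(k^{p^n})$ (with $V$ the Verschiebung), and one must show conversely that $(0,\dots,0,c)\in\calC_{n+1}(k)$ forces $c\in k^{p^n}$. Expanding $(0,\dots,0,c)=\sum_{m\in\calM_n}[m]\xi_m$, comparing zeroth components and using the $k^{p^n}$-linear independence of $\calM_n$ forces every $\xi_m\in V(W_n(k^{p^n}))$, say $\xi_m=V(\eta_m)$; the projection formula $[m]V(\eta_m)=V([m^p]\eta_m)$ and additivity of $V$ then give $\sum_m[m^p]\eta_m=(0,\dots,0,c)$ as a length-$n$ Witt vector, and since the $[m^p]$ ($m\in\calM_n$) together with $W_n(k^{p^n})$ generate $\calC_n(k^p)$, the inductive hypothesis applied to $k^p$ gives $c\in(k^p)^{p^{n-1}}=k^{p^n}$. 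Surjectivity in (i) reduces to $W_n(k^{p^{n-1}})\subseteq\langle W_n(k^{p^n}),[b]:b\in B\rangle$ inside $W_n(k)$, proved by the same bookkeeping: decompose an element of $k^{p^{n-1}}$ over the $k^{p^n}$-basis provided by $B^{p^{n-1}}$, split off the Teichm\"uller part, and use $p\cdot[\lambda^{1/p}]=V([\lambda])\in W_n(k^{p^n})$ for $\lambda\in k^{p^n}$ to push the remaining Witt components into $W_n(k^{p^n})$; this is again an induction, with base cases ($n=1$) trivial.

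The main obstacle is exactly the normal-form identity together with the equality $\ker(\pi_n|_{\calC_{n+1}(k)})=p^n\calC_{n+1}(k)$: this is where the $p$-basis hypothesis enters essentially --- it is what separates $\calC_{n+1}(k)$ from the strictly larger ring $W_{n+1}(k)$, whose reduction modulo $p$ is far bigger than $k$ when $k$ is imperfect --- and it requires genuine control of the carries in Witt addition, not a soft argument. One could instead proceed conceptually: $\bfF_p$ being perfect, $k/\bfF_p$ is formally smooth, so flat $\bfZ/p^n$-algebra lifts of $k$ exist and are unique, producing an abstract Cohen ring; but identifying that abstract lift with the explicit Witt-vector model $\calC_n(k)$ still requires essentially the computation above, so I would carry out the hands-on argument --- which is what \cite[Section 3]{BSMF_1972__100__241_0} does.
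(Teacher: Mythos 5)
The paper does not prove \Cref{lem:19332}; it simply cites it from Schoeller's article \cite{BSMF_1972__100__241_0}, so there is no in-paper proof to compare against. Your reconstruction is correct in strategy and essentially reproduces the classical Witt-vector argument that the cited source gives, as you yourself note at the end.

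A few remarks on the argument itself. The reduction to the normal-form identity $\calC_{n+1}(k)=\sum_{m\in\calM_n}[m]\cdot W_{n+1}\left(k^{p^n}\right)$, the computation $p^n\calC_{n+1}(k)=V^n\left(k^{p^n}\right)$ via $p^n(a_0,\dots)=\left(0,\dots,0,a_0^{p^n}\right)$, and the induction for (ii) using the projection formula $[m]V(\eta)=V\left([m^p]\eta\right)$ to descend to $\calC_n\left(k^p\right)$ are all sound; the $k^{p^n}$-linear independence of the monomials is exactly what the $p$-basis hypothesis supplies, and it is what prevents extra $p$-torsion. The surjectivity step (i) is the most compressed part of your sketch: reducing to $W_n\left(k^{p^{n-1}}\right)\subseteq\sum_{m\in\calM_n}[m]W_n\left(k^{p^n}\right)$, one has to peel off the top Witt component via Teichm\"uller monomials as you describe, and then handle terms of the form $V([m c_m])$ with $m$ a monomial in $B^{p^{n-1}}$ and $c_m\in k^{p^n}$; the identity $[a]V(y)=V\left([a^p]y\right)$ lets one write $V\left(\left[(m')^{p^{n-1}}c_m\right]\right)=\left[(m')^{p^{n-2}}\right]\cdot V([c_m])$ with $V([c_m])\in W_n\left(k^{p^n}\right)$, which is the mechanism your phrase ``push the remaining Witt components into $W_n\left(k^{p^n}\right)$'' alludes to. Finally, the Mittag--Leffler step deducing $A/p^nA\cong\calC_n(k)$ from (i)--(ii) is not quite automatic: one needs that $\calC_m(k)[p^n]=p^{m-n}\calC_m(k)$ for $m>n$, which does follow from the isomorphisms $\calC_m(k)/p^{j}\calC_m(k)\cong\calC_j(k)$ together with $p^{m-1}\calC_m(k)\cong k\neq 0$ (so all graded pieces of the $p$-adic filtration are copies of $k$), but it is worth flagging as a non-routine point in an otherwise routine limit argument.
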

\begin{remark}
	The above construction of the $p$-Cohen ring depends on the choice of the $p$-basis $B$, which is not canonical.
\end{remark}

\begin{definition}
	Let $E$ be a subfield of $\wtilde{\bfE}_{\FT,K}$ satisfying that $E$ is stable under the action of $\scrG_K$ and this action factors through $\Gamma_{\FT,K}$. We call the action of $\Gamma_{\FT,K}$ on $E$ \textbf{liftable} if there exists a subring of $W\left(\wtilde{\bfE}_{\FT,K}\right)$ that satisfies
	\begin{enumerate}
		\item $R$ is a $p$-Cohen ring of $E$;
		\item $R$ is stable under the Frobenius map and the action of $\Gamma_{\FT,K}$ on $W\left(\wtilde{\bfE}_{\FT,K}\right)$:
		      $$\Gamma_{\FT,K}\times W\left(\wtilde{\bfE}_{\FT,K}\right)\lto W\left(\wtilde{\bfE}_{\FT,K}\right),\ g\cdot\sum_{n=0}^\infty[x_n]p^n\longmapsto \sum_{k=0}^\infty[g\cdot x_n]p^n.$$
	\end{enumerate}
\end{definition}

We are going to prove that the action of $\Gamma_{\FT,K}$ on $\bfE_{\frakF,K}^{\np,?}$ is liftable. The following lemma will be useful:
\begin{lemma}
	Let $K_1$ (resp. $K_1^\prime$) be the maximal tamely ramified extension of $K$ (resp. $\bfQ_p$) in $K_\infty$, and let $K_2$ (resp. $K_2^\prime$) be the maximal tamely ramified extension of $K$ (resp. $\bfQ_p$) in $K_{p^\infty}$. Then
	$K_1$ (resp. $K_2$) is a finite extension of $K_1^\prime$ (resp. $K_2^\prime$) and $[K_1\colon K_1^\prime]=[K_2\colon K_2^\prime]=p^{\frakF(K)}$ for some natural number $\frakF(K)$.
\end{lemma}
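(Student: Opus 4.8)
The plan is to realise each of the four fields as the maximal tamely ramified subextension of a suitable \emph{totally ramified} extension of $\bfQ_{p^f}$, where $\bfF_{p^f}=k_K$ is the common residue field, and then to read off the degrees from the prime-to-$p$ parts of the relevant ramification indices. Put $f\coloneqq[k_K\colon\bfF_p]$, let $\bfQ_{p^f}$ be the unramified extension of $\bfQ_p$ of degree $f$, and set $\frakF(K)\coloneqq v_p\bigl(e(K/\bfQ_p)\bigr)$; this will be the exponent. By the Assumption, $K_\infty/K$ and $K_{p^\infty}/K$ are totally ramified, so $K_\infty$, $K_{p^\infty}$ and all fields in between have residue field $k_K$ and therefore contain $\bfQ_{p^f}$.

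First I would carry out the reductions. Since $X^{p^n}-\pi_K$ is Eisenstein, $K(\pi_K^{1/p^n})/K$ is totally (wildly) ramified of degree $p^n$, hence every finite subextension of $K_\infty=\bigcup_n K(\pi_K^{1/p^n})$ over $K$ is totally ramified of $p$-power degree; so its maximal tamely ramified subextension is $K$ itself, i.e.\ $K_1=K$. Next, using the standard fact that the compositum of a tamely ramified extension of $\bfQ_p$ with any extension is again tamely ramified, any subfield $M$ of $K_\infty$ (resp.\ of $K_{p^\infty}$) with $M/\bfQ_p$ tame gives a tamely ramified extension $MK/K$; for $K_\infty$ the previous sentence forces $MK=K$, so $K_1'\subseteq K$, whereas for $K_{p^\infty}$ it gives $MK\subseteq K_2$, so $K_2'\subseteq K_2$. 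Since, conversely, the maximal tame subextensions of $K/\bfQ_p$ and of $K_2/\bfQ_p$ lie in $K_1'$ and $K_2'$ respectively and contain $\bfQ_{p^f}$, this identifies $K_1'=K\cap\bfQ_p^{\mathrm{tr}}$ with the maximal tame subextension of $K/\bfQ_{p^f}$ and $K_2'=K_2\cap\bfQ_p^{\mathrm{tr}}$ with that of $K_2/\bfQ_{p^f}$. Finally, $K_2/K$ is finite: by \Cref{lem:22831}, $\Gal(K_{p^\infty}/K)$ is an open subgroup of $\bfZ_p^\times$, hence a product $\mu\times N$ of a finite group $\mu$ of order prime to $p$ with an open pro-$p$ subgroup $N\cong\bfZ_p$; as $K_{p^\infty}/K$ is totally ramified, $N$ is its wild inertia and $K_2=(K_{p^\infty})^N$ with $[K_2\colon K]=|\mu|\leq p-1$, so in particular $p\nmid e(K_2/K)$.

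It then remains to invoke the elementary structure of tamely ramified extensions: for a finite totally ramified extension $L/F$ of finite extensions of $\bfQ_p$, the maximal tame subextension $L^{\mathrm t}$ satisfies $[L\colon L^{\mathrm t}]=p^{v_p(e(L/F))}$ (writing $e(L/F)=e_0p^{s}$ with $p\nmid e_0$, the field $F(\pi_L^{p^{s}})$ is tame of degree $e_0$ and the residual degree $p^{s}$ is wild). Applying this to $K/\bfQ_{p^f}$ gives $[K_1\colon K_1']=[K\colon K_1']=p^{v_p(e(K/\bfQ_p))}=p^{\frakF(K)}$ (using $\bfQ_{p^f}/\bfQ_p$ unramified), and applying it to $K_2/\bfQ_{p^f}$ gives $[K_2\colon K_2']=p^{v_p(e(K_2/\bfQ_{p^f}))}$; since $e(K_2/\bfQ_{p^f})=e(K_2/K)\,e(K/\bfQ_p)$ with $p\nmid e(K_2/K)$, this again equals $p^{v_p(e(K/\bfQ_p))}=p^{\frakF(K)}$, and finiteness of $K_1/K_1'$ and $K_2/K_2'$ follows at once. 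The only inputs that are not pure formalism are the Assumption (so that $K_{p^\infty}/K$ is totally ramified, forcing the common residue field; without it $K_2$ would acquire an unramified part and require a separate analysis) and \Cref{lem:22831} (so that $K_2/K$ is finite of prime-to-$p$ degree); everything else is routine, and the only point needing care is the bookkeeping that $K_1'$ and $K_2'$, defined relative to $\bfQ_p$, coincide with the maximal tame subextensions over $\bfQ_{p^f}$ sitting inside $K$ and $K_2$.
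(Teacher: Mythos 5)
Your proof is correct and follows essentially the same route as the paper: identify $K_1=K$ via the Eisenstein/totally-wildly-ramified structure of $K_\infty/K$, recognise $K_1'$ and $K_2'$ as maximal tame subextensions over $\bfQ_p$ sitting inside $K_1$ and $K_2$, and read off the degrees as the wild parts $p^{v_p(e(\cdot/\bfQ_p))}$, using $p\nmid e(K_2/K)$ to match the two exponents. The differences are peripheral. The paper gets finiteness of $K_1/K$, $K_2/K$, etc.\ directly from the APF property and cites \cite[Remarque 1.4.2]{Wintenberger1983} for the wild degree, whereas you get finiteness of $K_2/K$ from \Cref{lem:22831} (open subgroup of $\bfZ_p^\times$, prime-to-$p$ torsion part) and appeal to the elementary structure theory of tame extensions; you also carry out an explicit reduction to the base $\bfQ_{p^f}$ which the paper leaves implicit. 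Both are valid, and neither buys much over the other here.

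One small flaw in a parenthetical aside: the claim that, for $L/F$ finite totally ramified with $e(L/F)=e_0p^s$, the field $F(\pi_L^{p^s})$ is the maximal tame subextension of degree $e_0$ is \emph{not} correct in general. The element $\pi_L^{p^s}$ has valuation $1/e_0$ but need not lie in any degree-$e_0$ subfield, and typically generates more than $L^t$ over $F$ (take any Eisenstein polynomial of degree $p^s$ over $L^t$ whose constant term is not itself a $p^s$-th power times a unit in $L^t$). The fact you actually use, namely $[L\colon L^t]=p^{v_p(e(L/F))}$, is standard and true (the correct argument: $L/L^t$ is totally wildly ramified, hence of $p$-power degree, and $e(L^t/F)$ is prime to $p$), so this does not affect the validity of your proof; I would simply drop or replace that parenthetical justification.
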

\begin{proof}
	By the definition of APF extension, the extensions $K_1/K$, $K_1^\prime/\Qp$, $K_2/K$ and $K_2^\prime/\Qp$ are all finite. Since $K_1^\prime K$ (resp. $K_2^\prime K$) is tamely ramified over $K$ (cf. \cite[Corollary 7.8]{Neukirch2008}), it follows that $K_1^\prime\subseteq K_1^\prime K\subseteq K_1$ (resp. $K_2^\prime\subseteq K_2^\prime K\subseteq K_2$) are finite extensions.

	As a consequence, $K_1^\prime$ is the maximal tamely ramified extension of $\Qp$ in $K_1$, which has ramification index coprime to $p$. If we view $K_1/K$ as finite APF extension, then \cite[Remarque 1.4.2]{Wintenberger1983} tells us that $[K_1\colon K_1^\prime]$ is a power of $p$, i.e.,
	$$[K_1\colon K_1^\prime]=\frake(K_1/K_1^\prime)=p^{v_p(\frake(K_1/\Qp))}.$$
	Similarly, we can show that
	$$[K_2\colon K_2^\prime]=\frake(K_2/K_2^\prime)=p^{v_p(\frake(K_2/\Qp))}.$$
	The important observation here is that $K_1$ actually coincides with $K$. Then one can write
	$$[K_2\colon K_2^\prime]=p^{v_p(\frake(K_2/K))+v_p(\frake(K/\Qp))}=p^{v_p(\frake(K_2/K))}\cdot [K_1\colon K_1^\prime]=[K_1\colon K_1^\prime],$$
	as desired.
\end{proof}
The following lemma helps us to reduce the problem of liftability to the case of $K=\Qp$:
\begin{lemma}\label{lem:36685}
	Let $K$ be a finite extension of $\Qp$.
	\begin{enumerate}
		\item The field $\bbE_{\frakF,K}^{\np,\circ}\coloneqq k_K\pparen{\pi_K^{p^{\frakF(K)}},\eta_K^{p^{\frakF(K)}}}=\varphi^{\frakF(K)}(\EtmKnpc)$ is a finite separable extension of $\bfE_{\frakF,\Qp}^{\np,\circ}$.%
		\item The field $\bbE_{\frakF,K}^{\np,?}\coloneqq\varphi^{\frakF(K)}\left(\bfE_{\frakF,K}^{\np,?}\right)$ is a finite separable extension of $\bfE_{\frakF,\Qp}^{\np,?}$.
	\end{enumerate}
\end{lemma}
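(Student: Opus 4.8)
\emph{Plan and proof of (1).} The strategy is to prove (1) by an explicit two–coordinate analysis, and then to deduce (2) by transporting (1) along the Frobenius. Write $e_K=p^{\frakF(K)}e_K'$ with $\gcd(e_K',p)=1$; this is exactly the content of the previous lemma, since $p^{\frakF(K)}=[K_1:K_1']=p^{v_p(e(K/\bfQ_p))}$. The point of the twist by $\varphi^{\frakF(K)}$ is precisely to strip this power of $p$ off the ramification, so that the comparison becomes, modulo the residue extension $k_K/\bfF_p$, tamely ramified and hence separable. I would factor the embedding $\bfE_{\frakF,\Qp}^{\np,\circ}\hookrightarrow\bbE_{\frakF,K}^{\np,\circ}=k_K\pparen{\varphi^{\frakF(K)}(u_K),\varphi^{\frakF(K)}(\eta_K)}$ along its two coordinates. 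On the Kummer coordinate, after normalizing $\pi_K$ so that $p=\pi_K^{e_K}\omega$ with $\omega\in\scrO_K^\times$ a root of unity of order prime to $p$, one gets inside $\Cpflat$ the identity $u_{\Qp}=\varphi^{\frakF(K)}(u_K)^{e_K'}\,\widetilde\omega$ with $\widetilde\omega\in k_K$, so that $\varphi^{\frakF(K)}(u_K)$ is a root of $T^{e_K'}-\widetilde\omega^{-1}u_{\Qp}$ over $\bfE_{\tau,\Qp}$, a separable polynomial because $\gcd(e_K',p)=1$. On the cyclotomic coordinate, field–of–norms functoriality for the finite extension $K(\zeta_{p^\infty})/\bfQ_p(\zeta_{p^\infty})$ realizes $\bfE_K$ as a finite separable extension of $\bfE_{\Qp}$ (here the Assumption, through $k_{K(\zeta_{p^\infty})}=k_K$, pins down the residue degree), and the same $\varphi^{\frakF(K)}$–twist absorbs the $p$–part of its ramification, placing $\eta_{\Qp}$ into $k_K\pparen{\varphi^{\frakF(K)}(\eta_K)}$ with separable minimal polynomial over $\bfE_{\Qp}$. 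Assembling the two coordinates together with the separable residue extension $k_K/\bfF_p$ yields $\bfE_{\frakF,\Qp}^{\np,\circ}\subseteq\bbE_{\frakF,K}^{\np,\circ}$ of finite degree, and separability follows since every factor is either residual or tame.

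\emph{Proof of (2).} Recall $\EtmKnp=\bigl(\overline{\EtmKnpc}^{\sep}\bigr)^{H_{\frakF,K}}$ and, via \Cref{lem:47550}, $\EtmKcnp=\bigl(\overline{\EtmKcnp}^{\sep}\bigr)^{H_{\frakF,K}}$, the separable closures being taken inside $\Cpflat$. Since $\Cpflat$ is perfect, $\varphi$ is a bijective homeomorphism of it that commutes with the $\scrG_K$–action (being a ring endomorphism) and multiplies $v^\flat$ by $p$; hence $\varphi^{\frakF(K)}$ carries separable closures to separable closures and completions to completions. Applying $\varphi^{\frakF(K)}$ to the two formulas and using (1) to identify $\overline{\bbE_{\frakF,K}^{\np,\circ}}^{\sep}$ with $\overline{\bfE_{\frakF,\Qp}^{\np,\circ}}^{\sep}=:\scrC$, one obtains that $\bbE_{\frakF,K}^{\np,?}$ is the subfield of $H_{\frakF,K}$–invariants of $\scrC$ (resp. of its completion $\widehat{\scrC}$), while $\bfE_{\frakF,\Qp}^{\np,?}$ is the subfield of $H_{\frakF,\Qp}$–invariants of the same $\scrC$ (resp. $\widehat{\scrC}$). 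Part (1) moreover gives $\bfE_{\frakF,\Qp}^{\np,\circ}\subseteq\bbE_{\frakF,K}^{\np,\circ}\subseteq(\Cpflat)^{H_{\frakF,K}}=\wtilde{\bfE}_{\frakF,K}$, so $p^{1/p^{n}}\in\widehat{K_{\frakF}}$ for all $n$; being algebraic over $\bfQ_p$, these lie in $\widehat{K_{\frakF}}\cap\overline{\bfQ}_p=K_{\frakF}$, whence $(\bfQ_p)_{\frakF}\subseteq K_{\frakF}$ and $H_{\frakF,K}\subseteq H_{\frakF,\Qp}$ with $[H_{\frakF,\Qp}:H_{\frakF,K}]=[K_{\frakF}:(\bfQ_p)_{\frakF}]\le[K:\bfQ_p]<\infty$. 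Since $\rho$ is injective both for $K$ and for $\bfQ_p$ by \Cref{lem:5466}, this index equals $[\bbE_{\frakF,K}^{\np,?}:\bfE_{\frakF,\Qp}^{\np,?}]$, which is therefore finite; and separability is automatic, everything lying inside a separable closure of $\bfE_{\frakF,\Qp}^{\np,\circ}$. (For the completed variant one may instead deduce finiteness and separability from the uncompleted case by \Cref{lem:39640}.)

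\emph{Main obstacle.} The substance is entirely in part (1), and there in the separability assertion: one must check that $\frakF(K)=v_p(e_K)$ is exactly the amount of Frobenius needed to clear all wild (inseparable) ramification from both coordinates. In the Kummer direction this forces the normalization of $\pi_K$ so that $\omega=p/\pi_K^{e_K}$ is a root of unity of order prime to $p$ (so that the correction $\widetilde\omega$ lands in $k_K$), which is where the Assumption is used; in the cyclotomic direction it requires tracking how the wild part of $\bfE_K/\bfE_{\Qp}$ interacts with the twist. Once these points are secured the remaining degree and separability bookkeeping is routine.
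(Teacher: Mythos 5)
Your proof of part~(2) is essentially the paper's: use~(1) to identify the separable closures, realise both $\bbE_{\frakF,K}^{\np}$ and $\bfE_{\frakF,\Qp}^{\np}$ as $H$-invariants, and conclude by the finite index of $H_{\frakF,K}$ in $H_{\frakF,\Qp}$ together with injectivity of $\rho$; for the completed variant you invoke \Cref{lem:39640}, while the paper passes through a separable primitive element $\beta$ so that separability of the completed extension is immediate. That is fine.

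The real issue is in the Kummer coordinate of part~(1). You assert that one can normalise $\pi_K$ so that $p=\pi_K^{e_K}\omega$ with $\omega\in\scrO_K^\times$ a root of unity of order prime to $p$, and that this yields $u_{\Qp}=\varphi^{\frakF(K)}(u_K)^{e_K'}\widetilde\omega$ with $\widetilde\omega\in k_K$. Such a normalisation is not available for a general finite $K/\Qp$. Writing $\omega=\zeta\cdot u$ with $\zeta\in\mu_{q-1}(K)$ and $u$ a principal unit, replacing $\pi_K$ by $\pi_K v$ changes $\omega$ by $v^{-e_K}$, and since $e_K=p^{\frakF(K)}e_K'$ this requires extracting a $p^{\frakF(K)}$-th root of $u$ inside the pro-$p$ group $1+\frakm_K$, which generically does not exist (already for $K$ with $\frakF(K)\ge 1$, the $p$-th power map on $1+\frakm_K$ is far from surjective). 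Without this normalisation, $\widetilde\omega=u_{\Qp}/(u_K')^{e_K'}$ is merely a unit in $k_K\bbrac{u_K'}$ with constant term in $k_K^\times$, not an element of $k_K$, so $T^{e_K'}-\widetilde\omega^{-1}u_{\Qp}$ is not a polynomial with the coefficients you want, and the purported explicit separable relation for $\varphi^{\frakF(K)}(u_K)$ collapses. Relatedly, this is not the role of the paper's standing Assumption: that assumption is that $K_{\frakF}/K$ is totally ramified (so that the residue field of $X_K(K_{\frakF})$ is $k_K$); it says nothing about the unit $p/\pi_K^{e_K}$. The paper sidesteps all of this by never writing an explicit minimal polynomial: it invokes Wintenberger's functoriality to know that $X_{\Qp}(K_\infty)$ is finite separable over $X_{\Qp}(\bfQ_{p,\infty})$ as abstract fields, and records that the embedding into $\Cpflat$ identifies $X_{\Qp}(K_\infty)$ with the Frobenius twist $\bbE_{\tau,K}=\varphi^{\frakF(K)}(\bfE_{\tau,K})$ — the $\varphi^{\frakF(K)}$-twist then comes for free rather than having to be verified to clean up wild ramification by hand.

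A second, smaller gap: the step ``assembling the two coordinates together with the separable residue extension $k_K/\bfF_p$ yields $\bfE_{\frakF,\Qp}^{\np,\circ}\subseteq\bbE_{\frakF,K}^{\np,\circ}$ of finite degree'' is unjustified. Separability and finiteness of the two one-variable extensions do not automatically give separability and finiteness of the two-variable Laurent series extension $k_K\pparen{u_K',\eta_K'}/\bfF_p\pparen{u_{\Qp},\eta_{\Qp}}$; a concrete gluing argument is needed. The paper supplies one: it takes a primitive element $\alpha$ of the (one-variable) Kummer coordinate extension, chooses a generating set $s_1(\alpha),\dots,s_d(\alpha)$ of $k_K\bbrac{u_K'}$ as a module, rewrites $k_K\bbrac{u_K',\eta_K'}=\sum_i \bigl(k_K\bbrac{u_{\Qp}}\bbrac{\eta_K'}\bigr)s_i(\alpha)$, and hence lands inside $k_K\pparen{u_{\Qp},\eta_K'}(\alpha)$, then repeats for the cyclotomic coordinate. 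You will need something of this sort before ``assembling'' can be accepted.
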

\begin{proof}
	Consider the field of norms $X_{\Qp}(K_\infty)$, which is a finite separable extension of $X_{\bfQ_p}(\bfQ_{p,\infty})$ by \cite[Théorème 3.1.2]{Wintenberger1983}. The identification
	$$X_{\Qp}(K_\infty)\xlongrightarrow{\cong} X_K(K_\infty),\ (\alpha_E)_{E\in\scrE(K_\infty/\Qp)}\longmapsto (\alpha_E)_{E\in\scrE(K_\infty/K)}$$
	becomes a purely inseparable extension of degree $p^{\frakF(K)}$ if we embed them into $\Cpflat$ by using the formula in \Cref{thm:55903}. If we denote by $\bbE_{\tau,K}$ the image of $X_{\Qp}(K_\infty)$ in $\Cpflat$, then one can write $\bbE_{\tau,K}=k_K\pparen{u_K^\prime}$ with $u_K^\prime=u_K^{p^{\frakF(K)}}$, i.e., $\bbE_{\tau,K}=\varphi^{\frakF(K)}(\bfE_{\tau,K})$. Similarly, if we denote by $\bbE_{K}$ the image of $X_{\Qp}(K_{p^\infty})$ in $\Cpflat$, then $\bbE_K=k_K\pparen{\eta_K^\prime}$ with $\eta_K^\prime=\eta_K^{p^{\frakF(K)}}$ is a finite separable extension of $\bfE_{\Qp}$ and $\bbE_K=\varphi^{\frakF(K)}(\bfE_K)$.

	\begin{enumerate}
		\item The first assertion is equivalent to prove that $k_K\pparen{u_K^\prime,\eta_K^\prime}$ is a finite separable extension of $\bfE_{\frakF,\Qp}^{\np,\circ}$.

		      Take a primitive element $\alpha$ of the extension $\bfE_{\tau,K}/\bfE_{\tau,\Qp}$, and we fix a set of generators $\{s_1(\alpha),\cdots,s_d(\alpha)\}$ of $k_K\bbrac{u_K^\prime}$ as a finite free $\bfF_p\bbrac{u_{\Qp}}$-module. Then
		      $$k_K\bbrac{u_K^\prime,\eta_K^\prime}=k_K\bbrac{u_K^\prime}\bbrac{\eta_K^\prime}=\left(\sum_{i=1}^d k_K\bbrac{u_{\Qp}}s_i(\alpha)\right)\bbrac{\eta_K^\prime}=\sum_{i=1}^d\left(k_K\bbrac{u_{\Qp}}\bbrac{\eta_K^\prime}\right)s_i(\alpha),$$
		      which lies in $k_K\pparen{u_{\Qp},\eta_K^\prime}(\alpha)$. Since $\alpha$ is separable over $\bfE_{\tau,K}\subset k_K\pparen{u_{\Qp},\eta_K^\prime}$, one obtains that $k_K\pparen{u_K^\prime,\eta_K^\prime}=\opn{Frac}k_K\bbrac{u_K^\prime,\eta_K^\prime}$ is finite separable over $k_K\pparen{u_{\Qp},\eta_K^\prime}$. With the same strategy, one can show that $k_K\pparen{u_{\Qp},\eta_K^\prime}$ is finite separable over $\bfE_{\frakF,\Qp}^{\np,\circ}$, which completes the proof of the first assertion.
		\item For $\bfE_{\frakF,K}^{\np,?}=\bfE_{\frakF,K}^{\np}$, since $\bbE_{\frakF,K}^{\np,\circ}$ is finite separable over $\bfE_{\frakF,\Qp}^{\np,\circ}$ by the first assertion, we know that
		      $$\left(\bfE_{\frakF,K}^{\np,\circ}\right)^{\opn{sep}}=\left(\bbE_{\frakF,K}^{\np,\circ}\right)^{\opn{sep}}=\varphi^{\frakF(K)}\left(\left(\bfE_{\frakF,K}^{\np,\circ}\right)^{\opn{sep}}\right).$$
		      Since $\bfE_{\frakF,\Qp}^{\np}=\left(\left(\bfE_{\frakF,\Qp}^{\np,\circ}\right)^{\opn{sep}}\right)^{H_{\frakF,\Qp}}$,
		      $$\varphi^{\frakF(K)}\left(\EtmKnp\right)=\varphi^{\frakF(K)}\left(\left(\left(\bfE_{\frakF,K}^{\np,\circ}\right)^{\opn{sep}}\right)^{H_{\frakF,K}}\right)=\left(\left(\bbE_{\frakF,K}^{\np,\circ}\right)^{\opn{sep}}\right)^{H_{\frakF,K}}$$
		      and $H_{\frakF,K}$ is a finite index subgroup of $H_{\frakF,\Qp}$, the result follows.

		      For $\bfE_{\frakF,K}^{\np,?}=\EtmKcnp$,	we take a primitive element $\beta$ of the extension $\bbE_{\frakF,K}^{\np,\circ}/\bfE_{\frakF,\Qp}^{\np,\circ}$. By \Cref{lem:39640}, the set $\{1,\beta,\cdots,\beta^{e-1}\}$, where $e=[\bbE_{\frakF,K}^{\np,\circ}\colon\bfE_{\frakF,\Qp}^{\np,\circ}]$, is a generating set (which is not necessarily a basis!) of
		      $$\widehat{\bbE_{\frakF,K}^{\np,\circ}}=\widehat{\varphi^{\frakF(K)}(\EtmKnpc)}=\varphi^{\frakF(K)}(\EtmKcnp)=\widehat{\bbE}_{\frakF,K}^{\np,\circ}$$
		      as a $\widehat{\bfE}_{\frakF,\Qp}^{\np,\circ}$-vector space. This implies that $\widehat{\bbE}_{\frakF,K}^{\np,\circ}\subseteq \widehat{\bfE}_{\frakF,\Qp}^{\np,\circ}(\beta)$, where $\widehat{\bfE}_{\frakF,\Qp}^{\np,\circ}(\beta)$ is finite separable over $\widehat{\bfE}_{\frakF,\Qp}^{\np,\circ}\supset \bfE_{\frakF,\Qp}^{\np,\circ}$.
	\end{enumerate}
\end{proof}

\begin{proposition}\label{prop:42040}
	The action of $\Gamma_{\frakF,K}$ on $\bfE_{\frakF,K}^{\np,?}$ is liftable.
\end{proposition}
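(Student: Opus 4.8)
The plan is to reduce to the case $K=\Qp$ via \Cref{lem:36685}, to handle that case with an explicit computation on the Cohen ring attached to the $p$-basis of \Cref{coro:42040}, and then to descend along a finite separable extension.

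\emph{The case $K=\Qp$.} Put $E_0\coloneqq\bfE_{\frakF,\Qp}^{\np,?}$; by \Cref{coro:42040} the set $\{u_{\Qp},\varepsilon\}$ is a $p$-basis of $E_0$, so by \Cref{lem:19332} I may realize the $p$-Cohen ring $\calC(E_0)$ inside $W(\wtilde{\bfE}_{\frakF,\Qp})$ as $\varprojlim_n\calC_n$, with $\calC_{n+1}$ the subring of $W_{n+1}(E_0)$ generated by $W_{n+1}(E_0^{p^n})$ and $[u_{\Qp}],[\varepsilon]$. Stability under $\varphi$ is immediate from $\varphi([b])=[b]^p$ and $\varphi\bigl(W_{n+1}(E_0^{p^n})\bigr)\subseteq W_{n+1}(E_0^{p^{n+1}})$. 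For the $\Gamma_{\frakF,\Qp}$-action, \Cref{lem:22831} and \Cref{lem:13864} (with $\frakT_{\Qp}=1$) together with the description of $u_{\Qp},\varepsilon$ as compatible systems give, for every $g\in\Gamma_{\frakF,\Qp}$, $g(\varepsilon)=\varepsilon^{\chi(g)}$ and $g(u_{\Qp})=u_{\Qp}\varepsilon^{a(g)}$ for some $a(g)\in\bfZ_p$; since $E_0$ is $\Gamma_{\frakF,\Qp}$-stable, all that remains is to show $[\varepsilon^{b}]\in\calC(E_0)$ for every $b\in\bfZ_p$. This is the technical core: fixing $n$ and writing $b=j+p^{n}c$ with $0\le j<p^n$ and $c\in\bfZ_p$, one has $\varepsilon^{b}=\varepsilon^{j}(\varepsilon^{c})^{p^{n}}$, and $\varepsilon^{c}$ lies in the complete subfield $\bfE_{\Qp}\subseteq E_0$ (\Cref{lem:56945}) because the integer partial powers $\varepsilon^{c_m}$ form a Cauchy sequence ($v^\flat(\varepsilon^{p^{m}}-1)=p^{m}v^\flat(\varepsilon-1)\to\infty$); hence $(\varepsilon^{c})^{p^{n}}\in E_0^{p^n}$ and $[\varepsilon^{b}]=[\varepsilon]^{j}\,[(\varepsilon^{c})^{p^{n}}]\in\calC_{n+1}$. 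As $n$ is arbitrary, $[\varepsilon^{b}]\in\calC(E_0)$, so each $g$ maps the generators $[u_{\Qp}],[\varepsilon]$ and the subrings $W_{n+1}(E_0^{p^n})$ into the $\calC_{n+1}$, proving that $\calC(E_0)$ is $\Gamma_{\frakF,\Qp}$-stable. This settles the case $K=\Qp$, with witness $\calC(E_0)$.

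\emph{Descent to general $K$.} As $\wtilde{\bfE}_{\frakF,K}$ is perfect, $\varphi$ is an automorphism of $W(\wtilde{\bfE}_{\frakF,K})$ commuting with $\Gamma_{\frakF,K}$, so it is enough to lift the $\Gamma_{\frakF,K}$-action on $\bbE_{\frakF,K}^{\np,?}=\varphi^{\frakF(K)}(\bfE_{\frakF,K}^{\np,?})$ and then apply $\varphi^{-\frakF(K)}$. By \Cref{lem:36685}(2), $\bbE_{\frakF,K}^{\np,?}$ is a finite separable extension of $E_0$ and $E_0\subseteq\bbE_{\frakF,K}^{\np,?}\subseteq\wtilde{\bfE}_{\frakF,K}$; as $\scrG_{K_{\frakF}}$ fixes $\wtilde{\bfE}_{\frakF,K}$ pointwise, $E_0$ is a $\Gamma_{\frakF,K}$-stable subfield of $\wtilde{\bfE}_{\frakF,K}$, and the computation above (now with $g$ ranging over $\Gamma_{\frakF,K}$, for which the formulas $g(\varepsilon)=\varepsilon^{\chi(g)}$, $g(u_{\Qp})=u_{\Qp}\varepsilon^{a(g)}$ still hold) shows that the copy $R_0\subseteq W(\wtilde{\bfE}_{\frakF,K})$ of $\calC(E_0)$ built from $\{u_{\Qp},\varepsilon\}$ is $\varphi$- and $\Gamma_{\frakF,K}$-stable. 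Now choose a primitive element $\bar\alpha$ of $\bbE_{\frakF,K}^{\np,?}/E_0$, lift its (irreducible, separable) minimal polynomial to a monic $P\in R_0[T]$, let $\alpha\in W(\wtilde{\bfE}_{\frakF,K})$ be the unique Hensel lift of $\bar\alpha$, and set $R\coloneqq R_0[\alpha]\cong R_0[T]/(P)$, a finite unramified extension of $R_0$, hence itself a $p$-Cohen ring, with residue field $\bbE_{\frakF,K}^{\np,?}$. Since $\varphi(\alpha)$ (resp.\ $g(\alpha)$, $g\in\Gamma_{\frakF,K}$) is a root of $\varphi(P)\in R_0[T]$ (resp.\ $P^{g}\in R_0[T]$) reducing mod $p$ to $\bar\alpha^{p}$ (resp.\ $g(\bar\alpha)$), which is a simple root of the reduced polynomial and generates $\bbE_{\frakF,K}^{\np,?}$ over $E_0$ (using $E_0(\bar\alpha^p)=E_0(\bar\alpha)$ for separable $\bar\alpha$), the uniqueness of Hensel lifts in $W(\wtilde{\bfE}_{\frakF,K})$ together with the fact that $R$ is henselian with residue field $\bbE_{\frakF,K}^{\np,?}$ forces $\varphi(\alpha),g(\alpha)\in R$. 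Thus $R$ is $\varphi$- and $\Gamma_{\frakF,K}$-stable, and $\varphi^{-\frakF(K)}(R)$ is the desired liftable $p$-Cohen ring of $\bfE_{\frakF,K}^{\np,?}$.

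I expect the main obstacle to be the $K=\Qp$ step, specifically the inclusion $[\varepsilon^{b}]\in\calC(E_0)$. The argument crucially uses the \emph{multiplicative} $p$-basis element $\varepsilon$ (so that $g(\varepsilon)$ and $g(u_{\Qp})$ are monomials in the $p$-basis, not merely elements of a subfield) and the decomposition of a $p$-adic power of $\varepsilon$ as $\varepsilon^{j}$ times a $p^{n}$-th power drawn from $E_0^{p^n}$. This is exactly what the $p$-basis $\{u_K,\eta_K\}$ of \Cref{prop:16686} does not afford for general $K$, where $\gamma_K,\tau_K$ act on $\eta_K,u_K$ through genuine power series $\frakG_K,\frakT_K$, which is why the proof is routed through $\Qp$ and the finite separable descent. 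A secondary point to watch is that $E_0$ really is a $\Gamma_{\frakF,K}$-stable subfield of $\wtilde{\bfE}_{\frakF,K}$ (equivalently $\bfQ_{p,\frakF}\subseteq K_{\frakF}$), which is encoded in the inclusion $E_0\subseteq\wtilde{\bfE}_{\frakF,K}$ furnished by \Cref{lem:36685}.
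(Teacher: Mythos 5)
Your proposal follows the same route as the paper: reduce to $K=\Qp$ via \Cref{lem:36685}, realize $\calC(\bfE_{\frakF,\Qp}^{\np,?})$ inside $W(\wtilde{\bfE}_{\frakF,\Qp})$ from the $p$-basis $\{u_{\Qp},\varepsilon\}$ of \Cref{coro:42040}, check stability under $\varphi$ and $\Gamma_{\frakF,\Qp}$ on the generators, and then ascend to a finite unramified extension via Hensel's lemma before pulling back by $\varphi^{-\frakF(K)}$. The one place where you add something is the step you flag as the technical core: verifying $[\varepsilon^{\chi(\gamma)}]\in\calC_{n+1}$ via the decomposition $\chi(\gamma)=j+p^n c$ with $\varepsilon^c\in\bfE_{\Qp}\subseteq\bfE_{\frakF,\Qp}^{\np,?}$ — the paper disposes of this with "can be checked directly," so your explicit argument is a welcome piece of bookkeeping but not a different method.
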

\begin{proof}
	\begin{enumerate}
		\item We first prove the case of $K=\bfQ_p$. By \Cref{coro:42040}, we may take the $p$-basis of $\bfE_{\frakF,\Qp}^{\np,?}$ to be $\{u_{\Qp},\varepsilon\}$. Thus, by \Cref{lem:19332}, it is reduced to prove that $\calC_{n+1}\left(\bfE_{\frakF,K}^{\np,?}\right)=W_{n+1}\left(\left(\bfE_{\frakF,K}^{\np,?}\right)^{p^n}\right)([u_{\Qp}],[\varepsilon])$ is stable under the action of $\Gamma_{\FT,\Qp}$ on $W\left(\wtilde{\bfE}_{\FT,\Qp}\right)$. This can be checked directly: $W_{n+1}\left(\left(\bfE_{\frakF,K}^{\np,?}\right)^{p^n}\right)$ is stable under the action of $\Gamma_{\FT,\Qp}$ since $\left(\bfE_{\frakF,K}^{\np,?}\right)^{p^n}$ is, and $\Gamma_{\FT,\Qp}$ acts on $[u_{\Qp}]$ and $[\varepsilon]$ by the formula
		      $$\gamma_{\Qp}\cdot [u_{\Qp}]=[u_{\Qp}], \tau_{\Qp}\cdot [u_{\Qp}]=[\varepsilon]\cdot [u_{\Qp}]$$
		      and
		      $$\gamma_{\Qp}\cdot [\varepsilon]=[\varepsilon]^{\chi(\gamma_{\Qp})},\ \tau_{\Qp}\cdot [\varepsilon]=[\varepsilon].$$
		      We denote by $\bfA_{\frakF,\bfQ}^{\np,?}$ the constructed $p$-Cohen ring of $\bfE_{\frakF,\Qp}^{\np,?}$ in $W\left(\wtilde{\bfE}_{\frakF,\Qp}\right)$.
		\item For general $p$-adic field $K$, we first show that the $\Gamma_{\frakF,K}$-action on $\bbE_{\frakF,K}^{\np,?}$ is liftable. \Cref{lem:36685} tells us that $\bbE_{\frakF,K}^{\np,?}$ is a finite separable extension of $\bfE_{\frakF,\Qp}^{\np,?}$. Therefore, there exists an irreducible separable polynomial $\overline{w}(T)\in \bfE_{\frakF,\Qp}^{\np,?}[T]$ that $$\bbE_{\frakF,K}^{\np,?}=\bfE_{\frakF,\Qp}^{\np,?}(T)/\overline{w}(T).$$ Take a lift $u(T)$ of $\overline{u}(T)$ in $\bfA_{\frakF,\bfQ}^{\np,?}[T]$. By Hensel's lemma, $u(T)$ has a root $\theta_K$ in $W\left(\wtilde{\bfE}_{\frakF,K}\right)$. We define $\bbA_{\frakF,K}^{\np,?}\coloneqq \bfA_{\frakF,\Qp}^{\np,?}(\theta_K)$, which is a finite unramified ring extension of $\bfA_{\frakF,\Qp}^{\np,?}$ with residue field $\bbE_{\frakF,K}^{\np,?}$. Then $\bbA_{\frakF,K}^{\np,?}$ is a $p$-Cohen ring of $\bbE_{\frakF,K}^{\np,?}$ in $W\left(\wtilde{\bfE}_{\frakF,K}\right)$. To prove that $\bbA_{\frakF,K}^{\np,?}$ is stable under the Frobenius map $F$ and the action of $\Gamma_{\FT,K}$ on $W\left(\wtilde{\bfE}_{\frakF,K}\right)$, it suffices to show that $F(\theta_K)\in \bbA_{\frakF,K}^{\np,?}$ and $g(\theta_K)\in \bbA_{\frakF,K}^{\np,?}$ for every $g\in \Gamma_{\FT,K}$. Since $F(\theta_K)$ (resp. $g(\theta_K)$) is a root of the polynomial $F(u)(T)$ (resp. $g(u)(T)$), whose reduction $\varphi(\overline{u})(T)$ (resp. $g(\overline{u})(T)$) splits in $\bbE_{\frakF,K}^{\np,?}[T]$, the result follows from Hensel's lemma again.

		      Finally, we define
		      $$\bfA_{\frakF,K}^{\np,?}\coloneqq \left\{x\in W\left(\wtilde{\bfE}_{\frakF,K}\right)\colon F^{\frakF(K)}(x)\in \bbA_{\frakF,K}^{\np,?}\right\}.$$
		      Since $\bfA_{\frakF,K}^{\np,?}$ is isomorphic to $\bbA_{\frakF,K}^{\np,?}$ as $\bfE_{\frakF,K}^{\np,?}\cong\bbE_{\frakF,K}^{\np,?}$, $\bfA_{\frakF,K}^{\np,?}$ is automatically a $p$-Cohen ring of $\bfE_{\frakF,K}^{\np,?}$. To verify the liftability of $\Gamma_{\frakF,K}$ on $\bfA_{\frakF,K}^{\np,?}$, we only need to write
		      $$\bfA_{\frakF,K}^{\np,?}=\left\{\sum_{n}\left[a_n^{1/p^{\frakF(K)}}\right]p^n\in W\left(\wtilde{\bfE}_{\frakF,K}\right)\colon \sum_n [a_n]p^n\in \bbA_{\frakF,K}^{\np,?}\right\}$$\
		      and apply the liftability of $\Gamma_{\frakF,K}$ on $\bbA_{\frakF,K}^{\np,?}$.
	\end{enumerate}
\end{proof}
\begin{remark}
	It makes no harm if we replace $\EtmKnpc$ by $\bbE_{\frakF,K}^{\np,\circ}$ everywhere and rebuild the whole theory in this paper. We prefer to use $\EtmKnpc$, for most definitions of $(\varphi,\tau)$-modules or Breuil-Kisin modules in the literatures directly use the ``absolute'' field of norms $X_K(K_\infty)\cong\bfE_{\tau,K}=k_K\pparen{u_K}$ instead of the ``relative-to-$\Qp$'' field of norms $X_{\Qp}(K_\infty)\cong\bbE_{\tau,K}$ as the period ring.
\end{remark}

\begin{definition}\label{def:325}\leavevmode
	\begin{enumerate}
		\item Let $\bfA_{\FT,\Qp}^{\np}$ and $\bfA_{\FT,\Qp}^{\np,\opn{c}}$ be the $p$-Cohen ring of $\bfE_{\FT,\Qp}^{\np}$ and $\widehat{\bfE}_{\FT,\Qp}^{\np}$ respectively. We regard them as a subring of $W\left(\wtilde{\bfE}_{\FT,\Qp}\right)$ with respect to the $p$-basis $\{u_{\Qp},\varepsilon\}$ by \Cref{lem:19332}.
		\item Let $\bfA_{\FT}^{\np}$ (resp. $\bfA_{\FT}^{\np,\opn{c}}$) be the completion of the maximal unramified ring extension of $\bfA_{\FT,\Qp}^{\np}$ (resp. $\bfA_{\FT,\Qp}^{\np,\opn{c}}$), which is also the Cohen ring of $\Esep$ (resp. $\Ecsep$).
	\end{enumerate}
\end{definition}
\begin{remark}
	To be consistent with \Cref{rmk:84309}, the corresponding Cohen ring of $\bfE_{\frakF,K}^{\np,?}$ (resp. $\bfE_{\frakF}^{\np}$) in \Cref{def:325} will be denoted as $\bfA_{\frakF,K}^{\np,?}$ (resp. $\bfA_{\frakF}^{\np,?}$).
\end{remark}
By applying dévissage arguments on \Cref{lem:5466} and \Cref{coro:42040}, we obtain:
\begin{lemma}\leavevmode
	\begin{lemenum}
		\item $\left(\bfA_{\FT}^{\np,?}\right)^{H_{\frakF,K}}=\bfA_{\FT,K}^{\np,?}$.
		\item\label{coro:38666} When $K=\Qp$, $\left\{[u_{\Qp}]^i\cdot[\varepsilon]^j\right\}_{0\leq i,j\leq p-1}$ is a basis of $\bfA_{\FT}^{\np,?}$ (resp. $\bfA_{\FT,\Qp}^{\np,?}$) over $\varphi\left(\bfA_{\FT}^{\np,?}\right)$ (resp. $\varphi\left(\bfA_{\FT,\Qp}^{\np,?}\right)$).
	\end{lemenum}
\end{lemma}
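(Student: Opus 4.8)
The plan is to obtain both parts from the corresponding statements about residue fields by the standard dévissage along the $p$-adic filtration (reducing mod $p, p^2, \dots$ and passing to the inverse limit): part (1) rests on the Galois descent supplied by \Cref{lem:5466}, and part (2) on the $p$-basis supplied by \Cref{prop:16686} and \Cref{coro:42040}.

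For (1): recall that $\bfA_{\FT}^{\np,?}$ is the completed maximal unramified ring extension of $\bfA_{\FT,K}^{\np,?}$, hence a $p$-Cohen ring of $\bfE_{\FT}^{\np,?}$ containing $\bfA_{\FT,K}^{\np,?}$, and that $H_{\frakF,K}$ acts on it through its Witt-functorial action on $\bfC_p^\flat$, the induced action on the residue field $\bfE_{\FT}^{\np,?}$ being the one of \Cref{lem:5466}. Since $\bfE_{\FT}^{\np,?}$ is the separable closure of $\bfE_{\FT,K}^{\np,?}$ and, by \Cref{lem:5466} (and \Cref{lem:47550} in the completed case, together with the fact that the continuous image of the profinite group $H_{\frakF,K}$ is closed), $\rho$ resp.\ $\widehat\rho$ identifies $H_{\frakF,K}$ with $\Gal\left(\bfE_{\FT}^{\np,?}/\bfE_{\FT,K}^{\np,?}\right)$, additive Hilbert~90 — in the continuous form for the profinite group $H_{\frakF,K}$, obtained by passing to the limit over finite Galois subextensions — gives $\rmH^1\left(H_{\frakF,K},\bfE_{\FT}^{\np,?}\right)=0$. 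Taking $H_{\frakF,K}$-invariants of the short exact sequences $0\to\bfE_{\FT}^{\np,?}\xrightarrow{p^n}\bfA_{\FT}^{\np,?}/p^{n+1}\to\bfA_{\FT}^{\np,?}/p^{n}\to0$ of discrete $H_{\frakF,K}$-modules and using this vanishing, one obtains by induction on $n$ that $\left(\bfA_{\FT}^{\np,?}/p^{n}\right)^{H_{\frakF,K}}$ has residue ring $\bfE_{\FT,K}^{\np,?}$ and that the transition maps are surjective; passing to $\varprojlim_n$ and using that $\bfA_{\FT}^{\np,?}$ is $p$-adically complete and separated shows that $\left(\bfA_{\FT}^{\np,?}\right)^{H_{\frakF,K}}$ is $p$-torsion-free, $p$-adically complete and separated with residue field $\bfE_{\FT,K}^{\np,?}$, i.e.\ a $p$-Cohen ring of $\bfE_{\FT,K}^{\np,?}$. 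As $\bfA_{\FT,K}^{\np,?}\subseteq\left(\bfA_{\FT}^{\np,?}\right)^{H_{\frakF,K}}$ is another $p$-Cohen ring of the same field and the inclusion is an isomorphism on residue fields, a routine $p$-adic successive-approximation argument forces equality.

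For (2): by \Cref{prop:16686} and \Cref{coro:42040}, $\left\{u_{\Qp}^i\varepsilon^j\right\}_{0\le i,j\le p-1}$ is a basis of $\bfE$ over its Frobenius image $\varphi(\bfE)=\bfE^{p}$, for $\bfE\in\left\{\bfE_{\FT,\Qp}^{\np,?},\bfE_{\FT}^{\np,?}\right\}$ — one passes from the $p$-basis $\{u_{\Qp},\eta_{\Qp}\}$ of \Cref{prop:16686} to $\{u_{\Qp},\varepsilon\}$ by the unipotent substitution $\varepsilon=1+\eta_{\Qp}$. On the Cohen rings $\bfA\in\left\{\bfA_{\FT,\Qp}^{\np,?},\bfA_{\FT}^{\np,?}\right\}$ the Frobenius $\varphi$ is the restriction of the Witt-vector Frobenius $F$ (this is how the lift is constructed in \Cref{prop:42040}), so $\varphi([u_{\Qp}])=[u_{\Qp}]^{p}$, $\varphi([\varepsilon])=[\varepsilon]^{p}$, $\varphi$ is injective, and $\varphi(\bfA)\cap p\bfA=p\varphi(\bfA)$, so that $\varphi(\bfA)/p\varphi(\bfA)\hookrightarrow\bfA/p\bfA$ has image $\varphi(\bfE)$. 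Now consider the $\varphi(\bfA)$-linear map $\mu\colon\varphi(\bfA)^{\oplus p^{2}}\to\bfA$ carrying the $(i,j)$-th standard basis vector to $[u_{\Qp}]^{i}[\varepsilon]^{j}$: its reduction modulo $p$ is the isomorphism $\varphi(\bfE)^{\oplus p^{2}}\to\bfE$, $e_{ij}\mapsto u_{\Qp}^{i}\varepsilon^{j}$. Since the source and target of $\mu$ are $p$-torsion-free, $p$-adically complete and separated, $\mu$ being an isomorphism modulo $p$ forces $\mu$ to be an isomorphism (surjectivity by $p$-adic completeness; injectivity by dividing any putative relation by the highest power of $p$ dividing all its coefficients and reducing modulo $p$), which is exactly the assertion.

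The cohomological bookkeeping and the $p$-adic approximation lemmas are routine. The point demanding care in (1) is the final identification — verifying that the explicit Cohen ring $\bfA_{\FT,K}^{\np,?}\subseteq W\left(\wtilde{\bfE}_{\frakF,K}\right)$ produced in \Cref{prop:42040} genuinely lies in $\bfA_{\FT}^{\np,?}$ and is fixed pointwise by $H_{\frakF,K}$, so that the ring of invariants is literally this ring and not merely an abstractly isomorphic Cohen ring — together with, upstream of it, the continuous vanishing $\rmH^1\left(H_{\frakF,K},\bfE_{\FT}^{\np,?}\right)=0$, which is precisely where \Cref{lem:5466} (and \Cref{lem:47550} in the completed case) enters.
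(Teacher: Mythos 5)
Your proposal is correct and follows essentially the same route as the paper, whose entire argument is the single phrase ``By applying dévissage arguments on \Cref{lem:5466} and \Cref{coro:42040}''; you have simply written out that dévissage (additive Hilbert~90 and $p$-adic inverse-limit for Galois descent in (1), lifting a mod-$p$ basis for (2)), and the two points you flag at the end as demanding care are genuine but are exactly the routine compatibilities the paper's one-line proof leaves implicit.
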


\section{Bridging $(\varphi,\Gamma)$-modules and $(\varphi,\tau)$-modules}
The goal of this section is to bridge the $(\varphi,\Gamma)$-modules and $(\varphi,\tau)$-modules over imperfect period rings via $(\varphi,\Gamma_{\frakF,K})$-modules.
\subsection{Trivial case: the period rings}
We first compare the imperfect period rings of $(\varphi,\Gamma)$-modules, $(\varphi,\tau)$-modules and $(\varphi,\Gamma_{\frakF,K})$-modules.
The following result is a generalization of \cite[Proposition 1.9]{Caruso2013}:
\begin{proposition}\label{lem:56945}
	For $E\in\left\{\EtmKnpc, \bfE_{\FT,K}^{\np,?}\right\}$, we have $E^{H_{\tau,K}}=\bfE_{\tau,K}$ and $E^{H_K}=\bfE_K$.
\end{proposition}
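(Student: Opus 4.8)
The plan is to reduce all three cases ($E=\EtmKnpc$, $\EtmKnp$, $\EtmKcnp$) to the single case $E=\EtmKcnp$. For every such $E$ the inclusions $\bfE_{\tau,K}\subseteq E^{H_{\tau,K}}$ and $\bfE_{K}\subseteq E^{H_{K}}$ are immediate: $\bfE_{\tau,K}\subseteq E$ lies inside $\wtilde{\bfE}_{\tau,K}=\widehat{K}_\infty^{\flat}$, on which $H_{\tau,K}=\scrG_{K_\infty}$ acts trivially, and symmetrically $\bfE_K\subseteq\wtilde{\bfE}_K$ is fixed by $H_K=\scrG_{K_{p^\infty}}$. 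Since $E\mapsto E^{H}$ is monotone and $\EtmKnpc\subseteq\EtmKnp\subseteq\EtmKcnp$ by \Cref{thm:42581} and \Cref{lem:47550}, it is enough to prove $\bigl(\EtmKcnp\bigr)^{H_{\tau,K}}\subseteq\bfE_{\tau,K}$ and $\bigl(\EtmKcnp\bigr)^{H_{K}}\subseteq\bfE_K$. (For the smallest field $\EtmKnpc$ one could instead argue directly following Caruso, by expanding an element of $\Frac\bigl(k_K\bbrac{u_K,\eta_K}\bigr)$ as a Laurent series in $u_K$ over $\bfE_K$ and using $\tau\cdot u_K=u_K\varepsilon$ to kill every term of nonzero $u_K$-degree; this argument, however, does not survive $v^\flat$-completion, which is why $\EtmKcnp$ must be handled separately.)

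I carry out the case of $H_{\tau,K}$; that of $H_K$ is entirely symmetric, with $\eta_K,\bfE_K,\wtilde{\bfE}_K$ in place of $u_K,\bfE_{\tau,K},\wtilde{\bfE}_{\tau,K}$. Set $F\coloneqq\bigl(\EtmKcnp\bigr)^{H_{\tau,K}}$. By the Ax--Sen--Tate theorem applied to the perfectoid field $\Cpflat$, $\bigl(\Cpflat\bigr)^{H_{\tau,K}}=\wtilde{\bfE}_{\tau,K}$, so $F=\EtmKcnp\cap\wtilde{\bfE}_{\tau,K}$. Then $F$ is complete (it is closed in $\EtmKcnp$); its value group lies in $v^\flat\bigl(\EtmKcnp\bigr)=v^\flat\bigl(\EtmKnpc\bigr)$, which is cyclic because $v^\flat(u_K)$ and $v^\flat(\eta_K)$ are commensurable, so $F$ is discretely valued; and its residue field is caught between those of $\bfE_{\tau,K}$ and $\wtilde{\bfE}_{\tau,K}$, both equal to $k_K$ by our assumption that $K_{\frakF}/K$ is totally ramified. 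Hence $F$ is a local field containing $\bfE_{\tau,K}$ as a closed subfield, and the Nazir--Popescu theorem (cf.\ the reference in \Cref{prop:35357}) gives $[F:\bfE_{\tau,K}]<\infty$.

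To upgrade this to $F=\bfE_{\tau,K}$, let $S$ be the separable closure of $\bfE_{\tau,K}$ in $F$. Then $S$ is separable algebraic over $\bfE_{\tau,K}$ and lies in $\wtilde{\bfE}_{\tau,K}=\bigl(\Cpflat\bigr)^{H_{\tau,K}}$, hence in $\bigl(\bfE_{\tau,K}^{\sep}\bigr)^{H_{\tau,K}}$, where $\bfE_{\tau,K}^{\sep}$ denotes the separable closure of $\bfE_{\tau,K}$ in $\Cpflat$; but \Cref{thm:55903} identifies $H_{\tau,K}=\scrG_{K_\infty}$ with $\Gal\bigl(\bfE_{\tau,K}^{\sep}/\bfE_{\tau,K}\bigr)$ via its action on $\Cpflat$, so $S=\bfE_{\tau,K}$. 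Therefore $F/\bfE_{\tau,K}$ is purely inseparable and $F\subseteq\varphi^{-m}(\bfE_{\tau,K})$ for some $m$. Finally, iterating \Cref{prop:16686} shows that $\{u_K^{1/p^n},\eta_K\}$ is a $p$-basis of $\EtmKcnp\bigl(u_K^{1/p^n}\bigr)$, so $[\EtmKcnp(u_K^{1/p^n}):\EtmKcnp]=p^n$ and $\EtmKcnp$ is linearly disjoint from $\varphi^{-n}(\bfE_{\tau,K})$ over $\bfE_{\tau,K}$; hence $\EtmKcnp\cap\varphi^{-n}(\bfE_{\tau,K})=\bfE_{\tau,K}$, and with $n=m$ we conclude $F\subseteq\bfE_{\tau,K}$.

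The only genuine obstacle is the completion: the elementary $u_K$-expansion disposes of $\EtmKnpc$ at once, but $\EtmKcnp$ carries no comparable series expansion, so one is forced to replace that computation by the Ax--Sen--Tate / discrete-value-group / Nazir--Popescu package above, with the crucial linear-disjointness input supplied by \Cref{prop:16686}.
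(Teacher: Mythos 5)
Your reduction of all three cases to $E=\EtmKcnp$ is a legitimate observation: the inclusions $\bfE_{\tau,K}\subseteq E^{H_{\tau,K}}$ and $\bfE_K\subseteq E^{H_K}$ are indeed automatic for every $E$ in question, and taking $H$-invariants is monotone along $\EtmKnpc\subseteq\EtmKnp\subseteq\EtmKcnp$, so the reverse inclusions for $\EtmKcnp$ would suffice. But your treatment of that single remaining case breaks down at its keystone.

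You claim that $v^\flat\bigl(\EtmKcnp\bigr)=v^\flat\bigl(\EtmKnpc\bigr)$ is cyclic because $v^\flat(u_K)$ and $v^\flat(\eta_K)$ are commensurable, and you use this to conclude that $F\coloneqq\EtmKcnp\cap\wtilde{\bfE}_{\tau,K}$ is a discretely valued complete field with finite residue field, hence a local field to which Nazir--Popescu applies. This is precisely what \Cref{prop:35357} of the paper disproves: $v^\flat$ is \emph{not} discrete on $\EtmKnpc$ (nor on $\EtmKcnp$), and the white-row entries of Theorem~\ref{thm:42581} record this. The value group of $\EtmKnpc$ is not the subgroup of $\bfR$ generated by $v^\flat(u_K)$ and $v^\flat(\eta_K)$; for a general two-variable power series the cancellation in $\iota_{\np}(f)$ pushes $v^\flat\bigl(\iota_{\np}(f)\bigr)$ strictly above the monomial valuation $v_{\opn{mono}}(f)$, and there is no a priori control on where it lands. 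Indeed, the proof of \Cref{prop:35357} assumes discreteness of $v^\flat$ on $\EtmKcnp$ only in order to derive a contradiction via the very Nazir--Popescu theorem you invoke. Once discreteness of $v^\flat$ on $F$ is gone, $F$ is not known to be a local field and Nazir--Popescu does not apply, so the argument stalls before the inseparability analysis can begin. (Your subsequent linear-disjointness step, asserting that $\{u_K^{1/p^n},\eta_K\}$ is a $p$-basis of $\EtmKcnp(u_K^{1/p^n})$ ``by iterating \Cref{prop:16686}'', is also stated without justification, but the earlier gap is fatal on its own.)

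For contrast, the paper handles $\EtmKcnp$ by a route that \emph{uses the other two cases as input} rather than trying to bypass them: given $x\in\wtilde{\bfE}_{\tau,K}\setminus\bfE_{\tau,K}$, it first shows $x\notin\EtmKnpc(x^p)$ (otherwise $x$ would satisfy a separable polynomial over $\EtmKnpc$ and hence lie in $\bigl(\EtmKnp\bigr)^{H_{\tau,K}}=\bfE_{\tau,K}$ by the already-proved case), so $\EtmKnpc(x^p)\subsetneq\EtmKnpc(x)$ is purely inseparable of degree $p$; then \Cref{lem:16740} yields $x\notin\widehat{\EtmKnpc(x^p)}\supseteq\EtmKcnp$. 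To salvage your plan you would need an independent proof that $v^\flat$ restricted to $F$ is discrete, and \Cref{prop:35357} rules out the source you appealed to.
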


\begin{proof}
	We first prove $E^{H_{\tau,K}}=\bfE_{\tau,K}$.
	\begin{enumerate}[wide]
		\item\label{it:1724} \textbf{Case of $\left(\EtmKnpc\right)^{H_{\tau,K}}$.} %
		      The theory of the field of norms gives us $\left(\EtmKnpc\right)^{H_{\tau,K}}\supseteq\bfE_{\tau,K}$. Since $H_{\frakF,K}$ acts trivially on $\left(\EtmKnpc\right)^{H_{\tau,K}}$, we only need to prove
		      $\left(\EtmKnpc\right)^{\gamma=1}\subseteq\bfE_{\tau,K}$ (cf. \Cref{lem:22831}). %

		      Let $x=\frac{f\left(u_K,\eta_K\right)}{g\left(u_K,\eta_K\right)}\in\left(\EtmKnpc\right)^{\gamma=1}$, where $f,g\in k_K\bbrac{X,Y}$. Then by \Cref{lem:13864}, the condition $x=\gamma\cdot x$ is equivalent to
		      \begin{equation}\label{eq:51283}
			      \frac{f\left(u_K,\eta_K\right)}{g\left(u_K,\eta_K\right)}=\frac{f\left(u_K, \eta_K\cdot \frakG_K\left(\eta_K\right)\right)}{g\left(u_K, \eta_K\cdot \frakG_K\left(\eta_K\right)\right)}.
		      \end{equation}
		      By \Cref{lem:1122}, this is equivalent to
		      \begin{equation}\label{eq:25993}
			      \frac{f(X,Y)}{g(X,Y)}=\frac{f(X, Y\cdot\frakG_K(Y))}{g(X,Y\cdot \frakG_K(Y))}\in k_K\pparen{X,Y}.
		      \end{equation}
		      If we view $f/g$ as an element of $k_K\pparen{Y}\pparen{X}$ and let its $X$-adic expansion be $f/g=\sum_{n=l}^\infty h_n(Y)X^n$, where $h_n(Y)\in k_K\pparen{Y}$, then \Cref{eq:25993} implies
		      \begin{equation}\label{eq:55536}
			      \sum_{n=l}^\infty h_n(Y)X^n=\sum_{n=l}^\infty h_n(Y\cdot \frakG_K(Y))X^n,
		      \end{equation}
		      i.e., $h_n(Y)=h_n(Y\cdot \frakG_K(Y))$ for all $n\neq 0$.

		      Notice that the condition $h_n(Y)=h_n(Y\cdot \frakG_K(Y))$ is equivalent to
		      $h_n\left(\eta_K\right)\in \bfE_K^{\gamma=1}$. \cite[Lemma 3.3.19]{Zhao2022} tells us that $\bfE_K^{\gamma=1}=k_K$, therefore we have $h_n(Y)\in k_K$ for every $n\neq 0$. This shows that $f/g\in k_K\pparen{X}$, i.e., $x\in \bfE_{\tau,K}$.
		\item \textbf{Case of $\left(\EtmKnp\right)^{H_{\tau,K}}$.} \Cref{it:1724} gives the inclusion $\bfE_{\tau,K}\subseteq \left(\EtmKnp\right)^{H_{\tau,K}}$. Suppose $y$ is an element of $\left(\EtmKnp\right)^{H_{\tau,K}}$ and let $r(T)$ be the minimal polynomial of it over $\EtmKnpc$. Since $y$ is invariant under the action of $H_{\tau,K}$, we have $y=\gamma\cdot y$ and consequentially
		      $$0=\gamma\cdot (r(y))=(\gamma\cdot r)(\gamma\cdot y)=(\gamma\cdot r)(y),$$
		      i.e., $(\gamma\cdot r)(T)$ is also the minimal polynomial of $y$ over $\EtmKnpc$. Thus the coefficients of $r(T)$ is fixed by the action of $H_{\tau,K}$ and belong to $\left(\EtmKnpc\right)^{H_{\tau,K}}=\bfE_{\tau,K}$. Therefore $y\in \left(\bfE_{\tau,K}^{\operatorname{sep}}\right)^{H_{\tau,K}}=\bfE_{\tau,K}$.%

		\item \textbf{Case of $\left(\EtmKcnp\right)^{H_{\tau,K}}$.}
		      The statement $\left(\EtmKcnp\right)^{H_{\tau,K}}=\bfE_{\tau,K}$ is equivalent to the assertion that $\EtmKcnp\cap\wtilde{\bfE}_{\tau,K}=\bfE_{\tau,K}$. The inclusion $\bfE_{\tau,K}\subseteq \EtmKcnp\cap\wtilde{\bfE}_{\tau,K}$ is trivial.

		      Suppose that $x\in\wtilde{\bfE}_{\tau,K}\backslash\bfE_{\tau,K}$. We claim that $x$ is not an element of $\EtmKnpc(x^p)$. If this is not the case, then there exist $f(T),g(T)\in\EtmKnpc[T]$ with $\gcd(f,g)=1$, such that $x$ is a root of the polynomial $h(T)\coloneqq f(T^p)-T\cdot g(T^p)$. Since $h^\prime(T)=-g(T^p)$, we know that $\gcd(h(T),h^\prime(T))=\gcd(f(T^p),g(T^p))=1$, i.e., $h(T)$ is separable. Therefore, $x$ belongs to
		      $$\left(\EtmKnpc\right)^{\opn{sep}}\cap\wtilde{\bfE}_{\tau,K}=\left(\left(\EtmKnpc\right)^{\opn{sep}}\right)^{H_{\tau,K}}=\left(\EtmKnp\right)^{H_{\tau,K}}=\bfE_{\tau,K},$$
		      which is a contradiction.

		      By the claim, we can set $E=\EtmKnpc(x^p)(x)=\EtmKnpc(x)$ and $F=\EtmKnpc(x^p)$ in \Cref{lem:16740} to conclude that $x$ is not contained in $\widehat{\EtmKnpc(x^p)}$. In particular, $x$ is not contained in $\EtmKcnp\cap\wtilde{\bfE}_{\tau,K}$. This completes the proof.
	\end{enumerate}

	The assertion $E^{H_K}=\bfE_K$ can be deduced similarly. Thus, we only give the brief proof of $\left(\EtmKnpc\right)^{\tau=1}\subseteq \bfE_K$, where $\tau=\tau_K$ is the topological generator of $\wtilde{\Gamma}_{\tau,K}$ in \Cref{lem:13864}.%

	Let $x=\frac{f\left(u_K,\eta_K\right)}{g\left(u_K,\eta_K\right)}\in\left(\EtmKnpc\right)^{\tau=1}$, where $f,g\in k_K\bbrac{X,Y}$, Then the condition $x=\tau\cdot x$ translates to
	\begin{equation}\label{eq:17658}
		\frac{f(X,Y)}{g(X,Y)}=\frac{f(X(1+Y\cdot\frakT_K(Y)),Y)}{g(X(1+Y\cdot\frakT_K(Y)),Y)}\in \kappa_K\pparen{X,Y}.
	\end{equation}
	by \Cref{lem:1122}.
	If we view $f/g$ as an element of $k_K\pparen{Y}\pparen{X}$ and let the $X$-adic expansion of it be $f/g=\sum_{n=l}^\infty h_n(Y)X^n$, where $h_n(Y)\in k_K\pparen{Y}$, then \Cref{eq:17658} is equivalent to
	$$\sum_{n=l}^\infty h_n(Y)X^n=\sum_{n=l}^\infty h_n(Y)\cdot(1+Y\cdot\frakT_K(Y))^nX^n.$$
	This implies that $h_n(Y)=0$ for all $n\neq 0$. Therefore $x=h_0\left(\eta_K\right)\in\bfE_K$.
\end{proof}
\begin{remark}\label{rmk:304}
	When Caruso proved $\left(\bfE_{\FT,K}^{\mathtt{Car},\np ,\circ}\right)^{H_{\tau,K}}=\bfE_{\tau,K}$ in \cite[Proposition 1.9]{Caruso2013}, he embedded the field $\bfE_{\FT,K}^{\mathtt{Car},\np ,\circ}$ into ``$k_K\pparen{u_K}\pparen{\eta_{\Qp}}$'' without applying the embedding lemma explicitly.  Although his proof is technically spotless, it is unclear whether one can extend $\iota_{\mathtt{Car}}$ to $k_K\pparen{X}\pparen{Y}$. Even if this is realizable, elements in the image of this extended embedding can not be written in the form
	$$\sum_{n>-\infty}h_n(u_K)\eta_{\Qp}^n,\ h_n\in k_K\pparen{Y},$$
	since $u_K$ has positive valuation and summations like $\sum_{n=1}^\infty \left(\frac{\eta_{\Qp}}{u_K^\theta}\right)^n,\ \theta>\!\!\!>0$ do not converge with respect to $v^\flat$.

	This subtlety brings much difficulty to the study of multivariable period rings. To do most calculations, we have to use the embedding lemma to pull $\bfE_{\FT,K}^{\np ,\circ}$ back to $k_K\pparen{X,Y}$, which loses most of the topological information.
\end{remark}

\begin{corollary}\label{coro:5386}
	One has $\left(\bfA_{\FT,K}^{\np,?}\right)^{H_{\tau,K}}=\bfA_{\tau,K}$ and $\left(\bfA_{\FT,K}^{\np,?}\right)^{H_K}=\bfA_K$.
\end{corollary}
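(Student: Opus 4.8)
The plan is to deduce this from its mod-$p$ counterpart \Cref{lem:56945} by a $p$-adic dévissage. The ambient ring $\AKq$ is, by \Cref{prop:42040} and its construction, a $p$-Cohen ring of $\bfE_{\frakF,K}^{\np,?}$ inside $W\left(\wtilde{\bfE}_{\frakF,K}\right)$: in particular it is a domain, $p$-torsion-free, and $p$-adically complete and separated, with residue ring $\bfE_{\frakF,K}^{\np,?}$, and reduction modulo $p$ is $\Gamma_{\frakF,K}$-equivariant. Note that $H_{\tau,K}=\scrG_{K_\infty}$ acts on $\AKq$ through its image $\wtilde{\Gamma}_K=\Gal(K_{\frakF}/K_\infty)$ in $\Gamma_{\frakF,K}$ (and $H_K=\scrG_{K_{p^\infty}}$ through $\wtilde{\Gamma}_{\tau,K}$), by ring automorphisms fixing $p$. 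I will only spell out the case of $H_{\tau,K}$; the case of $H_K$ follows verbatim after replacing $u_K,\bfE_{\tau,K},\bfA_{\tau,K}$ throughout by $\eta_K,\bfE_K,\bfA_K$.

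First I would record the easy inclusion $\bfA_{\tau,K}\subseteq\left(\AKq\right)^{H_{\tau,K}}$. Unwinding the construction of $\AKq$ in \Cref{prop:42040} (for $K=\Qp$ directly from the $p$-basis $\{u_{\Qp},\varepsilon\}$, and for general $K$ through the $\varphi^{\frakF(K)}$-descent used there), one sees that $W(k_K)$ and the Teichmüller lift $[u_K]$ lie in $\AKq$; since $u_K$ reduces to a nonzero element of the field $\bfE_{\frakF,K}^{\np,?}$, the element $[u_K]$ is a unit in the complete discrete valuation ring $\AKq$, so the $p$-adic completion of $W(k_K)\bbrac{[u_K]}[1/[u_K]]$, namely $\bfA_{\tau,K}$, is a subring of $\AKq$ (one also uses $p$-torsion-freeness and $\bfE_{\tau,K}\hookrightarrow\bfE_{\frakF,K}^{\np,?}$ to see that this completion embeds compatibly with the $p$-adic topology). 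It is fixed pointwise by $H_{\tau,K}$ because $\wtilde{\Gamma}_K$ fixes the field of norms $X_K(K_\infty)\cong\bfE_{\tau,K}$ pointwise, hence fixes $[u_K]$ and $W(k_K)$, hence all of $\bfA_{\tau,K}$.

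For the reverse inclusion, given $x\in\left(\AKq\right)^{H_{\tau,K}}$ I would build a sequence $(y_n)_{n\geq 0}$ in $\bfA_{\tau,K}$ with $x=\sum_{n\geq 0}p^n y_n$ by telescoping. Since reduction modulo $p$ is $H_{\tau,K}$-equivariant, $\overline{x}\in\left(\bfE_{\frakF,K}^{\np,?}\right)^{H_{\tau,K}}=\bfE_{\tau,K}$ by \Cref{lem:56945}; lift $\overline{x}$ to $y_0\in\bfA_{\tau,K}$ and write $x-y_0=p x_1$ with $x_1\in\AKq$. For every $g\in H_{\tau,K}$ the relation $p\,g(x_1)=g(x-y_0)=x-y_0=p x_1$, together with $p$-torsion-freeness of the domain $\AKq$, forces $g(x_1)=x_1$, so $x_1\in\left(\AKq\right)^{H_{\tau,K}}$ and the construction iterates. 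The partial sums $\sum_{n\leq N}p^n y_n$ are Cauchy and converge $p$-adically in $\bfA_{\tau,K}$ (which is $p$-adically complete) to some $x'\in\bfA_{\tau,K}$; under the $p$-adically continuous inclusion $\bfA_{\tau,K}\hookrightarrow\AKq$ the same partial sums also converge to $x$, because $x-\sum_{n\leq N}p^n y_n=p^{N+1}x_{N+1}$. As $\AKq$ is $p$-adically separated, the two limits coincide, so $x=x'\in\bfA_{\tau,K}$, giving $\left(\AKq\right)^{H_{\tau,K}}=\bfA_{\tau,K}$; the argument for $H_K$ is identical.

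The mathematical content of the statement is already carried by \Cref{lem:56945}, and the dévissage plus the comparison of $p$-adic limits are routine. The one place I expect to have to be careful is the easy inclusion: confirming, from the somewhat indirect ($\varphi^{\frakF(K)}$-twisted) construction of $\AKq$, that the Teichmüller lifts $[u_K]$ and $[\eta_K]$ genuinely belong to $\AKq$, so that $\bfA_{\tau,K}$ and $\bfA_K$ really are identified with the expected $H_{\tau,K}$- and $H_K$-fixed subrings rather than merely with abstractly isomorphic $p$-Cohen rings.
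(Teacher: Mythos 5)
Your proposal is correct and is exactly the paper's argument: the paper disposes of this corollary with the one-line proof ``By dévissage and \Cref{lem:56945},'' and your write-up is precisely that dévissage (equivariant reduction mod $p$, apply \Cref{lem:56945}, lift, iterate, use $p$-adic completeness and separatedness, plus $p$-torsion-freeness for the invariance of the quotient). Your flagged concern---that the inclusion $\bfA_{\tau,K}\subseteq\bfA_{\frakF,K}^{\np,?}$, i.e.\ that $[u_K]$ genuinely lies in the $\varphi^{\frakF(K)}$-twisted Cohen ring of \Cref{prop:42040}, needs to be checked---is a fair observation about something the paper's terse proof also leaves implicit, but it does not indicate a difference in approach.
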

\begin{proof}
	By dévissage and \Cref{lem:56945}.
\end{proof}

\subsection{General case: the $(\varphi,\Gamma_{\FT,K})$-modules}
\begin{definition}\label{def:44559}
	By replacing $\bfA_{\frakF,K}^{\ttR}$ with $\bfA_{\frakF,K}^{\np?}$ in the definition of \'etale $(\varphi,\Gamma_{\frakF,K})$-modules over $\bfA_{\frakF,K}^{\ttR}$ in \Cref{eg:5636}, one defines \'etale $(\varphi,\Gamma_{\frakF,K})$-modules over $\bfA_{\frakF,K}^{\np,?}$.
\end{definition}

It is immediate to generalize \Cref{thm:36449} for \'etale $(\varphi,\Gamma_{\frakF,K})$-modules over $\bfA_{\frakF,K}^{\np,?}$:
\begin{proposition}
	For any $V\in\repcat$, let $\bfD_{\FT}^{\np,?}(V)\coloneqq\left(\bfA_{\FT}^{\np,?}\otimes_{\Zp}V\right)^{H_{\frakF,K}}$. Then the functor $V\longmapsto \bfD_{\FT}^{\np,?}(V)$ induces an equivalence of categories
	$$\repcat\lto\left\{\text{\'etale }(\varphi,\Gamma_{\FT,K})\hyphen\text{modules over }\bfA_{\FT,K}^{\np,?} \right\},$$
	with the quasi-inverse given by $D\longmapsto \bfV(D)\coloneqq \left(\bfA_{\FT}^{\np,?}\otimes_{\bfA_{\FT,K}^{\np,?}}D\right)^{\varphi=1}$.
\end{proposition}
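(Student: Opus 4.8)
The plan is to carry out the standard dévissage à la Fontaine that already underlies \Cref{thm:36449}; no idea beyond the classical $(\varphi,\Gamma)$-module case is needed, only the structural facts proved in the preceding sections. First I would record the inputs. By \Cref{def:325}, $\bfA_{\FT}^{\np,?}$ is $p$-adically complete and $p$-torsion free with residue ring $\bfE_{\FT}^{\np,?}$ ($=\Esep$, resp. $\Ecsep$), which is separably closed by \Cref{def:9888}; it is stable under $\varphi$ and under the $\scrG_K$-action on $W(\wtilde{\bfE}_{\FT,K})$, this action lifts the one on $\bfE_{\FT}^{\np,?}$, and by \Cref{lem:5466} together with \Cref{lem:47550} it identifies $H_{\frakF,K}$ with $\scrG_{\bfE_{\FT,K}^{\np,?}}$. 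Moreover one has the invariance $\left(\bfA_{\FT}^{\np,?}\right)^{H_{\frakF,K}}=\bfA_{\FT,K}^{\np,?}$ and $\left(\bfA_{\FT}^{\np,?}\right)^{\varphi=1}=\Zp$, and the residual $\Gamma_{\FT,K}$-action on $\bfA_{\FT,K}^{\np,?}$ is the liftable one furnished by \Cref{prop:42040}.

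Granting these, I would run the argument exactly as for \Cref{thm:36449}. For the direct functor: given $V\in\repcat$, the module $\bfA_{\FT}^{\np,?}\otimes_{\Zp}V$ is free over $\bfA_{\FT}^{\np,?}$ with a continuous semilinear action of $H_{\frakF,K}\cong\scrG_{\bfE_{\FT,K}^{\np,?}}$; since $\bfE_{\FT}^{\np,?}$ is separably closed, $\rmH^1\bigl(H_{\frakF,K},\opn{GL}_d(\bfA_{\FT}^{\np,?})\bigr)$ is trivial (Hilbert 90 modulo $p$, then successive approximation), so $\bfD_{\FT}^{\np,?}(V)=\bigl(\bfA_{\FT}^{\np,?}\otimes_{\Zp}V\bigr)^{H_{\frakF,K}}$ is free over $\bfA_{\FT,K}^{\np,?}$ of rank $\opn{rk}_{\Zp}V$ and the natural map $\bfA_{\FT}^{\np,?}\otimes_{\bfA_{\FT,K}^{\np,?}}\bfD_{\FT}^{\np,?}(V)\to\bfA_{\FT}^{\np,?}\otimes_{\Zp}V$ is an isomorphism; in particular $\bfD_{\FT}^{\np,?}(V)$ is an étale $\varphi$-module. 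Because $H_{\frakF,K}$ is normal in $\scrG_K$, the diagonal $\scrG_K$-action on $\bfA_{\FT}^{\np,?}\otimes_{\Zp}V$ preserves $\bfD_{\FT}^{\np,?}(V)$, and, $H_{\frakF,K}$ acting trivially there, it descends to a continuous semilinear action of $\Gamma_{\FT,K}$ commuting with $\varphi$; hence $\bfD_{\FT}^{\np,?}(V)$ is an étale $(\varphi,\Gamma_{\FT,K})$-module over $\bfA_{\FT,K}^{\np,?}$. For the quasi-inverse: given such a $D$, the étale $\varphi$-module $\bfA_{\FT}^{\np,?}\otimes_{\bfA_{\FT,K}^{\np,?}}D$ is trivialized by its $\varphi$-invariants — the classical fact that over a complete $p$-ring with separably closed residue ring every étale $\varphi$-module has a basis of $\varphi$-fixed elements — so $\bfV(D)=\bigl(\bfA_{\FT}^{\np,?}\otimes_{\bfA_{\FT,K}^{\np,?}}D\bigr)^{\varphi=1}$ is free over $\Zp$ of rank $\opn{rk}D$ with $\bfA_{\FT}^{\np,?}\otimes_{\Zp}\bfV(D)\xrightarrow{\sim}\bfA_{\FT}^{\np,?}\otimes_{\bfA_{\FT,K}^{\np,?}}D$; the action obtained by combining the $\scrG_K$-action on $\bfA_{\FT}^{\np,?}$ with the $\Gamma_{\FT,K}$-action on $D$ pulled back along $\scrG_K\twoheadrightarrow\Gamma_{\FT,K}$ commutes with $\varphi$, hence restricts to a continuous $\scrG_K$-action on $\bfV(D)$. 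Finally the two descent isomorphisms, together with $\left(\bfA_{\FT}^{\np,?}\right)^{H_{\frakF,K}}=\bfA_{\FT,K}^{\np,?}$ and $\left(\bfA_{\FT}^{\np,?}\right)^{\varphi=1}=\Zp$, yield the functorial identifications $\bfV\bigl(\bfD_{\FT}^{\np,?}(V)\bigr)\cong V$ and $\bfD_{\FT}^{\np,?}\bigl(\bfV(D)\bigr)\cong D$, so the two functors are mutually quasi-inverse equivalences.

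I do not expect a genuine obstacle here: all the substantive work — the identification $\scrG_{\bfE_{\FT,K}^{\np,?}}\cong H_{\frakF,K}$ (\Cref{lem:5466}, \Cref{lem:47550}), the liftability of the $\Gamma_{\FT,K}$-action (\Cref{prop:42040}), and the invariance $\left(\bfA_{\FT}^{\np,?}\right)^{H_{\frakF,K}}=\bfA_{\FT,K}^{\np,?}$ — has already been carried out, and neither the imperfectness of $\bfE_{\FT,K}^{\np,?}$ (the relation $[\bfE:\varphi(\bfE)]=p^2$ of \Cref{prop:16686}) nor the two-dimensionality of $\Gamma_{\FT,K}$ intervenes in the descent. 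The only points that demand a word of care are purely formal: that the diagonal $\scrG_K$-action descends through $\scrG_K/H_{\frakF,K}=\Gamma_{\FT,K}$ (normality of $H_{\frakF,K}$), that it commutes with $\varphi$ (Frobenius on $W(\wtilde{\bfE}_{\FT,K})$ is $\scrG_K$-equivariant since $g(x^p)=(gx)^p$), and that continuity survives passage to the finitely generated invariant submodules; each is checked verbatim as in the classical setting, which is exactly why the generalization is ``immediate''.
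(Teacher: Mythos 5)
Your proposal is correct and takes essentially the same approach the paper has in mind: the paper gives no written proof, stating only that the result ``is immediate to generalize'' from \Cref{thm:36449}, and what you have written is precisely the standard Fontaine dévissage that remark alludes to, with all the required structural inputs (separably closed residue field of $\bfA_{\FT}^{\np,?}$ whose Galois group is $H_{\frakF,K}$ via \Cref{lem:5466}/\Cref{lem:47550}, stability under $\varphi$ and the lifted $\Gamma_{\frakF,K}$-action from \Cref{prop:42040}, and $(\bfA_{\FT}^{\np,?})^{H_{\frakF,K}}=\bfA_{\FT,K}^{\np,?}$) correctly identified. Nothing genuinely new beyond the Fontaine/Caruso/Ribeiro template is needed, as you observe.
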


Notice that if we take $V$ to be the trivial $\bfZ_p$-representation of $\scrG_K$, then \Cref{coro:5386} indicates that $\bfD_{\FT}^{\np,?}(V)^{H_K}\cong \bfD(V)$ and $\bfD_{\FT}^{\np,?}(V)^{H_{\tau,K}}\cong \bfD_{\tau}(V)$. The following proposition generalizes this observation to arbitrary $\bfZ_p$-representations of $\scrG_K$:
\begin{proposition}\label{prop:48076}
	For any $V\in\repcat$, one has
	\begin{enumerate}
		\item $\bfA_{\FT,K}^{\np,?}\otimes_{\bfA_K}\bfD(V)\cong \bfD_{\FT}^{\np,?}(V)$ and
		      $\bfD_{\FT}^{\np,?}(V)^{H_K}\cong \bfD(V)$, where $\bfD(V)$ is the \'etale $(\varphi,\Gamma)$-module over $\bfA_K$ associated to $V$;
		\item $\bfA_{\FT,K}^{\np,?}\otimes_{\bfA_{\tau,K}}\bfD_\tau^{\np,?}(V)\cong \bfD_{\FT}^{\np,?}(V)$ and
		      $\bfD_{\FT}^{\np,?}(V)^{H_{\tau,K}}\cong \bfD_{\tau}^{\np,?}(V)$, where $\bfD_\tau^{\np,?}(V)$ is the \'etale $(\varphi,\tau)$-module over $\left(\bfA_{\tau,K},\bfA_{\FT,K}^{\np,?}\right)$ associated to $V$.
	\end{enumerate}
\end{proposition}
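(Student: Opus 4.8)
The plan is to realize all the modules in question inside the single big ring $\bfA_{\FT}^{\np,?}\otimes_{\Zp}V$ — where $\bfA_{\FT}^{\np,?}$ is the completion of the maximal unramified ring extension of $\bfA_{\FT,K}^{\np,?}$, so that $\bfD_{\FT}^{\np,?}(V)=\left(\bfA_{\FT}^{\np,?}\otimes_{\Zp}V\right)^{H_{\frakF,K}}$ by the proposition just above — and then to read off each invariant submodule from a single cleverly chosen basis. The first step is bookkeeping on the big rings. By \Cref{lem:56945} one has $\bfE_K\subseteq\bfE_{\FT,K}^{\np,?}$ and $\bfE_{\tau,K}\subseteq\bfE_{\FT,K}^{\np,?}$, hence (taking separable closures inside $\Cpflat$) $\bfE_K^{\sep},\bfE_{\tau,K}^{\sep}\subseteq\bfE_{\FT}^{\np,?}$; lifting along Witt vectors and invoking uniqueness of unramified lifts, these promote to compatible inclusions $\bfA\hookrightarrow\bfA_{\FT}^{\np,?}$ and $\bfA_{\tau}\hookrightarrow\bfA_{\FT}^{\np,?}$, where $\bfA$ (resp.\ $\bfA_{\tau}$) is the completion of the maximal unramified ring extension of $\bfA_K$ (resp.\ $\bfA_{\tau,K}$) appearing in \Cref{thm:36449}. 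Next, $\left(\bfA_{\FT}^{\np,?}\right)^{H_{\frakF,K}}=\bfA_{\FT,K}^{\np,?}$ (dévissage from \Cref{lem:5466}), and combining this with \Cref{coro:5386} — noting that $H_{\frakF,K}$ is normal in both $H_K$ and $H_{\tau,K}$, so that the quotients $H_K/H_{\frakF,K}$ and $H_{\tau,K}/H_{\frakF,K}$ act on $\bfA_{\FT,K}^{\np,?}$ — gives $\left(\bfA_{\FT}^{\np,?}\right)^{H_K}=\bfA_K$ and $\left(\bfA_{\FT}^{\np,?}\right)^{H_{\tau,K}}=\bfA_{\tau,K}$.

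The trick in the main argument is to use Fontaine's module to produce a basis of $\bfA_{\FT}^{\np,?}\otimes_{\Zp}V$ fixed by the \emph{whole} group $H_K$, which turns every invariant computation into a one-line basis expansion and entirely bypasses any genuine Galois descent. Assume $V$ is free over $\Zp$ (the general case reduces to this by dévissage along $0\to p^nV\to V\to V/p^nV\to 0$, as $\bfA_{\FT}^{\np,?}$ is $\Zp$-flat and all functors in sight are exact). Let $\{e_i\}$ be an $\bfA_K$-basis of $\bfD(V)$; by \Cref{thm:36449} it is an $\bfA$-basis of $\bfA\otimes_{\Zp}V$, hence via $\bfA\hookrightarrow\bfA_{\FT}^{\np,?}$ an $\bfA_{\FT}^{\np,?}$-basis of $\bfA_{\FT}^{\np,?}\otimes_{\Zp}V$; and each $e_i$, lying in $\bfD(V)=\left(\bfA\otimes_{\Zp}V\right)^{H_K}$, is fixed by $H_K\supseteq H_{\frakF,K}$. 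Expanding an arbitrary element in this basis therefore gives, for every closed subgroup $H\subseteq H_K$, the identity $\left(\bfA_{\FT}^{\np,?}\otimes_{\Zp}V\right)^{H}=\bigoplus_i\left(\bfA_{\FT}^{\np,?}\right)^{H}e_i$. Taking $H=H_{\frakF,K}$ yields $\bfD_{\FT}^{\np,?}(V)=\bigoplus_i\bfA_{\FT,K}^{\np,?}e_i=\bfA_{\FT,K}^{\np,?}\otimes_{\bfA_K}\bfD(V)$, the first isomorphism of (1); taking $H=H_K$ yields, using $H_{\frakF,K}\subseteq H_K$ (normality), that $\bfD_{\FT}^{\np,?}(V)^{H_K}=\left(\bfA_{\FT}^{\np,?}\otimes_{\Zp}V\right)^{H_K}=\bigoplus_i\bfA_K e_i=\bfD(V)$, the second. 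That these identifications are the evident multiplication and inclusion maps, and that they intertwine $\varphi$ and the residual Galois (hence $\Gamma_{\FT,K}$- and $\Gamma_K$-) actions, is immediate from the construction.

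Part (2) is formally the same with $\left(\bfD(V),\bfA_K,\bfA,H_K\right)$ replaced throughout by $\left(\bfD_{\tau}^{\np,?}(V),\bfA_{\tau,K},\bfA_{\tau},H_{\tau,K}\right)$: here $\bfD_{\tau}^{\np,?}(V)=\left(\bfA_{\FT}^{\np,?}\otimes_{\Zp}V\right)^{H_{\tau,K}}$ coincides, as a $\varphi$-module over $\bfA_{\tau,K}$, with the classical $(\varphi,\tau)$-module $\wtilde{\bfD}_\tau(V)$, whose $\bfA_{\tau,K}$-basis $\{f_i\}$ is — via \Cref{thm:36449} applied to the Kummer tower together with $\bfA_{\tau}\hookrightarrow\bfA_{\FT}^{\np,?}$ — an $\bfA_{\FT}^{\np,?}$-basis of $\bfA_{\FT}^{\np,?}\otimes_{\Zp}V$ fixed by $H_{\tau,K}\supseteq H_{\frakF,K}$; expanding in $\{f_i\}$ and specializing the displayed identity to $H=H_{\frakF,K}$ and $H=H_{\tau,K}$ produces the two isomorphisms of (2).

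I do not anticipate a real obstacle: every ingredient is already available — \Cref{lem:56945} for the residue-field containments, \Cref{coro:5386} and the dévissage of \Cref{lem:5466} for the invariant subrings, and \Cref{thm:36449} for the all-important $H_K$-fixed (resp.\ $H_{\tau,K}$-fixed) basis, which is exactly what removes the need for any non-trivial descent. The only point that calls for care is the bookkeeping of the first paragraph, namely checking that $\bfA$, $\bfA_{\tau}$ and $\bfA_{\FT}^{\np,?}$ can genuinely be taken as compatible subrings of a common $W(\Cpflat)$ and that their invariant subrings come out as claimed; this proceeds just as in the classical cyclotomic theory, working inside $\Cpflat$ throughout and using uniqueness of unramified lifts together with $p$-adic completeness and $p$-torsion-freeness.
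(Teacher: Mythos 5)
Your proof is essentially the paper's: both pass through $\bfA_{\FT}^{\np,?}\otimes_{\Zp}V$ via Fontaine's comparison isomorphism $\bfA\otimes_{\bfA_K}\bfD(V)\cong\bfA\otimes_{\Zp}V$ and read off the $H_{\frakF,K}$-invariants after expanding in an $H_K$-fixed (resp.\ $H_{\tau,K}$-fixed) $\bfA_K$- (resp.\ $\bfA_{\tau,K}$-)basis of $\bfD(V)$ (resp.\ $\bfD_\tau^{\np,?}(V)$) — the paper merely compresses your explicit basis argument into the observation that $H_{\frakF,K}$ acts trivially on $\bfD(V)$ and $\bfA_K$. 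One small inaccuracy worth flagging: the dévissage you invoke along $0\to p^nV\to V\to V/p^nV\to 0$ does not actually reduce the general case to the $\Zp$-free case (for free $V$ one has $p^nV\cong V$ as a representation, so nothing is gained); the clean way to handle non-free $V$ is to decompose the finitely generated module $\bfD(V)$ over the complete DVR $\bfA_K$ into a free summand plus cyclic $p$-power-torsion summands and check that $H_{\frakF,K}$-invariants behave as expected on each piece — a point the paper itself leaves tacit.
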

\begin{proof}
	We only prove the first assertion. The second one can be proved similarly.

	Let $\bfA$ be the $p$-adic completion of the maximal unramified ring extension of $\bfA_K$ in $\bfA_{\inf}$. By the well-known isomorphism of Fontaine (cf. \cite[Proposition A.1.2.4]{Fontaine1990})
	$$\bfA\otimes_{\bfA_K}\bfD(V)\cong \bfA\otimes_{\Zp}V,$$
	one has
	\begin{align*}
		\bfD_{\FT}^{\np,?}(V)= & \left(\bfA_{\FT}^{\np,?}\otimes_{\bfA}\left(\bfA\otimes_{\Zp}V\right)\right)^{\scrG_{\bfQ_{p,\FT}}}  \\
		\cong                  & \left(\bfA_{\FT}^{\np,?}\otimes_{\bfA}\left(\bfA\otimes_{\bfA_K}\bfD(V)\right)\right)^{H_{\frakF,K}} \\
		\cong                  & \left(\bfA_{\FT}^{\np,?}\otimes_{\bfA_K}\bfD(V)\right)^{H_{\frakF,K}}.
	\end{align*}
	Since $H_{\frakF,K}$ acts trivially on $\bfD(V)$ and $\bfA_K$, we obtain
	$$\bfD_{\FT}^{\np,?}(V)\cong \left(\bfA_{\FT}^{\np,?}\right)^{H_{\frakF,K}}\otimes_{\bfA_K}\bfD(V)=\bfA_{\FT,K}^{\np,?}\otimes_{\bfA_K}\bfD(V).$$
	The assertion $\bfD_{\FT}^{\np,?}(V)^{H_K}\cong \bfD(V)$ then follows from \Cref{coro:5386}.
\end{proof}
\begin{corollary}\label{coro:3935}
	The functor
	$$\left\{\text{\'etale }(\varphi,\Gamma)\hyphen\text{modules over }\bfA_K\right\}\lto\left\{\text{\'etale }(\varphi,\tau)\hyphen\text{modules over }\left(\bfA_{\tau,K},\bfA_{\FT,K}^{\np,?}\right)\right\},$$
	$$D\longmapsto \left(\bfA_{\FT,K}^{\np,?}\otimes_{\bfA_K}D\right)^{H_{\tau,K}}$$
	induces the equivalence of categories, with the quasi-inverse given by
	$$D^\prime\longmapsto \left(\bfA_{\FT,K}^{\np,?}\otimes_{\bfA_{\tau,K}}D^\prime\right)^{H_K}.$$%
\end{corollary}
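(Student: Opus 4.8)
The plan is to deduce the corollary formally from \Cref{prop:48076} together with the equivalences of \Cref{thm:36449} and its multivariable analogue. Write $F$ for the functor $D\mapsto\left(\bfA_{\FT,K}^{\np,?}\otimes_{\bfA_K}D\right)^{H_{\tau,K}}$ and $G$ for the functor $D'\mapsto\left(\bfA_{\FT,K}^{\np,?}\otimes_{\bfA_{\tau,K}}D'\right)^{H_K}$. Recall that $\bfD\colon\repcat\to\{\text{\'etale }(\varphi,\Gamma)\text{-modules over }\bfA_K\}$ is an equivalence with quasi-inverse $\bfV$ by \Cref{thm:36449}, and that the same argument produces an equivalence $\bfD_\tau^{\np,?}\colon\repcat\to\{\text{\'etale }(\varphi,\tau)\text{-modules over }(\bfA_{\tau,K},\bfA_{\FT,K}^{\np,?})\}$, the evident generalization of \Cref{thm:36449} analogous to the one stated above for $(\varphi,\Gamma_{\FT,K})$-modules over $\bfA_{\FT,K}^{\np,?}$.

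The heart of the matter is to identify the two composites with these standard equivalences. For $V\in\repcat$, the natural isomorphism $\bfA_{\FT,K}^{\np,?}\otimes_{\bfA_K}\bfD(V)\cong\bfD_{\FT}^{\np,?}(V)$ followed by the natural isomorphism $\bfD_{\FT}^{\np,?}(V)^{H_{\tau,K}}\cong\bfD_\tau^{\np,?}(V)$, both from \Cref{prop:48076}, yields a natural isomorphism $F\circ\bfD\cong\bfD_\tau^{\np,?}$; in particular $F$ lands in the category of \'etale $(\varphi,\tau)$-modules (since $\bfD$ is essentially surjective), and from $\bfD\circ\bfV\cong\id$ we get $F\cong\bfD_\tau^{\np,?}\circ\bfV$, a composite of equivalences, so $F$ is an equivalence. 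Symmetrically, \Cref{prop:48076} gives $\bfA_{\FT,K}^{\np,?}\otimes_{\bfA_{\tau,K}}\bfD_\tau^{\np,?}(V)\cong\bfD_{\FT}^{\np,?}(V)$ and $\bfD_{\FT}^{\np,?}(V)^{H_K}\cong\bfD(V)$, hence $G\circ\bfD_\tau^{\np,?}\cong\bfD$. That $F$ and $G$ are then mutually quasi-inverse is formal: $G\circ F\circ\bfD\cong G\circ\bfD_\tau^{\np,?}\cong\bfD$ forces $G\circ F\cong\id$ because $\bfD$ is an equivalence, and likewise $F\circ G\circ\bfD_\tau^{\np,?}\cong F\circ\bfD\cong\bfD_\tau^{\np,?}$ forces $F\circ G\cong\id$.

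Apart from this bookkeeping, the only point that needs care --- and which I would treat as the main (and fairly mild) obstacle --- is to ensure that the isomorphisms of \Cref{prop:48076} are natural in $V$ and respect the full $(\varphi,\Gamma)$-, $(\varphi,\tau)$- and $(\varphi,\Gamma_{\FT,K})$-module structures rather than just the underlying modules, so that they are isomorphisms in the relevant categories and assemble into natural transformations of functors. This is automatic from the construction: the base-change functors $\bfA_{\FT,K}^{\np,?}\otimes_{\bfA_K}(-)$ and $\bfA_{\FT,K}^{\np,?}\otimes_{\bfA_{\tau,K}}(-)$, the invariants functors $(-)^{H_{\tau,K}}$ and $(-)^{H_K}$, and Fontaine's comparison isomorphism $\bfA\otimes_{\Zp}V\cong\bfA\otimes_{\bfA_K}\bfD(V)$ invoked in the proof of \Cref{prop:48076} are all functorial and Frobenius- and Galois-equivariant. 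One could alternatively avoid representations and argue directly by Galois descent along $\bfA_{\FT,K}^{\np,?}/\bfA_{\tau,K}$ and $\bfA_{\FT,K}^{\np,?}/\bfA_K$ using \Cref{coro:5386}, but the route through \Cref{prop:48076} is shorter.
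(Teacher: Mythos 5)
Your proof is correct and matches the paper's intent: the paper states this as a corollary of \Cref{prop:48076} with no separate proof, and your deduction is exactly the formal bookkeeping that makes that implication precise. You correctly identify the one ingredient that is used tacitly but must be supplied, namely that $\bfD_\tau^{\np,?}$ is itself an equivalence onto the category of \'etale $(\varphi,\tau)$-modules over $\left(\bfA_{\tau,K},\bfA_{\FT,K}^{\np,?}\right)$ (the $(\varphi,\tau)$-analogue of the unnumbered generalization of \Cref{thm:36449} given just before \Cref{prop:48076}), and that the isomorphisms in \Cref{prop:48076} are natural and equivariant; both points are handled as the paper intends.
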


\section{Galois cohomology via \'etale $(\varphi,\Gamma_{\frakF,K})$-modules}\label{sec:6180}

\subsection{The cohomology of $\opn{Kos}\left(\varphi,\Gamma_{\frakF,K},\bfD_{\frakF}^{\np,?}(V)\right)$}
By replacing the $(\varphi,\tau)$-module $\wtilde{\bfD}_\tau(V)$ (resp. the $(\varphi,\Gamma_{\frakF,K})$-module $\bfD_{\frakF}^{\ttR}(V)$) in the Herr-Zhao complex (resp. the Herr-Ribeiro complex) by $\bfD_{\tau}^{\np,?}(V)$ (resp. $\bfD_{\frakF}^{\np,?}(V)$), we get the complexes $\opn{Kos}\left(\varphi,\tau,\bfD_{\tau}^{\np,?}(V)\right)$ and $\opn{Kos}\left(\varphi,\Gamma_{\frakF,K},\bfD_{\frakF}^{\np,?}(V)\right)$ for these modules over the imperfect period rings. With identical arguments as in \cite{Zhao2022} and \cite{ribeiroExplicitFormulaHilbert2011}, one can prove the following proposition:
\begin{proposition}\label{thm:52100}
	One has quasi-isomorphisms
	$$\kappa_\infty^*\colon\opn{Kos}\left(\varphi,\Gamma_{\frakF,K},\bfD_{\tau}^{\np,?}(V)\right)\cong \rmR\Gamma_{\opn{cont}}(\scrG_K,V)$$
	and
	$$\kappa_{\frakF}^*\colon\opn{Kos}\left(\varphi,\wtilde{\Gamma}_{\tau,K},\bfD_{\frakF}^{\np,?}(V)\right)\cong \rmR\Gamma_{\opn{cont}}(\scrG_K,V).$$
\end{proposition}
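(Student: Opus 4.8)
The plan is to transport, essentially verbatim, the arguments of \cite{ribeiroExplicitFormulaHilbert2011} (for the Ribeiro-type complex $\opn{Kos}(\varphi,\Gamma_{\frakF,K},\bfD_{\frakF}^{\np,?}(V))$) and \cite{Zhao2022} (for the Zhao-type complex $\opn{Kos}(\varphi,\tau,\bfD_{\tau}^{\np,?}(V))$) to our imperfect rings $\bfA_{\frakF,K}^{\np,?}$ and $\bfA_{\frakF}^{\np,?}$; both rest on the same homological skeleton, which I would assemble in three steps. (i) The Artin--Schreier--Witt fundamental exact sequence of $\scrG_K$-modules
$$0\lto\Zp\lto\bfA_{\frakF}^{\np,?}\xlongrightarrow{1-\varphi}\bfA_{\frakF}^{\np,?}\lto 0$$
holds here because the residue field $\bfE_{\frakF}^{\np,?}$ is separably closed in $\Cpflat$ --- so $1-\varphi$ is surjective on it with kernel $\Fp$ --- and one lifts $p$-adically along the $\varphi$-stable Cohen ring; tensoring with the flat $\Zp$-module $V$ and applying $\RGcont(\scrG_K,-)$ presents $\RGcont(\scrG_K,V)$ as the total complex of $\bigl[\,\RGcont(\scrG_K,\bfA_{\frakF}^{\np,?}\otimes_{\Zp}V)\xlongrightarrow{1-\varphi}\RGcont(\scrG_K,\bfA_{\frakF}^{\np,?}\otimes_{\Zp}V)\,\bigr]$. (ii) The Hochschild--Serre spectral sequence for $1\to H_{\frakF,K}\to\scrG_K\to\Gamma_{\frakF,K}\to1$, combined with the $H_{\frakF,K}$-acyclicity of $\bfA_{\frakF}^{\np,?}$, degenerates to $\RGcont(\scrG_K,\bfA_{\frakF}^{\np,?}\otimes_{\Zp}V)\cong\RGcont(\Gamma_{\frakF,K},\bfD_{\frakF}^{\np,?}(V))$. (iii) One feeds this into the explicit complex of \cite{ribeiroExplicitFormulaHilbert2011} computing $\RGcont(\Gamma_{\frakF,K},-)$ for the two-dimensional $p$-adic Lie group $\Gamma_{\frakF,K}\cong\wtilde{\Gamma}_{\tau,K}\rtimes\wtilde{\Gamma}_K$, in which the relation $\gamma\tau\gamma^{-1}=\tau^{\chi(\gamma)}$ is exactly what produces the twisted differentials $\tau^{\chi(\gamma)}-1$ and $\gamma-\delta$ with $\delta=\frac{\tau^{\chi(\gamma)}-1}{\tau-1}$; together with the $1-\varphi$ direction this is Ribeiro's cube $\scrC_{\frakF}(\bfD_{\frakF}^{\np,?}(V))$, whose totalization is $\opn{Kos}(\varphi,\Gamma_{\frakF,K},\bfD_{\frakF}^{\np,?}(V))$, and the composed zig-zag of these comparisons is $\kappa_{\frakF}^*$. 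For $\kappa_\infty^*$ one instead takes $\wtilde{\Gamma}_{\tau,K}$-cohomology first and rewrites everything via part~(2) of \Cref{prop:48076} --- $\bfD_{\frakF}^{\np,?}(V)^{H_{\tau,K}}\cong\bfD_{\tau}^{\np,?}(V)$ and $\bfA_{\frakF,K}^{\np,?}\otimes_{\bfA_{\tau,K}}\bfD_{\tau}^{\np,?}(V)\cong\bfD_{\frakF}^{\np,?}(V)$ --- recovering $\opn{Kos}(\varphi,\tau,\bfD_{\tau}^{\np,?}(V))$ with the twisted invariants $(\bfA_{\frakF,K}^{\np,?}\otimes_{\bfA_{\tau,K}}\bfD_{\tau}^{\np,?}(V))^{\gamma-\delta=0}$, i.e.\ Zhao's complex. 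As the remark following \Cref{thm:59311} observes, part~(2) of \Cref{prop:48076} makes this actually shorter than \cite{Zhao2022}, which had to insert an intermediate complex over $W(\wtilde{\bfE}_{\tau,K})$.

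What genuinely has to be checked for our rings --- and all of it is in the earlier sections --- is exactly three facts: the identity $\bigl(\bfA_{\frakF}^{\np,?}\bigr)^{H_{\frakF,K}}=\bfA_{\frakF,K}^{\np,?}$ (the dévissage lemma after \Cref{def:325}); Fontaine's isomorphism $\bfA_{\frakF}^{\np,?}\otimes_{\bfA_{\frakF,K}^{\np,?}}\bfD_{\frakF}^{\np,?}(V)\cong\bfA_{\frakF}^{\np,?}\otimes_{\Zp}V$, which generalizes \cite[Proposition A.1.2.4]{Fontaine1990} and underlies the equivalence of \Cref{thm:36449} over $\bfA_{\frakF,K}^{\np,?}$; and the $H_{\frakF,K}$-acyclicity $\rmH^i(H_{\frakF,K},\bfA_{\frakF}^{\np,?})=0$ for $i>0$. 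The last is the only substantive input: additive Hilbert~90 for the separable extension $\bfE_{\frakF}^{\np,?}/\bfE_{\frakF,K}^{\np,?}$ gives it modulo $p$, which is licit \emph{only} because $\scrG_{\bfE_{\frakF,K}^{\np,?}}\cong H_{\frakF,K}$ by \Cref{lem:5466} and \Cref{lem:47550} --- precisely the reason the proposition is stated for $\bfE_{\frakF,K}^{\np,?}\in\{\bfE_{\frakF,K}^{\np},\widehat{\bfE}_{\frakF,K}^{\np}\}$ and not for $\bfE_{\frakF,K}^{\np,\circ}$, whose absolute Galois group is too large --- and one then lifts to $\bfA_{\frakF}^{\np,?}$ by dévissage modulo $p^n$ (the $H_{\frakF,K}$-action on $\bfE_{\frakF}^{\np,?}$ being smooth) and a $\varprojlim$ with vanishing $\rmR^1$, the maps $\bfA_{\frakF,K}^{\np,?}/p^{n+1}\to\bfA_{\frakF,K}^{\np,?}/p^n$ being surjective.

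I do not expect a genuine obstacle: once these three facts are available, the two quasi-isomorphisms are formal consequences of the same bookkeeping as in \cite{herrCohomologieGaloisienneCorps1998, ribeiroExplicitFormulaHilbert2011, Zhao2022}, which is why the statement can reasonably be left to ``identical arguments''. The one mild technicality --- the possible failure of $\wtilde{\Gamma}_K\subseteq\Zp^\times$ to be procyclic, which would obstruct the two-term Koszul presentation in the $\gamma$-direction --- is handled exactly as by Herr and Ribeiro: one passes to an open procyclic pro-$p$ subgroup of index prime to $p$, over which cohomology with $p$-adic coefficients reduces to that of the subgroup; everything else is bounded homological algebra. If I had to name the ``hard part'', it lies not in this proof but in the earlier \Cref{lem:5466} and \Cref{lem:47550}, which supply the crucial identification of the Galois group and thereby make the whole cohomological machine run.
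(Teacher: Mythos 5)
Your proposal is correct and matches the paper's approach: the paper's entire proof of this proposition is the one-sentence remark that the arguments of \cite{Zhao2022} and \cite{ribeiroExplicitFormulaHilbert2011} transfer verbatim, and your three-point checklist (the Artin--Schreier--Witt sequence $0\to\Zp\to\bfA_{\frakF}^{\np,?}\to\bfA_{\frakF}^{\np,?}\to 0$, $H_{\frakF,K}$-acyclicity plus the descent $(\bfA_{\frakF}^{\np,?})^{H_{\frakF,K}}=\bfA_{\frakF,K}^{\np,?}$ resting on \Cref{lem:5466}, \Cref{lem:47550} and the lemma following \Cref{def:325}, and the Koszul presentation of $\Gamma_{\frakF,K}$-cohomology) is exactly what must hold for that citation to be licit, and you have traced each input to its proof earlier in the paper. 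One small observation: the printed statement of the proposition appears to have its group subscripts transposed relative to the modules --- it should read $\opn{Kos}\left(\varphi,\wtilde{\Gamma}_{\tau,K},\bfD_{\tau}^{\np,?}(V)\right)$ and $\opn{Kos}\left(\varphi,\Gamma_{\frakF,K},\bfD_{\frakF}^{\np,?}(V)\right)$, consistent with the usage of $\kappa_{\frakF}^*$ and $\kappa_\infty^*$ in the proof of \Cref{thm:26607} --- a typo you have silently and correctly corrected.
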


On the other hand, Zhao proved in \cite[Section 2]{Zhao2022} that there exists a natural quasi-isomorphism
$$\opn{Kos}\left(\varphi,\Gamma_{\frakF,K},W\left(\wtilde{\bfE}_{\frakF,K}\right)\otimes_{\bfA_{\tau,K}}\wtilde{\bfD}_{\tau}(V)\right)\xlongrightarrow{\cong}\opn{Kos}\left(\varphi,\wtilde{\Gamma}_{\tau,K},\wtilde{\bfD}_{\tau}(V)\right).$$
Motivated by this quasi-isomorphism and \Cref{prop:48076}, we want to know how the complexes $\opn{Kos}\left(\varphi,\Gamma_K,\bfD(V)\right)$, $\opn{Kos}\left(\varphi,\Gamma_{\FT,K},\bfD_{\FT}^{\np,?}(V)\right)$ and $\opn{Kos}\left(\varphi,\wtilde{\Gamma}_{\tau,K},\bfD_\tau^{\np,?}(V)\right)$ of these modules over imperfect period rings can be related without the bridge of $\rmR\Gamma_{\opn{cont}}\left(\scrG_K,V\right)$.

We start with the following diagram:
\tikzfading[name=myfade,left color=transparent!50,right color=transparent!70,middle color=transparent!60]
\begin{equation}\label{eq:46495}
	\begin{tikzcd}[column sep=-14pt,execute at end picture={
					\foreach \Nombre in  {Dt1,Dt2,Dt3,Dt4,Dg1,Dg2,Dg3,Dg4,Df1,Df2,Df3,Df4,Df5,Df6,Df7,Df8}
						{\coordinate (\Nombre) at (\Nombre.center);}
					\fill[HotPink, path fading=myfade,fading angle=135]
					(Dg1) -- (Dg2) -- (Dg3) -- (Dg4) -- cycle;
					\fill[LightSeaGreen, path fading=myfade,fading angle=-45]
					(Dt1) -- (Dt2) -- (Dt3) -- (Dt4) -- cycle;
					\fill[MediumBlue, path fading=myfade,fading angle=-45] (Df1) -- (Df2) -- (Df3) -- (Df4) -- cycle;
					\fill[MediumBlue, path fading=myfade,fading angle=45] (Df1) -- (Df4) -- (Df8) -- (Df5) -- cycle;
					\fill[MediumBlue,path fading=myfade,fading angle=135] (Df4) -- (Df3) -- (Df7) -- (Df8) -- cycle;
				}
		]
		&                                        &                                                           & |[alias=Dt2]|\bfD_{\frakF}^{\np,?}(V)^{\gamma-1=0} \arrow[rr, "\tau-1"] \arrow[dd, hook]  &                                         & |[alias=Dt3]|\bfD_{\frakF}^{\np,?}(V)^{\gamma-\delta=0} \arrow[dd, hook]  \\
		&                                        & |[alias=Dt1]|\bfD_{\frakF}^{\np,?}(V)^{\gamma-1=0} \arrow[rr, "\tau-1",near start,crossing over]  \arrow[ru, "1-\varphi"]  &                                         & |[alias=Dt4]|\bfD_{\frakF}^{\np,?}(V)^{\gamma-\delta=0}  \arrow[ru, "1-\varphi"]  &                       \\
		& |[alias=Dg1]|\bfD_{\frakF}^{\np,?}(V)^{\tau-1=0} \arrow[rr, hook] \arrow[dd, "\gamma-1",near start] &                                                           & |[alias=Df2]|\bfD_{\frakF}^{\np,?}(V) \arrow[rr, "\tau-1",near start,rightsquigarrow] \arrow[dd, "\gamma-1",near start] &                                         & |[alias=Df3]|\bfD_{\frakF}^{\np,?}(V) \arrow[dd, "\gamma-\delta"] \\
		|[alias=Dg4]|\bfD_{\frakF}^{\np,?}(V)^{\tau-1=0} \arrow[rr, hook,crossing over] \arrow[dd, "\gamma-1"] \arrow[ru, "1-\varphi"] &                                        & |[alias=Df1]|\bfD_{\frakF}^{\np,?}(V) \arrow[rr, "\tau-1",near start,crossing over,rightsquigarrow] \arrow[ru, "1-\varphi"]  &                                         & |[alias=Df4]|\bfD_{\frakF}^{\np,?}(V) \arrow[ru, "1-\varphi"]  &                       \\
		& |[alias=Dg2]|\bfD_{\frakF}^{\np,?}(V)^{\tau^{\chi(\gamma)}-1=0} \arrow[rr, hook]                   &                                                           & |[alias=Df6]|\bfD_{\frakF}^{\np,?}(V) \arrow[rr, "\tau^{\chi(\gamma)}-1",near start,rightsquigarrow]                  &                                         & |[alias=Df7]|\bfD_{\frakF}^{\np,?}(V)                  \\
		|[alias=Dg3]|\bfD_{\frakF}^{\np,?}(V)^{\tau^{\chi(\gamma)}-1=0} \arrow[rr, hook] \arrow[ru, "1-\varphi"]                   &                                        & |[alias=Df5]|\bfD_{\frakF}^{\np,?}(V) \arrow[rr, "\tau^{\chi(\gamma)}-1",rightsquigarrow] \arrow[ru, "1-\varphi"]                  &                                         & |[alias=Df8]|\bfD_{\frakF}^{\np,?}(V) \arrow[ru, "1-\varphi"]                   &
		\arrow[from=Dt1,to=Df1,hook,crossing over]
		\arrow[from=Dt4,to=Df4,hook,crossing over]
		\arrow[from=Df1,to=Df5,"\gamma-1",near start,crossing over]
		\arrow[from=Df4,to=Df8,"\gamma-\delta",near start,crossing over]
	\end{tikzcd}
\end{equation}
\noindent where the double complex \colorbox{HotPink!40!white}{$\scrC_\gamma$} is the lateral kernel complex of the triple complex  \colorbox{MediumBlue!40!white}{$\scrC_{\frakF}\left(\bfD_{\frakF}^{\np,?}(V)\right)$}, the double complex \colorbox{LightSeaGreen!40!white}{$\scrC_\tau$} is the vertical kernel complex of $\scrC_{\frakF}\left(\bfD_{\frakF}^{\np,?}(V)\right)$, and
$$\opn{Tot}\left({\scrC_{\frakF}\left(\bfD_{\frakF}^{\np,?}(V)\right)}\right)=\opn{Kos}\left(\varphi,\Gamma_{\frakF,K},\bfD_{\frakF}^{\np,?}(V)\right)$$
is the Herr-Ribeiro complex of $\bfD_{\frakF}^{\np,?}(V)$ over $\bfA_{\FT,K}^{\np,?}$. On the other hand, \Cref{prop:48076} shows that
$$\bfD_{\frakF}^{\np,?}(V)^{\tau=1}=\bfD_{\frakF}^{\np,?}(V)^{\tau^{\chi(\gamma)}=1}=\bfD(V),\ \bfD_{\frakF}^{\np,?}(V)^{\gamma=1}=\bfD_\tau^{\np,?}(V)$$
and $\bfD_{\frakF}^{\np,?}(V)=\AKq\otimes_{\bfA_{\tau,K}}\bfD_{\tau}^{\np,?}(V)$,
indicating that
$$\scrC_\gamma = \scrC_\gamma(\bfD(V)),\ \opn{Tot}(\scrC_\gamma)=\opn{Kos}\left(\varphi,\Gamma_K,\bfD(V)\right)$$ and $$\scrC_\tau=\scrC_\tau\left(\bfD_{\tau}^{\np,?}(V)\right),\ \opn{Tot}(\scrC_\tau)=\opn{Kos}\left(\varphi,\wtilde{\Gamma}_{\tau,K},\bfD_{\tau}^{\np,?}(V)\right).$$
If we denote by $f\colon \scrC_1\lto\scrC_2$ the morphism of double complexes represented by the squiggly arrow in \Cref{eq:46495}, then
\begin{align*}
	\opn{Kos}\left(\varphi,\Gamma_{\frakF,K},\bfD_{\frakF}^{\np,?}(V)\right)=\opn{Tot}\left({\scrC_{\frakF}\left(\bfD_{\frakF}^{\np,?}(V)\right)}\right)\cong & \opn{Tot}\left(\opn{Tot}(\scrC_1)\xlongrightarrow{\opn{Tot}(f)}\opn{Tot}(\scrC_2)\right)       \\
	\cong                                                                                                                                                     & \opn{Cone}\left(\opn{Tot}(\scrC_1)\xlongrightarrow{\opn{Tot}(f)}\opn{Tot}(\scrC_2)\right)[-1].
\end{align*}
Notice that
$$\opn{Kos}\left(\varphi,\Gamma_K,\bfD(V)\right)=\opn{Tot}(\scrC_\gamma(\bfD(V)))=\opn{Tot}\left(\opn{ker}\left(\opn{Tot}(\scrC_1)\xlongrightarrow{\opn{Tot}(f)}\opn{Tot}(\scrC_2)\right)\right),$$
a classical homological algebra argument (cf. \cite[Exercise 1.5.9]{weibelIntroductionHomologicalAlgebra1994}) gives us a natural injection (i.e., the kernel complex is exact)
\begin{equation}
	\iota_{p^\infty}\colon\opn{Kos}\left(\varphi,\Gamma_K,\bfD(V)\right)\longhookrightarrow \opn{Kos}\left(\varphi,\Gamma_{\FT,K},\bfD_{\FT}^{\np,?}(V)\right).
\end{equation}
Similarly, one obtains another natural injection
\begin{equation}
	\iota_{\infty}\colon\opn{Kos}\left(\varphi,\wtilde{\Gamma}_{\tau,K},\bfD_{\tau}^{\np,?}(V)\right)\longhookrightarrow \opn{Kos}\left(\varphi,\Gamma_{\FT,K},\bfD_{\FT}^{\np,?}(V)\right).
\end{equation}

\begin{theorem}\label{thm:26607}
	The morphisms $\iota_\infty$ and $\iota_{p^\infty}$ induce quasi-isomorphisms.
\end{theorem}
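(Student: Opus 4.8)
The plan is to deduce the statement from the fact that all three complexes in play compute $\rmR\Gamma_{\opn{cont}}(\scrG_K,V)$, by an explicit cocycle comparison. By Herr's theorem (\cite[Théorème~2.1]{herrCohomologieGaloisienneCorps1998}) the complex $\opn{Kos}(\varphi,\Gamma_K,\bfD(V))$ is quasi-isomorphic to $\rmR\Gamma_{\opn{cont}}(\scrG_K,V)$, and by \Cref{thm:52100} so are $\opn{Kos}(\varphi,\Gamma_{\FT,K},\bfD_{\FT}^{\np,?}(V))$ via Ribeiro's map and $\opn{Kos}(\varphi,\wtilde{\Gamma}_{\tau,K},\bfD_{\tau}^{\np,?}(V))$ via Zhao's map. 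It therefore suffices to prove that the triangle of \Cref{thm:59311}(3) commutes on cohomology, that is, that Ribeiro's quasi-isomorphism precomposed with $\iota_{p^\infty}^{*}$ (resp. $\iota_\infty^{*}$) coincides with Herr's (resp. Zhao's): the three named maps being quasi-isomorphisms, this forces $\iota_{p^\infty}^{*}$ and $\iota_\infty^{*}$ to be isomorphisms in every degree. Only the degrees $i=0,1,2$ need be examined, since $\scrG_K$ has $p$-cohomological dimension $2$ and hence all three complexes have vanishing cohomology in the remaining degrees.

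The degree-by-degree comparison is made transparent by \Cref{prop:48076}: in the triple complex of \eqref{eq:46495} one has $\bfD_{\FT}^{\np,?}(V)^{\tau=1}=\bfD_{\FT}^{\np,?}(V)^{\tau^{\chi(\gamma)}=1}=\bfD(V)$, $\bfD_{\FT}^{\np,?}(V)^{\gamma=1}=\bfD_{\tau}^{\np,?}(V)$ and $\bfD_{\FT}^{\np,?}(V)=\bfA_{\FT,K}^{\np,?}\otimes_{\bfA_{\tau,K}}\bfD_{\tau}^{\np,?}(V)$, so that a cocycle of Herr's complex, pushed through $\iota_{p^\infty}$, lands in the part of Ribeiro's complex with vanishing $\tau$- and $\tau^{\chi(\gamma)}$-directed components, and, symmetrically, a cocycle of Zhao's complex pushed through $\iota_\infty$ has vanishing $\gamma$- and $(\gamma-\delta)$-directed components. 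For $i=0$ every map is the identity on $V^{\scrG_K}$. For $i=1$, a Herr cocycle is a pair $(u,v)\in\bfD(V)^{2}$ with $(\gamma-1)u=(1-\varphi)v$, and applying Ribeiro's recipe to its image — solve the $\varphi$- and $\gamma$-equations in $\bfA_{\FT}^{\np,?}\otimes_{\Zp}V$ and read off the resulting Galois $1$-cocycle — collapses, by the vanishing of the $\tau$-components, to Herr's recipe for $(u,v)$; the computation for $\iota_\infty$ is the mirror image and matches Zhao's formula as in \cite[Corollary~4.14]{ZHAO202566} (an analogue of \cite[Proposition~I.4.1]{cherbonnierTheorieIwasawaRepresentations1999}). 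Degree $2$ is the same bookkeeping carried one step further.

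I expect the main obstacle to be the degree-$2$ comparison for $\iota_\infty$, where one must carry the twisted operator $\gamma-\delta$ with $\delta=\frac{\tau^{\chi(\gamma)}-1}{\tau-1}$ and invoke the semidirect-product relation $\gamma\tau\gamma^{-1}=\tau^{\chi(\gamma)}$ of $\Gamma_{\FT,K}\cong\wtilde{\Gamma}_{\tau,K}\rtimes\wtilde{\Gamma}_K$ to match up the two choices of solutions of the relevant equations in $\bfA_{\FT}^{\np,?}\otimes_{\Zp}V$; here the imperfectness of $\bfA_{\FT,K}^{\np,?}$ is irrelevant and one can follow Zhao's and Ribeiro's arguments, with the simplification that \Cref{prop:48076} already gives $\bfD_{\FT}^{\np,?}(V)\cong\bfA_{\FT,K}^{\np,?}\otimes_{\bfA_{\tau,K}}\bfD_{\tau}^{\np,?}(V)$, so no intermediate ``descent'' complex is needed. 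As a cross-check I would also record the purely homological route: by the standard fact on kernel complexes of bicomplexes (\cite[Exercise~1.5.9]{weibelIntroductionHomologicalAlgebra1994}), $\iota_{p^\infty}$ (resp. $\iota_\infty$) is a quasi-isomorphism precisely when the total complex of the cokernel of the $\tau$-directed map $\scrC_1\to\scrC_2$ (resp. the $\gamma$-directed map) in \eqref{eq:46495} is acyclic, an assertion one can reduce to a $(\varphi,\gamma)$-Koszul computation on $\bfD_{\FT}^{\np,?}(V)/(\tau-1)\bfD_{\FT}^{\np,?}(V)$; the cocycle route is preferred because it simultaneously yields the commuting triangle of \Cref{thm:59311}.
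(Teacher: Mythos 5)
Your proposal is correct and follows essentially the same route as the paper: reduce the claim to the commutativity of the comparison triangle, then verify it degree by degree by matching the explicit cocycle formulas of Cherbonnier--Colmez and Ribeiro (and Zhao's analogue for $\iota_\infty$), using \Cref{prop:48076} to see that a Herr (resp.\ Zhao) cocycle pushed forward has vanishing $\tau$- (resp.\ $\gamma$-) directed components. The only point I would flag is the appended homological ``cross-check'': the cokernel of the $\tau$-directed map of double complexes in \eqref{eq:46495} is not a Koszul complex on the single quotient $\bfD_{\FT}^{\np,?}(V)/(\tau-1)$, since the two columns are divided out by $\tau-1$ and $\tau^{\chi(\gamma)}-1$ respectively with a $(\gamma-\delta)$-twist between them, so that sketch would need more care --- but since you rightly prefer the cocycle route, this does not affect the proof.
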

\begin{proof}
	The theorem follows from comparing the cocycles. We prove $\iota_{p^\infty}$ induces quasi-isomorphism, and the same strategy applies to $\iota_\infty$.  Denote by $\kappa_{p^\infty}^*$ the quasi-isomorphism $$\opn{Kos}\left(\varphi,\Gamma_K,\bfD(V)\right)\xlongrightarrow{\cong}\rmR\Gamma_{\opn{cont}}(\scrG_K,V).$$

	By expanding the total complex, one can write the morphism $\iota_{p^\infty}$ as
	\[\begin{tikzcd}[ampersand replacement=\&,row sep=large,column sep=20pt]
			0 \& {\mathbf{D}(V)} \& {\mathbf{D}(V)^{\oplus 2}} \& {\mathbf{D}(V)} \& 0\ar[d]\ar[r]\&0 \\
			0 \& {\mathbf{D}_{\mathfrak{F}}^{\operatorname{np},?}(V)} \& {\mathbf{D}_{\mathfrak{F}}^{\operatorname{np},?}(V)^{\oplus 3}} \& {\mathbf{D}_{\mathfrak{F}}^{\operatorname{np},?}(V)^{\oplus 3}} \& {\mathbf{D}_{\mathfrak{F}}^{\operatorname{np},?}(V)} \& 0
			\arrow[from=1-1, to=1-2]
			\arrow[from=1-2, to=1-3,"f_0"]
			\arrow[from=1-2, to=2-2,"\opn{id}"]
			\arrow[from=1-3, to=1-4,"f_1"]
			\arrow[from=1-3, to=2-3,"{\begin{bsmallmatrix}1&0\\0&1\\0&0\end{bsmallmatrix}}"]
			\arrow[from=1-4, to=1-5]
			\arrow[from=1-4, to=2-4,"{\begin{bsmallmatrix}0\\0\\1\end{bsmallmatrix}}"]
			\arrow[from=2-1, to=2-2]
			\arrow[from=2-2, to=2-3,"g_0"]
			\arrow[from=2-3, to=2-4,"g_1"]
			\arrow[from=2-4, to=2-5,"g_2"]
			\arrow[from=2-5, to=2-6]
		\end{tikzcd},\]
	where $f_0=\begin{bsmallmatrix}\gamma-1\\1-\varphi\end{bsmallmatrix}$, $f_1=\begin{bsmallmatrix}\varphi-1 & \gamma-1\end{bsmallmatrix}$, $g_0=\begin{bsmallmatrix}\gamma-1\\1-\varphi\\\tau-1\end{bsmallmatrix}$, $g_1=\begin{bsmallmatrix}0&1-\tau&1-\varphi\\1-\tau^{\chi(\gamma)}&0&\gamma-\delta\\\varphi-1&\gamma-1&0\end{bsmallmatrix}$ and $g_2=\begin{bsmallmatrix}\tau^{\chi(\gamma)}-1&\delta-\gamma&\varphi-1\end{bsmallmatrix}$.

	It is easy to show that $\iota_{p^\infty}$ induces an isomorphism of $\rmH^0$. For $\rmH^1$, the isomorphism $\kappa_{p^\infty}^{*,1}$ is explicitly given by the formula (cf. \cite[Proposition I.4.1]{cherbonnierTheorieIwasawaRepresentations1999}):
	\begin{equation}\label{eq:54978}
		\begin{tikzcd}
			(x,y)\in Z^1\left(\opn{Kos}\left(\varphi,\Gamma_K,\bfD(V)\right)\right)\ar[d,maps to]\\\left(g\mapsto c_{x,y}(g)\coloneqq\frac{\gamma^n-1}{\gamma-1}x-(g-1)b, \text{ if }g\vert_{\Gamma_K}=\gamma^n\right)\in Z^1\left(\rmR\Gamma_{\opn{cont}}(\scrG,V)\right)
		\end{tikzcd}.
	\end{equation}
	where $b\in\bfA\otimes_{\Zp}V$ satisfies $(\varphi-1)b=-y$. On the other hand, the isomorphism $\kappa_{\frakF}^{*,1}$ is explicitly given by the formula (cf. \cite[Theorem 1.5 (ii)]{ribeiroExplicitFormulaHilbert2011}\footnote{Although Ribeiro established the formula exclusively for \'etale $(\varphi,\Gamma_{\frakF,K})$-modules over $\bfA_{\frakF,K}^{\ttR}$, one can still adapt his result to our variant by replacing $\bfA_{\frakF,K}^{\ttR}$ with $\bfA_{\frakF,K}^{\np,?}$ everywhere in \cite[Theorem 1.5 (ii)]{ribeiroExplicitFormulaHilbert2011}.}):
	\begin{equation}\label{eq:1990}
		\begin{tikzcd}
			(x,y,x)\in Z^1\left(\opn{Kos}\left(\varphi,\Gamma_{\frakF,K},\bfD_{\frakF}^{\np,?}(V)\right)\right)\ar[d,maps to]\\
			\left(g\mapsto c_{x,y,z}(g)\coloneqq\frac{\gamma^n-1}{\gamma-1}x-(g-1)b+\gamma^n\frac{\tau^m-1}{\tau-1}z, \text{ if }g\vert_{\Gamma_{\frakF,K}}=\gamma^n\tau^m\right)\in Z^1\left(\rmR\Gamma_{\opn{cont}}(\scrG,V)\right)
		\end{tikzcd}.
	\end{equation}
	Then it is immediate to verify the commutativity of the following diagram:
	\begin{equation}\label{eq:31224}
		\begin{tikzcd}[/tikz/column 1/.style={column sep=5pt},/tikz/column 2/.style={column sep=-5pt}]
			\rmH^1\left(\opn{Kos}\left(\varphi,\Gamma_K,\bfD(V)\right)\right) & \rmH^1\left(\scrG_K,V\right) & (x,y) & \left(g\mapsto c_{x,y}(g)\right) \\
			& \rmH^1\left(\opn{Kos}\left(\varphi,\Gamma_{\frakF,K},\bfD_{\frakF}^{\np,?}(V)\right)\right) && (x,y,0)
			\arrow[from=1-1, to=1-2, "\cong"',"\kappa_{p^\infty}^{*,1}"]
			\arrow[from=1-1, to=2-2, "\iota_{p^\infty}^{*,1}"']
			\arrow[maps to, from=1-3, to=1-4,"\eqref{eq:54978}"]
			\arrow[maps to, from=1-3, to=2-4]
			\arrow[from=2-2, to=1-2,"\cong","\kappa_{\frakF}^{*,1}"']
			\arrow[maps to, from=2-4, to=1-4,"\eqref{eq:1990}"']
		\end{tikzcd},
	\end{equation}
	Thus $\iota_{p^\infty}^{*,1}$ is an isomorphism. Similarly, one can show that the isomorphism $\kappa_{p^\infty}^{*,2}$ is given by the formula $$x\longmapsto \left((g,h)\mapsto s_g-s_{gh}+gs_h\right),$$ where $s_\sigma$ satisfies $(\varphi-1)s_\sigma=\frac{\gamma^n-1}{\gamma-1}(-x)$ if $\sigma\vert_{\Gamma_K}=\gamma^n$. By comparing this with the explicit formula of the isomorphism
	$\kappa_{\frakF}^{*,2}$ given by \cite[Proposition 1.10]{ribeiroExplicitFormulaHilbert2011}, one shows that the analogous diagram of \eqref{eq:31224} for $\rmH^2$ commutes. This shows that $\iota_{p^\infty}^{*,2}$ is also an isomorphism, and the result follows.
\end{proof}

\subsection{The cohomology of $\opn{Kos}\left(\psi^{\np},\Gamma_{\frakF,\Qp},\bfD_{\frakF}^{\np,?}(V)\right)$: attempts}
In this subsection we assume that $K=\Qp$.
\subsubsection{The $\psi$-operator}

\begin{definition}\leavevmode
	\begin{enumerate}
		\item For any $\bfE\in\left\{\bfE_{\FT,\Qp}^{\np,?},\bfE_{\FT}^{\np,?}\right\}$, by \Cref{coro:42040} we can write any element $x\in\bfE$ uniquely as $x=\sum_{i=0}^{p-1}\sum_{j=0}^{p-1}\varphi(x_{ij})u_{\Qp}^i\varepsilon^j$. Define the $\psi$-operator on $\bfE$ by
		      $$\psi_{\bfE}^{\np}\colon \bfE\to\bfE,\ x\longmapsto x_{00}.$$

		\item For any $\bfA_{\frakF}\in\left\{\bfA_{\FT,\Qp}^{\np,?},\bfA_{\FT}^{\np,?}\right\}$, by \Cref{coro:38666} we can write any element $x\in\bfA_{\frakF}$ uniquely as $x=\sum_{i=0}^{p-1}\sum_{j=0}^{p-1}\varphi(x_{ij})[u_{\Qp}]^i[\varepsilon]^j$. Define the $\psi$-operator on $\bfA_{\frakF}$ by
		      $$\psi^{\np}\colon \bfA_{\frakF}\lto \bfA_{\frakF},\ x\longmapsto x_{00}.$$
	\end{enumerate}
\end{definition}
\begin{remark}
	For $\bfE\in\left\{\bfE_{\FT,\Qp}^{\np,?},\bfE_{\FT}^{\np,?}\right\}$, the containment $\bfE\subseteq \widehat{\bfE}_{\FT}^{\np}$ implies that $\psi_{\bfE}^{\np}=\psi_{\widehat{\bfE}_{\FT}^{\np}}^{\np}\vert_{\bfE}$.
	Thus, we will simply write $\psi^{\np}$ instead of $\psi_{\bfE}^{\np}$. For the same reason, the notation $\psi^{\np}$ for the period rings of characteristic $0$ is unambiguous.
\end{remark}
It can be checked by direct calculation that
\begin{lemma}
	For every variant of $\psi^{\np}$ defined above, we have
	\begin{enumerate}
		\item $\psi^{\np}\circ \varphi=\opn{id}$;
		\item $\psi^{\np}$ commutes with the action of $\scrG_{\Qp}$.
	\end{enumerate}
\end{lemma}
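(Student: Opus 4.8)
The plan is to prove both parts by unwinding the coordinate definition of $\psi^{\np}$ and invoking the uniqueness of the $p$-basis expansion; I would carry out the characteristic $p$ case, the characteristic $0$ case being word-for-word the same after replacing $u_{\Qp}^{i}\varepsilon^{j}$ by its Teichm\"uller lift $[u_{\Qp}]^{i}[\varepsilon]^{j}$.

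Part (1) is immediate. If $x=\varphi(a)$, then $x=\varphi(a)\cdot u_{\Qp}^{0}\varepsilon^{0}+\sum_{(i,j)\ne(0,0)}\varphi(0)\cdot u_{\Qp}^{i}\varepsilon^{j}$ is an expansion of $x$ in the $p$-basis, so by the uniqueness provided by \Cref{coro:42040} (resp. \Cref{coro:38666}) the $(0,0)$-coordinate of $\varphi(a)$ equals $a$, i.e. $\psi^{\np}(\varphi(a))=a$.

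For part (2), I would first record the action of $\scrG_{\Qp}$ on the generators: an element $g\in\scrG_{\Qp}$ acts on the subring $\bfE_{\FT,\Qp}^{\np,?}$ (which contains $u_{\Qp}$ and $\varepsilon$) through its image in $\Gamma_{\FT,\Qp}$, and by the formulas in the proof of \Cref{prop:42040} together with the cyclotomic action on $\varepsilon$ one has $g\cdot u_{\Qp}=u_{\Qp}\cdot\varepsilon^{a(g)}$ and $g\cdot\varepsilon=\varepsilon^{\chi(g)}$ for some $a(g)\in\Zp$ and $\chi(g)\in\Zp^{\times}$, while $\varphi$ commutes with $g$. Writing $x=\sum_{0\le i,j\le p-1}\varphi(x_{ij})u_{\Qp}^{i}\varepsilon^{j}$ with $x_{ij}\in\bfE_{\FT,\Qp}^{\np,?}$, these give
\[
g\cdot x=\sum_{0\le i,j\le p-1}\varphi(g\cdot x_{ij})\,u_{\Qp}^{i}\,\varepsilon^{a(g)i+\chi(g)j}.
\]
Writing $a(g)i+\chi(g)j=p\,q_{ij}+r_{ij}$ with $q_{ij}\in\Zp$ and $0\le r_{ij}\le p-1$ and using $\varepsilon^{p}=\varphi(\varepsilon)$, each summand becomes $\varphi\bigl((g\cdot x_{ij})\varepsilon^{q_{ij}}\bigr)u_{\Qp}^{i}\varepsilon^{r_{ij}}$, which is already in the form prescribed by the $p$-basis. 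Collecting the $(0,0)$-slot, its coefficient is the sum of $(g\cdot x_{ij})\varepsilon^{q_{ij}}$ over those $(i,j)$ with $i=0$ and $r_{0j}=0$; but $r_{0j}=0$ means $\chi(g)j\equiv 0\pmod p$, and since $\chi(g)$ is a $p$-adic unit this forces $j=0$. Hence only $(i,j)=(0,0)$ contributes, and with $q_{00}=0$ one gets $\psi^{\np}(g\cdot x)=g\cdot x_{00}=g\cdot\psi^{\np}(x)$.

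Two routine points must be checked along the way. First, one needs $\varepsilon^{q}\in\bfE_{\FT,\Qp}^{\np,?}$ (resp. $[\varepsilon]^{q}\in\bfA_{\FT,\Qp}^{\np,?}$) for every $q\in\Zp$: since $\chi\colon\Gamma_{\Qp}\cong\Zp^{\times}$ is surjective, $\varepsilon^{v}=\gamma\cdot\varepsilon$ lies in the ring for $v\in\Zp^{\times}$, and for $q=p^{t}v$ one uses $\varepsilon^{p^{t}v}=\varphi^{t}(\varepsilon^{v})$ together with the Frobenius-stability of the ring (the case $q=0$ being trivial); in characteristic $0$ the same argument applies via $\varphi([y])=[y^{p}]$ and the $\varphi$- and $\scrG_{\Qp}$-stability of $\bfA_{\FT,\Qp}^{\np,?}$ from \Cref{prop:42040}. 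Second, the statements for $\bfE_{\FT}^{\np,?}$ and $\bfA_{\FT}^{\np,?}$ follow identically, using that these rings are stable under $\scrG_{\Qp}$ and contain the respective smaller rings. I do not expect a serious obstacle here; the one genuinely essential input is the invertibility of $\chi(g)$ modulo $p$, which is precisely what collapses every off-diagonal contribution onto the $(0,0)$-coordinate.
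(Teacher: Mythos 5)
Your proof is correct, and since the paper only says the lemma ``can be checked by direct calculation'' without supplying details, your argument is precisely the unwinding it has in mind: uniqueness of the $p$-basis expansion gives part (1) immediately, and for part (2) the invertibility of $\chi(g)$ modulo $p$ is exactly what isolates the $(0,0)$-slot. The side check that $\varepsilon^{q}$ (resp.\ $[\varepsilon]^{q}$) lies in the ring for $q\in\Zp$ is the one point that is easy to overlook, and you handle it correctly via surjectivity of $\chi$ and $\varphi$-stability.
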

\begin{proposition}\label{prop:2140}
	For every \'etale $(\varphi,\Gamma_{\FT,\Qp})$-module $D$ over $\bfA_{\FT,\Qp}^{\np,?}$, there exists a unique operator $\psi^{\np}\colon D\lto D$ satisfying
	$$\psi^{\np}(a\cdot\varphi(x))=a\cdot\psi^{\np}(x),\ \psi^{\np}(a\cdot\varphi(x))=\psi^{\np}(a)\cdot x$$
	for every $a\in \bfA_{\FT,\Qp}^{\np,?}$ and $x\in D$. Moreover, for any $x\in D$, and any positive integer $n$, one has
	\begin{equation}\label{eq:17638}
		x=\sum_{i=0}^{p^n-1}\sum_{j=0}^{p^n-1}[\varepsilon]^i\cdot u_{\Qp}^j\cdot\varphi^n\left(\psi^{\np,n}\left([\varepsilon]^{-i}\cdot u_{\Qp}^{-j}\cdot x\right)\right),
	\end{equation}
	where $\psi^{\np,n}$ is the $n$-th iterate of $\psi^{\np}$.
\end{proposition}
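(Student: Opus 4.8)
\textbf{Construction of $\psi^{\np}$.} The plan is to build $\psi^{\np}$ by hand from the $\varphi$-basis of \Cref{coro:38666}, verify the two defining relations (reading the first one as $\psi^{\np}(\varphi(a)\cdot x)=a\cdot\psi^{\np}(x)$), deduce uniqueness, and finally obtain \eqref{eq:17638} by iteration; the substantive input — imperfectness with $[\bfE_{\FT,\Qp}^{\np,?}\colon\varphi(\bfE_{\FT,\Qp}^{\np,?})]=p^2$ — is \Cref{prop:16686}. Concretely, \Cref{coro:38666} says $\{[u_{\Qp}]^i[\varepsilon]^j\}_{0\le i,j\le p-1}$ is a $\varphi\!\left(\bfA_{\FT,\Qp}^{\np,?}\right)$-basis of $\bfA_{\FT,\Qp}^{\np,?}$, while étaleness of $D$ means the linearized Frobenius $\bfA_{\FT,\Qp}^{\np,?}\otimes_{\varphi,\bfA_{\FT,\Qp}^{\np,?}}D\to D$ is an isomorphism. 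Combining these, every $x\in D$ has a unique expansion $x=\sum_{i,j=0}^{p-1}[u_{\Qp}]^i[\varepsilon]^j\varphi(x_{ij})$ with $x_{ij}\in D$ ($\varphi$ being injective on $D$), and we set $\psi^{\np}(x)\coloneqq x_{00}$, an additive map. Applied to $D=\bfA_{\FT,\Qp}^{\np,?}$ this is exactly the $\psi$-operator on the ring introduced above; in particular $\psi^{\np}\!\left([u_{\Qp}]^i[\varepsilon]^j\right)=\delta_{i,0}\delta_{j,0}$ for $0\le i,j\le p-1$.

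\textbf{Relations and uniqueness.} If $a\in\bfA_{\FT,\Qp}^{\np,?}$, semilinearity of $\varphi$ on $D$ gives $\varphi(a)\cdot x=\sum_{i,j}[u_{\Qp}]^i[\varepsilon]^j\varphi(a\,x_{ij})$, so $\psi^{\np}(\varphi(a)\cdot x)=a\,x_{00}=a\cdot\psi^{\np}(x)$. Expanding $a=\sum_{i,j}[u_{\Qp}]^i[\varepsilon]^j\varphi(a_{ij})$ with $a_{ij}\in\bfA_{\FT,\Qp}^{\np,?}$ (so $\psi^{\np}(a)=a_{00}$) gives $a\cdot\varphi(x)=\sum_{i,j}[u_{\Qp}]^i[\varepsilon]^j\varphi(a_{ij}x)$, hence $\psi^{\np}(a\cdot\varphi(x))=a_{00}x=\psi^{\np}(a)\cdot x$. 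For uniqueness, any additive operator $\psi'$ on $D$ obeying the second relation satisfies $\psi'(x)=\sum_{i,j}\psi^{\np}\!\left([u_{\Qp}]^i[\varepsilon]^j\right)x_{ij}=x_{00}$ for every $x$, so $\psi'=\psi^{\np}$.

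\textbf{The iterated formula \eqref{eq:17638}.} Iterating \Cref{coro:38666} shows $\{[u_{\Qp}]^i[\varepsilon]^j\}_{0\le i,j\le p^n-1}$ is a $\varphi^n\!\left(\bfA_{\FT,\Qp}^{\np,?}\right)$-basis of $\bfA_{\FT,\Qp}^{\np,?}$, and iterating étaleness gives $D=\bigoplus_{0\le i,j\le p^n-1}[u_{\Qp}]^i[\varepsilon]^j\varphi^n(D)$; writing $x=\sum_{k,l}[u_{\Qp}]^k[\varepsilon]^l\varphi^n(y_{kl})$ one has $\psi^{\np,n}(x)=y_{00}$, and iterating the second relation gives $\psi^{\np,n}\!\left(c\cdot\varphi^n(z)\right)=\psi^{\np,n}(c)\cdot z$ for $c\in\bfA_{\FT,\Qp}^{\np,?}$, $z\in D$. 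For $|a|,|b|\le p^n-1$, writing $a=p^nq_a+r_a$ and $b=p^nq_b+r_b$ with $0\le r_a,r_b\le p^n-1$ (so $q_a,q_b\in\{-1,0\}$) yields $[u_{\Qp}]^a[\varepsilon]^b=[u_{\Qp}]^{r_a}[\varepsilon]^{r_b}\varphi^n\!\left([u_{\Qp}]^{q_a}[\varepsilon]^{q_b}\right)$, so $\psi^{\np,n}\!\left([u_{\Qp}]^a[\varepsilon]^b\right)=0$ unless $(a,b)=(0,0)$. Since $[u_{\Qp}]$ and $[\varepsilon]$ are units of the Cohen ring $\bfA_{\FT,\Qp}^{\np,?}$ (their reductions are nonzero in the field $\bfE_{\FT,\Qp}^{\np,?}$), we compute $\psi^{\np,n}\!\left([\varepsilon]^{-i}[u_{\Qp}]^{-j}x\right)=\sum_{k,l}\psi^{\np,n}\!\left([u_{\Qp}]^{k-j}[\varepsilon]^{l-i}\right)y_{kl}=y_{j,i}$ for $0\le i,j\le p^n-1$; substituting into the right-hand side of \eqref{eq:17638} (where $u_{\Qp}$ is understood as its Teichmüller lift $[u_{\Qp}]$) and reindexing recovers $\sum_{k,l}[u_{\Qp}]^k[\varepsilon]^l\varphi^n(y_{kl})=x$.

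\textbf{Expected difficulty.} There is no genuine obstacle: everything reduces to the explicit $\varphi$-basis already established in \Cref{prop:16686} and \Cref{coro:38666}. The only points requiring care are the index bookkeeping in the last step and the observations that $[u_{\Qp}],[\varepsilon]$ are units of the Cohen ring (so that $[\varepsilon]^{-i}[u_{\Qp}]^{-j}x$ is meaningful) and that ``operator'' must be read as ``additive map'' for the uniqueness assertion.
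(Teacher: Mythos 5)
Your proof is correct and is essentially the same argument the paper outsources to the citation of \cite[Corollary 5.30]{fontaineTheoryPadicGalois}: construct $\psi^{\np}$ via the unique decomposition coming from étaleness and the $\varphi$-basis of \Cref{coro:38666}, check the two relations, deduce uniqueness from the second relation, and iterate for \eqref{eq:17638}. You also rightly correct two slips in the statement — the first defining relation should read $\psi^{\np}(\varphi(a)\cdot x)=a\cdot\psi^{\np}(x)$ as in \Cref{thm:49557}, and the $u_{\Qp}^{\pm j}$ appearing in \eqref{eq:17638} should be the Teichmüller lift $[u_{\Qp}]^{\pm j}$ — and you supply the small but necessary observation that $[u_{\Qp}],[\varepsilon]$ are units of $\bfA_{\FT,\Qp}^{\np,?}$.
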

\begin{proof}
	See for example \cite[Corollary 5.30]{fontaineTheoryPadicGalois}. For $D=\bfD_{\FT}^{\np,?}(V)$, the $\psi$ operator on $D$ is induced by the map $\psi\otimes 1$ on $\bfA_{\FT}^{\np,?}\otimes_{\Zp}V$. \eqref{eq:17638} follows from induction on $n$.
\end{proof}
\begin{remark}\label{rmk:20098}
	Since $1,[\varepsilon],\cdots,[\varepsilon]^{p-1}$ is a basis of $\bfA$ over $\varphi(\bfA)$, the above construction implies that $\psi^{\np}$ on any \'etale $(\varphi,\Gamma_{\FT,\Qp})$-module $\bfD_{\frakF}^{\np,?}(V)$ coincides with $\psi$ when restricted to $\bfD(V)$.
\end{remark}
\subsubsection{Comparison of $\opn{Kos}\left(\varphi,\Gamma_{\frakF,\Qp},\bfD_{\frakF}^{\np,?}(V)\right)$ and $\opn{Kos}\left(\psi^{\np},\Gamma_{\frakF,\Qp},\bfD_{\frakF}^{\np,?}(V)\right)$}
By replacing $\varphi$ by $\psi^{\np}$ everywhere in $\opn{Kos}\left(\varphi,\Gamma_{\frakF,\Qp},\bfD_{\frakF}^{\np,?}(V)\right)$, one gets the $\psi^{\np}$-complex:
\begin{equation*}\opn{Kos}\left(\psi^{\np},\Gamma_{\frakF,K},\bfD_{\frakF}^{\np,?}(V)\right)\coloneqq\opn{Tot}\left(\begin{tikzcd}[column sep=tiny]%
			& \bfD_{\frakF}^{\np,?}(V)\ar[rr,"\tau-1"{xshift=0pt}]\ar[dd,"\gamma-1"{yshift=17pt}] && \bfD_{\frakF}^{\np,?}(V)\ar[dd,"\gamma-\delta"{yshift=0pt}] \\
			\bfD_{\frakF}^{\np,?}(V)\ar[ur,"1-\psi^{\np}"]\ar[dd,"\gamma-1"{yshift=0pt}]  && \bfD_{\frakF}^{\np,?}(V) \ar[ur,"1-\psi^{\np}"]\\
			& \bfD_{\frakF}^{\np,?}(V)\ar[rr,"\tau^{\chi(\gamma)}-1"{xshift=-17pt}] && \bfD_{\frakF}^{\np,?}(V) \\
			\ar[rr,"\tau^{\chi(\gamma)}-1"{xshift=0pt}]\bfD_{\frakF}^{\np,?}(V)\ar[ur,"1-\psi^{\np}"]&& \bfD_{\frakF}^{\np,?}(V)\ar[ur,"1-\psi^{\np}"]
			\arrow[from=2-3,to=4-3,"\gamma-\delta"{yshift=17pt},crossing over]
			\arrow[from=2-1, to=2-3,"\tau-1"{xshift=-17pt},crossing over]
		\end{tikzcd}\right).\end{equation*}
If one wants to imitate the roadmap used in \cite{cherbonnierTheorieIwasawaRepresentations1999} for proving the analogue of \Cref{thm:49557} in the false Tate curve extension setting, the next thing to do is to prove that $\opn{Kos}\left(\psi^{\np},\Gamma_{\frakF,K},\bfD_{\frakF}^{\np,?}(V)\right)$ is quasi-isomorphic to $\opn{Kos}\left(\varphi,\Gamma_{\frakF,K},\bfD_{\frakF}^{\np,?}(V)\right)$ (cf. \cite[Lemme I.5.2]{cherbonnierTheorieIwasawaRepresentations1999}), which consequently computes the Galois cohomology of $V$ by \Cref{thm:52100}. More specifically,
\begin{question}\label{ques:56379}
	For $V\in\repcat$, let $D=\bfD_{\frakF}^{\np,?}(V)$. Consider the following morphism of complexes:
	\begin{equation*}\begin{tikzcd}[row sep=large,column sep=40pt,/tikz/column 1/.style={column sep=-5pt},/tikz/column 2/.style={column sep=15pt},/tikz/column 6/.style={column sep=15pt},ampersand replacement=\&]
			\opn{Kos}\left( \varphi,\Gamma_{\frakF,\Qp},D \right)\colon\ar[d,"\Phi"]\&0\ar[r]\&D\ar[r]\ar[d,"\opn{id}"]\&D^{\oplus 3}\ar[r]\ar[d,"\scalebox{0.7}{$\begin{bsmallmatrix}
							1&&\\&1&\\&&-\psi^{\np}
						\end{bsmallmatrix}$}"]\&D^{\oplus 3}\ar[r]\ar[d,"\scalebox{0.7}{$\begin{bsmallmatrix}
							-\psi^{\np}&&\\&-\psi^{\np}&\\&&1
						\end{bsmallmatrix}$}"]\&D\ar[r]\ar[d,"-\psi^{\np}"]\&0\\
			\opn{Kos}\left( \psi^{\np},\Gamma_{\frakF,\Qp},D \right)\colon\&0\ar[r]\&D\ar[r]\&D^{\oplus 3}\ar[r]\&D^{\oplus 3}\ar[r]\&D\ar[r]\&0
		\end{tikzcd}.\end{equation*}
	Then does the morphism $\Phi$ induce a quasi-isomorphism?
\end{question}
\begin{lemma}\label{lem:57841}
	The answer to \Cref{ques:56379} is affirmative if and only if the following conditions hold:
	\begin{enumerate}[label=(C\arabic*)]
		\setcounter{enumi}{-1}
		\item\label{it:2549} $\bfD_{\FT}^{\np,?}(V)^{\psi^{\np}=0,\gamma=1,\tau=1}=0$;
		\item\label{it:52} For every $x,y\in\bfD_{\FT}^{\np,?}(V)^{\psi^{\np}=0}$ satisfying $\left( \tau^{\chi(\gamma)}-1 \right)x+(\delta-\gamma)y=0$, there exists $t\in\bfD_{\FT}^{\np,?}(V)^{\psi^{\np}=0}$ such that $x=(\gamma-1)t$ and $y=(\tau-1)t$;
		\item\label{it:35284} For every $z\in\bfD_{\FT}^{\np,?}(V)^{\psi^{\np}=0}$, there exist $x,y\in\bfD_{\FT}^{\np,?}(V)^{\psi^{\np}=0}$ such that $z=\left( \tau^{\chi(\gamma)}-1 \right)x+(\delta-\gamma)y$.
	\end{enumerate}
\end{lemma}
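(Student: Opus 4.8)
The plan is to compute the cohomology of $\opn{Cone}(\Phi)$. Being a morphism of complexes, $\Phi$ induces a quasi-isomorphism exactly when $\opn{Cone}(\Phi)$ is acyclic, so the key point is to show that $\opn{Cone}(\Phi)$ is quasi-isomorphic to the totalization of the commuting square
$$\begin{tikzcd}
 D^{\psi^{\np}=0}\ar[r,"\tau-1"]\ar[d,"\gamma-1"'] & D^{\psi^{\np}=0}\ar[d,"\gamma-\delta"]\\
 D^{\psi^{\np}=0}\ar[r,"\tau^{\chi(\gamma)}-1"'] & D^{\psi^{\np}=0}
\end{tikzcd}$$
placed in cohomological degrees $0,1,2$, where $D\coloneqq\bfD_{\FT}^{\np,?}(V)$; here commutativity is the identity $(\gamma-\delta)(\tau-1)=(\tau^{\chi(\gamma)}-1)(\gamma-1)$, equivalent to $\gamma\tau\gamma^{-1}=\tau^{\chi(\gamma)}$ via the definition of $\delta$, and $D^{\psi^{\np}=0}$ is stable under $\gamma$, $\tau$, $\tau^{\chi(\gamma)}$ and $\delta$ because $\psi^{\np}$ commutes with the $\scrG_{\Qp}$-action. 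Granting this quasi-isomorphism, an immediate inspection of the totalized square identifies its $\rmH^0$, $\rmH^1$, $\rmH^2$ with conditions (C0), (C1), (C2) respectively: $\rmH^0=D^{\psi^{\np}=0,\gamma=1,\tau=1}$; $\rmH^1$ vanishes exactly when every $(a,b)$ with $(\tau^{\chi(\gamma)}-1)b+(\delta-\gamma)a=0$ has the form $\bigl((\tau-1)t,(\gamma-1)t\bigr)$; and $\rmH^2$ vanishes exactly when $(a,b)\mapsto(\gamma-\delta)a-(\tau^{\chi(\gamma)}-1)b$ is onto.

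The quasi-isomorphism is built in two steps. First, I would observe that the Herr--Ribeiro complex $\opn{Kos}\left(\varphi,\Gamma_{\frakF,\Qp},D\right)=\opn{Tot}\left(\scrC_{\frakF}(D)\right)$ is the totalization of the triple complex obtained by putting the square above (with $D$ in all four corners) in two of the three directions and the two-term complex $[D\xrightarrow{1-\varphi}D]$ in the third; likewise $\opn{Kos}\left(\psi^{\np},\Gamma_{\frakF,\Qp},D\right)$ is the totalization of that same square against $[D\xrightarrow{1-\psi^{\np}}D]$. Reading off the matrices in \Cref{ques:56379}, the map $\Phi$ is precisely the totalization of the termwise comparison morphism $(\opn{id},-\psi^{\np})\colon[D\xrightarrow{1-\varphi}D]\to[D\xrightarrow{1-\psi^{\np}}D]$ inserted in the $\varphi$-direction (the identity on the source vertices of the $\varphi$-edges, and $-\psi^{\np}$ on their targets). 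Since the mapping cone commutes with totalizing against a fixed complex, $\opn{Cone}(\Phi)$ is the totalization of the triple complex with the same square in two directions and the complex $C^{\bullet}\coloneqq\opn{Cone}\bigl((\opn{id},-\psi^{\np})\colon[D\xrightarrow{1-\varphi}D]\to[D\xrightarrow{1-\psi^{\np}}D]\bigr)$ in the third.

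Second, I would show that $C^{\bullet}$, a three-term complex of shape $D\to D\oplus D\to D$, is quasi-isomorphic to $D^{\psi^{\np}=0}$ placed in its middle degree, compatibly with the operators $\gamma$, $\tau$, $\delta$ --- which is automatic, since these commute with $\varphi$ and $\psi^{\np}$. Only the two formal facts $\psi^{\np}\circ\varphi=\opn{id}$ and the surjectivity of $\psi^{\np}$ are used: the outgoing differential of $C^{\bullet}$ from $D$ has the identity as one of its components, so the lowest cohomology of $C^{\bullet}$ vanishes; $\psi^{\np}$ is onto, so the top one vanishes; and the middle cohomology is identified with $\ker\psi^{\np}=D^{\psi^{\np}=0}$ via $(a,b)\mapsto a-(1-\varphi)b$. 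Feeding this equivariant quasi-isomorphism into the third direction of the triple complex --- by the spectral sequence of a triple complex, or equivalently by noting that the totalization of the square against an acyclic complex is acyclic --- collapses the $\varphi$-direction and leaves exactly the totalized square on $D^{\psi^{\np}=0}$ from the first paragraph.

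I expect the main obstacle to be bookkeeping rather than anything deep: matching the explicit matrices defining $\Phi$ in \Cref{ques:56379} with the ``$\varphi$-direction comparison'' description, tracking signs and degree shifts through the totalization of the triple complex and the formation of the mapping cone, and checking that $\gamma-1$, $\gamma-\delta$, $\tau-1$, $\tau^{\chi(\gamma)}-1$ descend, under $\rmH^1(C^{\bullet})\cong D^{\psi^{\np}=0}$, to the expected maps. Some care is needed so that the three cohomology groups come out verbatim as (C0), (C1), (C2) --- in particular so that the roles of $\gamma$ and $\tau$ (and the sign in $\delta-\gamma$ versus $\gamma-\delta$) match the statement; up to a harmless renaming of $x$ and $y$ in (C1), they do.
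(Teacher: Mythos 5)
Your proof is correct and, once unwound, is essentially the same argument as the paper's: both turn on the surjectivity of $\Phi$ (coming from the surjectivity of $\psi^{\np}$) and then identify the remaining complex with the totalization of the $\tau$--$\gamma$ square on $D^{\psi^{\np}=0}$ placed in degrees $0,1,2$, whose three cohomology groups are precisely (C0), (C1), (C2). The paper phrases this directly via $\ker\Phi$ --- since $\Phi$ is degreewise surjective, $\Phi$ is a quasi-isomorphism iff $\ker\Phi$ is acyclic, and $\ker\Phi$ is read off immediately from the diagonal matrices defining $\Phi$ --- whereas you pass through $\opn{Cone}(\Phi)$ and a triple-complex decomposition with $[D\xrightarrow{1-\varphi}D]\to[D\xrightarrow{1-\psi^{\np}}D]$ in the $\varphi$-direction. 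These are equivalent routes (for a surjective chain map, $\opn{Cone}(\Phi)\simeq\ker\Phi[1]$), and your version buys a cleaner conceptual picture of why $\ker\Phi$ has the shape it does (the source-vertex/target-vertex description of $\Phi$, and the reduction of the $\varphi$-direction cone to $D^{\psi^{\np}=0}$ via $\psi^{\np}\varphi=\opn{id}$ plus surjectivity of $\psi^{\np}$) at the cost of more machinery. The paper's version is a one-liner because the kernel is visible by inspection. You were right to flag the bookkeeping as the delicate point: matching your ``identity on $\varphi$-sources, $-\psi^{\np}$ on $\varphi$-targets'' description against the printed matrices requires fixing the ordering of the components of $D^{\oplus 3}$ in each degree and chasing signs through $\delta$ and $\tau^{\chi(\gamma)}$, and the $\gamma/\tau$ roles in (C1) indeed only match up after a harmless swap of $x$ and $y$; none of this affects the argument.
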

\begin{proof}
	The surjectivity of $\psi^{\np}$ implies the surjectivity of $\Phi$. The conditions correspond to the exactness of $\opn{ker}\Phi$ at degree $0,1,2$.
\end{proof}
The first condition is easy to verify:
\begin{proposition}\label{prop:16574}
	The condition \labelcref{it:2549} is true.
\end{proposition}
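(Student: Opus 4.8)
The plan is to deduce \labelcref{it:2549} from the corresponding (classical) statement for the cyclotomic $(\varphi,\Gamma)$-module $\bfD(V)$. Let $x\in\bfD_{\FT}^{\np,?}(V)$ with $\psi^{\np}(x)=0$, $\gamma\cdot x=x$ and $\tau\cdot x=x$. The relation $\tau\cdot x=x$ says that $x$ is fixed by $\overline{\langle\tau\rangle}=\wtilde{\Gamma}_{\tau,\Qp}$, so by \Cref{prop:48076} we have $x\in\bfD_{\FT}^{\np,?}(V)^{\tau=1}=\bfD(V)$; moreover \Cref{rmk:20098} says that $\psi^{\np}$ restricted to $\bfD(V)$ is the classical operator $\psi$, so $\psi(x)=0$. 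Since in addition $\gamma\cdot x=x$ (and $\gamma$, the image of $\gamma_{\Qp}$, is a topological generator of $\Gamma_{\Qp}$ acting on $\bfD(V)$ in the usual way), we are reduced to proving $\bfD(V)^{\psi=0,\gamma=1}=0$.

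\textbf{The cyclotomic statement.} I would argue as follows. Because $\psi$ commutes with $\Gamma_{\Qp}$, the module $\bfD(V)^{\psi=0}$ carries an action of the Iwasawa algebra $\Lambda\coloneqq\bfZ_p\bbrac{\Gamma_{\Qp}}$, and when $V$ is free over $\bfZ_p$ the module $\bfD(V)^{\psi=0}$ is \emph{free} over $\Lambda$; this is the one nontrivial input, taken from the classical theory (cf. \cite{cherbonnierTheorieIwasawaRepresentations1999}), and ultimately rests on the identification $\bfA_{\Qp}^{\psi=0}\cong\Lambda$. Since $p\geq 3$, the Teichm\"uller decomposition $\Gamma_{\Qp}\cong\bfZ_p^\times\cong\mu_{p-1}\times(1+p\bfZ_p)$ yields $\Lambda\cong\prod_{\zeta\in\mu_{p-1}}\bfZ_p\bbrac{T}$, and under the $\zeta$-component the element $\gamma-1$ maps to $\zeta\cdot(1+T)^{c}-1$ for some $c\in\bfZ_p^\times$: if $\zeta\neq 1$ its constant term $\zeta-1$ is a unit of $\bfZ_p$ (as $\bar\zeta\neq 1$ in $\bfF_p^\times$), hence $\gamma-1$ is a unit in that factor, while in the factor $\zeta=1$ one has $\gamma-1=T\cdot(\text{unit})$. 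Thus $\gamma-1$ is a non-zero-divisor in $\Lambda$, so it acts injectively on the free module $\bfD(V)^{\psi=0}$, giving $\bfD(V)^{\psi=0,\gamma=1}=(\bfD(V)^{\psi=0})[\gamma-1]=0$ for $V$ free over $\bfZ_p$.

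\textbf{Dévissage for general $V\in\repcat$.} The exact sequence $0\to V_{\opn{tor}}\to V\to V/V_{\opn{tor}}\to 0$, together with the split surjectivity of $\psi$ (from $\psi\circ\varphi=\id$), produces an exact sequence $0\to\bfD(V_{\opn{tor}})^{\psi=0}\to\bfD(V)^{\psi=0}\to\bfD(V/V_{\opn{tor}})^{\psi=0}\to 0$; applying the left-exact functor $(-)^{\gamma=1}=\opn{Hom}_\Lambda(\Lambda/(\gamma-1),-)$ reduces us to the free case (done) and to the case $V$ killed by a power of $p$. Inducting on the length and using the same two facts, the latter reduces to $V$ an $\bfF_p$-representation, where $\bfD(V)^{\psi=0}$ is free over $\Lambda/p\cong\prod_{\zeta}\bfF_p\bbrac{T}$ and $\gamma-1$ is again a non-zero-divisor (a unit outside the trivial factor, $T$ times a unit in it), so $\bfD(V)^{\psi=0,\gamma=1}=0$. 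The only genuine obstacle is the $\Lambda$-freeness of $\bfD(V)^{\psi=0}$ in the free case, which I treat as a citation; the rest is the elementary bookkeeping above, consistent with the claim being ``easy to verify''.
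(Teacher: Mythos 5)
Your reduction step is exactly the paper's: use \Cref{prop:48076} and \Cref{rmk:20098} to identify $\bfD_{\FT}^{\np,?}(V)^{\psi^{\np}=0,\gamma=1,\tau=1}$ with $\bfD(V)^{\psi=0,\gamma=1}$. Where you diverge is in disposing of the cyclotomic statement. The paper quotes \cite[Proposition I.5.1]{cherbonnierTheorieIwasawaRepresentations1999} directly, which asserts that $\gamma-1$ is \emph{bijective} on $\bfD(V)^{\psi=0}$ for every $V\in\repcat$ — torsion or not — so the whole thing is a one-line citation. You instead go through the structure of $\bfD(V)^{\psi=0}$ as a $\Lambda=\bfZ_p\bbrac{\Gamma_{\Qp}}$-module: you invoke $\Lambda$-freeness in the $\bfZ_p$-free case, observe that $\gamma-1$ is a non-zero-divisor in $\Lambda$ via the Teichm\"uller splitting (the computation there is correct, and it is here that the blanket assumption $p\geq 3$ is genuinely used, as you note), and then run a d\'evissage through $V_{\operatorname{tor}}$ to handle arbitrary $V$. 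This is a legitimate route and arguably more self-contained in flavor, but it costs you two things. First, the input you lean on — ``$\bfD(V)^{\psi=0}$ is $\Lambda$-free'' — is not what \cite{cherbonnierTheorieIwasawaRepresentations1999} proves at the point you cite; what is there is precisely the bijectivity of $\gamma-1$, from which the $\Lambda$-module structure (via $(\gamma-1)^{-1}$) is subsequently deduced. The freeness statement is a separate and heavier structural fact, and pinning it to that reference is imprecise; you would also need its mod-$p$ analogue for the base of your induction. Second, the d\'evissage is superfluous: since the cited bijectivity already holds for every object of $\repcat$, there is nothing to reduce. Net: you prove the right thing, and your algebra is sound, but you trade a direct citation for a longer argument that ultimately rests on a stronger (and less precisely located) piece of the same classical theory, plus an unnecessary induction.
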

\begin{proof}
	\Cref{prop:48076} and \Cref{rmk:20098} imply that $\bfD_{\FT}^{\np,?}(V)^{\psi^{\np}=0,\gamma=1,\tau=1}=\bfD(V)^{\psi^{\np}=0,\gamma-1=0}$. Since $\gamma-1$ is invertible on $\bfD(V)^{\psi=0}$ (cf. \cite[Proposition I.5.1]{cherbonnierTheorieIwasawaRepresentations1999}), the result follows.
\end{proof}

On the other hand, verifying \labelcref{it:52} and \labelcref{it:35284} seems to be intractable. A closer examination of the proofs presented in \cite{herrCohomologieGaloisienneCorps1998} and \cite{cherbonnierTheorieIwasawaRepresentations1999} reveals that these calculations depend on delicate estimations of the action of the $\Gamma_K$ on the period rings $\bfE_{\Qp}, \bfA_{\Qp}$ (see, for instance \cite[Corollaire II.5.4]{cherbonnierTheorieIwasawaRepresentations1999} and \cite[Lemma 5.3.10]{colmezFontaineRingsPadic}). This relies on the trivial but critical fact that one can tell the valuation of an element in $\bfE_{\Qp}$ immediately by taking the minimal degree of nontrivial terms in its $\eta_{\Qp}$-adic expansion. However, when working with our multivariable structures, no simple way is known to estimate the valuation of general elements. This is predicted by the fact that $v^\flat$ is not discrete when restricted to these rings.

At the end of this article, we mention that the proof of \Cref{prop:60072} actually provides an upper bound of the valuation $v^\flat$ of elements in $\bfE_{\frakF,\Qp}^{\np,\circ,+}$ in terms of the \textbf{monomial valuation} $$v_{\opn{mono}}\left(\sum_{i,j\geq 0}f_{ij}u_{\Qp}^i\eta_{\Qp}^j\right)\coloneqq \min_{i,j\geq 0,f_{ij}\neq 0}i\cdot v^\flat(u_K)+j\cdot v^\flat(\eta_K),$$
as we will show in the appendix (cf. \Cref{thm:mono}). This might be helpful for establishing hardcore estimations towards \labelcref{it:52} and \labelcref{it:35284}.

\appendix
\section{Perfectoid valuation vs. monomial valuation}
The aim of this appendix is to compare the perfectoid valuation $v^\flat$ and the monomial valuation $v_{\opn{mono}}$ on $\bfE_{\frakF,\Qp}^{\np,\circ,+}$ in an explicit way, i.e.
\begin{theorem}\label{thm:mono}
	Let
	\begin{align*}
		\frakM(t)\coloneqq & \frac{p^{2\lceil t\rceil+3}}{p-1}\prod_{k=0}^{\lceil t\rceil}\left(k+\frac{p+1}{p}\right)^2-(p-1)\sum_{k=0}^{\lceil t\rceil}\left(k+1\right)p^k\prod_{l=0}^{k-1}\left(l+\frac{p+1}{p}\right) \\
		\colon             & \bfR_{\geq 0}\to\bfQ_{>0}.
	\end{align*}
	For any $f\in \bfE_{\frakF,\Qp}^{\np,\circ,+}$ with $v^\flat(f)\geq \frakM(0)$, one has
	$$v_{\opn{mono}}(f)\leq v^\flat(f)<\frakM\left(v_{\opn{mono}}(f)\right)=O\left(p^{2\cdot v_{\opn{mono}}(f)}\cdot \Gamma\left(v_{\opn{mono}}(f)+C\right)^2\right),$$
	where $C\coloneqq 3+\frac{1}{p}$ and $\Gamma(\cdot)$ is the usual Gamma function.
\end{theorem}
The proof of this theorem is based on a quantitative generalization of \Cref{prop:61330}.  To begin with, we make explicit choice of $\delta_c$ and $A_c$ in the proof of \Cref{prop:61330}:
\begin{lemma}\label{lem:56820}\leavevmode
	\begin{enumerate}
		\item For any $c\in \bfQ_{>0}$, let $\tau_c\coloneqq p^{n(c)}\in\bfZ_p\cong \wtilde{\Gamma}_{\tau,\Qp}$ with
		      $$n(c)\coloneqq\begin{cases}
				      \left\lceil\log_p\left((p-1)\cdot (c-1)\right)\right\rceil-1 & \text{if }c>\frac{p}{p-1}, \\
				      0                                                            & \text{otherwise}.
			      \end{cases}$$
		      If we set $\delta_1(c)\coloneqq \tau_c\cdot u_{\Qp}-u_{\Qp}$, then we have
		      $$\max\left(c,1+\frac{p}{p-1}\right)\leq v^\flat\left(\delta_1(c)\right)\leq \max\left(1+p\cdot(c-1),1+\frac{p}{p-1}\right).$$
		\item For any $c\in\bfQ_{>0}$, let $\gamma_c\coloneqq 1+p^{n'(c)}\in\bfZ_p^\times\cong \Gamma_{\Qp}$ with $$n'(c)\coloneqq\begin{cases}
				      \left\lceil\log_p\left((p-1)\cdot c\right)\right\rceil-1, & \text{if }c>\frac{1}{p-1}, \\
				      0,                                                        & \text{otherwise}.
			      \end{cases}$$
		      If we set $\delta_2(c)\coloneqq \gamma_c\cdot \eta_{\bfQ_p}-\eta_{\bfQ_p}$, then we have
		      $$\max\left(c,\frac{p}{p-1}\right)\leq v^\flat\left(\delta_2(c)\right)\leq \max\left(p\cdot c,\frac{p}{p-1}\right).$$
		\item The functions $v^\flat\left(\delta_1(\cdot)\right), v^\flat\left(\delta_2(\cdot)\right)\colon \bfQ_{>0}\lto \bfQ$ are non-decreasing.
	\end{enumerate}
\end{lemma}
\begin{proof}
	This is a direct consequence of the fact that for any integer $n$, one has
	$$v^\flat\left(\varepsilon^n-1\right)=\frac{p^{v_p(n)+1}}{p-1}.$$
\end{proof}
Now we restate \Cref{prop:61330} for the specific case of $K=\Qp$ and $\iota_{\theta,F}=\iota_{\np}$ as follows:
\begin{lemma}\label{lem:62817}
	For any $c\in\bfQ_{>0}$, we recursively define two sequences $$\left\{M_1^{[i]}(c)\right\}_{i\geq 0}, \left\{M_2^{[j]}(c)\right\}_{j\geq 0}\subset\bfQ_{>0}$$ by the formulae
	$$M_1^{[0]}(c)=M_2^{[0]}(c)\coloneqq c,$$
	$$M_1^{[i+1]}(c)\coloneqq \max\left\{M_1^{[i]}\left(c+(i+1)v^\flat\left(\delta_1(c)\right)\right),c+(i+1)v^\flat\left(\delta_1(c)\right),M_1^{[i]}(c)\right\}.$$
	$$M_2^{[j+1]}(c)\coloneqq \max\left\{M_2^{[j]}\left(c+(j+1)v^\flat\left(\delta_2(c)\right)\right),c+(j+1)v^\flat\left(\delta_2(c)\right),M_2^{[j]}(c)\right\}.$$
	Then
	\begin{enumerate}
		\item for any formal power series $f\in \bfF_p\bbrac{X,Y}$ with $v^\flat\left(\iota_{\np}(f)\right)\geq M_1^{[i]}(c)$, one has $v^\flat\left(\iota_{\np}\left(\partial_1^{[s]}f\right)\right)\geq c$ for $s=0,\cdots,i$;
		\item for any formal power series $f\in \bfF_p\bbrac{X,Y}$ with $v^\flat\left(\iota_{\np}(f)\right)\geq M_2^{[j]}(c)$, one has $v^\flat\left(\iota_{\np}\left(\partial_2^{[s]}f\right)\right)\geq c$ for $s=0,\cdots,j$.
	\end{enumerate}
\end{lemma}
\begin{proof}
	See \Cref{prop:61330}, \eqref{eq:27570} and \eqref{eq:27571}. Notice that when we only consider the case of $K=\Qp$ and $\iota_{\theta,F}=\iota_{\np}$, one has $B_c=0$ and $\lambda(c)=0$ in \eqref{eq:27571}.
\end{proof}
The following lemma collects some essential properties of $M_1^{[i]}(c)$ and $M_2^{[j]}(c)$ that will be used later:
\begin{lemma}\label{lem:50178}Let $i$ be a natural number. Then
	\begin{enumerate}
		\item For any $c\in\bfQ_{>0}$, one has $M_1^{[i]}(c)\ \text{(resp. $M_2^{[i]}(c)$)} \geq c$;
		\item The function $M_1^{[i]}(\cdot)\ \text{(resp. $M_2^{[j]}(\cdot)$)}\colon \bfQ_{>0}\lto \bfQ_{>0}$ is non-decreaseing.
		\item For any $c\in \bfQ_{>0}$, one has
		      $$M_1^{[i+1]}(c)=M_1^{[i]}\left(c+(i+1)v^\flat(\delta_1(c))\right)$$
		      and
		      $$M_2^{[j+1]}(c)=M_2^{[j]}\left(c+(j+1)v^\flat(\delta_2(c))\right).$$
	\end{enumerate}
\end{lemma}
\begin{proof}
	We prove the lemma for $M_1^{[i]}(c)$ by induction on $i$. The case of $i=0$ is trivial. Now assume that these assertions hold for certain $i$. Then for any $c\in\bfQ_{>0}$, we have
	$$M_1^{[i+1]}(c)\geq c+(i+1)v^\flat(\delta_1(c))\geq c,$$
	which shows the first assertion for $i+1$. As a result, the definition of $M_1^{[i+1]}(c)$ can be simplified as
	\begin{equation}\label{eq:25009}
		M_1^{[i+1]}(c)= \max\left\{M_1^{[i]}\left(c+(i+1)v^\flat\left(\delta_1(c)\right)\right),M_1^{[i]}(c)\right\}.
	\end{equation}
	For the second assertion, we fix $c_1,c_2\in\bfQ_{>0}$ with $c_1\geq c_2$.%
	By the induction hypothesis, we have
	$$M_1^{[i]}\left(c_1+(i+1)v^\flat\left(\delta_1(c_1)\right)\right)\geq M_1^{[i]}(c_1)$$
	and
	$$M_1^{[i]}\left(c_2+(i+1)v^\flat\left(\delta_1(c_2)\right)\right)\geq M_1^{[i]}(c_2).$$
	Thus it left to prove that
	$$M_1^{[i]}\left(c_1+(i+1)v^\flat\left(\delta_1(c_1)\right)\right)\geq M_1^{[i]}\left(c_2+(i+1)v^\flat\left(\delta_1(c_2)\right)\right),$$
	which is a direct consequence of \Cref{lem:56820} and the induction hypothesis.
	The last assertion follows from the second one and \Cref{eq:25009}.%

	The results for $M_2^{[j]}(c)$ can be proved similarly.
\end{proof}
The sequences $\left\{M_1^{[i]}(c)\right\}_{i\geq 0}$ and $\left\{M_2^{[j]}(c)\right\}_{j\geq 0}\subset\bfQ_{>0}$ are recursively defined and consequently hard to evaluate. The following lemma provides a more explicit upper bound of $M_1^{[i]}(c)$ and $M_2^{[j]}(c)$:
\begin{lemma}\label{lem:64038}
	Let $i,j\in\bfN$ and $c\in\bfQ_{>0}$.
	\begin{enumerate}
		\item Let
		      $$F_1^{[i]}(X)=p^i\prod_{k=0}^{i-1}\left(k+\frac{p+1}{p}\right)X-(p-1)\sum_{k=0}^{i-1}\left(k+1\right)p^k\prod_{l=0}^{k-1}\left(l+\frac{p+1}{p}\right).$$
		      Then $M_1^{[i]}(c)\leq F_1^{[i]}\left(\max\left(c,\frac{p}{p-1}\right)\right)$.
		\item Let
		      $$F_2^{[j]}(X)=p^j\prod_{k=0}^{j-1}\left(k+\frac{p+1}{p}\right)X.$$
		      Then $M_2^{[j]}(c)\leq F_2^{[j]}\left(\max\left(c,\frac{1}{p-1}\right)\right)$.
	\end{enumerate}
\end{lemma}
\begin{proof}
	We prove the first assertion by induction on $i$, the second one can be proved similarly.

	The case of $i=0$ is trivial, so we assume that the first assertion is proved for certain $i\in\bfN$ and arbitrary $c\in\bfQ_{>0}$.

	The series $\left\{F_1^{[i]}(x)\right\}$ can be easily verified to satisfy the following recursive relation:
	$$F_1^{[i]}(X)=X,F_1^{[i+1]}(X)=F_1^{[i]}\left(X+(i+1)\left(
		1+p\cdot(X-1)\right)\right),$$
	thus we have
	$$M_1^{[i+1]}(c)=M_1^{[i]}\left(c+(i+1)v^\flat(\delta_1(c))\right)\leq F_1^{[i]}\left(c+(i+1)v^\flat\left(\delta_1(c)\right)\right)$$
	When $c>\frac{p}{p-1}$, by \Cref{lem:56820} one has
	\begin{align*}
		F_1^{[i]}\left(c+(i+1)v^\flat\left(\delta_1(c)\right)\right)\leq F_1^{[i]}(c+(i+1)(1+p\cdot(c-1)))=F_1^{[i+1]}(c).
	\end{align*}

	When $c\leq \frac{p}{p-1}$, since $c+(i+1)\cdot v^\flat(\delta_1(c))>\frac{p}{p-1}$, we know that
	\begin{align*}
		F_1^{[i]}\left(c+(i+1)\cdot v^\flat(\delta_1(c))\right)\leq F_1^{[i]}\left(c+(i+1)\left(1+\frac{p}{p-1}\right)\right)\leq F_1^{[i+1]}\left(\frac{p}{p-1}\right).
	\end{align*}
	And the result follows.
\end{proof}
\begin{corollary}\label{coro:65342}
	For any $n\in\bfN$, let

	\begin{equation}\label{eq:0395}\begin{aligned}M(n)\coloneqq & F_1^{[n]}\left(F_2^{[n]}\left(\frac{p}{p-1}\right)\right)                                                                                                  \\
               =             & \frac{p^{2n+1}}{p-1}\prod_{k=0}^{n-1}\left(k+\frac{p+1}{p}\right)^2-(p-1)\sum_{k=0}^{n-1}\left(k+1\right)p^k\prod_{l=0}^{k-1}\left(l+\frac{p+1}{p}\right).\end{aligned}\end{equation}
	If $f=\sum_{i,j}f_{ij}X^iY^j\in\bfF_p\bbrac{X,Y}$ satisfies $v^\flat(\iota_{\np}(f))\geq M(n)$, then $f_{ij}=0$ for $i,j=0,1,\cdots,n$.
	In particular, one has $v_{\opn{mono}}(\iota_{\np}(f))> n$.
\end{corollary}
\begin{proof}
	The first assertion follows from \Cref{lem:64038} and the proof of \Cref{prop:60072}. In particular, it implies that $f_{ij}=0$ for any $v^\flat\left(u_{\Qp}\right)\cdot i+v^\flat\left(\eta_{\Qp}\right)\cdot j\leq n$.\, which finishes the proof.
\end{proof}

\begin{proof}[Proof of \Cref{thm:mono}]
	It is immediate to verify that the sequence $\{M(n)\}_{n\geq 1}$ is strictly increasing and tends to infinity, which can be extended to a strictly increasing function $\wtilde{M}(x)\colon [1,+\infty)\to [M(1),+\infty)$.
	Since
	$$v^\flat(f)=\wtilde{M}\left(\wtilde{M}^{-1}(v^\flat(f))\right)\geq\wtilde{M}\left(\left\lfloor\wtilde{M}^{-1}(v^\flat(f))\right\rfloor\right)=M\left(\left\lfloor\wtilde{M}^{-1}(v^\flat(f))\right\rfloor\right) ,$$
	\Cref{coro:65342} tells us that
	$v_{\opn{mono}}(f)>\left\lfloor\wtilde{M}^{-1}(v^\flat(f))\right\rfloor$, implying
	$$\frakM(v_{\opn{mono}}(f))=M(\lceil v_{\opn{mono}}(f)\rceil+1)\geq\wtilde{M}\left(v_{\opn{mono}}(f)+1\right)>v^\flat(f).$$
	For the big $O$ estimation, we just casually drop the negative terms in the definition of $\frakM(t)$ and use the fact for any $c\in\bfR_{>0}$ and $n\in\bfZ_{\geq 1}$, one has $\prod_{k=0}^{n-1} (k+c)=\frac{\Gamma(c+n)}{\Gamma(c)}$.
\end{proof}
\begin{remark}
	The condition $v^\flat(f)\geq \frakM(0)$ in \Cref{thm:mono} is used to ensure that $v^\flat(f)$ lies in the image of $\wtilde{M}$ in the proof.
\end{remark}
\begin{remark}
	It is possible to get a better bound of $v^\flat(f)$ in terms of $v_{\opn{mono}}(f)$ by modifying the definition of $M(n)$ (cf. \eqref{eq:0395}).
\end{remark}
\backmatter
\bibliographystyle{smfalpha-doi}
\bibliography{ref.bib}
\end{document}